\numberwithin{equation}{subsection}
\newtheorem{theorem}{Theorem}[section]
\newtheorem{lemma}[theorem]{Lemma}
\newtheorem{proposition}[theorem]{Proposition}
\newtheorem{corollary}[theorem]{Corollary}
\theoremstyle{definition}
\newtheorem{definition}[theorem]{Definition}
\theoremstyle{remark}
\newtheorem*{claim}{Claim}
\newtheorem{rmk}[theorem]{Remark}
\DeclareMathOperator{\image}{\mathrm{im}}
\DeclareMathOperator{\res}{\mathrm{res}}
\DeclareMathOperator{\G}{\mathbb{G}}
\DeclareMathOperator{\Cb}{\mathbb{C}}
\DeclareMathOperator{\Fb}{\mathbb{F}}
\DeclareMathOperator{\Zb}{\mathbb{Z}}
\DeclareMathOperator{\bF}{\mathbb{F}}
\DeclareMathOperator{\GL}{GL}
\DeclareMathOperator{\ur}{ur}
\DeclareMathOperator{\sM}{\mathcal{M}}
\DeclareMathOperator{\Spec}{Spec}
\DeclareMathOperator{\End}{\mathsf{End}}
\DeclareMathOperator{\Oo}{\mathcal{O}}
\DeclareMathOperator{\gen}{gen}
\newcommand{\Dht}{D_{\textrm{HT}}}
\newcommand{\HT}{\textrm{HT}}
\newcommand{\ra}{\rightarrow}
\newcommand{\A}{\mathbb{A}}
\let\C\relax
\newcommand{\C}{\mathbb{C}}
\newcommand{\F}{\mathbb{F}}
\newcommand{\Q}{\mathbb{Q}}
\newcommand{\ol}{\overline}
\newcommand{\Qbar}{\overline{\Q}}
\newcommand{\R}{\mathbb{R}}
\newcommand{\Ss}{\mathbb{S}}
\newcommand{\V}{\mathbb{V}}
\newcommand{\Z}{\mathbb{Z}}
\newcommand{\bL}{\mathbb{L}}
\newcommand{\W}{\mathbb{W}}
\newcommand{\bbA}{\mathbf{A}}
\newcommand{\cA}{\mathcal{A}}
\newcommand{\cB}{\mathcal{B}}
\newcommand{\cD}{\mathcal{D}}
\newcommand{\cL}{\mathcal{L}}
\newcommand{\cO}{\mathcal{O}}
\newcommand{\cOB}{\mathcal{O}B}
\newcommand{\cR}{\mathcal{R}}
\newcommand{\cS}{\mathcal{S}}
\newcommand{\cV}{\mathcal{V}}
\newcommand{\cX}{\mathcal{X}}
\newcommand{\cY}{\mathcal{Y}}
\newcommand{\fp}{\mathfrak{p}}
\newcommand{\ho}{\widehat{\cO}}
\newcommand{\ShimK}{S_K(G,X)}
\newcommand{\integralShimK}{\mathscr{S}_K(G,X)}
\DeclareMathOperator{\MF}{MF}
\newcommand{\FL}{\MF^{\nabla}_{[0,p-2],\textrm{free}}}
\newcommand{\rng}{{\mathrm{rng}}}
\newcommand{\dR}{_{\mathrm{dR}}}
\newcommand{\et}{{\mathrm{et}}}
\newcommand{\cris}{_{\mathrm{cris}}}
\newcommand{\Fil}{{\mathrm{Fil}}}
\newcommand{\an}{{\mathrm{an}}}
\newcommand{\ds}{{\displaystyle}}
\DeclareMathOperator{\dr}{dR}
\newcommand{\OBdr}{\mathcal{O}B_{\dr}}
\newcommand{\OBdrlog}{\mathcal{O}B_{\dr,\log}}
\newcommand{\Bdr}{B_{\dr}}
\DeclareMathOperator{\GSp}{GSp}
\DeclareMathOperator{\gr}{gr}
\DeclareMathOperator{\tr}{tr}
\DeclareMathOperator{\Gal}{Gal}
\DeclareMathOperator{\Hom}{Hom}
\DeclareMathOperator{\Lie}{Lie}
\DeclareMathOperator{\Res}{Res}
\DeclareMathOperator{\Spf}{Spf}
\DeclareMathOperator{\ad}{ad}
\DeclareMathOperator{\der}{der}
\DeclareMathOperator{\disc}{disc}
\DeclareMathOperator{\Nm}{Nm}
\DeclareMathOperator{\Id}{Id}
\DeclareMathOperator{\Disc}{Disc}
\DeclareMathOperator{\Gr}{Gr}
\DeclareMathOperator{\rig}{rig}
\DeclareMathOperator{\std}{Std}
\definecolor{ao(english)}{rgb}{0.0, 0.5, 0.0}
\title{Canonical Heights on Shimura Varieties and the Andr\'e-Oort Conjecture}
\author{Jonathan Pila, Ananth N. Shankar, Jacob Tsimerman\smallskip\\\emph{\MakeLowercase{with an appendix by} H\'el\`ene Esnault and Michael Groechenig} }
\begin{document}

\maketitle

\tableofcontents\newpage


\section{Introduction}

\subsection{History and main results}

The main purpose of this work is to prove the Andr\'e-Oort conjecture in full generality. Recall the statement of the conjecture:

\begin{theorem}\label{mainao}
Let $S$ be a Shimura variety. Let $V\subset S$ be a subvariety. Then there are only finitely many maximal special subvarieties contained in $V$. 
\end{theorem}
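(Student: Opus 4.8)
The plan is to run the Pila--Zannier strategy; its only ingredient not yet available in full generality is a lower bound for the size of the Galois orbit of a special point, and supplying that bound on an arbitrary Shimura variety (via a theory of canonical heights) is the main new contribution, with a key technical input coming from the appendix. First I would reduce \Cref{mainao}, following the now-standard arguments of Ullmo--Yafaev and Klingler--Yafaev (equidistribution of special subvarieties and the analysis of the ``geometric part'', together with Noetherian induction on $\dim V$), to the following statement about special points: if an irreducible subvariety $V\subseteq S$ contains a Zariski-dense set of special points, then $V$ is special. One may then assume $V$ is Hodge-generic in $S$, and — since its special points are algebraic and Zariski-dense — that $V$ is defined over a number field.

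Next, write $S=\Gamma\backslash X^+$ with uniformization $\pi\colon X^+\to S$ and fix a semialgebraic fundamental set $\mathfrak F$. By Peterzil--Starchenko and Klingler--Ullmo--Yafaev the restriction $\pi|_{\mathfrak F}$ is definable in the o-minimal structure $\R_{\an,\exp}$, so $Z:=(\pi|_{\mathfrak F})^{-1}(V)$ is a definable set; realizing $X^+$ inside its compact dual, every special point $s\in V$ admits a lift $\tilde s\in\mathfrak F$ that is algebraic of bounded degree, with multiplicative height polynomially bounded in the complexity $\Delta(s)$ (the discriminant of the CM algebra attached to $s$). I then play two estimates against each other. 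The Pila--Wilkie counting theorem gives, for every $\varepsilon>0$, at most $O_\varepsilon(H^\varepsilon)$ points of $Z\setminus Z^{\mathrm{alg}}$ of height at most $H$. Against this, the arithmetic core of the argument is a bound $\#\bigl(\Gal(\Qbar/\Q)\cdot s\bigr)\gg \Delta(s)^{\delta}$ for some fixed $\delta>0$: since $V$ is defined over a number field, the Galois conjugates of $s$ are again special points of $V$ of complexity comparable to $\Delta(s)$, hence lift to $\gg\Delta(s)^{\delta}$ points of $Z$ of height $\ll\Delta(s)^{O(1)}$. Comparing the two estimates forces, once $\Delta(s)$ is large, all of these conjugates to lie in $Z^{\mathrm{alg}}$.

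It remains to exploit membership in $Z^{\mathrm{alg}}$. By the Ax--Lindemann--Weierstrass theorem for Shimura varieties (Klingler--Ullmo--Yafaev, Pila--Tsimerman) the images in $S$ of the maximal positive-dimensional semialgebraic subvarieties of $Z$ are weakly special subvarieties of $V$, and by a theorem of Ullmo--Yafaev a weakly special subvariety that contains a special point is in fact special. Hence all but finitely many special points of $V$ (those of bounded complexity being finite in number by class number lower bounds) lie in positive-dimensional special subvarieties of $V$. Feeding this back into the Ullmo--Yafaev recursion and inducting on $\dim V$ forces $V$ itself to be special, which is the desired contradiction to the assumption that $V$ is not special.

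\emph{The hard part is the Galois lower bound} on a general Shimura variety, including the exceptional ones for which there is no interpretation of special points via CM abelian varieties. The plan is to construct a canonical height $\hat h$ on $S$ attached to (a power of) the automorphic line bundle, to prove that special points have canonical height bounded polynomially in $\Delta(s)$, and then to convert this upper bound into the required lower bound for Galois orbits through the relation between $\hat h(s)$ and the Faltings height of the CM object associated to $s$, combined with the averaged Colmez formula and Brauer--Siegel-type estimates. In abelian type this machinery is available directly; extending it beyond abelian type requires controlling the arithmetic of the canonical variation of Hodge structure on $S$ — integrality and bounded ramification of its $\ell$-adic and crystalline realizations — which is precisely what the appendix of Esnault--Groechenig provides.
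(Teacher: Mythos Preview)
Your outline of the Pila--Zannier strategy is correct up to and including the Ax--Lindemann step, and you rightly identify that the missing ingredient is the Galois orbit lower bound. However, there is a genuine gap in how you propose to obtain it.

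You write that you will ``convert [the height] upper bound into the required lower bound for Galois orbits through the relation between $\hat h(s)$ and the Faltings height of the CM object associated to $s$, combined with the averaged Colmez formula and Brauer--Siegel-type estimates.'' This conflates two separate steps. The averaged Colmez formula and the Faltings-height comparison are inputs to proving the height \emph{upper} bound (this is indeed how the paper uses them, reducing via partial CM types to CM points in $\cA_g$). They do not by themselves yield a Galois orbit \emph{lower} bound. In the $\cA_g$ case the bridge from height to Galois orbit was the Masser--W\"ustholz isogeny estimate, and there is no known analogue of that for Shimura varieties of exceptional type --- precisely because special points there have no moduli interpretation as CM abelian varieties. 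The Esnault--Groechenig appendix supplies crystallinity of the automorphic local systems, which is what makes the canonical height admissible; it plays no role in the height-to-Galois-orbit conversion.

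The paper closes this gap differently: once the sub-polynomial height bound $h(s)=(M\cdot\disc E)^{o(1)}$ is established (Theorem~\ref{mainhtbound}), one invokes \cite[Theorem~1]{BSY}, which shows directly that such a height bound implies Andr\'e--Oort. Their argument replaces Pila--Wilkie by Binyamini's stronger point-counting theorem \cite{Binyamini}, which counts algebraic points on definable sets in terms of both height \emph{and} degree; this is exactly what allows one to bypass isogeny estimates entirely. Your proposal, relying on classical Pila--Wilkie, would still need an independent polynomial Galois orbit bound, and the tools you list do not supply one outside the abelian-type case.
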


The first unconditional result was obtained by Andr\'e \cite{Andre} for a product of two modular curves. In the past two decades there has been much work on the conjecture. First, spurred by an idea of Edixhoven \cite{Edixhoven}, the conjecture was proven conditionally on GRH in a series of works by Klingler-Ullmo-Yafaev \cite{KY,UYA}. 

Further unconditional results were obtained, starting with \cite{Pila}, using a different strategy. This strategy was originally proposed by Zannier
and had been implemented to reprove the Manin-Mumford conjecture in \cite{PZ}. The approach has three main ingredients:

\begin{itemize}
    \item Estimates \cite{PW} for counting rational points on a transcendental set
    \item Functional Transcendence theorems, specifically for the uniformization map of Shimura varieties
    \item Lower bounds for Galois orbits of special points.
\end{itemize}

The required functional transcendence results were generalized to arbitrary Shimura varieties in a series of works including \cite{PT-axlag, UY}, and inspired stronger transcendence results in these and other contexts, including recently a proof of the so called Ax-Schanuel theorem in the context of arbitrary variations of mixed Hodge structures \cite{MPT, BT-hodgeaxs, chiu, GK}. Such generalizations are important for studying deeper unlikely intersection questions such as the Zilber-Pink conjecture.

The remaining missing ingredient was the lower bound for Galois orbits of special points. Such bounds were obtained for $\cA_g$ in \cite{T} by relying on the average Colmez conjecture\cite{col} (proved independently by Andreatta-Goren-Howard-Madapusi--Pera\cite{AGHM} and Yuan-Zhang \cite{YZ}) to obtain bounds for the \emph{height} of Galois orbits and the Masser-W\"ustholz isogeny estimates. The isogeny estimates are not available in arbitrary Shimura varieties, but this ingredient was recently removed in a paper of Binyamini-Schmidt-Yafaev \cite{BSY} based on a recent breakthrough of Binyamini \cite{Binyamini} obtaining strong point-counting results in terms of both the height and degree of the points being counted. The idea for this strategy to obtain Galois orbit bounds first appeared in work of Schmidt
\cite{Schmidt} in the context of tori and elliptic curves.

In essence, this means that the conjecture is reduced to finding suitable upper bounds for heights of special points.
Our main contribution is to establish this result:

\begin{theorem}[Theorem \ref{thm:mainhtbound} + \S11]
Fix a Shimura variety $S_K(G,X)$ with $G$ of adjoint type. Let $(T,r)\subset (G,X)$ be a (varying) 0-dimensional Shimura datum such that $K\cap T(\bbA_f)$ is of index $M$ in the maximal compact $K_T$, and let $E_T$ be the splitting field of $T$. Then the height of $(T,r)$ with respect to an ample line bundle is $(M\disc E_T)^{o(1)}$. 
\end{theorem}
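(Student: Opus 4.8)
The plan is to realize the canonical height as an Arakelov‑theoretic degree and bound its archimedean and finite parts separately, using that everything in sight is controlled by the fixed group $G$ apart from the arithmetic of the torus $T$. Up to comparison of heights of ample line bundles it suffices to treat the case where $\overline{\cL}$ is an automorphic line bundle on a canonical integral model of a toroidal compactification of $S_K(G,X)$, equipped with its natural (Hodge) metric. The special point $x:=(T,r)$ is defined over a number field $F$ contained in the splitting field $E$, and one writes $h_{\overline{\cL}}(x)=\tfrac{1}{[F:\Q]}\,\widehat{\deg}\bigl(\overline{\cL}\big|_{\overline{\{x\}}}\bigr)=\tfrac{1}{[F:\Q]}\sum_v\lambda_v(x)$, the sum running over places of $F$, with $\lambda_v(x)=-\log\|s(x)\|_v$ for a fixed generator $s$ of the fibre $\cL_x$. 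A first, purely group‑theoretic reduction: $[E:\Q]$ — hence $[F:\Q]$ — is bounded in terms of $G$ alone, since $\Gal(E/\Q)$ embeds into $\GL(X^*(T))\subseteq\GL_m(\Z)$ with $m\le\mathrm{rk}(G)$ and finite subgroups there have order bounded by Minkowski's theorem (and by $|W|\cdot|\mathrm{Out}(G)|$, $W$ the Weyl group, when $T$ is maximal). This is exactly what lets a bounded number of local terms, each $O(\log(M\disc E))$, add up to $(M\disc E)^{o(1)}$.

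At an archimedean place $v$, functoriality of automorphic bundles along $(T,r)\hookrightarrow(G,X)$ identifies $\cL_x$ with its Hodge metric as the object attached to the $0$‑dimensional datum $(T,r)$, so $\lambda_v(x)$ is, up to the logarithm of the absolute value of an algebraic number of controlled height, the logarithm of a CM period associated to $T$ and the cocharacter $\mu_r$. By the theory of periods of CM motives (Chowla-Selberg and Lerch, or the Deligne-Anderson description through $\Gamma$‑values), this is a $\Q$‑linear combination with $G$‑bounded coefficients of values $\log\Gamma(a/f)$ with $f\mid\disc E$, equivalently of logarithmic derivatives $L'(\chi,0)/L(\chi,0)$ of Artin $L$‑functions of conductor dividing $\disc E$. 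Crude, GRH‑free estimates give $|\log\Gamma(a/f)|\ll\log f$ and $|L'(\chi,0)/L(\chi,0)|\ll\log\disc E$, so the archimedean contribution is $O_G(\log(M\disc E))$. This is precisely the point at which, for $\cA_g$, one invokes Colmez's conjecture; here only an upper bound is required.

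At a finite place $v$ over a prime $p$ of good reduction — $p\nmid M$, $p$ unramified in $E$, $K_p$ hyperspecial — the canonical integral model supplies a distinguished $\Z_p$‑lattice in $\cL_x\otimes\Q_p$ with $\|s(x)\|_v=1$, so $\lambda_v(x)=0$. The remaining places lie over primes dividing $M\disc E$, and there one must bound the $p$‑adic defect between the lattice cut out at $x$ by the integral model and a fixed rational generator; this is governed by $p$‑adic Hodge theory — the Hodge-Tate weights of the automorphic bundle being bounded in terms of $G$ — provided one has a good enough integral model of the automorphic bundle near $x$, with denominators controlled by $G$. For Shimura varieties of abelian type this follows from the canonical integral models of Kisin and others; for the exceptional cases of types $E_6$ and $E_7$, where no construction from moduli of abelian varieties is available, it is supplied by the appendix of Esnault-Groechenig, whose cohomological‑rigidity results produce the necessary integral structure (an $F$‑isocrystal with bounded denominators). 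One then gets $\lambda_v(x)\ll_G(\log p)\,p^{o(1)}$, and summing over $p\mid M\disc E$ the finite contribution is $(M\disc E)^{o(1)}$. This step plays the role that the Masser-W\"ustholz isogeny estimates play in the argument for $\cA_g$.

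The main obstacle is this last step: constructing, uniformly in the varying torus $T$ and with denominators bounded in terms of $G$ alone, an integral model of the automorphic bundle at the bad primes for every $G$ simple of adjoint type — in particular for the exceptional groups, where one cannot fall back on a family of abelian varieties — and controlling the $p$‑adic period of the CM point against it. This is exactly the content for which the Esnault-Groechenig appendix is indispensable. By contrast, once the group‑theoretic finiteness $[E:\Q]=O_G(1)$ is in hand, the archimedean estimate stays within classical analytic and transcendence technology.
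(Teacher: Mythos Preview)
Your archimedean step has a genuine gap. You assert that the Hodge--metric contribution at each infinite place is, ``by the theory of periods of CM motives (Chowla--Selberg and Lerch, or the Deligne--Anderson description through $\Gamma$-values)'', a $\Q$-linear combination with $G$-bounded coefficients of values $\log\Gamma(a/f)$, equivalently of $L'(\chi,0)/L(\chi,0)$. No such description is available for the CM periods arising on an arbitrary adjoint Shimura variety. The Anderson--Colmez formalism is a statement about CM \emph{abelian varieties}; for a special point on a datum of type $E_6$, $E_7$, or the mixed $D$ cases, the fibre of the automorphic line bundle is not a priori a period of any abelian variety, and no Colmez-type formula is known. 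It is true (Milne) that the underlying motive lies in the Tannakian category generated by CM abelian varieties, but converting that abstract membership into an explicit expression for the period with coefficients bounded \emph{uniformly in the varying torus} $T$ is precisely the problem. Even in the abelian-variety case this step is the averaged Colmez conjecture of \cite{AGHM,YZ}; you cannot replace it for general $G$ by an appeal to ``crude GRH-free estimates''. (Separately, the unconditional bound you quote, $|L'(\chi,0)/L(\chi,0)|\ll\log\disc E$, is not available either; Brauer--Siegel only yields $(\disc E)^{o(1)}$, and ineffectively.)

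The paper's argument is organised quite differently and never estimates a CM period on $S_K(G,X)$ directly. It builds a canonical height that is functorial for maps of Shimura data, reduces via Theorem~\ref{pcmreduction} and Lemma~\ref{modifiedpullback} to the heights $h_{r_\Phi}(\chi_\Phi)$ attached to \emph{partial CM types} on the tori $R_K=\Res_{K/\Q}\G_m/\Res_{F/\Q}\G_m$, and then uses Deligne's device (Theorem~\ref{delignesidea}): combine several partial CM types $\Phi_i$ on auxiliary CM fields $E_i$ so that their union is a \emph{full} CM type on the compositum; the resulting height is then a Faltings height, bounded by \cite{T} via the averaged Colmez conjecture, and varying the auxiliary fields lets one solve for the individual partial-CM heights by linear algebra. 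Thus the archimedean input remains the averaged Colmez conjecture, but for manufactured abelian varieties, not for a hypothetical period formula on $S_K(G,X)$. You have also mislocated the Esnault--Groechenig appendix: it is not invoked to control denominators at the bad primes, but to prove that the automorphic $\Z_p$-local systems on non-abelian-type Shimura varieties are crystalline at \emph{almost all good} primes, which is what makes the collection of local norms admissible (\S\ref{S-metrized}, Theorem~\ref{shhtadmis}) so that a global height exists at all; the finitely many bad primes are handled by the intrinsic norm and Theorem~\ref{crisint}.
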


Gao \cite{Gao-reduction} has shown how to deduce the Andr\'e-Oort conjecture for a mixed Shimura variety from Galois bounds for the associated pure Shimura variety, and so the conjecture (which is as stated in Theorem \ref{mainao} for $S$ a mixed
Shimura variety) follows for all mixed Shimura varieties also.

\subsection{Method of proof}

The idea is to reduce the general case to the height bounds for CM points in $\cA_g$. We thus need a way to compare heights for CM points which embed in different Shimura varieties. To facilitate this comparison, we require a `canonical' height on arbitrary Shimura varieties, similar to the Faltings height on $\cA_g$. At first glance this might feel minor, because all Weil heights on a line bundle are the same up to bounded functions, so why is it so important to get a canonical height function? The reason is that we have a different comparison for each CM point, so knowing the result up to a bounded function for each CM point separately tells us nothing!  As such, we need to have better pointwise control. We explain how to do this in the next subsection. Once this is done, then for every $0$-dimensional Shimura variety and automorphic line bundle - i.e. a character of the split torus -  we obtain a canonical  height \footnote{In fact the details are a bit more complicated and require us to make some arbitrary and unaesthetic choices between what we dub the \emph{intrinsic} and \emph{crystalline} norms, but morally this is what is happening.}.

We first explain in \ref{S-SV-partialCMtypes} how to associate a 0-dimensional Shimura datum $(E^\times/F^\times,r_\Phi)$ to a partial CM type $\Phi$  associated to a CM field $E/F$, and a canonical character $\chi_\Phi$ on the associated torus. We are thus reduced to bounding the corresponding intrinsic heights of $\chi_\Phi$ on $(T_\Phi,r_\Phi)$, and the case where $\Phi$ is a full CM-type is covered by the case of $\cA_g$.

The idea is then to use Deligne's construction for augmenting partial CM types into full CM types. Namely, suppose that $E_1,E_2$ are CM fields over the same real totally real field $F$. Say $\Phi_1,\Phi_2$ are partial CM types for $E_1,E_2$ respectively, such that their restrictions to $F$ are complementary. Then one may form a complete CM type $\Phi$ on $E_{tot}:=E_1E_2$ by taking the union of the pullbacks of the $\Phi_i$. Moreover, 
$(E_{tot}^\times/F_{tot}^\times,r_\Phi)$ admits maps (up to isogeny) to $(E_i^\times/F^\times,r_{\Phi_i})$ via the norm map on tori, and $\chi_{\Phi}$ is the product of the pullbacks of the $\chi_{\Phi_i}$. In this way, we are able to deduce from the case of full CM type, bounds for the sum of the intrinsic heights of $\Phi_1$ and $\Phi_2$.

Armed with this technique, the key observation is that we may combine these partial CM types in many different ways,  allowing us to extract individual bounds by taking linear combinations. This is a simple combinatorial argument, which we carry out in Theorem \ref{delignesidea}.  

\subsection{Constructing canonical heights on Shimura varieties}

Given a $\Z$-local system $\V$ arising from an algebraic representation $V$ of $G_\Q$, we may form an associated flat vector bundle $_{\dR}V$ via the Riemann-Hilbert correspondence, which in this setting is equipped with a filtration. Our line bundles will arise as determinants of sub-bundles of $_{\dR}V$ , and so we seek to give $\Gr^\bullet_{\dR}V$ the structure of a normed vector bundle. At the Archimedean places, we simply use the Hodge norm\footnote{This depends on the choice of polarization}.
Our construction of canonical heights is similar in spirit to previous constructions of heights for motives, in particular, works of Kato \cite{Kato} and Koshikawa \cite{Koshikawa}. 

At the finite places, work of Scholze\cite{S} and Liu-Zhu\cite{LZ} has shown how to interpret $_{\dR}V$ in terms of a $p$-adic Riemann-Hilbert correspondence, which specializes to the functor $D_{\dR}$ from $p$-adic Hodge theory pointwise. This allows us to equip its grading with a canonical norm, and it is not hard to show that this norm behaves well at each finite place. 

A difficulty arises in showing that the above local norms piece together to give a well-behaved global norm. We suspect that this is the case for all Shimura varieties\footnote{This would follow from the (conjectured) existence of suitable motives over exceptional Shimura varieties}  (and perhaps much more generally), but have been unable to establish this.

Here we use in an essential way the work of Esnault-Groechenig, which they generalize in the appendix to accommodate non-proper Shimura varieties. Specifically, they establish that if the local system is rigid, then for all but finitely many places it is in fact crystalline in the sense of Faltings. It is this extra data that allows us to prove that the heights piece together well.


\subsection{Solid height functions}

Most fundamentally for us is a height on 0-dimensional Shimura varieties which behaves well with respect to embeddings. For a $0$-dimensional Shimura variety $(T,r)$ we can assign automorphic line bundles corresponding to \emph{solid} characters $\chi$ of $T$ (See definition \ref{def:solidcharacter}). To such a triple $(T,r,\chi)$ we associate a real-valued height function $h$ (note that this is level invariant), satisfying the following four properties:

    \begin{enumerate}
        \item If $\chi_1,\chi_2$ are both solid characters then \footnote{We denote by $E_T$ the splitting field of $T$}
        $$h(T,r,\chi_1\chi_2)=h(T,r,\chi_1)+h(T,r,\chi_2)+O_{\rng_{\chi_1}+\rng_{\chi_2}+\dim T}(\log\Disc E_T)$$

        \item Given a solid triple $(T,r,\chi)$ and a positive integer $m$ we have 
        $$h(T,r^m,\chi)=mh(T,r,\chi) + O_{m+\rng_\chi+\dim T}(\log\Disc E_T)$$

        \item If $(T_2,r_2,\chi)$ is solid, and $f:(T_1,r_1)\ra (T_2,r_2)$ is a morphisms of Shimura data with $T_1,T_2$ tori, then 
        $$h(T_1,r_1,\chi\circ f)=h(T_2,r_2,\chi) + O_{\rng_\chi+\dim T_1+\dim T_2}(\log \Disc E_{T_1}+\log \Disc E_{T_2}).$$

        \item Let $S=S_K(G,X)$ be a Shimura variety, and $V$ an irreducible representation of $G$ with a sublattice $\V$ fixed by $K$. Assume that the highest weight piece $L:=\Fil^a _{\dR}V_\C$ is 1-dimensional, and let $h_L$ be a Weil height on a Toroidal compactification $\bar S$ of $S$ corresponding to the Deligne extension of $L$. Let $h_A$ be a Weil height corresponding to any ample bundle on a Toroidal compactification of $S$. Finally, let $(T,r)\subset (G,X)$ be a 0-dimensional Shimura subdatum, and $\chi:=\Fil_r^a V_\C$ the corresponding solid character of $T$. Then for all points $x\in S_K(G,X)$ in the image of $S_{K\cap T(\bbA_f)}(T,r)$, we have
        $$|h(T,r,\chi) - h_L(x)| = O_S\Big(\log\Disc E_T + \log\big([K_T:K\cap T(\bbA_f)]\big) + \log^+ h_A(x)\Big)$$
    \end{enumerate}

The most important property is the last one, establishing a uniform comparison between the solid height function of a 0-dimensional Shimura variety, and the Weil height of a corresponding CM point when embedded in a larger Shimura variety.

In \ref{thm:solidimpliesAO} we show that the existence of a solid height function implies the validity of the Andr\'e-Oort conjecture, and in \S11 we construct a solid height function using the canonical heights from before.

\subsection{Organization of the paper}

The paper is roughly broken up into 3 parts:

\begin{enumerate}
    \item \S2-5: Using $p$--adic Hodge theory to construct canonical norms on vector bundles corresponding to Crystalline and De-Rham representations
    \item \S6-9: Endowing Shimura varieties with canonical height functions
    \item \S10-11: Proving the Andr\'e-Oort conjecture
    
\end{enumerate}

In \S\ref{S-hodge} we recall the relevant integral $p$-adic Hodge theory results from Tsuji\cite{Tsuji} and Liu-Zhu\cite{LZ} and show a compatibility between them. In \S\ref{S-metrized} we define notions of well-behaved norms on vector bundles that allows us to discuss global heights. In \S\ref{S-canheights} we work in the general context of a variety and a de Rham local system on it and enow the corresponding vector bundle with canonical norms (crystalline and de Rham). In  \S\ref{S-ramifiedexamples} we present concrete examples of what our constructions give in the case of CM elliptic curves with CM by ramified and non-maximal orders.

In \S\ref{S-SV} we recall some basics about Shimura varieties, introduce the concept of partial CM-types and show that the associated 0-dimensional Shimura varieties are sufficient to understand every
0-dimensional Shimura variety arising in a Shimura variety of adjoint type. In \S\ref{S-crystalline} we combine results of Esnault-Groechenig (\cite{EsnaultGroechenig} and generalized to our setting in the appendix) and Margulis \cite{margulis} to show that automorphic local systems on Shimura varieties are (log-)crystalline, so that we may apply the theory of relative Fontaine-Laffaille modules to them. In \S\ref{S-GR} by showing that in a given Shimura variety, all CM points are integral with respect to an appropriate integral model in the spirit of the Neron-Ogg-Shafarevich criterion. This is important for us as we need our representations to be crystalline at almost every prime for the analysis when we compare heights.
Finally, In \S\ref{S-normShimura} we combine the previous results to obtain global `canonical' heights on Shimura varieties.

In section \S\ref{S-solidheightfunctions} we define solid height functions and prove that their existence implies the truth of the Andr\'e-Oort conjecture. This section is self-contained (except for some background on Shimura varieties). Finally, in section \S\ref{S-solidheightconstruction} we construct a solid height function using the results of \S\ref{S-normShimura}.

\subsection{Acknowledgements}

The authors would like to thank Yves Andr\'e, Fabrizo Andreatta, Ben Bakker, Bhargav Bhatt, George Boxer, Antoine Chambert-Loir, H\'el\`ene Esnault, Michael Groechenig, Haoyang Guo, Mark Kisin, Ruochan Liu,  Davesh Maulik, Yong Suk Moon, Matthew Morrow, Will Sawin, Peter Scholze, Andrew Snowden, Takeshi Tsuji, Zijian Yao, Shouwu Zhang, and Roy Zhao for useful conversations. We would also like to thank Teruhisa Koshikawa for useful comments and for pointing out several inaccuracies regarding the rigid geometry and $p$-adic Hodge theory parts of the paper. We thank the referees for pointing us to the work of Brinon\cite{brinon} and many useful suggestions. 

A.S. was partially supported by NSF grant DMS-2100436.

\section{$p$-adic Hodge Theory}\label{S-hodge}

\subsection{Notation}\label{sec:padicHTnotation}

We use the following definitions and notations from \cite{brinon,S,Tsuji,LZ}\footnote{The notation in \cite{S} isn't always consistent with the notation in \cite{brinon}. In cases when the notation conflicts, we defer to \cite{S}.}:

\begin{itemize}

    \item $V$ is a complete DVR of mixed characteristic $p$ with perfect residue field $k$ and fraction field $K$. We assume $p$ is a uniformizer of $V$, so that $V\cong W(k)$, the Witt-vectors of $k$.
    
    \item $\ol V$ is the integral closure of $V$ in $\ol{K}$.
     
    \item $R$ is a smooth $V$-algebra. Let $\cR$ denote its $p$-adic completion. 
    
    \item $\ol{R}$ is the integral closure of $R$ in the maximal \'etale extension of $R[1/p]$, and define $\ol{\cR}$ analogously. Let $\hat{\ol{\cR}}$ denote the $p$-adic completion of $\ol{\cR}$.
    
    \item $f:V[s_1,s_1^{-1},\dots,s_n,s_n^{-1}]\ra R$ is an \'etale map.

    \item Let $\ol{\cR}^{\flat} $ be $\varprojlim_{x\mapsto x^p} (\ol{\cR}/p\ol{\cR})$. 
    
    \item Define $A_{\inf}(\ol{\cR})$ to be $W(\ol{\cR}^{\flat})$, the ring of Witt-vectors of $\ol{\cR}^{\flat}$. This ring has a Frobenius $\varphi$ by functoriality. For any element $x\in \ol{\cR}^{\flat}$, let $[x] \in A_{\inf}(\ol{\cR})$ denote its Teichmuller lift. 
    
    
    \item There is the following ring homomorphism $\theta: A_{\inf}(\ol{\cR})\rightarrow \hat{\ol{\cR}}$, characterized by $\theta([a]) = \lim_{n\rightarrow \infty} \widetilde{a_n}^{p^n}$, where $a\in \ol{\cR}^{\flat} = (a_n)$ and $\widetilde{a_n}\in \ol{\cR}$ is some lift of $a_n$. 
    
    \item Fix a compatible system $\beta_n \in \overline{\cR}$ of $p^{n}$th roots of $p$, with $\beta_0 = p$. Define $p^\flat \in \ol{\cR}^{\flat}$ to be the element $(\beta_n \mod p)_n$. Let $[p^\flat]$ denote the Teichmuller lift of $p^\flat.$
    
    \item Define $\xi\in A_{\inf}(\ol{\cR}) = p - [p^\flat]$. It generates the kernel of $\theta$.
    
    \item Define $\epsilon$ to be an inverse limit of primitive $p^n$th roots of unity, and $\pi=[\epsilon]-1$.

    \item Define $\Fil^r(A_{\inf}(\ol{\cR}))$ to be the ideal $\ker(\theta)^r$ if $r\geq 1$, and to be $A_{\inf}(\ol{\cR})$ if $r\leq 0$. 
    
    \item Define $A_{\cris,m}(\ol{\cR})$ to be divided power envelope of $A_{\inf}(\ol{\cR})/p^m$ with respect to the ideal $\ker(\theta) \mod p^m$. Define $A_{\cris}(\ol{\cR})$ to be the inverse limit $\varprojlim_{m} A_{\cris,m}(\ol{\cR})$ endowed with the inverse limit topology with respect to the discrete topology on $A_{\cris,m}(\ol{\cR})$. This is the same as the following construction: consider the divided power envelope $A_{0,\cris}(\ol{\cR})$ of $A_{\inf}(\ol{\cR})$ with respect to the ideal $\ker(\theta)$. Then $A_{\cris}(\ol{\cR})$ is the $p$-adic completion of $A_{0,\cris}(\ol{\cR})$.

    
    
    \item Define $\cA_{\cris,m}(\ol{\cR})$ to be the divided power envelope of $A_{\inf}(\ol{\cR}) \otimes_V \cR/p^m$ with respect to the kernel of the surjective homomorphism $\theta \mod p^m \otimes \Id \mod p^m: A_{\inf}(\ol{\cR}) \otimes \cR \mod p^m \rightarrow \ol{\cR} \mod p^m$. Define $\cA_{\cris}(\ol{\cR})$ to be the inverse limit of $\cA_{\cris,m}(\ol{\cR})$. This ring is isomorphic to $A_{\cris}(\ol{\cR})\langle v_1,\hdots v_n \rangle^{PD}$, which is the $p$-adic completion of the PD-algebra $A_{\cris}(\ol{\cR})[ v_1,\hdots v_n ]^{PD}$. Here, the elements $v_i$ map to $[s_i^{\flat}] \otimes s_i^{-1} - 1 = (1\otimes s_i^{-1})([s_i^{\flat}]\otimes 1 - 1\otimes s_i) $.
    
    \item Following Brinon,  $B_{\cris}(\ol{\cR}) := A_{\cris}(\ol{\cR})[\frac{1}{\pi}]$, $\cB_{\cris}(\ol{\cR}):= \cA_{\cris}(\ol{\cR})[\frac{1}{\pi}]$. The element $p$ is invertible in $B_{\cris}(\ol{\cR})$ and $\cB_{\cris}(\ol{\cR})$. We note that the notation for $B_{\cris}(\ol{\cR})$ (\emph{resp.} $\cB_{\cris}(\ol{\cR})$) in \cite{brinon} is $B_{\cris}^{\nabla}$ (\emph{resp.} $B_{\cris}$). 
    
    \item $B_{\inf}(\ol{\cR}):=A_{\inf}(\ol{\cR})[\frac1p]$. We use $\theta$ to refer also to the natural extension $B_{\inf}(\ol{\cR})\ra \hat{\ol{\cR}}[\frac1p]$.
    
    \item $B_{\dR}^+(\ol{\cR}):=\varprojlim B_{\inf}(\ol{\cR})/(\ker\theta)^n$ with its natural filtration $\Fil^iB_{\dR}^+(\ol{\cR})$ generated by $(\ker\theta)^i$.
    
    \item $B_{\dR}(\ol{\cR}):=B_{\dR}^+(\ol{\cR})[\frac1\xi]$. We note that this is the ring that Brinon denotes by $B_{\dR}^{\nabla}$
    
    \item $\cOB_{\inf}(\ol{\cR}):=\ol{\cR}\otimes_{V}B_{\inf}(\ol{\cR})$. Note this still admits a map $\theta$ to $\hat{\ol{\cR}}\big[\frac1p\big]$.
    
    \item $\cOB_{\dR}^+(\ol{\cR})$ is a suitable $p$-adic completion of $\varprojlim \cOB_{\inf}(\ol{\cR})/(\ker\theta)^n$ as in \cite{S-C} with its natural filtration $\Fil^i\cOB_{\dR}^+(\ol{\cR})$ generated by $(\ker\theta)^i$. It is a power-series ring over $B_{\dR}^+(\ol{\cR})$, with variables $X_i$. We note that this is the ring Brinon denotes by $B_{\dR}$.

    \item $\cOB_{\dR}(\ol{\cR}):=\cOB^+_{\dR}(\ol{\cR})[\frac1\xi]$.  
    
    \item Note that $\cA_{\cris}(\ol{\cR}), \cB_{\cris}(\ol{\cR}), \cOB_{\inf}(\ol{\cR}),\cOB_{\dr}^+(\ol{\cR}),\cOB_{\dR}(\ol{\cR})$ all admit integrable connections \mbox{$*\ra*\otimes_\cR\Omega^1_{\cR/V}$} inherited from $\cR$, as well as an action of $G =\Gal(\ol{\cR}/\cR) $. 
    
    \item There are natural embeddings $B_{\cris}(\ol{\cR})\rightarrow B_{\dr}(\ol{\cR})$ and  $\cB_{\cris}(\ol{\cR})\rightarrow \cOB_{\dR}(\ol{\cR})$, compatible with filtrations, $G$-actions and the connection. Under these embeddings, the local coordinates $v_i$ map to $(1\otimes s_i^{-1})X_i$ (\cite[Section 6.2.1, Proposition 6.2.1, Corollaire 6.2.3]{brinon}).  
    
   \item Define $t=\log([\epsilon]):=\sum_{i\geq 1} (-1)^{i+1}\frac{\pi^i}{i}$ as an element of $A_{\cris}(\ol{\cR})$. Note that $t-\pi \in \Fil^2 A_{\cris}(\ol{\cR})$, $\frac{t}{\pi}$ is a unit in $A_{\cris}(\cO_{\C_p})$ and that the Galois action on $A_{\cris}(\cO_{\C_p})$ scales $t$ via the cyclotomic character.

\end{itemize}

\subsection{The $p$-adic Riemann-Hilbert correspondence of Liu-Zhu}

We summarize here (a consequence of) one of the main results from \cite{LZ}, in the context of $X=\Spf \cR$. To set the stage, we let $\cL$ be a $\Z_p$ - local system of finite rank $r$ over $ X^{\mathrm{rig}}$, the generic fiber of $X$. We can identify this with a $G=\Gal(\ol{\cR}/\cR)$ - module $L$, free of rank $r$ over $\Z_p$. The following Theorem\footnote{Liu-Zhu work in far greater generality. Indeed, they work with local systems on rigid-analytic varieties over finite extensions of $\Q_p$. The goal of this section is to compare Liu-Zhu's construction with the relative Fontaine-Laffaile correspondence, and hence we work in a more restricted setup. However, we will use Liu-Zhu's general framework in later sections.} is \cite[Thm 3.9]{LZ}.

\begin{theorem}

Define $D^0_{\dR}(L):=(L\otimes \cOB_{\dR}(\ol{\cR}))^G$. 
Then $D^0_{\dR}(L)$ is a locally-free $\cR[\frac1p]$-module with an integrable connection. It is of rank $\leq r$ wth equality iff $L$ is a de Rham local system in the sense of Brinon and Scholze.

\end{theorem}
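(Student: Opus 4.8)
\medskip\noindent\emph{Proof strategy.} The plan is to prove this in four moves: (1) produce the integrable connection on $D^{0}_{\dR}(L)$ by a formal descent from $\cOB_{\dR}(\ol R)$; (2) prove that $D^{0}_{\dR}(L)$ is finitely generated over $R[\frac1p]$; (3) upgrade finiteness to local freeness using the connection; and (4) compute the rank, and characterize the de Rham condition, via the period comparison map. Move (1) is immediate: as recorded in the notation section, $\cOB_{\dR}(\ol R)$ carries an integrable, $G$-equivariant connection $\nabla$ over $R$ with $\nabla t=0$, $(\cOB_{\dR}(\ol R))^{\nabla=0}=\Bdr(\ol R)$, and $(\cOB_{\dR}(\ol R))^{G}=R[\frac1p]$ (the last equality after the descent from the $p$-adic completion $\cR$ carried out in \cite{LZ}). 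Equipping $L\otimes_{\Z_p}\cOB_{\dR}(\ol R)$ with $1_{L}\otimes\nabla$---the local system $L$ being locally constant, hence horizontal---yields an integrable connection commuting with $G$, so $G$-invariants inherit an integrable connection over $R[\frac1p]$.

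\medskip\noindent\emph{Finiteness.} This is the crux. The plan is to filter $L\otimes\cOB_{\dR}(\ol R)$ by $\Fil^{\bullet}\cOB_{\dR}(\ol R)$ and run a d\'evissage on the associated graded, using: (a) the structure of $\cOB_{\dR}(\ol R)$ as a suitably completed power-series ring in $n=\dim X$ variables over $\Bdr(\ol R)$, with $\Gr^{j}\Bdr(\ol R)\cong\widehat{\ol R}[1/p](j)$ and an explicit Galois action on the power-series variables; (b) Tate's theorem, in its relative form, that $H^{0}(G,\widehat{\ol R}[1/p](j))$ equals $\cR[1/p]$ for $j=0$ and vanishes otherwise, so that passing to $G$-invariants annihilates every monomial of positive Tate weight; and (c) relative Sen theory, which gives $\sum_{j}\dim_{\cR[1/p]}(L\otimes_{\Z_p}\widehat{\ol R}[1/p](j))^{G}\le r$. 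Assembling these (equivalently, in the sheaf-theoretic language of \cite{LZ}, via the computation of $R\nu_{*}$ of $\widehat{\mathcal O}_{X}(j)$) should show that $D^{0}_{\dR}(L)$ is finitely generated over $R[\frac1p]$, the Galois-cohomology bookkeeping naturally taking place over $\cR[1/p]$ before the descent to $R$.

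\medskip\noindent\emph{Freeness, the rank, and the de Rham criterion.} For move (3) I would invoke the standard fact that a coherent module with integrable connection on a smooth scheme over a field of characteristic $0$ is a vector bundle: its torsion submodule is horizontal, a nonzero horizontal coherent subsheaf cannot be supported in positive codimension, hence the torsion vanishes, and a torsion-free reflexive coherent sheaf on a regular scheme is locally free. Thus $D^{0}_{\dR}(L)$ is a vector bundle on $\Spec R[\frac1p]$ of some rank $\rho$, free after Zariski-localizing $R$ if necessary. For move (4), consider the natural $G$-equivariant, $\cOB_{\dR}(\ol R)$-linear comparison map $\alpha_{L}\colon D^{0}_{\dR}(L)\otimes_{R[1/p]}\cOB_{\dR}(\ol R)\to L\otimes_{\Z_p}\cOB_{\dR}(\ol R)$. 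It is injective by the classical period-ring argument: since $\cOB_{\dR}(\ol R)$ is a domain with $G$-invariants $R[\frac1p]$, a minimal nontrivial $\cOB_{\dR}(\ol R)$-linear relation among the images of an $R[\frac1p]$-basis of $D^{0}_{\dR}(L)$, after subtracting a Galois translate, would shorten, forcing all its coefficients into $R[\frac1p]$ and contradicting linear independence. Since the target is free of rank $r$ over the domain $\cOB_{\dR}(\ol R)$, injectivity forces $\rho\le r$; and $L$ is de Rham in Scholze's sense precisely when $\alpha_{L}$ is an isomorphism, which for an injection of free $\cOB_{\dR}(\ol R)$-modules of ranks $\rho$ and $r$ holds if and only if $\rho=r$.

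\medskip\noindent\emph{Main obstacle.} The hard part is the finiteness step: this is where essentially all of the $p$-adic Hodge theory enters---the explicit structure of $\cOB_{\dR}(\ol R)$ and its Galois action, Tate's relative cohomology computations for $\widehat{\ol R}$, relative Sen theory, and the descent from the $p$-adically completed ring $\cR$ back to $R$. The connection is purely formal, and the freeness and rank computations are routine once finiteness is established. The whole argument may equivalently be run on Scholze's pro-\'etale site, as in \cite{LZ}, with no change to the rank count or the de Rham criterion.
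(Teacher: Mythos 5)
Your proposal sketches a from-scratch proof; the paper does something entirely different. The paper's own ``proof'' is a citation argument: it notes that the definition is lifted from \cite[\S3.2]{LZ}, takes freeness and the connection as part of \cite[Thm 3.9]{LZ}, and obtains the rank characterization by using the pullback compatibility of \cite[Thm 3.9(ii)]{LZ} to specialize to a classical point $R=V$ (where the statement is the classical $D_{\dR}$ result) and then invoking \cite[Thm 1.3]{LZ}, which says a local system is de Rham iff its stalk at a single classical point is. What the paper's route buys is a complete escape from the hard work --- the d\'evissage, decompletion, and relative Sen theory, which is precisely the content of Liu--Zhu and which you would have to reproduce. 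Your sketch is in the right spirit (it roughly parallels the internal structure of Liu--Zhu's argument), but you are essentially signing up to reprove their main theorem, and you yourself flag the finiteness step with ``should show.''

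Within the sketch, two steps as written are broken. In move (3), after showing torsion vanishes, you invoke ``a torsion-free reflexive coherent sheaf on a regular scheme is locally free.'' This is false once $\dim \geq 3$ (a reflexive module over a $3$-dimensional regular local ring need not be free), and you never establish reflexivity anyway. The correct way to use the connection is the standard one: in characteristic zero a coherent module with integrable connection has locally constant fiber rank (formal Taylor expansion gives a formal trivialization), and this plus upper semicontinuity forces local freeness; equivalently, an $\mathcal O$-coherent $\mathcal D$-module on a smooth variety is a bundle. In move (4), the Fontaine-style minimal-relation argument for injectivity of $\alpha_L$ requires $\cOB_{\dR}(\ol R)$ to be a domain and the ring of $G$-invariants to be a field so that a relation with coefficients in the invariants is an actual contradiction; here $(\cOB_{\dR}(\ol R))^{G}=R[\tfrac1p]$ is not a field, and integrality of $\cOB_{\dR}(\ol R)$ in the relative setting is not obvious. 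Liu--Zhu establish injectivity and the rank bound by decompletion, not by the naive period-ring argument; the paper sidesteps both of your problematic steps by specializing to a point and citing the local constancy of de Rham-ness.
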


\begin{proof}
The definition of $D^0_{\dR}(L)$ is taken from \cite[\S3.2]{LZ} once one makes the translations from sheaves to rings. The statement about ranks follows from the pullback compatiblity \cite[Thm 3.9(ii)]{LZ} together with the corresponding statement for the classical case $R=V$, as well as the statement \cite[Thm 1.3]{LZ} showing that a local system is de Rham iff its stalk is de Rham at a single classical point in every connected component. 
\end{proof}

\subsection{Review of Tsuji's constructions}
Let $R,\cR$ be as above. We fix a Frobenius-lift $\phi$ on $V[s_i]$, by setting $\phi(s_i) = s_i^p$. There is a unique extension of $\phi$ on $\cR$ which we will also denote by the same $\phi$. We recall the category $ \FL(\cR,\phi)$. An object of $\FL(\cR,\phi)$ is given by the quadruple $(M,\Fil,\nabla,\Phi)$ where (see \cite[\S4]{Tsuji} for more details): 
\begin{enumerate}
    \item $M$ is a finitely-generated locally free $\cR$-module .
    \item A topologically nilpotent integrable connection $\nabla: M\rightarrow M\otimes \Omega^1$.
    \item A decreasing filtration $\Fil $ of $M$ by $\cR$-submodules, satisfying: 
    \begin{enumerate}
        \item[(a)] $\Fil^0 M = M$ and $\Fil^{p-1}M = 0$. 
        \item[(b)] $\Gr^{\bullet} M$ is finitely generated free $\cR$-module. 
        \item[(c)] $\nabla(\Fil^rM) \subset \Fil^{r-1}M\otimes \Omega^1$ (Griffiths transversality).
    \end{enumerate}
    \item Associated to $M$ is the filtered module $F^*(M)$ where the underlying module is simply $\phi^*M$, and the underlying filtration is
    $$\Fil^rF^*(M):=\sum_{s\geq 0}\frac{p^s}{s!}\phi^*(\Fil^{r-s}M)$$
    \item An $\cR$-linear homomorphism $\Phi:F^*(M)\ra M$ where 
        \begin{enumerate}
            \item $\Phi$ is compatible with the connections
            \item $\Phi(\Fil^r(F^*M))\subset p^rM$ for $r\in[0,p-2]$
            \item $\displaystyle\sum_{r=0}^{p-2} p^{-r}\Phi(\Fil^r(F^*(M)))=M$
        \end{enumerate}
\end{enumerate}

Tsuji in \cite[Section 5]{Tsuji} associates the following objects to an object $M$ of $\FL(\cR,\varphi)$:
\begin{itemize}
    \item An $A_{\cris}(\overline{\cR})$-module $TA_{\cris}(M):=M\otimes_{\cR} A_{\cris}(\ol\cR)$\cite[(37)]{Tsuji}
    \item An $A_{\inf}(\overline{\cR})$-module $TA_{\inf}(M)$, with a canonical isomorphism $$TA_{\inf}(M)\otimes_{A_{\inf}(\overline{\cR})}A_{\cris}(\overline{\cR})\cong TA_{\cris}(M)$$
    \item A $G$-module $T_{\cris}M$ of rank the same as $M$. This is defined as the dual of $T_{\cris}^*(M)$ where
$$T_{\cris}^*(M):=\Hom_{\cR,\Fil,\phi,\nabla}(M,\cA_{\cris}(\ol{\cR})).$$  Any $G$-module obtained this way is defined to be a crystalline local system.
\end{itemize}

\begin{rmk}\label{logfontainelaffaille}
\begin{enumerate}
    \item Let $\overline{X}/V$ denote a proper smooth scheme, let $D\subset \overline{X}$ denote a relative normal crossing divisor, and let $X = \overline{X}\setminus D$. Faltings defines the notion of a ``logarithmic Fontaine-Laffaille module" on $\overline{X}$ (see \cite[Theorem 2.6', page 43, i)]{FaltingsCrystallineCohomology}), associated to which is a local system on $X_{V[1/p]}$. Outside the appendix, we will refer to such local systems as log-crystalline in order to distinguish this notion from the notion of crystalline local systems on the generic fiber of smooth formal schemes. We note that such local systems are usually deemed crystalline in the literature (and in the appendix, are called crystalline local systems). 
    
    \item In the setting of $X, \bar{X}$ and $D$ above, let $L/X^{\an}_{V[1/p]}$ be a log-crystalline local system. Then the restriction $L|\mathscr{X}^{\rig}$ is crystalline in the usual sense, where $\mathscr{X}$ is the $p$-adic completion of $X$ and $\mathscr{X}^{\rig}$ is the rigid generic fiber of this formal scheme. Similarly, the restriction of a log Fontaine-Laffaile module to $\mathscr{X}$ is a Fontaine-Laffaile module in the usual sense. Further, the log Fontaine-Laffaile correspondence agrees with the usual Fontaine-Laffaile correspondence when restricted to $\mathscr{X}$ and $\mathscr{X}^{\rig}$. We will abuse notation and let $T_{\cris}$ and $S_{\cris}$ denote the forward and backwards direction of the log Fontaine-Laffaile correspondence.
    
\item By \cite[Theorem 2.6]{FaltingsCrystallineCohomology}, any subrepresentation and quotient-representation of a crystalline representation is also a crystalline representation. The same holds in the log-crystalline setting by \cite[Theorem 2.6', page 43, i)]{FaltingsCrystallineCohomology}.

\end{enumerate}
\end{rmk}

\subsubsection{Recovering $M$ from $T_{\cris}M$.}\label{mfromtcrism}

  Following Brinon, we mention how to recover $M\otimes \Q_p$ from $T_{\cris}(M)$ -- note that $M\otimes \Q_p$ only depends on $T_{\cris}(M)\otimes\Q_p$ and not on the actual lattice itself. We have $M\otimes \Q_p \cong (T_{\cris}(M)\otimes \cB_{\cris}(\ol{\cR}))^G$. 
 
 It is implicit though not formally stated in \cite{Tsuji} how to recover $M$ from $T_{\cris}(M)$. We summarize it here in two steps:

\begin{itemize}
    \item $TA_{\inf}(M)$ is the unique $G$-stable free $A_{\inf}(\ol{\cR})$-submodule of $T_{\cris}M\otimes A_{\inf}(\ol{\cR})$ which generates $T_{\cris}M\otimes A_{\inf}(\ol{\cR})[\frac1\pi]$ and which is \emph{trivial} modulo $\pi$ in the following sense: namely, that $TA_{\inf}(M)/\pi$ is isomorphic to $(A_{\inf}/\pi)^{\textrm{rank }M}$ as a (semi-linear) $A_{\inf}/\pi$-representation of $G$.
       \cite[Lemma 64(2) + (66)+ Prop 76]{Tsuji}
    
    \item  $M\cong \left(TA_{\inf}(M)\otimes_{A_{\inf}(\ol{\cR})}\cA_{\cris}(\ol{\cR})\right)^G$. \cite[Prop 61]{Tsuji}\footnote{Technically the proof of this proposition works with $TA_{\cris}(M)$ but the version written follows immediately from the definition of $TA_{\inf}(M)$ given above.}

\end{itemize}
We denote the above procedure by $L\ra S_{\cris}(L)$. We also have that $S_{\cris}(L)\otimes_{\Z_p} \Q_p \cong (L\otimes A_{\cris}(\ol{\cR})[1/\pi])^G$ for crystalline local systems $L$ (note that inverting $\pi$ also inverts $p$).

We shall require the following (almost certainly known) results for later:

\begin{lemma}\label{cristensor}

Let $L_1,L_2$ be crystalline local systems of weights in $[0,a]$ and $[0,b]$ such that $a+b\leq p-2$. Then $L_1\otimes L_2$ is crystalline
and there is a natural isomorphism $$S_{\cris}(L_1)\otimes S_{\cris}(L_2)\ra S_{\cris}(L_1\otimes L_2)$$. 

\end{lemma}

\begin{proof}
Define $M_i:=S_{\cris}(L_i)$ for $i=1,2$. Then $M:=M_1\otimes_\cR M_2$ is naturally an object of $MF^{\nabla}_{[0,a+b],\textrm{free}}(\cR,\phi)$.
Thus we may associate to $M$ the module
$T^*_{\cris}(M)$. Now it is clear that
$T^*_{\cris}(M)\supset T^*_{\cris}(M_1)\otimes T^*_{\cris}(M_2)$.  Therefore there is an injective isogeny
$f:T_{\cris}(M)\ra L_1\otimes L_2$. It follows that $L_1\otimes L_2$ is crystalline. 

By \cite[(39)]{Tsuji} it follows  that $$TA_{\cris}(M)\cong TA_{\cris}(M_1)\otimes_{A_{\cris}(\ol\cR)}TA_{\cris}(M_2).$$ It follows by the characterization of $TA_{\inf}(M)$ that
$$TA_{\inf}(M)\cong TA_{\inf}(M_1)\otimes_{A_{\inf}(\ol\cR)}TA_{\inf}(M_2).$$ 

It follows by \cite[Thm 70]{Tsuji} that $f\otimes A_{\inf}(\ol\cR)[\frac1\pi]$ is an isomorphism. Since $A_{\inf}(\ol\cR)[\frac1\pi]$ is a domain it is flat over $\Z_p$. Moreover $A_{\inf}(\ol\cR)[\frac1\pi]\otimes\bF_p \neq 0$ and therefore  $f$ must be an isomorphism. This shows the second part of the claim.

\end{proof}

We next prove the log-crystalline version. To do this, we recall the definition of $A_{\cris,\log}$ following Faltings: In the affine setting, we have a smooth scheme $\Spec R$ over $V$, and a relatively normal crossings divisor $D\subset \Spec R$ defined by a principal ideal $(T)\subset R$, and we define $\ol{R}$ to be the integral closure of $R$ in the maximal \'etale extension of $R[\frac1{pT}]$. The rest of the construction proceeds exactly as for $A_{\cris}$.

\begin{lemma}\label{lem:logcrystensor}
    Let $L_1,L_2$ be log crystalline local systems of weights in $[0,a]$ and $[0,b]$ such that $a+b\leq p-2$. Then $L_1\otimes L_2$ is log crystalline
and there is a natural isomorphism $$S_{\cris}(L_1)\otimes S_{\cris}(L_2)\ra S_{\cris}(L_1\otimes L_2)$$.
\end{lemma}

\begin{proof}
We first construct the map in question. Let $M_i = S_{\cris}(L_i)$ be the corresponding log Fontaine-Laffaile modules. The fact that $M:=M_1\otimes M_2$ is log Fontaine-Laffaille is obvious by definition. By Faltings' description on the top of \cite[page 37]{FaltingsCrystallineCohomology}, the Galois action on $M\otimes A_{\cris,\log} $ is determined by specifying the action of the geometric Galois group $\Delta$ where an element $\sigma$ acts on $m\otimes 1$ via 
    $e^T$ where $T:=\sum_i \nabla(\partial_i)\otimes \beta(\sigma_i)$. Note that $T$ is a derivation, hence
    \begin{align}
    e^T (m_1\otimes m_2\otimes 1)&=\sum_i \frac{1}{i!}\cdot T^i(m_1\otimes m_2\otimes 1)\\
    &= \sum_{a,b}\frac{\binom{a+b}{a}}{(a+b)!}  T^a(m_1)\otimes T^b(m_2)\otimes 1)\\
    &= e^T(m_1\otimes 1)\otimes e^T(m_2\otimes 1)
    \end{align}
    showing that that the natural isomorphism
    \begin{equation}\label{eq:crystensorisom}
    \big(M_1\otimes A_{\cris,\log}\big)\otimes \big(M_2\otimes A_{\cris,\log}\big)\ra M\otimes A_{\cris,\log}
    \end{equation} is Galois equivariant. 

    Recall that the dual of the crystalline local systems associated to a log Fontaine-Laffaile module $M$ is just $$T_{\cris}(M):=\Big(\varprojlim_{n} \Hom_{\Fil, \varphi} (M/p^n\otimes A_{\cris,\log} , A_{\cris,\log}/p^n)\Big)^*.$$ Combining with \eqref{eq:crystensorisom}, this allows us to view $(L_1\otimes L_2)^*$ as a Galois-stable sub-lattice of $(T_{\cris}(M_1\otimes M_2))^*$. Therefore, we have that $T_{\cris}(M_1 \otimes M_2)$ is a Galois stable sublattice of $L_1\otimes L_2$, and so $L_1 \otimes L_2$ is a log-crystalline local system by Remark \ref{logfontainelaffaille}(3).
    
    In order to show that $L_1 \otimes L_2 = T_{\cris}(M_1 \otimes M_2)$, it suffices to establish this equality when both local systems are restricted to the crystalline locus. But this is true by Lemma \ref{cristensor}. We have proved that $T_{\cris}(M_1\otimes M_2) = T_{\cris}(M_1) \otimes T_{\cris}(M_2)$, and therefore it follows that $S_{\cris}(L_1\otimes L_2) = S_{\cris}(L_1) \otimes S_{\cris}(L_2)$.

\end{proof}

\subsection{Compatibility of passage to fibers}

We fix a map $i:\cR\ra V$. Our goal is to show that the constructions given in the previous section are compatible with restriction to $i$.

We fix an extension $i:\ol{\cR}\ra \ol{V}$ to a geometric point, inducing a map $G_V\ra G_{\cR}$. 
Let $L$ be a crystalline $G_{\cR}$ representation, which also gives a $G_V$ representation $L_i$ via the map $G_V\ra G_{\cR}$ induced by $i$. 

By following the description in \ref{mfromtcrism}, we first form $TA_{\inf}(L)$ as the unique $G_{\cR}$-stable free $A_{\inf}(\ol{\cR})$-submodule of $L\otimes A_{\inf}(\ol{\cR})$ which generates $T_{\cris}M\otimes A_{\inf}(\ol{\cR})[\frac1\pi]$ and which is trivial modulo $\pi$.

\begin{lemma}\label{ainfcomp}
There is a functorially induced isomorphism $$TA_{\inf}(L)\otimes_{A_{\inf}(\ol{\cR})} A_{\inf}(\ol V) \ra TA_{\inf}(L_i).$$
\end{lemma}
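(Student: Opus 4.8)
The plan is to exploit the characterization of $TA_{\inf}(L)$ given in \ref{mfromtcrism} and to check that it is preserved under the base change $A_{\inf}(\ol{\cR})\to A_{\inf}(\ol V)$ induced by $i$. Write $T := T_{\cris}M$, so that $TA_{\inf}(L)$ is the unique $G_{\cR}$-stable free $A_{\inf}(\ol{\cR})$-submodule of $T\otimes_{\Z_p} A_{\inf}(\ol{\cR})$ which generates $T\otimes A_{\inf}(\ol{\cR})[\tfrac1\pi]$ and is trivial modulo $\pi$; and similarly $TA_{\inf}(L_i)$ is the analogous object inside $T\otimes_{\Z_p}A_{\inf}(\ol V)$, where we use the identification $L_i\cong L$ as $\Z_p$-modules (the $G_V$-action being the restriction of the $G_{\cR}$-action along $G_V\to G_{\cR}$). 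First I would note that $A_{\inf}(\ol{\cR})\to A_{\inf}(\ol V)$ is flat (indeed, it is a composite of $p$-completed, $\pi$-completed constructions; concretely $A_{\inf}$ is $p$-adically complete, $p$-torsion-free, and the map on the tilts is flat since $\ol{\cR}^\flat\to\ol V^\flat$ is a surjection of perfect rings along which one can check flatness after reduction mod $p$), so that $TA_{\inf}(L)\otimes_{A_{\inf}(\ol{\cR})}A_{\inf}(\ol V)$ is again free of the right rank and is naturally a submodule of $T\otimes_{\Z_p}A_{\inf}(\ol V)$, generating $T\otimes A_{\inf}(\ol V)[\tfrac1\pi]$. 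It is $G_V$-stable because $TA_{\inf}(L)$ is $G_{\cR}$-stable and $G_V$ maps to $G_{\cR}$.

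The remaining point is the triviality modulo $\pi$. We have $TA_{\inf}(L)/\pi$ is a trivial $G_{\cR}$-module, i.e. isomorphic to a free module over $(A_{\inf}(\ol{\cR})/\pi)^{G_{\cR}}$ with trivial action after base change; more precisely, Tsuji's statement is that $TA_{\inf}(L)\otimes_{A_{\inf}(\ol{\cR})}A_{\inf}(\ol{\cR})/\pi$ is, as a $G_{\cR}$-module, extended from the fixed ring, and the key input is that $A_{\inf}(\ol{\cR})/\pi$ and $A_{\inf}(\ol V)/\pi$ have the same $G$-invariants up to the relevant almost-mathematics error, or more simply that triviality mod $\pi$ is a statement that base-changes along ring maps compatible with the Galois actions. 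Concretely: tensoring the isomorphism $TA_{\inf}(L)/\pi \cong (\text{trivial})$ with $A_{\inf}(\ol V)/\pi$ over $A_{\inf}(\ol{\cR})/\pi$ and using that the natural map $A_{\inf}(\ol{\cR})\to A_{\inf}(\ol V)$ sends $\pi\mapsto\pi$ (the compatible systems of roots of unity match under $i$, since $i$ is a map of $\ol{\Z}_p$-algebras respecting the chosen $\epsilon$), we get that $\bigl(TA_{\inf}(L)\otimes_{A_{\inf}(\ol{\cR})}A_{\inf}(\ol V)\bigr)/\pi$ is trivial as a $G_V$-module. Hence $TA_{\inf}(L)\otimes_{A_{\inf}(\ol{\cR})}A_{\inf}(\ol V)$ satisfies all the defining properties of $TA_{\inf}(L_i)$, and by the uniqueness clause in \ref{mfromtcrism} the two are canonically (hence functorially) isomorphic.

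The step I expect to be the main obstacle is making the triviality-mod-$\pi$ argument genuinely rigorous: one must be careful about what ``trivial modulo $\pi$'' means in Tsuji's setup (it is a condition on the $G$-module $TA_{\inf}(M)\otimes A_{\inf}(\ol{\cR})/\pi$, governed by the comparison between $TA_{\inf}$ and $TA_{\cris}$ via \cite[Lemma 64(2), (66), Prop 76]{Tsuji}), and to verify that this condition, together with the freeness and the gener­ic-isomorphism condition, really does characterize the submodule uniquely \emph{and} that all three conditions are stable under the flat base change $A_{\inf}(\ol{\cR})\to A_{\inf}(\ol V)$. A secondary technical point is checking flatness of $A_{\inf}(\ol{\cR})\to A_{\inf}(\ol V)$ with enough care (passing through the tilts and using perfectness), and confirming that $i$ can be arranged to respect the auxiliary choices ($\epsilon$, $p^\flat$) so that $\pi$ and $\xi$ are preserved on the nose; these are routine but must be stated. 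Functoriality in $L$ is then immediate from the uniqueness characterization.
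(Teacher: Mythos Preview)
Your overall strategy is the same as the paper's: exhibit $TA_{\inf}(L)\otimes_{A_{\inf}(\ol{\cR})}A_{\inf}(\ol V)$ as a submodule of $L\otimes A_{\inf}(\ol V)$ satisfying the three defining properties (free, $G_V$-stable and trivial mod $\pi$, generates after inverting $\pi$), and then invoke the uniqueness clause in \S\ref{mfromtcrism}. Freeness, $G_V$-stability, generation after inverting $\pi$, and triviality mod $\pi$ all go through as you say.

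The gap is your flatness claim. The map $A_{\inf}(\ol{\cR})\to A_{\inf}(\ol V)$ is \emph{not} flat: on tilts it is the surjection $\ol{\cR}^\flat\twoheadrightarrow \ol V^\flat$ corresponding to restriction to a closed point, and a surjection of rings is flat only when it is an open immersion, which this is not. (Perfectness of the source and target does not help here.) So you cannot use flatness to conclude that the natural map
\[
i^\#\colon TA_{\inf}(L)\otimes_{A_{\inf}(\ol{\cR})}A_{\inf}(\ol V)\;\longrightarrow\; L\otimes_{\Z_p} A_{\inf}(\ol V)
\]
is injective. Without injectivity you do not know that the left-hand side sits inside $L\otimes A_{\inf}(\ol V)$ at all, and the uniqueness characterization cannot be applied.

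The paper proves injectivity of $i^\#$ directly, bypassing flatness entirely: the source is free over $A_{\inf}(\ol V)$ (this needs no flatness, since $TA_{\inf}(L)$ is already free over $A_{\inf}(\ol{\cR})$), and after tensoring with the fraction field of $A_{\inf}(\ol V)$ the map $i^\#$ becomes a surjection between vector spaces of the same dimension, hence an isomorphism. Since $A_{\inf}(\ol V)$ is a domain, a free module over it is torsion-free, so $i^\#$ is injective. Once you have this, your verification of the remaining properties goes through and the proof is complete. In short: drop the flatness claim and replace it with this elementary fraction-field argument.
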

\begin{proof}

Note that there is a natural map $A_{\inf}(\ol{\cR})\ra A_{\inf}(\ol{V})$
induced by functoriality, which is equivariant for the map of Galois groups. Now consider the map 
$$i^\# :TA_{\inf}(L)\otimes_{A_{\inf}(\ol{\cR})} A_{\inf}(\ol V) \ra L\otimes A_{\inf}(\ol{V}).$$ Note that $i^\#$ is surjective after tensoring with the fraction field $F$ of $A_{\inf}(\ol V)$ (in fact, even after inverting $\pi$). Since both sides have the same dimension as $L$, it follows that $i^\#$ is an isomorphism after tensoring with $F$. It follows that $i^\#$ is injective (here we are using that $A_{\inf}(\ol V)$ is torsion free).

Thus, the image of $i^\#$ satisfies the universal property characterizing $TA_{\inf}(L_i)$ and the claim follows. 
\end{proof}

We come to our main theorem for this subsection.

\begin{theorem}\label{fibercomp}
There is a functorially induced isomorphism $ S_{\cris}(L)\otimes_\cR V\cong S_{\cris}(L_i).$
\end{theorem}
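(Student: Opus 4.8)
The plan is to reduce Theorem \ref{fibercomp} to Lemma \ref{ainfcomp} by following the two-step recipe for recovering $M = S_{\cris}(L)$ from $T_{\cris}(L)$ described in \S\ref{mfromtcrism}, and checking that each step commutes with the base change along $i:\cR \to V$. Step one of that recipe produces $TA_{\inf}$ from $T_{\cris}$; this is precisely the content of Lemma \ref{ainfcomp}, which already gives the functorial isomorphism $TA_{\inf}(L) \otimes_{A_{\inf}(\ol\cR)} A_{\inf}(\ol V) \cong TA_{\inf}(L_i)$. So the real work is in step two: showing that forming $M \cong (TA_{\inf}(M) \otimes_{A_{\inf}(\ol\cR)} \cA_{\cris}(\ol\cR))^{G_\cR}$ is compatible with restriction, i.e. that
$$\left(TA_{\inf}(L) \otimes_{A_{\inf}(\ol\cR)} \cA_{\cris}(\ol\cR)\right)^{G_\cR} \otimes_\cR V \;\cong\; \left(TA_{\inf}(L_i) \otimes_{A_{\inf}(\ol V)} A_{\cris}(\ol V)\right)^{G_V}.$$

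First I would record the base-change map on the period rings: the functorial map $A_{\inf}(\ol\cR) \to A_{\inf}(\ol V)$ from Lemma \ref{ainfcomp} extends, after the PD-envelope and $p$-adic completion construction, to a $G_V$-equivariant map $\cA_{\cris}(\ol\cR) \to A_{\cris}(\ol V)$ compatible with $i:\cR \to V$; indeed $\cA_{\cris}(\ol\cR) = A_{\cris}(\ol\cR)\langle v_1,\dots,v_n\rangle^{PD}$ and under $i$ the elements $s_i$ specialize, so the $v_i = [s_i^\flat]\otimes s_i^{-1} - 1$ map into $A_{\cris}(\ol V)$ (one checks the images land in the divided-power ideal, so the PD-structure is respected). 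Tensoring the Lemma \ref{ainfcomp} isomorphism up along $A_{\inf}(\ol V) \to A_{\cris}(\ol V)$ gives
$$TA_{\inf}(L) \otimes_{A_{\inf}(\ol\cR)} A_{\cris}(\ol V) \;\cong\; TA_{\inf}(L_i) \otimes_{A_{\inf}(\ol V)} A_{\cris}(\ol V) \;=\; TA_{\cris}(L_i),$$
and since $\cA_{\cris}(\ol\cR) \to A_{\cris}(\ol V)$ factors the base change, it remains to compare $G_\cR$-invariants of $TA_{\cris}(L)$ (before $p$-adically completing / passing through $\cA_{\cris}$) with $G_V$-invariants of $TA_{\cris}(L_i)$ after $- \otimes_\cR V$.

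The core point is then: there is always a natural map $S_{\cris}(L) \otimes_\cR V \to S_{\cris}(L_i)$ (apply $i^\#$ to invariant elements and restrict the group of invariance from $G_\cR$ to its image $G_V$), and I want it to be an isomorphism. To see this, I would use that by construction $M = S_{\cris}(L)$ is a finite free $\cR$-module with $M \otimes_\cR \cA_{\cris}(\ol\cR) \cong TA_{\cris}(M)$ compatibly with filtration, connection and Frobenius, and the analogous statement holds for $M_i := S_{\cris}(L_i)$ over $V$. Both $M \otimes_\cR V$ and $M_i$ are finite free $V$-modules of the same rank $r$ (equal to $\operatorname{rk} L$), so it suffices to show the map is surjective, or equivalently — since $V$ is a DVR and both are free of the same rank — that it is an isomorphism after inverting $p$ and also modulo $p$, or just that it is surjective mod $p$ (Nakayama). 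Surjectivity can be checked after the faithfully flat base change $\cR \to A_{\cris}(\ol V)$ (or after $- \otimes_V A_{\cris}(\ol V)$): there the map becomes the identification $TA_{\cris}(L) \otimes_{\cA_{\cris}(\ol\cR)} A_{\cris}(\ol V) \cong TA_{\cris}(L_i)$ established above, using that forming $(-)^{G}$ and then extending scalars back up recovers the original $\cA_{\cris}$- (resp. $A_{\cris}$-) module because $L$ is crystalline — this is exactly Tsuji's reconstruction statement \cite[Prop 61]{Tsuji}.

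The main obstacle I anticipate is the bookkeeping around the non-trivial PD-variables $v_i$: one must verify that the base-change map $\cA_{\cris}(\ol\cR) \to A_{\cris}(\ol V)$ is well-defined (the $v_i$ must map into a PD-ideal so that divided powers $v_i^{[k]}$ have images), is compatible with the connection $\nabla$ (the connection on $\cA_{\cris}(\ol\cR)$ acts on the $v_i$, and under $i$ the module $M_i$ carries the specialized connection — one needs $dv_i$ to specialize correctly, which is where $i:\cR \to V$ being a morphism of $V$-algebras, hence killing $\Omega^1_{V/V}$, is used), and is compatible with Frobenius $\Phi$. Once these compatibilities are in place the invariants computation is formal. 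A secondary subtlety is ensuring faithful flatness / torsion-freeness statements (e.g. that $A_{\cris}(\ol V)$ is $p$-torsion-free and that $\cR \to A_{\cris}(\ol V)$ detects isomorphisms of finite free modules), but these are standard and parallel to what is already used in the proof of Lemma \ref{ainfcomp}.
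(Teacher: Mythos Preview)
Your approach is correct and follows essentially the same route as the paper: reduce to Lemma \ref{ainfcomp} at the $A_{\inf}$-level, then use Tsuji's reconstruction (his (39) and Prop.\ 61) at the $\cA_{\cris}$-level to recover the Fontaine--Laffaille module from its $TA_{\inf}$-module. The paper's execution is a bit more direct than yours: rather than constructing a comparison map $S_{\cris}(L)\otimes_\cR V \to S_{\cris}(L_i)$ and then arguing it is an isomorphism via rank count, Nakayama, and faithfully flat base change, the paper simply base-changes the identity $S_{\cris}(L)\otimes_\cR \cA_{\cris}(\ol\cR)\cong TA_{\inf}(L)\otimes_{A_{\inf}(\ol\cR)}\cA_{\cris}(\ol\cR)$ along $\cA_{\cris}(\ol\cR)\to\cA_{\cris}(\ol V)$, plugs in Lemma \ref{ainfcomp}, and takes $G_V$-invariants of both sides (using $\cA_{\cris}(\ol V)^{G_V}=V$). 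This bypasses your detour through surjectivity and avoids having to explicitly track the PD-variables $v_i$ or the connection under specialization, since only the underlying module isomorphism is needed.
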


\begin{proof}

Recall that $S_{\cris}(L)\cong \left(TA_{\inf}(L)\otimes_{A_{\inf}(\ol{\cR})}\cA_{\cris}(\ol{\cR})\right)^{G_{\cR}}$. Moreover, by \cite[(39)]{Tsuji} it follows that
$$S_{\cris}(L)\otimes_{\cR}\cA_{\cris}(\ol{\cR})\cong TA_{\inf}(L)\otimes_{A_{\inf}(\ol{\cR})}\cA_{\cris}(\ol{\cR}).$$

And so

$$(S_{\cris}(L)\otimes_{\cR} V)\otimes_V \cA_{\cris}(\ol{V})\cong TA_{\inf}(L)\otimes_{A_{\inf}(\ol{\cR})}\cA_{\cris}(\ol{V}),$$

$$(S_{\cris}(L)\otimes_{\cR} V)\otimes_V \cA_{\cris}(\ol{V})\cong (TA_{\inf}(L)\otimes_{A_{\inf}(\ol{\cR})}A_{\inf}(\ol{V}))\otimes_{A_{\inf}(\ol{V})}\cA_{\cris}(\ol{V}),$$

$$(S_{\cris}(L)\otimes_{\cR} V)\otimes_V \cA_{\cris}(\ol{V})\cong TA_{\inf}(L_i)\otimes_{A_{\inf}(\ol{V})}\cA_{\cris}(\ol{V})$$

where the last step follows by Lemma $\ref{ainfcomp}$. Finally, it follows that 
$$(S_{\cris}(L)\otimes_\cR V)\cong \left(TA_{\inf}(L_i)\otimes_{A_{\inf}(\ol{V})}\cA_{\cris}(\ol{V})\right)^{G_V}$$ from which the Theorem follows.

\end{proof}
 
We end this subsection with an elementary lemma. \begin{lemma}\label{cristwistsok}
Let $\chi$ denote the cyclotomic character\footnote{We work with the convention that the cyclotomic character has weight -1} (thought of as a $\Z_p$-representation). Let $L$ be a crystalline $G_{\cR}$-module with weights between $a$ and $ b$, with $0\leq a\leq b\leq p-2$. Then, $L\otimes \chi^{-n}$ is also crystalline for $-a\leq n \leq p-2-b $.
\end{lemma}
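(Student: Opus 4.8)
The plan is to reduce the twisting statement to the tensor-product result already established in Lemma \ref{cristensor}, using the crystallinity of the cyclotomic character and its inverse. First I would recall that the cyclotomic character $\chi$, viewed as a rank-one $\Z_p$-local system, is crystalline of weight $-1$; equivalently $\chi^{-1}$ (the Tate twist $\Z_p(1)$) is crystalline of weight $1$. Concretely, $S_{\cris}(\chi^{-1})$ is the free rank-one $\cR$-module with connection the trivial one and filtration jumping in degree $1$, corresponding to the Fontaine--Laffaille module attached to $\Z_p(1)$; the fact that this lies in $\FL(\cR,\varphi)$ uses precisely that $1 \leq p-2$, i.e. $p \geq 3$. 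Similarly $\chi$ itself is crystalline of weight $-1$. The point is that both $\chi$ and $\chi^{-1}$ have a single nonnegative or a single nonpositive weight, so one can only tensor ``in one direction'' while staying in the Fontaine--Laffaille window $[0,p-2]$ — this is exactly where the asymmetric constraint $-a \leq n \leq p-2-b$ comes from.

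Next I would split into two cases according to the sign of $n$. For $0 \leq n \leq p-2-b$, I observe that $L \otimes \chi^{-n} = L \otimes (\chi^{-1})^{\otimes n}$, and $(\chi^{-1})^{\otimes n}$ is crystalline with weights in $[0,n]$ (iterating Lemma \ref{cristensor}, or observing directly that it is the Fontaine--Laffaille module with filtration jump in degree $n$, which lies in $\FL$ since $n \leq p-2$). Then $L$ has weights in $[0,b]$ and $(\chi^{-1})^{\otimes n}$ has weights in $[0,n]$, with $b + n \leq p-2$ by hypothesis, so Lemma \ref{cristensor} applies directly and shows $L \otimes \chi^{-n}$ is crystalline. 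For $-a \leq n \leq 0$, write $n = -m$ with $0 \leq m \leq a$; then $L \otimes \chi^{m} = L \otimes \chi^{\otimes m}$, and now I would instead tensor with $\chi^{\otimes m}$, which is crystalline with weights in $[-m,0]$. To bring this into the normalized setting of Lemma \ref{cristensor} (which is stated for weights in $[0,\ast]$), I would first shift: $L \otimes \chi^{\otimes m}$ has the same crystallinity as $(L \otimes \chi^{\otimes m} \otimes \chi^{-m}) \otimes \chi^{\otimes m} $ — more cleanly, note $L$ has weights in $[a,b] \subseteq [0,b]$, so $L \otimes \chi^m \otimes (\chi^{-1})^{?}$; the simplest bookkeeping is to apply Lemma \ref{cristensor} to the pair $L' := L \otimes \chi^{\otimes a}$ (weights in $[0, b-a]$, valid since $b - a \leq b \leq p-2$) and note that we want to further twist. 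Actually the clean statement: since $\chi^{-1}$ is crystalline, so is $\chi$ (its dual is crystalline, as duals of crystalline local systems of bounded weight are crystalline by the Fontaine--Laffaille equivalence), and then $L \otimes \chi^m$ for $0 \le m \le a$ has the crystalline module $S_{\cris}(L) \otimes S_{\cris}(\chi)^{\otimes m}$ after a Tate twist bringing weights back into $[0,p-2]$: concretely $(L \otimes \chi^m) \otimes \chi^{a-m} = L \otimes \chi^{a}$ has weights in $[0,b-a+a] $...

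Let me restate the second case more carefully, since that is the genuinely delicate point. For $-a \leq n < 0$ set $m = -n \in (0,a]$. I want to show $L \otimes \chi^m$ is crystalline. Consider $N := L \otimes \chi^{\otimes m}$; tensoring $N$ with $(\chi^{-1})^{\otimes m}$ recovers $L$, which is crystalline with weights in $[a,b]$. I would argue that a local system $N$ with the property that $N \otimes (\chi^{-1})^{\otimes m}$ is crystalline with weights in $[a, b]$ (and $(\chi^{-1})^{\otimes m}$ crystalline with weights in $[0,m]$, $b \le p-2$) is itself crystalline provided its ``expected weights'' $[a-m, b-m]$ fit after a uniform shift into $[0,p-2]$ — which they do exactly when $a - m \geq 0 $ is false in general but $b - m \leq p - 2$ always and $a - m \geq -m \geq -a$; so instead I apply the construction to $N' := N \otimes \chi^{\otimes(a-m)} = L \otimes \chi^{\otimes a}$... the cleanest path is: $L \otimes \chi^{\otimes a}$ has weights in $[0, b]$ wait no, weights shift by $+a$ giving $[2a, a+b]$, that is wrong too since $\chi$ has weight $-1$: $L \otimes \chi^{\otimes a}$ has weights in $[a - a, b-a] = [0, b-a]$, good, and this is crystalline with weights in $[0,b-a] \subseteq [0,p-2]$ directly by Lemma \ref{cristensor} applied to $L$ (weights $[0,b]$, oh but I need $L$ tensor what has weights $[0, \cdot]$, and $\chi$ has weights $[-1,-1]$, not normalized). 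So the honest reduction is: prove first that $\chi^{-1}$ is crystalline with its evident FL module, hence $\Z_p(r)$ is crystalline with weights in $[0,r]$ for $0 \le r \le p-2$; then for $0 \le n \le p-2-b$ use Lemma \ref{cristensor} with $L_1 = L$, $L_2 = \Z_p(n)$; and for the negative range, dualize — $L^\vee$ is crystalline with weights in $[-b,-a] \subseteq [0,p-2]$ after shifting, apply the positive case to $L^\vee$, and dualize back.

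\bigskip

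\emph{The main obstacle.} The real work is the negative-twist range $-a \leq n \leq 0$, because Lemma \ref{cristensor} is only available for local systems with \emph{nonnegative} weights in a window of total length $\leq p-2$, whereas $\chi^m$ has negative weights. Bridging this requires either (i) establishing that the dual of a crystalline local system with weights in $[0,p-2]$ is again crystalline — which follows from the Fontaine--Laffaille equivalence together with the duality on $\FL(\cR,\varphi)$, since the dual FL module $M^\vee(\text{shift})$ stays in the window as long as $b - a \le p-2$ — and then reducing to the positive case applied to $L^\vee$ (weights in $[a,b]$, hence after the Tate shift into $[0, b-a]$), or (ii) directly building the FL module $S_{\cris}(L) \otimes S_{\cris}(\chi)^{\otimes m}$ and checking that, although $S_{\cris}(\chi)$ has its filtration jump in negative degree, the tensor $S_{\cris}(L) \otimes S_{\cris}(\chi)^{\otimes m}$ still lies in $MF^{\nabla}_{[0,p-2],\textrm{free}}(\cR,\phi)$ precisely under the hypothesis $a - m \geq -a$, i.e. $n \geq -a$ — this is where the left-hand bound on $n$ is used — and then running the same flatness/domain argument as in the proof of Lemma \ref{cristensor} to see that the natural injective isogeny $T_{\cris}(M_1 \otimes M_2) \to L \otimes \chi^{-n}$ is an isomorphism. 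I expect route (i) to be the slicker write-up: it isolates exactly one auxiliary fact (crystallinity is preserved by duality within the FL window), which is standard, and otherwise reduces everything to Lemma \ref{cristensor} already proved above. One should double-check the edge cases $n = -a$ and $n = p - 2 - b$ to confirm the window constraints in Lemma \ref{cristensor} are met with equality, which they are.
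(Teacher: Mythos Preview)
Your approach is correct in outline but far more elaborate than needed, and the paper takes a much simpler direct route that you nearly stumble onto in your ``route (ii)'' but then back away from. The paper does not invoke Lemma \ref{cristensor} at all, nor any duality statement. Instead, it works directly on the Fontaine--Laffaille side: if $(M,\Fil,\nabla,\Phi)$ is the object of $\FL(\cR,\varphi)$ associated to $L$, one simply defines a new object by setting $\Fil(n)^i := \Fil^{i-n}$ and $\Phi(n) := p^n\cdot\Phi$, keeping $M$ and $\nabla$ unchanged. The condition $-a \le n \le p-2-b$ is exactly what guarantees $\Fil(n)^0 = M$ and $\Fil(n)^{p-1} = 0$, so that the shifted object still lies in $\MF^{\nabla}_{[0,p-2],\mathrm{free}}(\cR,\varphi)$; the compatibility of $\Phi(n)$ with $\Fil(n)$ is inherited from that of $\Phi$ with $\Fil$. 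That this shifted module corresponds under $T_{\cris}$ to $L\otimes\chi^{-n}$ is the standard Tate-twist compatibility of the Fontaine--Laffaille functor.

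The contrast: your method treats positive and negative $n$ asymmetrically, needs $\Z_p(1)$ to be crystalline as a separate input, and for negative $n$ leans on a duality result for $\FL(\cR,\varphi)$ that is not established in the paper (and whose precise normalization you would have to pin down). The paper's approach handles both signs uniformly in one stroke, with no auxiliary lemmas. What your approach does buy is a conceptual reduction to Lemma \ref{cristensor}, but since the direct filtration shift is a two-line computation, that reduction is not worth the overhead here.
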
 
\begin{proof}
The lemma follows from Lemma \ref{cristensor} if $n\geq 0$ (we remark that $\chi^{-n}$ is indeed crystalline for $0\leq n\leq p-2$). Therefore, suppose that $n<0$. Let $M = (M,\Fil,\nabla,\Phi)$ denote the object in $\MF$ associated to $L$. For any $n\in \Z$, consider the data $M(-n) :=(M,\Fil(-n),\nabla,\Phi(-n))$ where $\Fil(-n)^i = \Fil^{i-n}$, and $\Phi(-n) = p^n\cdot \Phi$. For $-a\leq n \leq p-2-b$, it's clear that $\Fil(-n)^0 = M$, $\Fil(-n)^{p-1} = \{0\}$. The compatibility of $\Phi(-n)$ with $\Fil(-n)$ follows directly from the compatibility of $\Phi$ and $\Fil$.

Let  $T_{\cris}(M(-n)) = L'$. We then have that $L'\otimes \chi^n$ is crystalline (by Lemma \ref{cristensor}), and further, that $S_{\cris}(L' \otimes \chi^n) \cong S_{\cris}(L')\otimes S_{\cris}(\chi^n)$. However, $S_{\cris}(\chi^n)$ is just the data $\cR(n) = (\cR,\Fil_{\textrm{Triv}}(n),\nabla_{\textrm{Triv}},\Phi_{\textrm{Triv}}(n) ) $ where:

\begin{itemize}
    \item $\Fil_{\textrm{Triv}}(n)^{-n} = M$ and $\Fil_{\textrm{Triv}}(n)^{1-n} = 0$,
    \item $\nabla_{\textrm{Triv}}(n) = d$, and 
    \item $\Phi_{\textrm{Triv}}(n)(1\otimes r) = p^{-n}r$, for $r\in \cR$. 
\end{itemize} 

It therefore follows that $S_{\cris}(L'\otimes \chi^n) \cong S_{\cris}(L)$, and since $S_{\cris}$ is fully faithful, we conclude that $L' \cong L\otimes \chi^{-n}$. 

\end{proof}

\begin{rmk}
    The same conclusion holds in the setting of log-crystalline local systems. 
\end{rmk}


\subsection{Compatibility of Tsuji with Liu-Zhu} 

We will need to compare the relative Fontaine-Laffaile correspondence (both in the crystalline case and the log-crystalline case) with Liu-Zhu's functor later in the paper. This follows by assembling the results of \cite{LZ}, \cite{LZ2}, \cite{FaltingsCrystallineCohomology} and \cite{Tsuji}:

\begin{theorem}\label{thm:crystalline-DeRhammap}
Let $L$ be a crystalline (respectively log-crystalline) $\Gal_{\cR}$-module. Then $L$ is de Rham in the sense of Scholze and there is a natural isomorphism of filtered vector bundles with connection $S_{\cris}(L)\otimes\Q_p\ra D^0_{\dR}(L)$.
    
\end{theorem}
\begin{proof}
    There is an embedding $\mathcal{A}_{\cris} \rightarrow \OBdr$ (see for eg. \cite[Proposition 8.2.12]{brinon}), and upto inverting $p$ the Fontaine-Laffaile module associated to a crystalline local system $\bL$ is given by $(L[1/p]\otimes \mathcal{A}_{\cris})^{\Gal_R}$. However, we are not aware of a reference that constructs a logarithmic analogue of $\mathcal{A}_{\cris}$. We will therefore deduce the crystalline compatibility using results of \cite{FaltingsCrystallineCohomology} and \cite{Tsuji} that bypasses the need for $\mathcal{A}_{\cris}$, and only uses $A_{\cris}$ and $\OBdr$ (both of which have logarithmic avatars in \cite{FaltingsCrystallineCohomology} and \cite{LZ2}).
    
    We will first consider the crystalline case to illustrate the idea. Let $M$ denote the Fontaine-Laffaile module associated to $L$. By \cite{LZ}, the filtered flat bundle $D^0_{\dR}(L)$ is given by $(L \otimes \OBdr)^{\Gal_R}$. It suffices to show that $M$ maps naturally to a subset of Galois fixed elements of $L \otimes \OBdr$. 

    Faltings associates to $M$ an $A_{\cris}$-module, which (following Tsuji) we shall denote by $T_{A_{\cris}}(M)$. This is a filtered module equipped with a Frobenius action, as well as an action of the Galois group (\cite[Page 37]{FaltingsCrystallineCohomology}). Note however that $T_{A_{\cris}}(M)$ is not equipped with a flat connection -- indeed, Faltings defines the Galois action on $T_{A_{\cris}}(M)$ using the connection on $M$. We have that $T_{A_{\cris}}(M)$ is isomorphic to $M\otimes_{\beta} A_{\cris}$, where $\beta: R\rightarrow A_{\cris}$ is constructed in page 192 of \cite{Tsuji} (see also \cite[Page 36]{FaltingsCrystallineCohomology}, where the same map is described). Note that this isomorphism is compatible with Frobenius and the filtration, but not with the Galois action. 
    
    We note that $A_{\cris}$ is equipped with a natural embedding into $\OBdr$. Therefore, there are two maps $R\rightarrow \OBdr$ -- one the natural inclusion and the other constructed using $\beta$. By \cite[Equation (40)]{Tsuji}, there is an isomorphism $M \otimes_{\beta} \OBdr \rightarrow M\otimes \OBdr$ that is compatible with all the extra structure (where the Galois action on the left hand side is defined via the one on $T_{A_{\cris}(M)}$, and the connection on the left hand side has $T_{A_{\cris}}(M)$ as its set of flat sections). The Galois action on the right hand side is solely through $\OBdr$, and the filtration and connection are defined by the tensor-product filtration and connection. The left hand side is naturally isomorphic to $L\otimes \OBdr$ (by \cite[Theorem 2.6* remark h]{FaltingsCrystallineCohomology}). This yields the required embedding of $M$ into $D^0_{\dR}(L)$. 

    To treat the log-crystalline case, we first recall Tsuji's explicit formula (\cite[Equation (42)]{Tsuji}) in the crystalline case that embeds $M$ in $M\otimes_{\beta} \OBdr$. We will use the toric chart and coordinates $s_i$ introduced in Section \ref{sec:padicHTnotation}.
    
    For $\underline{N} = (N_1,\hdots, N_n)$ a tuple of integers, define $\nabla_{\underline{N}}(m) = \Big(\prod_{1\leq i \leq n} \prod_{0\leq j \leq N_i-1} (\nabla(s\frac{d}{ds_i}) -j)\Big)(m)$, where $m\in M$. Let $X'_i = ([s_i^{\flat}]^{-1}\otimes 1) X_i$. Then, we have:  \begin{equation}\label{eqtsuji}
    m\otimes 1 \mapsto \sum_{\underline{N}} (\nabla_{\underline{N}}(m) \otimes \prod_{i}X_i^{\prime [ N_i]}).    
    \end{equation}
    Here, the exponent $^{[N_i]}$ refers to the divided power operation $\frac{X_i^{\prime N_i}}{N_i \!}$.

    We now consider the logarithmic setting, and work with the toric chart $V[s_1,s_2,\hdots s_n] \rightarrow R$, where the log structure is given by $s_1s_2\hdots s_n $. We again let $M$ denote a log-Fontaine-Laffaile module and we let $\bL$ denote the log-Crystalline local system associated to it. By \cite[Equation 2.3.14]{LZ2}, the sum in equation \eqref{eqtsuji} converges and thus gives an isomorphism of $\OBdrlog$-modules.
    $$\Phi: M\otimes\OBdrlog \ra M\otimes_\beta\OBdrlog.$$

    By \cite[Theorem 2.6*, remarks h, i]{FaltingsCrystallineCohomology}, there is a natural map $\Psi:M\otimes_{\beta} A_{\cris,\log} \rightarrow L\otimes A_{\cris,\log}$ which is an ismorphism upto inverting $t$, which yields natural morphisms;

    $$\tau: M\otimes\OBdrlog \xrightarrow{\Phi} M\otimes_\beta\OBdrlog \xrightarrow{\Psi} L\otimes \OBdrlog.$$

    Since $\Phi$ and $\Psi$ are Galois equivariant for the respective actions, $\tau(M\otimes 1)\subset ( L\otimes \OBdrlog)^{\Gal_R}=D^0_{\dR}(L)$, which completes the proof.

\end{proof}

\section{Adelically Metrized Bundles}\label{S-metrized}

Throughout we work with the norm on $\bar\Q_p$ and all of its subfields such that $|p|=\frac1p$. Given a $\bar\Q_p$ vector space $V$, we work with norms $|\cdot |$ on $V$ such that $|\alpha v|=|\alpha||v|$ for all $\alpha\in\bar\Q_p$.

\begin{definition}\label{def:acceptable}
Let $F$ be a non-Archimedean local field, and let $X/F$ denote a rigid-analytic variety with a vector bundle $V$ on it. 
Following \cite[2.7.1]{BG}, we define a norm on $V$ to be a norm $|\cdot|_x$ on every fiber $V_x, x\in X(\ol F)$. We say such a norm is \emph{acceptable} if it is Galois-invariant, and for every affinoid $U \subset X$ on which $V$ is trivial, there exist sections $s_1,\dots,s_n\in V(U)$ trivializing $V$ on $U$, and an integer $N$ such that for every $u\in E$ we have
\begin{enumerate}
    \item $\log_p|s_i(u)|_u\leq N$
    \item $p^N\cdot \{v\in V_u: |v|_u\leq 1\}\subset\bigoplus_{i=1}^n \cO_{\ol F} \cdot s_i(u)$.
\end{enumerate}


\end{definition}

\begin{lemma}\label{anysections}
If $U$ is an affinoid, $V, s_i$ is a vector bundle with a set of trivializing section and $|\cdot|$ is a norm such that the $s_i$ satisfy the conditions above, then any trivializing sections $t_1,\dots,t_n$ satisfy the two conditions above.

\end{lemma}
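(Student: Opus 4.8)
The plan is to reduce the statement about arbitrary trivializing sections $t_1,\dots,t_n$ to the hypothesis on the given sections $s_1,\dots,s_n$ by a change-of-basis argument, exploiting that on a (quasi-compact) affinoid $U$ the transition matrix between the two trivializations and its inverse have bounded entries. Concretely, write $t_j = \sum_i a_{ij} s_i$ with $a_{ij} \in \cO_X(U)$ and, conversely, $s_i = \sum_j b_{ji} t_j$ with $(b_{ji})$ the inverse matrix, so $b_{ji} \in \cO_X(U)$ as well since both $\{s_i\}$ and $\{t_j\}$ trivialize $V$ on $U$. Because $U$ is affinoid, each $a_{ij}$ and each $b_{ji}$ is a bounded function: there is an integer $N_1$ with $\log_p |a_{ij}(u)|_u \leq N_1$ and $\log_p|b_{ji}(u)|_u \leq N_1$ for all $u \in U(\ol F)$ (this is the standard fact that elements of a Tate algebra are bounded by their Gauss/sup norm, and the sup is attained over $U(\ol F)$ after base change).

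The two conditions are then checked in turn. For condition (1), from $t_j(u) = \sum_i a_{ij}(u) s_i(u)$ and the ultrametric inequality we get $\log_p |t_j(u)|_u \leq \max_i\big(\log_p|a_{ij}(u)|_u + \log_p|s_i(u)|_u\big) \leq N_1 + N$, so condition (1) holds for the $t_j$ with constant $N_1 + N$. For condition (2), suppose $v \in V_u$ with $|v|_u \leq 1$. By the hypothesis on the $s_i$ we have $p^N v \in \bigoplus_i \cO_{\ol F} s_i(u)$, say $p^N v = \sum_i c_i s_i(u)$ with $c_i \in \cO_{\ol F}$. Substituting $s_i(u) = \sum_j b_{ji}(u) t_j(u)$ gives $p^N v = \sum_j \big(\sum_i c_i b_{ji}(u)\big) t_j(u)$; since $|c_i| \leq 1$ and $\log_p|b_{ji}(u)|_u \leq N_1$, the coefficient $\sum_i c_i b_{ji}(u)$ has $p$-adic absolute value at most $p^{N_1}$, hence lies in $p^{-N_1}\cO_{\ol F}$. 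Multiplying through by $p^{N_1}$ we obtain $p^{N+N_1} v \in \bigoplus_j \cO_{\ol F} t_j(u)$, which is condition (2) for the $t_j$ with constant $N + N_1$. Galois-invariance of the norm is part of the hypothesis and is unaffected by the change of basis, so it is inherited automatically.

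The only genuine point requiring care — and the step I would flag as the main obstacle, though it is really a routine application of rigid-analytic generalities rather than a deep difficulty — is justifying that the transition functions $a_{ij}, b_{ji}$ are \emph{uniformly} bounded over all of $U(\ol F)$ by a single integer exponent of $p$. This follows because on an affinoid $U = \Spec A$ with $A$ a Tate algebra, every $f \in A$ satisfies $\sup_{u \in U(\ol F)} |f(u)| = |f|_{\sup} < \infty$ (the maximum modulus principle for affinoids), and one then takes $N_1$ to be any integer exceeding $\log_p$ of the finitely many sup-norms $|a_{ij}|_{\sup}, |b_{ji}|_{\sup}$. One should also note that $(b_{ji})$ is genuinely the inverse of $(a_{ij})$ as matrices over $A$ — automatic since both systems trivialize the same $V(U)$ — so no determinant-invertibility issue arises. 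With these observations in place the lemma is immediate, and I would present it in essentially the form above: fix $N_1$ bounding the transition matrices, then verify (1) and (2) by the two short ultrametric estimates.
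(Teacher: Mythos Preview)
Your proof is correct and follows essentially the same approach as the paper: both argue that the change-of-basis matrix $g$ (your $(a_{ij})$) and its inverse have entries in $\cO_U(U)$ and hence are globally bounded on the affinoid, from which the two conditions transfer with a shifted constant. Your version is simply more explicit about the ultrametric estimates, but the underlying idea is identical.
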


\begin{proof}

Let $g\in\Gamma(U,\GL(\cO_U))$ be the matrix such that $g\vec{s}=\vec{t}$. Note that $g,g^{-1}$ have entries that are elements of $\cO_U(U)$ and therefore are globally bounded. It follows that the norms of $\vec{t}$ are uniformly bounded by those of $\vec{s}$, and also that the two lattices $\cO_{\ol F} \cdot s_i(u)$ and $\cO_{\ol F} \cdot t_i(u)$ are comparable uniformly for $u\in U$. The claim follows.

\end{proof}
\begin{lemma}\label{acceptabilityafinoidcover}
Let $F$ be a non-Archimedean local field, and $X/F$ a rigid-analytic variety, $V$ a vector bundle on $X$ and $|\cdot|$ a norm on $V$.
Then $|\cdot|$ is acceptable iff it is acceptable on a finite affinoid cover.
\end{lemma}

\begin{proof}
Let $X = \bigcup_{j = 1}^JU_j$ denote an affinoid cover, such that there are trivializing sections $s_{i,j}$ of $V|_{U_j}$ which satisfy the conditions of Definition \ref{def:acceptable}. Let $U'$ denote any affinoid on which $V$ is trivial, and let $t_1\hdots t_n$ denote a set of trivializing sections of $V|_{U'}$. Define $U'_j= U'\cap U_j$. By Lemma \ref{anysections}, the $\{t_i\}|_{U'_j}$ satisfy the conditions of Definition \ref{def:acceptable} follows from the fact that the $\{s_{i,j}\}|_{U'_j}$ do. The lemma follows. 
\end{proof}

Given an integral scheme $\cY/\cO_F$ and vector bundle $\cV$ on $\cY$,  there is a natural norm on $\cV^{\rig}$ over $\cY^{\rig}$ which is easily seen to be acceptable. We call this the $\cV$-norm.

We now come to our central definition:

\begin{definition}\label{def:admissible}

Let $X$ be a proper variety over a number field $F$, $V$ a vector bundle on $X$, and $Y$ an open subscheme of $X$. We define an \emph{admissible collection of norms} on $V$ to consist of the following data:

\begin{enumerate}
    \item An integral model $\cY$ of $Y$ over $\cO_F[N^{-1}]$
    \item A vector bundle $\cV$ on $\cY$ extending $V$.
    \item For each infinite place $v$, a continuous metric $h_v$ on $V_v\mid Y_v$.
    \item For each finite place $v\mid N$, a norm $|\cdot|_v$ on $V^{\an}_v\mid Y^{\an}_v$ which extends to an acceptable norm on $X^{\an}_v$.
    \item For each finite place $v\not\mid N$, an acceptable norm  $|\cdot |_v$ on $\cV^{\rig}_v\mid \cY^{\rig}_v$ such that for almost all finite places it is the $\cV_v$-norm. 
\end{enumerate}

Moreover, we say that this collection is \emph{strongly admissible} if it also satisfies that for each infinite place $v$, the metric $h_v$ extends to a continuous metric on $V_v\mid X_v$. Abusing notation somewhat, we refer to $\left(V,(|\cdot|_v)_v\right)$ as an \emph{ (strongly) admissible normed vector bundle} on the triple $(X,Y,V)$.

\end{definition}

\begin{lemma}\label{lem:admissibleind}
For a vector bundle $V$ on a proper variety $X/F$ with open subscheme $Y$, any two admissible collections of norms on $(X,Y,V)$ agree at almost all finite places, and differ by  $O(1)$ at every finite place.
\end{lemma}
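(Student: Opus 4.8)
The statement is a local-at-each-place comparison, so the plan is to fix two admissible collections of norms on $(X,Y,V)$ and compare them place by place, separating the infinite places, the finitely many ``bad'' finite places dividing the product of the two integer parameters, and the cofinite set of ``good'' finite places where both norms are the respective integral-model norms. I would first observe that at each archimedean place both metrics are continuous metrics on a vector bundle over the compact complex-analytic space $X_v^{\an}$ (using properness of $X$), and any two continuous metrics on a vector bundle over a compact space differ by a bounded factor — so the archimedean places contribute $O(1)$ and there is nothing left to prove there. The content is entirely at the finite places.

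\textbf{The good places.} Suppose the two collections come with integral models $\cY_1,\cY_2$ over $\cO_F[N_1^{-1}]$ and $\cO_F[N_2^{-1}]$ together with extensions $\cV_1,\cV_2$ of $V$. After inverting finitely many more primes — replacing $N_i$ by a common multiple $N$ — the two integral models become isomorphic over $\cO_F[N^{-1}]$: indeed $\cY_1$ and $\cY_2$ are both models of $Y/F$, so they agree over the generic fibre, and by spreading out this isomorphism extends over $\Spec\cO_F[N^{-1}]$ after inverting the finitely many primes where it fails to be defined or fails to be an isomorphism. Once the integral models are identified, $\cV_1$ and $\cV_2$ are two coherent extensions of the same $V$ over this model; invert finitely more primes so that the identity map $V\to V$ spreads out to an isomorphism $\cV_1\cong\cV_2$ over $\cO_F[N^{-1}]$. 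For every finite place $v\nmid N$ the two collections are then \emph{by definition} eventually the $\cV_i$-norm, and after our identifications these are literally the same norm, so the two collections agree at all but finitely many finite places. This is the first assertion.

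\textbf{The bad places.} It remains to show that at each of the finitely many remaining finite places $v$ the two norms differ by $O(1)$, i.e. by a multiplicative constant depending on $v$. Here the point is that at such a place each norm extends to an \emph{acceptable} norm on the vector bundle $V$ on the \emph{proper} rigid-analytic variety $X_v^{\an}$ (conditions (4) and (5) of Definition \ref{def:admissible}). Cover $X_v^{\an}$ by finitely many affinoids $U$ on which $V$ is trivial — possible by properness, or simply by quasi-compactness of $X_v^{\an}$ combined with Lemma \ref{acceptabilityafinoidcover} — and fix trivializing sections $s_1,\dots,s_n$ of $V$ on each such $U$. By Lemma \ref{anysections} the acceptability conditions (1) and (2) hold for \emph{these} $s_i$ with respect to \emph{both} norms, with possibly different integers $N',N''$. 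Condition (1) for one norm together with condition (2) for the other then gives a uniform upper bound for $\log_p(|v|'/|v|'')$ over $u\in U(\ol F)$ and $v\in V_u$, and symmetrically a lower bound; since finitely many affinoids cover $X_v^{\an}$, this yields a global bound $\big|\log_p(|\cdot|_v'/|\cdot|_v'')\big|\leq C_v$ on $X_v^{\an}$. As there are only finitely many such $v$, we conclude that the two admissible collections differ by $O(1)$ at every finite place.

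\textbf{Main obstacle.} The genuinely delicate step is the spreading-out argument for the good places: one must be a little careful that ``model of $Y$ over $\cO_F[N^{-1}]$'' together with the choice of extension $\cV$ really does pin down the norm at almost every $v$ up to the coherence of the two extensions, and that the ambiguity is absorbed by inverting finitely many primes. The bad-places estimate is then a routine unwinding of Definition \ref{def:acceptable} via Lemmas \ref{anysections} and \ref{acceptabilityafinoidcover}, and the archimedean case is immediate from compactness; I expect the bookkeeping of which finite set of primes must be inverted to be the only thing requiring care.
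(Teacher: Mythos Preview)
Your proposal is correct and follows essentially the same approach as the paper: spreading out the integral models and vector-bundle extensions to handle almost all places, then using the acceptability conditions on a finite affinoid cover of the proper $X_v^{\an}$ to get the $O(1)$ bound at the remaining places. Your write-up is considerably more detailed than the paper's (which is just a few lines invoking property~(2) of Definition~\ref{def:acceptable}), and your paragraph on archimedean places is superfluous since the lemma concerns only finite places, but the substance is the same.
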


\begin{proof}
Any two integral models agree at almost all places, which implies the corresponding norms agree. As for the second claim, it suffices to prove that two acceptable norms differ by a uniform $O(1)$. 

It is sufficient to work over affinoids since ever proper variety is covered by finitely many such, on which the vector bundle is trivial. 

So let $U$ be an affinoid and $\vec{s}$ be any set of trivializing the vector bundle $V$. The claim immediately follows by property 2 of definition \ref{def:acceptable}.
\end{proof}

The following lemma is straightforward:

\begin{lemma}
The restriction of an acceptable  (resp. admissible) norm(resp. collection of norms) to a sub-vector bundle is admissible (resp. admissible). Likewise for the induced norm(s)  on a quotient bundle.
\end{lemma}

\subsection{Heights}

Note that if the vector bundle is a line bundle, then an  acceptable collection of norms is  - when $X=Y$ - nothing other than an $M$-metric \cite{BG} on our line bundle. Analogously to that context, we have an associated height function for points $P\in \cY(\cO_F[N^{-1}])$ given by taking a section $s$ of $V$ not vanishing at $P$ and defining
$$h_{V,(|\cdot|_v,v\in M_F)}(P):=-\sum_v \frac{[F_v:\mathbb{Q}_p]}{[F:\mathbb{Q}]} \log|s(P)|_v.$$ Moreover, this extends in a natural way to points in the algebraic closure $\cY(\cO_{\ol F}[N^{-1}])$.

It follows from Lemma \ref{lem:admissibleind} that up to an additive $O(1)$ the height depends only on the triple $(X,Y,V)$ and the difference of metrics at infinity. If however our metrics are strongly admissible, then this latter point is also bounded by an $O(1)$.   To make it very concrete, we have the following:

\begin{corollary}\label{cor:ourheightsareweil}
Let $X$ be a proper variety over a number field $F$, $Y\subset X$ an open subscheme, and $V$ a line bundle on $X$. Let $(V,(|\cdot|_v)_v)$ be a strongly admissible normed vector bundle on  $(X,Y,V)$. Then the associated height function on $\cY(\cO_{\ol{F}}[N^{-1}])$ is the restriction of a Weil height $h_V$ on $X(\ol \Q)$.
\end{corollary}

\begin{proof}
An integral model $\cX$ for $X$ together with a model $\cV$ of $V$ and continuous metrics at all the infinite places yields a Weil height, and also yields a strongly admissible collection of norms. The lemma now follows from the fact that all strongly admissible collections of norms give the same height up to an additive $O(1)$.
\end{proof}

\section{Canonical Heights and Admissible Metrics}\label{S-canheights}

\subsection{Metrizing $D_\HT$}

Let $K$ be a finite extension of $\Q_p$. We let $G_K$ be the absolute Galois
group of $K$. We let $\C_p$ be the completion of $\bar{K}$, and we let 
$B_\HT:=\oplus_i \C_p(i)$ where we think of $B_\HT$ as a $\C_p$-vector space with a semi-linear $G_K$ action. We metrize $\C_p(i)$ by identifying $\C_p(i)$ with $\C_p$ with the Galois action on 1 being through the $ith$ power of the cyclotomic character, and then pulling back the metric on $\C_p$ under this identification. See \cite{brinonconrad} for background on $p$-adic Hodge theory.

We let $V/\Q_p$ be a finite dimensional $G_K$-representation, and define
$D_\HT(V):=(V\otimes B_\HT)^{G_K}$. We assume that $V$ is Hodge-Tate, which means that $\dim D_\HT(V) = \dim V$. 

In integral $p$-adic Hodge theory, one typically fixes lattices in $V$ and attempts to define a lattice in $D_\HT(V)$. For our purposes it will be more convenient to work with norms, which record more information since they may not be $p^{\Z}$-valued. Thus, let assume that $|\bullet|$ is a $p$-adic norm on $V$ which is invariant under $G_K$. We call such a $(V,|\bullet|)$ a \emph{metrized $G_K$ -representation}, and say it is Hodge-Tate if $V$ is Hodge-Tate (likewise for de Rham, crystalline,...)

We shall construct a norm on $D_\HT(V)$ (in fact on all graded pieces of it) in a way which will works well in families:

\begin{definition}

Let $(V,|\bullet|)$ be a metrized $G_K$-representation which is Hodge-Tate.
For each integer $n$, let $V^\circ_n=(V\otimes \C_p(n))^{G_k}\otimes\C_p$. There is a natural map $V^\circ_n(-n)\ra V\otimes\C_p$ and we let $V_n$ be the image of this map. Finally let $V_{<n}:=\bigoplus_{m<n}V_m$. Now $V_{<n}$ has an induced norm, and thus we may equip $V_{\C_p}/V_{<n}$ with the quotient norm. This then yields a norm on 
$$V_{\C_p}/V_{<n} \otimes_{\C_p} \C_p(n).$$ Note that this latter space is isomorphic to $V^\circ_n$ and we endow it with the corresponding norm. We call it the \emph{intrinsic metric} on $V^\circ_n$ and likewise on $D_\HT(V)$. We call the corresponding norm $\leq 1$ set on $V_n^{\circ}$ the \emph{intrinsic lattice}.
\end{definition}

Recall that $\Gr B_{\dR}\cong B _{HT}$ as rings with Galois-actions. This isomorphism is unique up to an element of $\Q_p^{\times}$. We choose an isomorphism where the image of $t\mod \Fil^2\Bdr$ is sent to the element $1\in \C_p(1)$. Note that this means $\pi \mod \Fil^2(\Bdr)$ also maps to 1 in $\C_p(1)$.

Now let $\V$ be a crystalline representation in the sense of Fontaine-Laffaille. Then by Theorem \ref{thm:crystalline-DeRhammap} we obtain a map $\Gr S_{\cris}(\V)\ra \Gr D_{\dR}(V)\cong D_{HT}(V)$. The image of the crystalline lattice defines another norm, which we call the crystalline norm. We note that the intrinsic and crystalline norms are compatible with tensor and wedge powers, the former by construction as a subquotient and the latter by Lemma \ref{cristensor}. We will now prove that these two norms exactly agree for one-dimensional representations. 
\begin{lemma}\label{absolutecrisintonedim}
Let the the setting be as above, with the further condition that $\V$ is a one-dimensional representation. Then, the two norms agree. 
\end{lemma}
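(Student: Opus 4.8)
The plan is to make everything explicit in the one-dimensional case. Since $\V$ is a one-dimensional crystalline $\Z_p$-representation of $\Gal_K$ with $K/\Q_p$ unramified, it must be of the form $\Z_p\cdot e$ with $\Gal_K$ acting through an unramified character times a power of the cyclotomic character; say the single Hodge--Tate weight is $h$, with $0\le h\le p-2$. The associated Fontaine--Laffaille module $M=S_{\cris}(\V)$ is then a free rank-one $W(k)$-module concentrated in filtration degree $h$, i.e. $\Fil^h M = M$ and $\Fil^{h+1}M = 0$, with Frobenius $\Phi$ which is a unit times $p^h$ on a generator. The crystalline lattice in $\Gr^h D_{\dR}(V)$ is, by definition, the image of $\Gr^h M$ under the comparison isomorphism of Theorem \ref{FLliuzhu}; the crystalline norm is the one whose unit ball is this image (tensored up to $\C_p$ appropriately and twisted by $\C_p(h)$).

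\textbf{Key steps.} First I would unwind the definition of the intrinsic norm in this case: since $\V\otimes\C_p \cong \C_p(-h)$ as a $\Gal_K$-module (Hodge--Tate with a single weight), we have $V_m = 0$ for $m\ne h$ and $V_h = V\otimes\C_p$, so $V_{<h}=0$ and the quotient norm is just the norm on $V\otimes\C_p$ induced from $|\bullet|$ on $\V$; then $V_h^\circ = (V\otimes\C_p(h))^{\Gal_K}\otimes\C_p$ carries the norm transported from $V\otimes\C_p$ via the canonical isomorphism $V_h^\circ \cong (V\otimes\C_p)\otimes\C_p(-h)^{\vee}$... more precisely via the twist by $\C_p(h)$. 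The point is that the intrinsic unit ball in $V_h^\circ$ is exactly $\V \otimes (\text{unit ball of }\C_p(h))^{\Gal_K}$, and the latter is generated by a suitable power of $t$ (equivalently $\pi$) under our normalization $t\equiv 1$ in $\C_p(1)$. Second, I would identify the crystalline lattice the same way: by Tsuji's construction, $S_{\cris}(\V) \hookrightarrow \V\otimes \cA_{\cris}$, and passing to $\Gr$ and then to $\C_p$ via Proposition \ref{prop:sAcristoOBdr}, the generator of $\Gr^h M$ maps to a generator of $\V$ times $t^h$ (up to a unit in $W(k)$, hence a unit for the norm). Since $K$ is unramified, $W(k)=\cO_K$, so units in $W(k)$ have norm $1$; and in the one-dimensional case there is no interaction between different filtration steps so no extra power of $p$ is lost. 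Third, comparing the two unit balls: both equal $\V\cdot t^h$ (up to a $W(k)^\times$-multiple) inside $\Gr^h D_{\dR}(V)$, hence they coincide, and the two norms agree on the nose.

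\textbf{Main obstacle.} The delicate point is bookkeeping the normalizations: making sure that the isomorphism $\Gr B_{\dR}\cong B_{\HT}$ chosen just before the theorem (sending $t \bmod \Fil^2$ to $1\in\C_p(1)$) is used consistently on both the intrinsic side and the crystalline side, and that the map $\cA_{\cris}\to \OBdr$ of Proposition \ref{prop:sAcristoOBdr} is compatible with this choice — i.e. that the image of the divided-power element $\gamma_h(\xi)$ or the relevant Frobenius-eigenvector in $A_{\cris}$ really does land in $t^h\cdot(\text{unit})$ modulo the next filtration step. Once the generator of the crystalline lattice is pinned down to be $u\cdot t^h$ with $u\in W(k)^\times$ (the unramifiedness of $K$ is what makes $|u|=1$), the equality of norms is immediate. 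I expect the bulk of the write-up to be this careful tracking of generators through $S_{\cris}$, $TA_{\inf}$, $\cA_{\cris}$, and $\OBdr$, rather than any substantive new idea; the genuinely new input is already packaged in Theorem \ref{FLliuzhu} and Proposition \ref{prop:sAcristoOBdr}.
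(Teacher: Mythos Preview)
Your proposal is correct and takes essentially the same approach as the paper: reduce to a power of the cyclotomic character (up to an unramified twist contributing a $W(k)^\times$-unit), then compute both the intrinsic and crystalline lattices explicitly and observe they coincide. The paper's precise identification of the crystalline lattice goes through $TA_{\inf}(L)=L\otimes\pi^nA_{\inf}$ (\cite[Lemma~75]{Tsuji}) together with the fact that $t/\pi\in A_{\cris}^\times$, rather than through divided powers of $\xi$; this is the concrete mechanism behind your ``generator maps to $t^h$ times a unit'' step.
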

\begin{proof}
 After replacing $K$ with $W(\overline{\F}_p)[1/p]$, the Galois representation being crystalline must be of the form $\chi^{-a}$ with $0\leq a\leq p-2$.
Recall the element $t\in \Fil^1 \Bdr$, on which the Galois action is through the cyclotomic character ($t$ is unique up to scaling by $\Z_p^{\times}$). For brevity, we will use the term intrinsic lattice to denote the norm $\leq 1$ elements of $\Dht(\V)$. It suffices to prove that this equals the crystalline lattice. 

We first compute the intrinsic lattice. Let $t^n$ also denote the image of $t^n \mod \Fil^{n+1} \in \C_p(n)$. We have that $\Dht^n(\V) = \{e \otimes t^nz: \ z\in    \C_p\} \subset L\otimes \C_p(n)$ where $e$ is a generator of $L$. Therefore, the intrinsic lattice consists of elements $\{e\otimes t^n z: z\in W(\overline{\bF}_p)[1/p] \}$ with $\frac{t^n}{\pi^n} \cdot z \in \cO_{\C_p}$, where we identify $\frac{t^n}{\pi^n}$ with $\theta(\frac{t^n}{\pi^n}) \in \C_p$. As $\frac{t}{\pi} \mapsto 1$ under $\theta$, we see that $z\in W(\overline{\bF}_p)$.

We now compute the crystalline lattice. Note that the crystalline lattice is defined by considering the image of $S_{\cris}(L) \subset \Dht^n(L)$. To compute $S_{\cris}(L)$, observe that $TA_{\inf}$ (as in Section 2) is $L\otimes \pi^n A_{\inf}$ (this follows from \cite[Lemma 75]{Tsuji} and the defining property of $TA_{inf}$). Therefore, the crystalline lattice is the image of $(L\otimes \pi^n A_{crys})^{\Gal = 1}$ in $(L\otimes t^n\Bdr)^{Gal = 1}$. This shows that the crystalline lattice consists of elements of the form $\{e\otimes zt^n: z \in W(\overline{\bF}_p)[1/p] \}$ with $zt^n \in \pi^n A_{\cris}$. As $\frac{t}{\pi}$ is a unit in $A_{\cris}$, we see that $z\in W(\overline{\bF}_p)$, and hence the two lattices are the same. 
\end{proof}

As both norms are compatible with tensor products, the above result yields that both norms are invariant upon twisting by powers of the cyclotomic character (as long the representation remains crystalline). 
In fact, this allows us to define the crystalline norm on representations that are only crystalline up to twist.
\begin{definition}\label{crystallinenormoncrystallineuptotwist} 
Let $\V$ be a representation such that there exists some integer $a$ such that $\V(a)$ is crystalline in the sense of Fontaine-Laffaile. Then the identification of $\Gr D_{\dR}(V)$ with $\Gr D_{\dR}(V(a))$ (via multiplication by $t^a$) yields a crystalline norm on $\Gr D_{\dR}(V)$. 
\end{definition}
Lemma \ref{absolutecrisintonedim} shows that the crystalline norm on a representation that is crystalline upto twist doesn't depend on which power of the cyclotomic character we choose to twist it by. For ease of notation, we make the following definition.  
\begin{definition}\label{weightsuptotranslation}
    Let $V$ be a Hodge-Tate Galois representation. We define its weight interval to be $[b_1,b_2]$ if $b_1$ is the smallest Hodge-Tate weight and $b_2$ to be the biggest. We define its weight range to be the non-negative integer $\rng_V = b_2 - b_1$. We define the \textit{normalized weight-sum} to be the $ S - b_1 \dim V$, where $S$ is the sum of the Hodge-Tate weights\footnote{This is just the sum of the Hodge-Tate weights of $V(-b_1)$, which is the unique twist of $V$ whose minimal Hodge-Tate weight is 0.}.
\end{definition}

In the next subsection, we will define the intrinsic norm in the case of families, and show that the intrinsic and crystalline norms are comparable for local systems up to twist. One advantage to carrying out the argument in families is it allows us to deal with points defined over ramified fields by specializing, without having to directly confront the Faltings-Fontaine-Laffaille theory in that context (for example, Tsuji\cite{Tsuji} insists that $p$ is a uniformizer throughout). However, we will first show that the intrinsic and crystalline norms are comparable in the absolute case as it illuminates some of the steps needed in the case of families. This is related to results obtained by Faltings on integral comparison theorems (see \cite{FaltingsCrystallineCohomology}).

\begin{theorem}\label{absolutecrisint}
Let $K$ denote an unramified extension of $\Q_p$ and suppose that $\V$ is a $\Z_p$-representation of $\Gal_K$ that is crystalline (in the sense of Fontaine-Laffaile) up to twist. Suppose that the normalized weight-sum of $\V$ is $\leq p-2$ and that the weight range is $b$. Then the intrinsic norm on $\Gr D_{\dR}(V)$ agrees with the crystalline norm (induced by the image of $\Gr S_{\cris}$) up to a multiple of $p^{\frac{b(\dim \V-1)}{p-1}}$
\end{theorem}
\begin{proof}[Proof of Theorem \ref{absolutecrisint}]
Without loss of generality, we can and will assume that the weight interval is $[0,b]$. We have that $S_{\cris}(\V) \subset \V\otimes A_{\cris} $. Note that for $0\leq i\leq p-2$, $\Gr^i(A_{\cris}) \cong \xi^i\cO_{\C_p}$. Therefore, the image of $\Gr^i S_{\cris}(\V)$ in $\Gr^i D_{\dR}(\V)$ is contained in the Galois invariants of $\V \otimes \xi^i\cO_{\C_p}$. As we have metrized $\Gr^i B_{\dR}$ with the convention that $t^i$ has norm 1, it follows that $\Gr^i S_{\cris}(\V)$ is contained in $\theta(\xi/t)^i$ times the intrinsic lattice. As $v_p(\theta(\frac{\xi}{t})) = \frac{-1}{p-1}$ (see \cite[Page 29]{FaltingsCrystallineCohomology}), the crystalline lattice is contained in $p^{\frac{-b}{p-1}}$ times the intrinsic lattice.

To obtain the other inclusion, note that the intrinsic norm and crystalline norm are both compatible with tensor and wedge powers, the former by construction as a subquotient and the latter by Lemma \ref{cristensor}. We thus get two $\cO_{\C_p}$ lattices $A,B$ such that $A\subset p^{\frac{-b}{p-1}}B$ and $\det A=\det B$. 

Let $v_1\in B$ be primitive and such that $p^cv_1\in A$ is primitive, and complete it to a basis $p^cv_1,w_2,\dots,w_{\dim V}$ of $A$. Then since $\det A=\det B$ we have $$1\leq |p^{c} v_1|_B\ds\prod_{i=2}^{\dim V} |w_i|_B \leq p^{-c} p^{\frac{b(\dim V-1)}{p-1}}$$ from which the desired inequality $c\leq \frac{b(\dim V-1)}{p-1}$ follows.

\end{proof}


\subsection{Metrizing Hodge-Tate metrics in families}

We will borrow heavily from \cite{LZ},\cite{LZ2}, and \cite{S}. Let $X$ be a smooth geometrically connected rigid-analytic variety over $k$. In the case where one exists, we let $\ol{X}$ denote a partial compactification of $X$ such that $D:=\ol{X}-X$ is a smooth normal crossings divisor. In this setting, we equip $\ol{X}$ with the structure of a smooth log-adic space in the natural way. We will assume that $L$ is a $\Z_p$-local system on $X_{et}$ which is de Rham in the sense of \cite{S}, and such that the local geometric monodromy around the components of $D$ is unipotent. 

We shall work with the pro-\'etale site $X_{proet}$ and the pro-Kummer-\'etale site $\ol{X}_{proket}$. We shall denote by $L_{ket}$ the extension of $L$ to $\ol{X}_{ket}$, which is also a $\Z_p$-local system\footnote{To those unfamiliar, the point is that $L$ is an inverse limit of local systems with finite fibers. On each of those, monodromy is finite and can therefore be unwound by taking a sufficiently large Kummer extension.} 

Let $\widehat{\cO}_X$ and $\widehat{\cO}_{\ol X}$ be the completed structure sheaf on $X_{proet},\ol{X}_{proket}$ respectively. Let $\widehat{L}$ be the induced sheaf of local systems
on $X_{proet}$ and likewise let $\widehat{L}_{ket}$ be the sheaf on $\ol X_{proket}$. Finally we set $\bar{L}:=\widehat{L}\otimes_{\widehat{\Z_p}}\widehat{\cO}_X,$ and 
$\bar{L}_{ket}:=\widehat{L}_{ket}\otimes_{\widehat{\Z_p}}\widehat{\cO}_{\ol X}$. These are locally free sheaves on $X_{proet}, \ol{X}_{proket}$ (over their respective structure sheaves).  Let $\nu:X_{proet}\ra X_{et}, \nu_{ket}:\ol{X}_{proket}\ra\ol{X}_{et}$ be the natural maps. Note that $\bar{L},\bar{L}_{ket}$ have natural norms at every classical point of $X,\ol{X}$ which agree on $X$. 

We will now define an ascending filtration on $\bar{L},\bar{L}_{ket}$, and will use this filtration to metrize the graded pieces of $D^0_{\dr}(L)$. We work only in the Kummer case as it is more general, though it is unnecessary (and we will drop it) when we work in settings without log-structures (or, equivalently, trivial log-structures)

First, recall that by combining the second displayed equation in the proof of  \cite[Lemma 3.3.17]{LZ2}, the first equation in the proof of \cite[Cor 3.4.22]{LZ2}, and the isomorphism at the bottom of page 36 of \cite{LZ2}, we obtain

\begin{equation}\label{ds1}
\bigoplus_r \nu^*_{ket}\nu_{ket,*}(\widehat{L}_{ket}\otimes\cO\C_{\log}(r))\otimes_{\nu^*_{ket}\cO_{\bar{X}}} \cO\C_{\log}(-r) \cong \hat{L}_{ket}\otimes \cO\C_{\log}.
\end{equation}

Locally on $X$, we may pick a smooth toric chart with co-ordinates $x_1,\dots,x_m$. Recall also that we have \cite[2.3.17]{LZ2}

\begin{equation}\label{ds2}
    \cO\C_{\log}\cong \hat{\cO}_{\bar{X}}\bigg[\frac{y_1}{t},\dots,\frac{y_m}{t}\bigg]
\end{equation}

where the $y_i$ are as in \cite[2.3.6]{LZ2}.

Assume the weights for $L$ are in $[0,m]$. We have the following important structural theorem(see also \cite[\S5]{hansen}):

\begin{theorem}\label{hansenright}

Let $L$ be as above. Then:

\begin{enumerate}
    \item For $r\geq 0$ The elements of $\nu_{ket,*}(\widehat{L}_{ket}\otimes\cO\C_{\log}(r))$ have degree (in the variables $y_i$) $\leq r$ under the isomorphism \eqref{ds2}.
    \item For $r\geq 1$, the positive degree monomials of $\nu_{ket,*}(\widehat{L}_{ket}\otimes\cO\C_{\log}(r))$, lie in the $\cO\C_{\log}$ - span of $\nu_{ket,*}(\widehat{L}_{ket}\otimes\cO\C_{\log}(s))(r-s)$ for $0\leq s<r$, when pulled back via $\nu_{ket}^*$ to $\hat{L}_{ket}\otimes \cO\C_{\log}(r)$ under the identification \eqref{ds2}.
    
\end{enumerate}

\end{theorem}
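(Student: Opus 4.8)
The plan is to compute $\nu_{ket,*}$ \'etale-locally on $\bar X$ near $D$, using the toric chart $x_1,\dots,x_m$ and the explicit presentation $\cO\C_{\log}\cong\widehat{\cO}_{\bar X}[u_1,\dots,u_m]$ of \eqref{ds2}, where $u_i:=y_i/t$. The first step is to make two structures on $\widehat L_{ket}\otimes\cO\C_{\log}$ completely explicit: the connection, which by $\nabla y_i=t\otimes d\log x_i$ and $\nabla t=0$ takes the shape $\nabla=\nabla_L\otimes 1+\sum_i\partial_{u_i}\otimes d\log x_i$, where $\nabla_L$ is the log connection attached to $L$ on $\bar X_{ket}$ whose residues $N_i$ along the branch $D_i$ are nilpotent by the unipotent-monodromy hypothesis; and the pro-Kummer-\'etale monodromy generator $\gamma_i$ along $D_i$, which fixes $t$ (a Hodge--Tate, hence geometrically trivial, class), sends $u_i\mapsto u_i+c_i$ with $c_i\in\Z_p^\times$, and acts on $\widehat L_{ket}$ by $\exp(c_iN_i)$. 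With this in hand one identifies $\nu_{ket,*}(\widehat L_{ket}\otimes\cO\C_{\log}(r))$, over the chart, with the $\widehat{\cO}_{\bar X}$-module of $\gamma_i$-invariant elements of $t^r\cdot\bigl(\widehat L_{ket}\otimes_{\widehat\Z_p}\widehat{\cO}_{\bar X}[u_1,\dots,u_m]\bigr)$ --- i.e.\ with the solution space of an explicit system of polynomial identities in the $u_i$ whose coefficients are built from the $N_i$. Establishing this identification (by unwinding \cite[Lemma 3.3.17, Cor.\ 3.4.22]{LZ2} together with the Poincar\'e-lemma / contracting-homotopy computation of $R\nu_{ket,*}\cO\C_{\log}$) is the technical heart of the argument; from there everything is the graded linear algebra carried out in \cite[\S5]{hansen}.

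For part (1): running the $\gamma_i$-invariance equations in descending monomial degree, the degree-$d$ homogeneous component of an invariant section must be annihilated by a degree-$d$ polynomial in the commuting nilpotent operators $N_i$ together with the $\partial_{u_j}$. Heuristically each $u_i=y_i/t$ lowers the attached Hodge--Tate weight by one while the twist by $\C_p(r)$ contributes $+r$, so a section contributing to the weight picked out (after the $(-r)$-twist) by the decomposition \eqref{ds1} cannot contain monomials of degree $>r$ once one uses that the weights of $L$ are non-negative; this is made precise by the very same invariance equations, run inductively on $d$. Hence all components of degree $>r$ vanish, and the bound $d\le m$ is automatic as there are only $m$ variables.

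For part (2): proceed by descending induction on monomial degree. Given $s\in\nu_{ket,*}(\widehat L_{ket}\otimes\cO\C_{\log}(r))$, write $s=\sum_{d\le r}s_d$ by degree in the $u_i$. Contracting $\nabla$ against $\partial_{x_i}$ and discarding the horizontal term produces the operator $\partial_{u_i}$, which lowers the monomial degree by one and, compatibly with the identifications above, lowers the twist from $r$ to $r-1$, carrying $s$ into $\nu_{ket,*}(\widehat L_{ket}\otimes\cO\C_{\log}(r-1))$. Iterating and bookkeeping the $t$-twists, the positive-degree part of $s$ is reconstructed --- modulo strictly-lower-degree terms, which are handled by the inductive hypothesis --- from multiplication by monomials in the $u_i$ applied to (pullbacks of) elements of $\nu_{ket,*}(\widehat L_{ket}\otimes\cO\C_{\log}(s))(r-s)$ for $0\le s<r$. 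This is exactly the asserted $\cO\C_{\log}$-span statement.

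The main obstacle is the first step: showing that $\nu_{ket,*}$ is genuinely computed by the naive pro-Kummer-\'etale monodromy invariants, with the correct interplay between the $t$-twist and the nilpotent residues $N_i$ and with no contribution from higher cohomology. This is where the log structure and the acyclicity of $\cO\C_{\log}$ over the (pro-Kummer-)\'etale site must be handled with care, following \cite{LZ2} and \cite[\S5]{hansen}. Once that is in place, parts (1) and (2) reduce to the finite-dimensional weight count and the $\nabla$-induction sketched above.
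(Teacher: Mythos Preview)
Your approach and the paper's diverge at precisely the step you flag as the ``main obstacle.'' The paper never attempts to identify $\nu_{ket,*}$ with an explicit space of pro-Kummer-\'etale monodromy invariants, and never invokes the $\gamma_i$ or the nilpotent residues $N_i$. Instead it uses a single structural input from \cite[2.4.2(4)]{LZ2}: the log-connection on $\cO\C_{\log}$ induces, for each $r$, a map
\[
\nabla_r\colon \nu_{ket,*}\bigl(\hat L_{ket}\otimes\cO\C_{\log}(r)\bigr)\longrightarrow \nu_{ket,*}\bigl(\hat L_{ket}\otimes\cO\C_{\log}(r-1)\bigr)\otimes_{\cO_{\ol X_{\et}}}\Omega^{\log}_{\ol X_{\et}},
\]
where the right-hand side is obtained via the projection formula since $\Omega^{\log}_{\ol X}$ is locally free. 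Part (1) is then strong induction on $r$: by hypothesis $\nabla_r(\alpha)$ has degree $\le r-1$ in the $y_i/t$, and the explicit formula $\nabla\bigl(\sum_h a_h(y/t)^h\bigr)=\sum_h a_h\sum_i (y/t)^{h-e_i}\delta(y_i)$ shows that $\nabla$ drops degree by exactly one in each $\delta(y_i)$-component, forcing $\deg\alpha\le r$. Part (2) is the same induction: the positive-degree monomials of $\alpha$ are read off from those of $\nabla_r(\alpha)$ via the projection operators for the basis $\delta(y_i)$ of $\Omega^{\log}$, and by induction the latter already lie in the asserted span.

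Your route to (2) is morally the same idea --- apply $\partial_{u_i}$ and induct --- but you have front-loaded it with a computation (monodromy invariants) that is unnecessary once one knows $\nabla$ descends along $\nu_{ket,*}$. For (1), your weight-count argument is not the paper's and, as written, is only heuristic: you have not explained why the $\gamma_i$-invariance equations force the top-degree piece to vanish when $d>r$ (the shift $u_i\mapsto u_i+c_i$ fixes the leading $u_i$-coefficient, so the constraint is on $N_i$, not directly on the degree), nor why Hodge--Tate weights of $L$ interact with the $u_i$-degree in the way you claim without first establishing the very decomposition you are trying to control. The paper's connection argument avoids all of this and does not use non-negativity of weights at this step.
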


\begin{proof}

\begin{enumerate}

    \item We proceed by strong induction on $r$. Consider the induced map
    $$\nabla_r:\nu_{ket,*}\bigg(\hat{L}\otimes\cO\C_{\log}(r)\bigg)\ra \nu_{ket,*}\bigg(\hat{L}\otimes\cO\C_{\log}\otimes_{\cO_{\ol X}}\Omega_{\bar {X}}^{\log}(r-1)\bigg)$$
    given by \cite[2.4.2(4)]{LZ2}. Now $\Omega_X^{\log}$ is locally free and so
    $$\nu_{ket,*}\bigg(\hat{L}\otimes\cO\C_{\log}\otimes_{\cO_{\ol X}}\Omega_X^{\log}(r-1)\bigg)\cong \nu_{ket,*}\bigg(\hat{L}\otimes\cO\C_{\log}(r-1)\bigg)\otimes_{\cO_{\ol X_{\et}}}\Omega^{\log}_{\ol X_{\et}}$$.
    
    Therefore by induction, the image consists of elements of degree $\leq r-1$. We now finish with the claim that if $\alpha\in\cO\C_{\log}$ and  $\nabla(\alpha)$ has degree $\leq r-1$ then $\alpha$ has degree $\leq r$. Indeed, writing $$\alpha = \sum_{h\in \Z^m_{\geq0}} a_h {\left(\frac{\vec{y}}t\right)}^h$$ we compute that
    $$\nabla\alpha = \sum_{h\in \Z^m_{\geq0}} a_h\sum_{1\leq i\leq m}{\left(\frac{\vec{y}}t\right)}^{h-e_i}\delta(y_i).$$ where we have used the notation in \cite[\S2.4]{LZ2}. Now since $\Omega_{\bar{X}_{\et}}^{\log}$ is locally free over $\cO_{\ol X}$ with generators $\delta(y_i)$ (see proof of \cite[2.4.2]{LZ2}) the claim follows.
    
    \item We again proceed by induction. As above, applying $\nabla_r$ to an element $f$ of $\nu_{ket,*}\bigg(\hat{L}\otimes\cO\C_{\log}(r)\bigg)$ gives an element $\nabla_r(f)$ of $\nu_{ket,*}\bigg(\hat{L}\otimes\cO\C_{\log}(r-1)\bigg)\otimes_{\cO_{\ol X_{\et}}}\Omega^{\log}_{\ol X_{\et}}$. 
    By induction, all the positive degree monomials of this element lie in the $\cO\C_{\log}$-span of $\nu_{ket,*}(\widehat{L}_{ket}\otimes\cO\C_{\log}(s))(r-1-s)\otimes \Omega^{\log}_{\ol X_{\et}}$ for $0\leq s<r-1$, and therefore the constant term is in the $\cO\C_{\log}$-span of $\nu_{ket,*}(\widehat{L}_{ket}\otimes\cO\C_{\log}(s))(r-1-s)\otimes \Omega^{\log}_{\ol X_{\et}}$ for $0\leq s\leq r-1$.

    Now by the computation in part 1, each positive-degree monomial in $\alpha$ can be extracted from the monomials of $\nabla_r(\alpha)$ by taking the projection operators for $\Omega^{\log}_{\ol X_{\et}}$ with respect to the basis $\delta(y_i)$, and the claim follows.
    
\end{enumerate}

\end{proof}

We now define $\ol{L}_{<r}\subset \ol{L}_{ket}$ to be the 
generated by all coefficients of the constant terms of 

$\nu_{ket,*}\bigg(\hat{L}\otimes\cO\C_{\log}(s)\bigg)(-s)$ for $s<r$. 

It follows from Theorem \ref{hansenright} that \begin{equation}\label{filteredisom}\ol{L}_{<r}\otimes\cO\C_{\log}\cong\bigoplus_{s<r} \nu^*_{ket}\nu_{ket,*}\bigg(\hat{L}\otimes\cO\C_{\log}(s)\bigg)(-s)\otimes_{\nu_{ket}^*\cO_{\bar X}} \cO\C_{\log}
\end{equation}

and consequently by \eqref{ds2} that $\ol{L}_{<r}$ is a locally split filtration of $\ol{L}_{ket}$. 
Finally, we define $\ol{L}_n:=\ol{L}_{<n+1}/\ol{L}_{<n}$.

\begin{proposition}\label{gradedrealization}

We have a natural isomorphism 
$$ \nu_{ket}^*\nu_{ket,*}(\hat{L}\otimes\cO\C_{\log}(n))\otimes_{\nu_{ket}^*\cO_{\bar{X}}} \ho_{\ol{X}}(-n)\cong\ol{L}_n$$

\end{proposition}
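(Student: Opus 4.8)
The plan is to deduce the statement from the filtered decomposition \eqref{filteredisom} by passing to the $n$-th graded piece of the filtration $\ol{L}_{<\bullet}$ and then extracting the part of $\cO\C_{\log}$ of degree $0$ in the variables $y_i/t$.

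First I would apply \eqref{filteredisom} for $r=n+1$ and for $r=n$. By \eqref{ds2} the sheaf $\cO\C_{\log}$ is free, hence flat, over $\nu_{ket}^*\cO_{\ol X}$ (and over $\ho_{\ol X}$), so tensoring with it is exact and forming the quotient of the two identifications yields an isomorphism
\[
\ol{L}_n\otimes_{\ho_{\ol X}}\cO\C_{\log}\;\cong\;\nu_{ket}^*\nu_{ket,*}(\hat{L}_{ket}\otimes\cO\C_{\log}(n))(-n)\otimes_{\nu_{ket}^*\cO_{\ol X}}\cO\C_{\log},
\]
compatible with the embeddings of both sides into $\hat{L}_{ket}\otimes\cO\C_{\log}$ furnished by \eqref{ds1}.

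Next I would take degree-$0$ parts for the $y_i/t$-grading. By \eqref{ds2}, $\cO\C_{\log}$ is graded by total degree in the $y_i/t$, its degree-$0$ part is $\ho_{\ol X}$, and the grading is $\ho_{\ol X}$-linear; hence for an $\ho_{\ol X}$-module $N$ the degree-$0$ part of $N\otimes_{\ho_{\ol X}}\cO\C_{\log}$ is $N$, while for a $\nu_{ket}^*\cO_{\ol X}$-module $N'$ the degree-$0$ part of $N'\otimes_{\nu_{ket}^*\cO_{\ol X}}\cO\C_{\log}$ is $N'\otimes_{\nu_{ket}^*\cO_{\ol X}}\ho_{\ol X}$. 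Applying this to the displayed isomorphism, with $N=\ol{L}_n$ (which lies in degree $0$ inside $\ol{L}_{ket}=\hat{L}_{ket}\otimes\ho_{\ol X}$) and $N'=\nu_{ket}^*\nu_{ket,*}(\hat{L}_{ket}\otimes\cO\C_{\log}(n))(-n)$, and moving the $(-n)$-twist onto $\ho_{\ol X}$, I would obtain exactly
\[
\ol{L}_n\;\cong\;\nu_{ket}^*\nu_{ket,*}(\hat{L}_{ket}\otimes\cO\C_{\log}(n))\otimes_{\nu_{ket}^*\cO_{\ol X}}\ho_{\ol X}(-n).
\]

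The hard part is the one step above that is not formal: that the isomorphism of the first display genuinely carries the degree-$0$ summand onto the degree-$0$ summand — that is, that it is homogeneous for the $y_i/t$-grading — rather than being merely an $\cO\C_{\log}$-linear isomorphism that happens to become one after the full base change. This is precisely what both parts of Theorem \ref{hansenright} supply. Part (1), bounding the $y_i/t$-degree of the sections of $\nu_{ket,*}(\hat{L}_{ket}\otimes\cO\C_{\log}(n))$ by $n$, ensures that after the $(-n)$-twist the ``constant term'' map into $\ol{L}_{ket}$ is well defined and that its image generates $\ol{L}_{<n+1}$ modulo $\ol{L}_{<n}$; part (2), placing the positive-degree monomials in the $\cO\C_{\log}$-span of the pieces of lower twist, ensures that every contribution which could fail to have degree $0$ already comes from the summands with $s<n$ and therefore dies in the quotient $\ol{L}_{<n+1}/\ol{L}_{<n}$. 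Once this homogeneity is secured — which is, after all, exactly how \eqref{filteredisom} itself is extracted from Theorem \ref{hansenright} — the remainder is routine graded-module bookkeeping over $\cO\C_{\log}$.
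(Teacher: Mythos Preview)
Your proposal is correct and follows essentially the same route as the paper. The paper constructs the natural map $\psi$ into $\hat{L}_{ket}\otimes\cO\C_{\log}$, uses \eqref{filteredisom} to see that the image lies in $\ol{L}_{<n+1}\otimes\cO\C_{\log}$, and then uses part (2) of Theorem \ref{hansenright} to conclude that, after passing to the quotient by $\ol{L}_{<n}\otimes\cO\C_{\log}$, the image actually lies in $\ol{L}_n$; it then verifies the resulting map is an isomorphism by tensoring with $\cO\C_{\log}$ and invoking faithful flatness. Your degree-$0$ extraction argument is a repackaging of the same two ingredients --- \eqref{filteredisom} for the isomorphism after base change, and Theorem \ref{hansenright}(2) to kill the positive-degree contributions in the quotient --- with the faithful-flatness step replaced by the observation that the degree-$0$ part of $N\otimes_{\ho_{\ol X}}\cO\C_{\log}$ is $N$.
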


\begin{proof}

First, note there is a natural map 
$$\psi:\nu_{ket}^*\nu_{ket,*}(\hat{L}\otimes\cO\C_{\log}(n))\otimes_{\nu_{ket}^*\cO_{\bar{X}}} \ho_{\ol{X}}(-n)\ra \hat{L}\otimes\cO\C_{\log}.$$ The image of $\psi$ is actually in $\bar{L}_{<n+1}\otimes\cO\C_{\log}$ by \eqref{filteredisom} and therefore there is (\'etale locally on $X$)  a map $$\phi:\nu_{ket}^*\nu_{ket,*}(\hat{L}\otimes\cO\C_{\log}(n))\otimes_{\nu_{ket}^*\cO_{\bar{X}}} \ho_{\ol{X}}(-n)\ra \bar{L}_n\otimes\cO\C_{\log}$$ given by composing $\psi$ with the quotient map. Using the isomorphism \eqref{ds2} and part 2 of Theorem \ref{hansenright} we conclude that the image of $\phi$ is in fact simply $\bar{L}_n$, which gives us our desired map. 

It remains to show this map is an isomorphism. But it becomes one when tensored with $\cO\C_{\log}$ by \eqref{filteredisom} and so the claim follows since the $\cO\C_{\log}$ is faithfully flat (in fact free) over $\ho_{\ol X}$. 

\end{proof}

Given the theorem, we make the following definition. 

\begin{definition}\label{intrinsicnormdefn}
We endow  $\Gr^n D^0_{\dR}(L) = \nu_{ket \ *}(\widehat{L}_{ket}\otimes\cO\C_{\log}(n))$ with the norm coming from $\bar{L}_{n}$, induced from the quotient norm on $\bar{L}$, and we call this the \emph{intrinsic norm}.
\end{definition}
We remark that the norm on $\bar{L}$ and therefore $\bar{L}_n$ is Galois-invariant and therefore descends to a norm on $\Gr^n D^0_{\dR}(L)$.


\begin{lemma}\label{lem:intrinsicacceptable}

The intrinsic norms on $\nu_*(\widehat{L}\otimes\cO\C)(n))$ and  $\nu_{ket \ *}(\widehat{L}_{ket}\otimes\cO\C_{\log}(n))$ are acceptable, (according to definition \ref{def:acceptable}).

\end{lemma}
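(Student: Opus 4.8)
The plan is to reduce acceptability to a local computation on a suitable affinoid cover, using Lemma \ref{acceptabilityafinoidcover}, and then to exploit the explicit description of $\bar L_n$ in terms of the $\nu_{ket,*}$-pushforwards together with the structural Theorem \ref{hansenright}. First I would cover $\ol X$ (resp. $X$) by finitely many affinoids $U$ admitting a smooth toric chart with coordinates $x_1,\dots,x_m$, on which $L$ becomes trivialized after passing to a pro-Kummer-\'etale cover; on such a $U$ the sheaf $\hat L_{ket}\otimes\cO\C_{\log}$ is free over $\cO\C_{\log}$, and by the isomorphism \eqref{ds2} we have $\cO\C_{\log}\cong\hat\cO_{\ol X}[\tfrac{y_1}{t},\dots,\tfrac{y_m}{t}]$. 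I would pick a basis of $\hat L_{ket}$ over $\widehat{\cO}_{\ol X}$ coming from the flat sections, and track how the filtration $\bar L_{<r}$ and the identification of $\bar L_n$ from Proposition \ref{gradedrealization} are expressed in these coordinates.

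The key point is that the intrinsic norm on $\bar L$ is, by definition, the $\widehat{\cO}_{\ol X}$-lattice norm attached to $\bar L=\hat L\otimes_{\widehat{\Z_p}}\widehat{\cO}_{\ol X}$ (i.e. the norm whose unit ball at a classical point $u$ is $L_u\otimes\cO_{\ol F}$), and this norm is manifestly acceptable: conditions (1) and (2) of Definition \ref{def:acceptable} hold with $N=0$ for the chosen trivializing sections, since $\widehat{\cO}_{\ol X}(U)$ consists of power-bounded functions on the affinoid $U$ and the lattice $\bigoplus\cO_{\ol F}\cdot s_i(u)$ is exactly the unit ball. Then I would argue that the induced quotient norm on $\bar L/\bar L_{<n}$ and hence on $\bar L_n$ remains acceptable: the filtration $\bar L_{<n}$ is a \emph{locally split} sub-bundle (this is exactly the content of \eqref{filteredisom} combined with the freeness of $\cO\C_{\log}$ over $\ho_{\ol X}$, noted just after Proposition \ref{gradedrealization}), so locally on $U$ we may choose trivializing sections of $\bar L$ adapted to the splitting, whence the quotient norm on $\bar L_n$ is again the lattice norm of an $\widehat{\cO}_{\ol X}$-sub-bundle and the same $N=0$ estimate applies. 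The identification $\bar L_n\cong \nu_{ket}^*\nu_{ket,*}(\hat L\otimes\cO\C_{\log}(n))\otimes\ho_{\ol X}(-n)$ of Proposition \ref{gradedrealization} then transports this to the claimed norm on $\nu_{ket,*}(\widehat L_{ket}\otimes\cO\C_{\log}(n))=\Gr^n D^0_{\dR}(L)$, and since $\ho_{\ol X}(-n)$ differs from $\ho_{\ol X}$ only by the Galois-equivariant twist $t^n$ which we have normalized to have norm $1$, acceptability is unaffected. Finally, by Lemma \ref{anysections} the estimates are independent of the choice of trivializing sections, so they glue, and Lemma \ref{acceptabilityafinoidcover} finishes the argument; the $X$ versus $\ol X$ (Kummer vs.\ non-Kummer) comparison is immediate since the two norms agree on $X$.

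The main obstacle I anticipate is verifying that the filtration $\bar L_{<n}$ is split \emph{over} $\widehat{\cO}_{\ol X}$ (not merely over $\cO\C_{\log}$) locally on a sufficiently fine affinoid cover, so that the quotient norm on $\bar L_n$ genuinely coincides with the lattice norm of a sub-bundle rather than just being comparable to it. The displayed isomorphism \eqref{filteredisom} splits the filtration after $\otimes\,\cO\C_{\log}$, and faithful flatness of $\cO\C_{\log}$ over $\ho_{\ol X}$ (used already in the proof of Proposition \ref{gradedrealization}) shows $\bar L_{<n}$ is a direct summand as $\widehat{\cO}_{\ol X}$-modules; since everything is coherent and locally free, local splittings of the summand exist on affinoids after possibly refining the cover, which is all that is needed. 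The remaining bookkeeping — that the transition matrices between adapted trivializations have power-bounded entries with power-bounded inverses on an affinoid, so that Definition \ref{def:acceptable}(2) holds with a uniform $N$ — is routine given Lemma \ref{anysections}.
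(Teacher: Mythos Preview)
Your proposal is correct and follows essentially the same route as the paper: reduce via Lemma \ref{acceptabilityafinoidcover} to an affinoid on which $L_{ket}$ trivializes, choose a basis $l_i$ of $\bar L_{ket}$ adapted to the (locally split) filtration $\bar L_{<n}$, and then verify the two conditions of Definition \ref{def:acceptable} using the boundedness of the transition matrix between the $l_i$ and the flat basis $e_j$ together with the boundedness of its inverse. Your one slip is the claim that $N=0$ works for the quotient norm---the quotient norm on $\bar L_n$ is only \emph{comparable} to the lattice norm in the adapted basis, not equal to it---but the bounded-transition-matrix bookkeeping you invoke in your final paragraph is exactly the fix, and is exactly what the paper does.
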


\begin{proof}

We deal with the $\ol{X}$ case, the other case being identical. By Lemma \ref{acceptabilityafinoidcover}, it suffices to check the conditions of acceptability with respect to any one finite affinoid cover. Therefore, we may assume that all the graded pieces $\Gr^i D^0_{\dR}(L)$ are trivial vector bundles. 

We work with a subaffinoid $U\subset \ol{X}$ and a perfectoid cover $\tilde{U}=\varprojlim_i U_i$ on which $L_{ket}$ trivializes. We may then pick a basis $e_j$ for $L_{ket}$ and a basis $l_i$ for $\bar{L}_{ket}$ compatible with the filtration $\bar{L}_{<n,ket}$. Let $t_{i,j}\in\hat{\cO}_{\ol X}(\tilde{U})$ be the coefficients of the $l_i$ w.r.t the $e_j$. That the norms of the $t_{i,j}$ are bounded above is trivial, hence condition 1 of definition \ref{def:acceptable}) is satisfied. 

For condition 2, note that the norm 1 lattice in the quotient is the image of the norm 1 lattice in the source which is generated by the $e_j$.
Since the $l_i$ are a basis, that means that they generate the $e_j$ over $\hat{\cO}_{\ol X}(\tilde{U})$. The demnominators of the coefficients are bounded, which immediately yields condition 2. 
\end{proof}

The intrinsic norm is completely functorial, as per the following result:

\begin{proposition}\label{intrinsifunctoriality}

Let $X$ be a smooth rigid analytic variety, and let $L$ be a $\Z_p$-local system on $X_{et}$ which is de Rham. Let $\phi:Y\ra X$ be a map of smooth rigid analytic varieties. Then the intrinsic norm on $$\nu_{X}^*\nu_{X,*}((\hat{L}\otimes\cO\C_{X,\log}(n))\otimes_{\nu_{X}^*\cO_{\bar{X}}} \ho_{\ol{X}}(-n))$$ pulls back to the intrinsic norm on $$\nu_{Y}^*\nu_{Y,*}((\widehat{\phi^{-1}L }\otimes\cO\C_{Y,\log}(n))\otimes_{\nu_{Y}^*\cO_{\bar{Y}}} \ho_{\ol{Y}}(-n)).$$ 

\end{proposition}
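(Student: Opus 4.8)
The plan is to reduce the claim to the functoriality of the three ingredients that enter Definition \ref{intrinsicnormdefn}: the norm on the completed local system $\bar L$, the ascending filtration $\bar L_{<\bullet}$ on $\bar L_{ket}$, and the identification of $\bar L_n$ coming from Proposition \ref{gradedrealization}. Since the intrinsic norm is defined fiberwise at classical points, and for a classical point $y\in Y(\ol{k})$ with image $x=\phi(y)$ one has $(\phi^{-1}L)_y=L_x$ compatibly with all the auxiliary structures (in particular $\phi^{-1}L$ is again de Rham, de Rham-ness being preserved under pullback), it suffices to establish these three compatibilities; the passage to quotient norms and the Tate twist are then formal.

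First I would record the functoriality of the normed bundle $\bar L$. One has $\widehat{\phi^{-1}L}=\phi^{*}\widehat L$ on the relevant pro-(Kummer-)\'etale sites, whence $\overline{\phi^{-1}L}=\widehat{\phi^{-1}L}\otimes_{\widehat{\Z_p}}\ho_{\ol Y}\cong\phi^{*}\bar L$ via the structure map $\phi^{*}\ho_{\ol X}\to\ho_{\ol Y}$. The norm-$1$ lattice of $\bar L$ at a classical point is the image of the $\Z_p$-lattice $\widehat L$, and under the above identification this is carried onto the image of $\widehat{\phi^{-1}L}$, i.e.\ onto the norm-$1$ lattice of $\overline{\phi^{-1}L}$. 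Hence the norm on $\bar L$ pulls back to that on $\overline{\phi^{-1}L}$; this step is purely formal.

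Next I would check that the filtration $\bar L_{<r}\subset\bar L_{ket}$ is preserved by pullback. Recall that $\bar L_{<r}$ is generated by the coefficients of the constant terms of $\nu_{ket,*}(\widehat L_{ket}\otimes\cO\C_{\log}(s))(-s)$, for $s<r$, under the chart identification \eqref{ds2}. The pushforward sheaves $\nu_{ket,*}(\widehat L_{ket}\otimes\cO\C_{\log}(s))$ --- equivalently, the graded pieces $\Gr^{s} D^0_{\dR}(L)$ --- are compatible with pullback along $\phi$ by the pullback-compatibility of \cite[Thm 3.9(ii)]{LZ} and its logarithmic refinement in \cite{LZ2}, together with the compatibility of the period sheaves $\cO\C_{\log}$ and $\ho$ with pullback; and the degree decomposition of Theorem \ref{hansenright}, being extracted by the connection $\nabla$ and the projection operators attached to the local basis $\delta(y_i)$, is natural in $X$. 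Here one chooses the compactification and toric charts on $Y$ compatibly with those on $X$, which is possible since $\phi$ is assumed to respect the log structures (when the log structures are trivial there is nothing to arrange). This yields $\phi^{*}\bar L_{<r}=\overline{\phi^{-1}L}_{<r}$ inside $\overline{\phi^{-1}L}_{ket}$, and in particular $\phi^{*}\bar L_n=\overline{\phi^{-1}L}_n$.

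Combining the two points, the quotient norm on $\bar L_n=\bar L_{<n+1}/\bar L_{<n}$ pulls back to the quotient norm on $\overline{\phi^{-1}L}_n$ --- this being the general, fiberwise-checked fact that quotient norms commute with pullback --- and the isomorphism of Proposition \ref{gradedrealization}, induced from the tautological map $\psi$, is natural in $X$ and hence intertwines the two intrinsic norms. I expect the second step to be the only genuinely non-formal one: verifying that the filtration $\bar L_{<\bullet}$ on the \emph{large} bundle $\bar L_{ket}$ (and not merely the induced filtration on $D^0_{\dR}(L)$) is preserved by pullback is where the input from \cite{LZ,LZ2} and the naturality of the chart-level constructions of Theorem \ref{hansenright} are really used, and where one must take care with the choices of compactifications and log structures.
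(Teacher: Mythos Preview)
Your proposal is correct and follows essentially the same approach as the paper. The paper's proof is terser: once the Liu--Zhu pullback compatibility $\phi^*\nu_{X,*}(\hat L\otimes\cO\C_{X,\log}(n))\cong\nu_{Y,*}(\widehat{\phi^{-1}L}\otimes\cO\C_{Y,\log}(n))$ is invoked, it reads off $\phi^*\bar L_{<n}\cong\overline{\phi^{-1}L}_{<n}$ directly from the direct sum decomposition \eqref{ds1} (equivalently \eqref{filteredisom}), rather than going through the constant-term description and the naturality of Theorem \ref{hansenright} as you do; but the substantive input is the same, and your additional remarks on the norm on $\bar L$ and the naturality of Proposition \ref{gradedrealization} are correct elaborations of steps the paper leaves implicit.
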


\begin{proof}

Note that the natural map $\phi^*(\nu_{X,*})((\hat{L}\otimes\cO\C_{X,\log}(n)) \ra ((\widehat{\phi^{-1}L}\otimes\cO\C_{Y,\log}(n))$ is an isomorphism by \cite[Thm 2.1(3)]{LZ}. It follows from comparing the direct sum decompositions \eqref{ds2} for$X$ and $Y$ that $\phi^*\ol{L}_{<n}\cong \ol{\phi^{-1}L}_{<n}$ and thus that $\phi^*\ol{L}_{n}\cong \ol{\phi^{-1}L}_{n}$. The claim follows.

\end{proof}

\subsection{Comparison of the two metrics}
Now, suppose that $X$ has a smooth integral model $\cX$ over $\cO_k$ and let $L$ be a crystalline local system on the generic fiber of the formal completion $\widehat{\cX}_{\gen}$.

Then by Theorem \ref{thm:crystalline-DeRhammap} we get a natural map $S_{\cris}(L)\ra D^0_{\dR}(L)$ which induces a map $$\Gr S_{\cris}(L)\ra \Gr D^0_{\dR}(L)\cong D_{HT}(L)=\oplus_{n} \nu_*(\widehat{L}\otimes\cO\C(n)).$$ We thus obtain a canonical lattice in $L^{\circ}_{n}$ which we call the \emph{crystalline lattice}. We define the corresponding norm the \emph{crystalline norm}. We will show that the two norms agree in two cases: the case of crystalline local systems all of whose Hodge-Tate weights are 0 and rank 1 crystalline local systems. 
\begin{lemma}\label{compareweight0}
Let $L$ denote a crystalline local system with all Hodge-Tate weights 0. Then, the crystalline and intrinsic norms agree. 
\end{lemma}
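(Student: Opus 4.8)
Since every Hodge--Tate weight of $L$ is $0$, the intrinsic filtration degenerates: $\ol{L}_{<0}=0$ and $\ol{L}_{<1}=\ol{L}_{ket}$, so $\ol{L}_0=\ol{L}_{ket}=\widehat{L}_{ket}\otimes_{\widehat{\Z_p}}\widehat{\cO}_{\ol X}$. Likewise the Fontaine--Laffaille module $M=S_{\cris}(L)$ has trivial filtration ($\Fil^0 M=M$, $\Fil^1 M=0$), and $D^0_{\dR}(L)$ is concentrated in degree $0$, with $\Gr^0 D^0_{\dR}(L)=D^0_{\dR}(L)=\nu_{ket,*}(\widehat L_{ket}\otimes\cO\C_{\log})$, which for weight-$0$ $L$ coincides with $\nu_{ket,*}(\ol L_{ket})$ by Theorem~\ref{hansenright}(1). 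Thus both norms in question are norms on the single vector bundle $D^0_{\dR}(L)$: by Proposition~\ref{gradedrealization} with $n=0$ and Definition~\ref{intrinsicnormdefn}, the intrinsic norm is the one whose unit-ball sheaf pulls back, under $\nu_{ket}^*(\Gr^0 D^0_{\dR}(L))\otimes\widehat{\cO}_{\ol X}\cong\ol L_{ket}$, to the standard unit ball $\widehat{L}_{ket}\otimes\widehat{\cO}^+_{\ol X}$ of $\ol{L}_{ket}$; and the crystalline norm is the one whose unit ball is the image of $S_{\cris}(L)$ under $S_{\cris}(L)\to D^0_{\dR}(L)$. It therefore suffices to prove that, after extension of scalars to $\widehat{\cO}^+_{\ol{\cR}}$, the image of $S_{\cris}(L)$ equals $\ol{L}_{ket}$.

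The plan is to trace $S_{\cris}(L)$ through the recovery procedure of \S\ref{mfromtcrism}. Because $L$ has weight $0$, the analogue of Tsuji's Lemma~75 used in the proof of Lemma~\ref{absolutecrisintonedim} (the case $n=0$) gives $TA_{\inf}(L)=L\otimes_{\Z_p}A_{\inf}(\ol{\cR})$, and hence $S_{\cris}(L)\cong\big(L\otimes_{\Z_p}\cA_{\cris}(\ol{\cR})\big)^{G}$ together with the isomorphism $S_{\cris}(L)\otimes_{\cR}\cA_{\cris}(\ol{\cR})\cong L\otimes_{\Z_p}\cA_{\cris}(\ol{\cR})$ from the proof of Theorem~\ref{fibercomp}. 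Now push forward along the map $\cA_{\cris}(\ol{\cR})\to\cOB_{\dR}(\ol{\cR})$ of Proposition~\ref{prop:sAcristoOBdr}. The factor $\theta(\xi/t)$ responsible for the loss of integrality in Theorem~\ref{absolutecrisint} only enters in strictly positive graded degree; on $\Gr^0$ the map identifies the integral part $\Gr^0\cA_{\cris}(\ol{\cR})$ with the $\widehat{\cO}^+_{\ol{\cR}}$-span inside $\Gr^0\cOB_{\dR}(\ol{\cR})$ in a norm-preserving way (the normalization sending $t\bmod\Fil^2$ to $1$ is vacuous in degree $0$). Consequently the image of $S_{\cris}(L)$ in $\Gr^0\big(L\otimes\cOB_{\dR}(\ol{\cR})\big)=D^0_{\dR}(L)\otimes\widehat{\cO}_{\ol{\cR}}$ is exactly $\big(L\otimes_{\Z_p}\widehat{\cO}^+_{\ol{\cR}}\big)^{G}\otimes\widehat{\cO}^+_{\ol{\cR}}=\widehat{L}_{ket}\otimes\widehat{\cO}^+_{\ol X}=\ol{L}_{ket}$, as desired. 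Restricting this sheaf-level equality to any classical point of $X$---whether or not its residue field is ramified over $\Q_p$---yields the agreement of the two norms there, which is exactly where working in families pays off: we never have to run the crystalline theory over a ramified base directly.

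As a consistency check, restricting the above over the (unramified) base to a classical point with unramified residue field recovers the $a=0$ instance of Theorem~\ref{absolutecrisint}: the loss factor $p^{a(\dim L-1)/(p-1)}$ then equals $1$, and both the inclusion ``image of $\Gr^0 S_{\cris}$ lies in the intrinsic lattice'' and its reverse (the determinant/wedge-power argument) force equality on the nose. The only real work is the integral bookkeeping of the second paragraph: verifying $TA_{\inf}(L)=L\otimes A_{\inf}(\ol{\cR})$ for weight-$0$ crystalline $L$, and checking that the map $\cA_{\cris}(\ol{\cR})\to\cOB_{\dR}(\ol{\cR})$ of Proposition~\ref{prop:sAcristoOBdr} is norm-preserving on the degree-$0$ graded piece in the presence of the logarithmic structure and the divided-power variables $v_i$---once one confirms that no $\xi/t$ factor appears in degree $0$, the identification of the two lattices, and hence of the two norms, is forced.
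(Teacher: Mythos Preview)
Your argument hinges on the assertion that for a weight-$0$ crystalline local system $L$ of arbitrary rank one has $TA_{\inf}(L)=L\otimes_{\Z_p}A_{\inf}(\ol{\cR})$. You justify this by invoking ``the analogue of Tsuji's Lemma~75 used in the proof of Lemma~\ref{absolutecrisintonedim}'', but that lemma (as cited there) computes $TA_{\inf}$ for a one-dimensional representation $\chi^{-n}$; no analogue for higher rank is stated or proved either in the paper or in \cite{Tsuji}. The characterization in \S\ref{mfromtcrism} says $TA_{\inf}(L)$ is the unique $G$-stable free $A_{\inf}(\ol{\cR})$-lattice which is \emph{trivial modulo $\pi$}, and you give no argument that $L\otimes A_{\inf}(\ol{\cR})$ has this property when $L$ itself carries a nontrivial Galois action. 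Your last paragraph essentially concedes this (``the only real work is the integral bookkeeping\ldots verifying $TA_{\inf}(L)=L\otimes A_{\inf}(\ol{\cR})$\ldots''), but the bookkeeping is never done, and it does not appear to be any easier than the lemma itself. A secondary gap is the descent step $(L\otimes\widehat{\cO}^+_{\ol{\cR}})^G\otimes\widehat{\cO}^+_{\ol{\cR}}=L\otimes\widehat{\cO}^+_{\ol{\cR}}$, which you assert without comment.

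The paper's proof avoids computing $TA_{\inf}(L)$ altogether. It first notes the easy inclusion: since $\Gr^0\cA_{\cris}(\ol{\cR})=\ol{\cR}$, one has $S_{\cris}(L)\subset(L\otimes\ol{\cR})^G$, so the crystalline lattice sits inside the intrinsic lattice. For the reverse inclusion it passes to the determinant---both norms are multiplicative under tensor products by Lemma~\ref{cristensor}---reducing to rank~$1$. There a duality trick finishes the job: by \cite[Theorem~2.6*,~h)]{FaltingsCrystallineCohomology} the dual $L^*$ is again crystalline with weight~$0$, so one has crystalline\,$\subset$\,intrinsic for each of $L$, $L^*$, and the trivial local system $L\otimes L^*$; since the two lattices visibly agree for the trivial local system, multiplicativity forces equality for $L$ and $L^*$ separately. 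This is a genuinely different route, and the point is precisely that it never needs the higher-rank identification you are assuming.
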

\begin{proof}
 We will first show that the crystalline lattice is contained in the norm $\leq 1$ subset of the intrinsic norm. For brevity, call the latter subset the intrinsic lattice.
 
 As 0 is the only Hodge-Tate weight, we have that $D^0_{\dR}(L) = \nu_* (\hat{L}\otimes \ho_X)$. Further, we have $S_{\cris}(L) \subset (L\otimes \Gr^0 \cA_{\cris})^{\Gal = 1} = (L\otimes \overline{\cR})^{\Gal = 1}$. It follows that the image of $ S_{\cris}(L)$ in $D^0_{\dr}(L)$ is contained in $L\otimes \overline{\cR}$, and therefore the crystalline lattice is contained in the intrinsic lattice. 
 
 To finish the lemma, it suffices to prove the other containment. In particular, it suffices to prove that the crystalline and intrinsic lattices agree for the determinant of $L$ (as both norms and therefore lattices are compatible with tensor products, see Lemma \ref{cristensor}). Therefore, we assume that $L$ is a crystalline rank-1 local system with Hodge-Tate weight 0. By \cite[Theorem 2.6*, h)]{FaltingsCrystallineCohomology}, the dual local system $L^*$ of $L$ is also crystalline. As $L^*$ (and $L\otimes L^*$, which is the trivial local system) also has Hodge-Tate weight 0, we have that the crystalline lattices for $L,L^*$ and $L\otimes L^*$ are each contained in the intrinsic lattices for each local system. As $L\otimes L^*$ is the trivial local system, the two lattices clearly agree for $L\otimes L^*$. It follows from lemma \ref{cristensor} that the two lattices therefore must agree for both $L$ and $L^*$. The lemma follows. 
 
\end{proof}

\begin{lemma}\label{compchar}
Let $L$ denote a rank 1 crystalline local system with Hodge-Tate weight $0\leq n\leq p-2$. Then the intrinsic norm is the same as the crystalline norm.
\end{lemma}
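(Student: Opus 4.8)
The plan is to reduce to the one-dimensional absolute statement proved in Lemma \ref{absolutecrisintonedim}, using the compatibility of both the crystalline and intrinsic constructions with passage to fibers. First I would fix a classical point $x$ of $\widehat{\cX}_{\gen}$ defined over a finite (automatically unramified, after possibly enlarging the residue field) extension $K/\Q_p$; since both the crystalline norm and the intrinsic norm are, by construction, families of fiberwise norms on the line bundle $\Gr^n D^0_{\dR}(L) = \nu_*(\widehat L \otimes \cO\C(n))$, it suffices to check that they agree at every such $x$. On the intrinsic side, Proposition \ref{intrinsifunctoriality} gives that the intrinsic norm is functorial under the map $\Spec K \to \widehat{\cX}_{\gen}$, and unwinding the definition at a point recovers exactly the intrinsic norm of Definition \ref{intrinsicnormdefn} in the absolute case applied to the crystalline (hence de Rham, hence Hodge--Tate) $G_K$-representation $L_x$. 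On the crystalline side, Theorem \ref{fibercomp} gives $S_{\cris}(L)\otimes_{\cR} V \cong S_{\cris}(L_x)$ compatibly with the maps to $D^0_{\dR}$ and $D_{\dR}$, so the crystalline lattice in the fiber $L^\circ_{n,x}$ is the crystalline lattice attached to the absolute crystalline representation $L_x$.

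Having made this reduction, the statement becomes: for a rank-one crystalline $\Z_p$-representation of an unramified $G_K$ with Hodge--Tate weight $0\le n\le p-2$, the intrinsic and crystalline norms on $\Gr^n D_{\dR}$ agree. After base change to $W(\overline{\bF}_p)[1/p]$ such a representation is forced to be $\chi^{-n}$, and this is precisely the content of Lemma \ref{absolutecrisintonedim} (whose proof computes both lattices explicitly via $TA_{\inf}(L)\cong L\otimes \pi^n A_{\inf}$ and the fact that $t/\pi$ is a unit in $A_{\cris}$, giving exact agreement rather than merely a bounded comparison). I would therefore simply cite Lemma \ref{absolutecrisintonedim}, noting that the normalization of the metric on $\Gr^n B_{\dR}$ — sending $t \bmod \Fil^2$ to $1 \in \C_p(1)$, hence $t^n$ to norm $1$ — is the one used there, which is what makes the two norms coincide on the nose.

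The main obstacle, and the only genuinely non-formal point, is verifying that the intrinsic norm \emph{in families} (Definition \ref{intrinsicnormdefn}), built from the ascending filtration $\ol L_{<r}$ on $\ol L_{ket}$ via Theorem \ref{hansenright} and Proposition \ref{gradedrealization}, genuinely restricts at a classical point to the absolute intrinsic norm of the previous subsection — i.e.\ that the quotient-norm construction commutes with specialization. This requires knowing that $\ol L_{<r}$ is a \emph{locally split} sub-bundle (which we have, from \eqref{filteredisom} together with faithful flatness of $\cO\C_{\log}$ over $\ho_{\ol X}$) so that forming the quotient norm and then restricting to a fiber agrees with restricting and then forming the quotient norm; this is exactly the functoriality packaged in Proposition \ref{intrinsifunctoriality}, applied to $\phi : \Spec K \to X$. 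Once that compatibility is in hand the lemma is immediate from Lemma \ref{absolutecrisintonedim}.
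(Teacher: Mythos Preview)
Your strategy of reducing to fibers and invoking the absolute one-dimensional case is natural, but there is a genuine gap. The parenthetical claim that a classical point of $\widehat{\cX}_{\gen}$ is ``automatically unramified, after possibly enlarging the residue field'' is false: the rigid generic fiber of a smooth formal $\cO_k$-scheme has plenty of points defined over ramified extensions (e.g.\ $T=p^{1/2}$ in a closed disk). Both Theorem \ref{fibercomp} and Lemma \ref{absolutecrisintonedim} require the base DVR to have $p$ as uniformizer, so your fiberwise comparison only goes through at unramified points. At a ramified point the absolute Fontaine--Laffaille functor $S_{\cris}$ is not even defined on the fiber, so there is no ``absolute crystalline norm'' to reduce to; the crystalline norm there is by definition the one obtained by specializing the \emph{relative} lattice $\Gr S_{\cris}(L)$. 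To complete your argument you would still need to pass from agreement at unramified points to agreement everywhere --- for instance by first showing that the intrinsic norm in rank one also arises from an $\cO_{\widehat{\cX}}$-lattice and then arguing that two such lattices agreeing on all $W(\ol{\bF}_p)$-points must coincide --- and you do not supply this step.

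The paper avoids the issue entirely by working directly in the relative setting over $\cR$. It twists by $\chi^n$ to reduce to Hodge--Tate weight $0$, invokes the relative weight-$0$ comparison Lemma \ref{compareweight0}, and then checks by hand (using $TA_{\inf}(L)=\pi^n TA_{\inf}(L')$ from \cite[Lemma 75]{Tsuji} together with the fact that $t/\pi$ is a unit in $\cA_{\cris}(\ol{\cR})$) that $e\otimes f\in S_{\cris}(L')$ iff $e\otimes t^n f\in S_{\cris}(L)$, and likewise on the intrinsic side. This yields equality of the two norms as a statement about sheaves over $\cR$, which then specializes to every classical point, ramified or not. Indeed, the paper remarks just before Theorem \ref{absolutecrisint} that the whole point of carrying out the argument in families is precisely to handle ramified points ``without having to directly confront the Faltings--Fontaine--Lafaille theory in that context.''
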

\begin{proof}
Consider the local system $L' = L\otimes \chi^n$. This is crystalline with Hodge-Tate weight 0, and so the intrinsic and crystalline norms agree for $L'$. 

We identify $L,L'$ as rank-1 $\Z_p$-modules (with the Galois action differing by a twist), and therefore also identify $L\otimes A$ with $L'\otimes A$ with $A = A_{\inf}(\overline{\cR})$ or $\cA_{\cris}(\overline{\cR})$. We choose a generator $e$ of $L$. By \cite[Lemma 75]{Tsuji}, we have that $TA_{\inf}(L) = \pi^nTA_{\inf}(L')$. We view $TA_{\inf}(L'), TA_{\inf}(L)$ as subsets of $L'\otimes \cA_{\cris}(\overline{\cR})$. We claim that $e\otimes f$ (with $f\in \cA _{\cris}(\overline{\cR})$) is an element of $S_{\cris}(L')$ if and only if $e\otimes t^nf$ is an element of $S_{\cris}(L)$. In other words, we claim that $e\otimes f$ is in the $\cA_{\cris}(\overline{\cR})$-span of $TA_{\inf}(L')$ if and only if $e\otimes t^nf$ is in the $\cA_{\cris}(\overline{\cR})$-span of $TA_{\inf}(L)$. This follows from the fact that $\frac{t}{\pi}\in \cA_{\cris}(\overline{\cR})$ is a unit. The claim follows from the easy observation that $e\otimes f \in L\otimes \cA_{\cris}(\overline{\cR})$ is Galois invariant if and only if $e\otimes t^nf \in L\otimes \cA_{\cris}(\overline{\cR})$ is. Therefore, the crystalline norm of $e\otimes f \in S_{\cris}(L')$ is the same as the crystalline norm of $e\otimes t^nf \in S_{\cris}(L)$. 

We now calculate the intrinsic norms. By Lemma \ref{compareweight0}, we have that $S_{\cris}(L')$ generates the intrinsic lattice in $D^0_{\dR}(L')$. Again, by identifying $L,L'$ as $\Z_p$-modules (with Galois actions differing by $\chi^n$), we may identify $\hat{L}\otimes \OBdr$ and $\hat{L'}\otimes \OBdr$ (with the Galois action again differing by $\chi^n$). As in the crystalline case, $e\otimes f \in \hat{L'}\otimes \OBdr$ is Galois invariant if and only if $e\otimes t^nf \in \hat{L}\otimes \OBdr$ is. We normalized the intrinsic norm so that the element $t$ has norm 1, and therefore the intrinsic norm of $e\otimes f \in \hat{L'}\otimes \OBdr$ is the same as the intrinsic norm of $e\otimes t^n f \in \hat{L}\otimes \OBdr$. The result follows.

\end{proof}

Analogous to the absolute case, we may define the crystalline norm on a local system that is crystalline up to twist. 
\begin{definition}\label{normcrystallinelocalsystemuptotwist}
Let $L$ be a local system such that there exists some integer $a$ such that $L(a)$ is crystalline. Then the identification of $\Gr D^0_{\dR}(V)$ with $\Gr D^0_{\dR}(V(a))$ (via multiplication by $t^a$) yields a crystalline norm on $\Gr D^0_{\dR}(V)$. Note by Lemmas \ref{compchar} and \ref{cristensor} that the norm is independent of $a$ (as long as $L(a)$ is crystalline).
\end{definition}

 Our goal is now to show that the crystalline and intrinsic norms  are comparable for local systems that are crystalline up to twist.

\begin{theorem}\label{crisint}
Let $\cX$ be a smooth geometrically connected scheme over $\cO_k$, and $L$ be a $\Z_p$ local system on $\widehat{\cX}_{\gen}$ that is crystalline up to twist. Suppose that the normalized weight-sum of $L$ is $\leq p-2$ and that the weight range is $b$. Then the crystalline norm and the intrinsic norm associated to $L$ are comparable up to $p^{\frac{b(\dim L -1)}{p-1}}$. 
\end{theorem}

\begin{proof}\emph{of Theorem \ref{crisint}:}

Without loss of generality, we may and do assume $X=\Spec(R)$ is affine with an \'etale map $$\cO_k[s_1,s_1^{-1}\dots,s_n,s_n^{-1}]\ra R,$$ and that the weight interval of $L$ is $[0,b]$. Recall
that $v_i=[s_i^{\flat}]\otimes s_i^{-1} -1$ are elements of $\cA_{\cris}(\ol\cR)$ such that
$\cA_{\cris}(\ol\cR)=A_{\cris}(\ol\cR)\langle v_1,\dots,v_n\rangle^{PD}$. Moreover,
$$S_{\cris}(L)\subset \cA_{\cris}(\ol\cR)\otimes L.$$ It follows that the image $S_i$ of $\Fil^i S_{\cris}(L)$ in $\Gr^iD^0_{\dR}(L)$ is contained in the image of $\Fil^iA_{\cris}(\ol\cR)\otimes L$ which is $ L\otimes \xi^i\ol\cR [\frac{v_1}\xi,\dots,\frac{v_n}\xi]^{\leq i}$ by
\cite[(5)]{Tsuji}.

Moreover, it follows by Theorem \ref{hansenright} that the positive degree components of $S_i$ are irrelevant when computing the intrinsic norm. Since we embed $\Dht^i(L)$ inside $L\otimes \ho_X$ by scaling by $\frac{1}{\pi^i}$, it follows that the crystalline lattice is contained inside $\theta(\frac{\xi}{\pi})^i$ times the intrinsic lattice. As $v_p(\theta(\frac\xi\pi))=-\frac1{p-1}$ (see for example \cite[page 29]{FaltingsCrystallineCohomology}), the crystalline lattice is contained within $p^{-\frac{b}{p-1}}$ times the intrinsic lattice.

To get the other inclusion, note first of all that by Lemma \ref{cristensor} that
$S_{\cris}(L\otimes L')=S_{\cris}(L)\otimes S_{\cris}(L')$, and by taking direct summands that
$\bigwedge^{\dim L}S_{\cris}(L)=S_{\cris}(\bigwedge^{\dim L}L) $ - note that the Hodge-Tate weights are $ \leq p-2$ by assumption. The same compatibility for the intrinsic height is immediate. 

It follows by Lemma \ref{compchar} that the top wedge powers of the crystalline lattice and of the intrinsic lattices are the same. An argument identical to one used to conclude the proof of Theorem \ref{absolutecrisint} proves this result. 

\end{proof}

\section{Some Ramified Examples}\label{S-ramifiedexamples}

\subsection{Elliptic curves with CM by a maximal order ramified at $p$}\label{sec:maximalEC}
In this subsection, we work out the comparison in the case of the Galois representation associated to an elliptic curve admitting CM by an imaginary quadratic field ramified at $p$. We will make extensive use of Colmez's paper \cite{col}, where he computes the $p$-adic valuation of certain $p$-adic periods. 

Let $p>2$ be a prime and let $\fp \in \overline{\Q}_p$ denote an element such that $\fp^2 = -p$. Let $K = \Q_p[\fp]$. 
Let $M = \Q[x] / (x^2 - p)$ -- note that $\cO_M\otimes \Z_p$ is obviously isomorphic to $\cO_K$, and there are two equally canonical choices of isomorphism which we denote by $\sigma,\tau$. Let $E$ denote an elliptic curve with CM by $\cO_M$. We note that $E$ is defined over the Hilbert class field $H$ of $M$, and the extension $H/M$ is totally split at the principal prime ideal $\fp$. Therefore, $E$ is defined over $K$, as is the CM action. 

The ring $\cO_M\otimes \Z_p$ acts $\cO_K$-linearly on $H^1_{\dR}(E_{\cO_K})$, the integral de Rham cohomology of $E$. Let $H^1_{\dR}(E_{\cO_K})^\sigma$ denote the one-dimensional subspace on which $\cO_M\otimes\Z_p$ acts via the embedding $\sigma$, and let $H^1_{\dR}(E_{\cO_K})^\tau$ denote the analogous object. The action of $\cO_M\otimes \Z_p$ preserves $\Fil^1$, and therefore $\Fil^1$ is one of two one-dimensional subspaces described above. Without loss of generality, we suppose that $\Fil^1 = H^1_{\dR}(E_{\cO_K})^\sigma$. 

Note that while $H^1_{\dR}(E_{\cO_K})^\sigma$ and $H^1_{\dR}(E_{\cO_K})^\tau$ are each saturated in $H^1_{\dR}(E_{\cO_K})$, the two subspaces don't span $H^1_{\dR}(E_{\cO_K})$ integrally. This follows because $H^1_{\dR}(E_{\cO_K})$ is free of rank 1 as an $\cO_M\otimes_{\Z} \cO_K$-module (\cite{col}). This description then gives us that $H^1_{\dR}(E_{\cO_K})/ (H^1_{\dR}(E_{\cO_K})^\tau \oplus H^1_{\dR}(E_{\cO_K})^\tau)$ is isomorphic to $\cO_K / \fp \cO_K$.

Let $T$ denote the $p$-adic Tate module of $E$. Note that $T$ is naturally a free rank-1 $\cO_M\otimes \Z_p$-module, and the Galois action is $\cO_M\otimes \Z_p$-linear. Let $L = T^{\vee}$, and let $u_1\in L$ denote an $\cO_M\otimes \Z_p$-generator, and let $u_2 = \sigma^{-1}(\fp) u_1$. In \cite{col}, Colmez computes the $p$-adic valuation of periods of Lubin-Tate groups. In particular, Colmez constructs elements $s_\sigma,s_\tau \in \Bdr$ on which the Galois action is through the Lubin-Tate characters, with the following properties (\cite[Theorem I.2.1]{col}): 
\begin{itemize}
    \item $s_\tau \in \Fil^0 \setminus \Fil^1$, with $v_p\theta(s_{\tau}) = \frac{1}{2(p-1)} + \frac{1}{2}$.
    
    \item $s_\sigma \in \Fil^1 \setminus \Fil^2$, with $v_p\theta((s_\sigma / t)) = -\frac{1}{2(p-1)} - \frac{1}{2}.$
\end{itemize}

One can now check that $D_{\dr}(L \otimes \Q_p)$ is spanned by two elements $A = u_1\otimes \fp s_\sigma + u_2 \otimes s_\sigma$ and $B = u_1 \otimes (-\fp s_\tau) + u_2 \otimes s_\tau$ -- i.e. the two elements $A,B \in L\otimes B_{\dR}$ are indeed Galois invariants.
We are now ready to compare the two norms. 

\subsubsection{The intrinsic norm}
Recall that the intrinsic norm is calculated by embedding the graded pieces (starting with the lowest piece and working upwards) of $ D_{\dR}(L)$ into quotients of $L\otimes \C_p$ and using the natural norm on $L\otimes \C_p$. In this example, the image of $\Gr^0 D_{\dR}(L)$ in $L\otimes \C_p$ is obtained by restricting $1\otimes \theta: L\otimes \Bdr \rightarrow L\otimes \C_p$ to $(L\otimes \Bdr)^{\Gal_K}$. Therefore, it follows that $B$ has intrinsic $p$-adic valuation $\frac{1}{2(p-1)} + \frac{1}{2}$. 

The intrinsic norm of $A$ is calculated as follows. Consider the quotient of $L\otimes \C_p$ by the $\C_p$-span of $(1\otimes \theta) (B)$. We endow this with the quotient norm induced by the norm on $L\otimes \C_p$. The intrinsic norm of $A$ is defined to be the norm of image of $(1\otimes \theta)(\frac{A}{t})$ in $(L\otimes \C_p) / \C_p\cdot (1\otimes \theta)(B) $. Here, $(L\otimes \C_p) / \C_p\cdot (1\otimes \theta)(B) $ has the quotient norm induced by the norm on $L\otimes \C_p$. 

We now claim that $A$ has the same intrinsic norm as the element $1\otimes \theta (u_1 \otimes \fp \frac{s_{\sigma}}{t})$. Indeed, $(1\otimes \theta) (\frac{A}{t}) - \frac{\theta(s_{\sigma}/t)}{\theta(s_{\tau})} \cdot (1\otimes \theta) (B) = u_1\otimes \theta(2\fp s_{\sigma}/t)$.   It now follows that the intrinsic valuation of $A$ is at least $\frac{-1}{2(p-1)}$. In order to show that the intrinsic valuation of $A$ is exactly $\frac{-1}{2(p-1)}$, it suffices to show the valuation of $ 1\otimes \theta (u_1\otimes \frac{2\frak{p}s_{\sigma}}{t})$ is greater than or equal to that of $A_{\alpha} = 1\otimes \theta (u_1\otimes \frac{2\frak{p}s_{\sigma}}{t}) + \alpha \cdot [1\otimes \theta (B)]$ for any $\alpha \in \C_p$. Note that we calculate the valuation in $L\otimes \C_p$ with respect to the lattice $L\otimes \cO_{\C_p}$. Suppose that there exists $\alpha \in \C_p$ such that $A_{\alpha}$ had valuation greater than $\frac{-1}{2(p-1)}$. Such an $\alpha$ must necessarily satisfy $v_p(\alpha\frak{p} \theta(s_{\tau})) = v_p(\frak{p}\theta(\frac{s_{\sigma}}{t}))$. But in that case, the valuation of $A_{\alpha}$ would then equal the valuation of $u_2 \otimes \theta(\alpha t_{\tau})$, which is strictly smaller than $\frac{-1}{2(p-1)}$.

\subsubsection{The crystalline norm}
In order to compute the crystalline norms of $A$ and $B$, we proceed as follows. We have the canonical identification $H^1_{\dR}(E_K) \tilde{\rightarrow} D_{\dR}(L)$. The image of $H^1_{\dR}(E_{\cO_K})$ under this map defines the set of points having crystalline norm $\leq 1$. Colmez's construction of the periods $s_\sigma, s_\tau$ yields that the elements $A,B$ are integral generators of the two subspaces $H^1_{\dR}(E_{\cO_K})^\sigma$ and $H^1_{\dR}(E_{\cO_K})^\tau$. Therefore, it follows that $A$ is an integral generator of $\Fil^1 \subset H^1(E_{\cO_K})$, and so has valuation 0 under the crystalline norm. 
    
As remarked above, the element $B$ is not an integral generator of $\Gr^0H^1_{\dR}(E_{\cO_K})$, and but $\frac{B}{\fp}$ is integral in $\Gr^0H^1_{\dR}(E_{\cO_K})$, and indeed generates it. It follows that $B$ has crystalline valuation $\frac{1}{2}$. Therefore, the crystalline and intrinsic norm agree up to a factor of $p^{\frac{1}{2(p-1)}}$, which is (stronger than) what is claimed by Theorem \ref{crisint}.


\subsection{Elliptic curves with CM by a non-maximal order}
Let $E,K$ be as in Subsection \ref{sec:maximalEC}. There is a unique degree $p$ cyclic extension $K'/K$ such that every order $p$ subgroup of $E_K$ is defined over $K'$. Let $E'$ denote an elliptic curve with CM by the unique order having index $p$ inside the maximal order. There exists a degree-$p$ isogeny $E\rightarrow E'$ defined over $K'$. The $p$-adic comparison isomorphism relating $H_{\dR}^1(E)$ with $D_{\dR}(T_p^\vee(E))$ is compatible with isogenies. Therefore, in order to compare the intrinsic and crystalline norms for $E'$, it suffices to work inside $D_{\dR}(T_p^{\vee}(E))$. 

We let $L$, $s_{\sigma}, s_{\tau}, A,B,$ etc to mean the same objects as in Section \ref{sec:maximalEC}. Setting $L' = T_p^{\vee}(E')$, we may assume without loss of generality that $L'\subset L$ is the $\Z_p$-sublattice spanned by $u_1,pu_2$. 
\subsubsection{Intrinsic norm}
We claim that $B,A$ have intrinsic valuation equalling $-\frac{1}{2}+\frac{1}{2(p-1)}$ and $-\frac{1}{2(p-1)}$ respectively. Indeed, the intrinsic valuation for $A$ equals the intrinsic valuation of the element $u_1 \otimes (-\frak{p}s_{\sigma)})$ as the term $u_2\otimes s_{\tau}$ is rendered irrelevant while computing the intrinsic valuation as in the maximal case. 

\subsubsection*{Crystalline norm}

It now remains to compute the crystalline valuations of $A,B$. As $ E\rightarrow E'$ is defined by a degree $p$ isogeny, it follows that $H^1_{\dR}(E'_{\cO_{K'}}) \subset H^1_{\dR}(E_{\cO_{K'}})$ is a sublattice, whose co-kernel has cardinality the same as $\cO_{K'} / p \cO_{K'}$. Therefore, as $A,B$ forms a basis of $H^1_{\dR}(E'_K)$, the changes in the crystalline valuations of $A$ and $B$ will sum to $-1$,and therefore the crystalline valuations of $A$ and $B$ sum to $-\frac{1}{2}$. Therefore, it suffices to calculate the crystalline norm of $B$. 

We have that $\Fil^1(H^1_{\dR}(E'_{\cO_{K'}})) = \Fil^1(H^1_{\dR}(E_{\cO_{K'}})) \cap H^1_{\dR}(E_{\cO_{K'}})$. Therefore, the change in the crystalline norm of $B$ is just the valuation of $\alpha \in \cO_{K'}$ where $\alpha$ is any element that satisfies $\Fil^1(H^1_{\dR}(E'_{\cO_{K'}})) =\alpha \Fil^1(H^1_{\dR}(E_{\cO_{K'}}))$, equivalently $\Omega^1_{E'/\cO_{K'}} = \alpha \Omega^1_{E/\cO_{K'}}$. 

We calculate this using a global argument as follows. Recall that $E,E'$ have CM by the imaginary quadratic field $M$. Let $H$  denote the Hilbert class field of $M$, and let $H'$ denote the cyclic degree $p$ extension of $H$ which is the field of definition of the order $p$ subgroups of $E$ and therefore of $E'$. First, we suppose that $H = M$, i.e. $\cO_M$ has class number 1. The change in the global Faltings heights of $E,E'$ is well understood. Indeed, we have that $h_F(E') - h_F(E) = \frac{p-1}{2p} \log p$ (see for instance, \cite{lucia}, or \cite{YZ}, or \cite[Proposition 5.1.6]{ShankarTang} and \cite{Autissier}). On the other hand, by the Faltings isogeny theorem, this change in height equals $\frac{1}{2}\log p - \frac{1}{[H':\Q]} \log (|\Omega^1_{E/\cO_{K'}}/\Omega^1_{E'/\cO_{K'}} |) = \frac{1}{2}\log p - \frac{1}{2p} \log (|\Omega^1_{E/\cO_{K'}} / \Omega^1_{E'/\cO_{K'}}|$. This is because the extension $H'/\Q$ is totally ramified at $p$, and so there is only one local place that contributes to the difference in Faltings heights. Comparing these quantities, we see that
$\frac{1}{2p}\log (|\Omega^1_{E/\cO_{K'}} / \Omega^1_{E'/\cO_{K'}}| = \frac{1}{2} \log p - \frac{(p-1)}{2p} \log p$, whence $|\Omega^1_{E/\cO_{K'}} / \Omega^1_{E'/\cO_{K'}}| = p$. It then follows that $v_p(\alpha) = \frac{1}{2p}$, and therefore that the crystalline valuation of $A$ is $ - \frac{1}{2p}$. 

If $M$ does not have class number 1, then the increase in the degree $[H':\Q]$ would be offset exactly by the number of places of $H$ above $p$ (note that the extension $H/M$ is totally split above the unique prime ideal of $M$ dividing $p$, as this ideal is principal), and so we would still obtain that the crystalline valuation of $A$ is $-\frac{1}{2p}$. 

It therefore follows that the crystalline valuation of $B$ must be $\frac{1}{2} - 1 + \frac{1}{2p} = -\frac{1}{2} + \frac{1}{2p}$. 

Therefore, the two norms are indeed seen to be comparable by the above explicit calculation.

\section{Shimura Varieties}\label{S-SV}

See \cite{milne-intro} for further background on this section.

\subsection{Basic definitions}

Let $G$ be a connected reductive group over $\Q$, $\Ss=\Res_{\C/\R}\G_m $ the Deligne torus, $X$ be a $G(\R)$-conjugacy class of 
homomorphisms $f:\Ss\ra G_\R$ satisfying the Shimura axioms in \cite{milne-models}. Note in particular that this includes the axiom that the central torus $Z(G)^o$ is split over a CM field. We call $(G,X)$ a \textit{Shimura datum}

Finally, let $K=\prod_p K_p\subset G(\A_f)$ be a split-neat compact subgroup. Let $E(G,X)\subset \C$ be the reflex field, and $E$ be a field over which $G$ splits. Associated to this data we get 
\begin{itemize}
    \item A complex projective variety $\check{X}_{\C}$ with a transitive action of $G$, such that $X$ injects into $\check{X}_{\C}(\C)$ in a $G(\R)$-equivariant way.
    \item An algebraic variety $S_K(G,X)$ over $E(G,X)$ called a \textit{Shimura variety} whose complex points can be identified with $G(\Q)\backslash\left(X\times G(\A_f)\right)/K$.
    \item A canonical model of $\check{X}$ over $E(G,X)$ satisfying certain natural properties.
\end{itemize}

Henceforth we write $\check{X}$ and $S_K(G,X)$ to mean the canonical models, which are algebraic varieties defined over $E(G,X)$. \medskip

If $G$ is a simple adjoint group over $\Q$, then $G=\Res_{F/\Q}G'$ for some totally real field $F$ and a geometrically simple adjoint group $G'$ \cite[Thm 3.13]{milne-moduli}. Thus $\displaystyle G_\R\cong\prod_{\sigma:F\ra \R} G'_\sigma$
where $G'_{\sigma} = G'\times_{F,\sigma} \R$. Let $I_{nc}$ be the set of real places of $\R$ such that $G'_{\sigma}(\R)$ is non-compact for each $\sigma \in I_{nc}$. We then have a corresponding splitting of hermitian symmetric domains $\displaystyle X\cong \prod_{\sigma \in I_{nc}} X_\sigma$. 

\subsection{Special points}

Following \cite{milne-moduli} we say that a point $x\in X$ is \textit{special} if the corresponding $h:\Ss\ra G$ factors through a $\Q$-torus $T$. We define $T_h$ to be the smallest such $\Q$-torus.

We say that a point of $S_K(G,X)$ is Special if some (and hence, any) representative $(x,g)\subset X\times G(\bbA_f)$ has the property that $x$ is special.

\begin{lemma}\label{lem:specialptCMsplit}
Given a special point $h\in X$, the torus $T_h$ splits over a CM field. Consequently, $h$ lies in a $0$-dimensional Shimura sub-datum $(T,h)$. 
\end{lemma}

\begin{proof}

If $G$ is adjoint, then this follows from \cite[A3]{milne-aut}. Therefore, the torus $T_{h^{\ad}} \subset G^{\ad}$ splits over a CM field. But $T_{h^{\ad}}$ is the quotient of $T_h$ by a subtorus of $Z(G)$, which also splits over a CM field. The claim follows.
    
\end{proof}

As a result, we see that every special point $h\in X$ is contained in a $0$-dimensional Shimura datum $(T_h,h)\subset (G,X)$.

\subsection{Useful group-theoretic reductions for Shimura varieties}
We will record two results that will be used at multiple points in the future. 
\begin{proposition}\label{prop:shimuracover}
    Let $(G,X)$ be a Shimura datum. Then there is another Shimura datum $(G',X')$ and a morphism $f: (G',X')\rightarrow (G,X)$ satisfying the following properties. 
    \begin{enumerate}
        \item $G'\rightarrow G$ is surjective. 
        \item $f$ is an isomorphism at the level of adjoint Shimura data.
        \item $G'^{\der}$ is simply connected.
    \end{enumerate}
\end{proposition}

\begin{proof}

We first assume that $G^{\der}$ is adjoint. In this case, $G\cong G^{\der} \times  Z(G)$. Let $G^s\ra G^{\der}$ be the simply connected cover, and set $G_1:=G^s\times Z(G)$ Now let $\mu:\Ss\ra G$ be a cocharacter in $X$ corresponding to a special point $h$, so that $\mu$ factors through the $\Q$-torus $T_h$. By Lemma \ref{lem:specialptCMsplit} $T_h$ splits over a CM field. Let $T'_h\subset G_1$ denote the connected component of the inverse image of $T_h$. Finally, let $K:=T'_h\cap \left(Z(G^s)\times\{1\}\right)$.
We define $G':= (G_1\times T'_h)/K_\Delta$, where $K_\Delta$ is the diagonal embedding of $K$. 

Note that there is a diagonal embedding of $T'_h$ into $G_1$ which induces an embedding $\phi:T_h\hookrightarrow G'$. We define the conjugacy class of the composition $\Ss\ra T_h\ra G'$ to be $X_1$. Since $T_h$ is split over a CM field the Shimura axioms are immediate, and so it remains to show that $G'^{\der}$ is simply connected. But $K_\Delta\cap \left(G_1\times \{1\}\right) = \{1\}$, and so $G_1$ injects into $G'$, and thus $G^s\cong G'^\der$, as desired.

Finally we handle the general case. Suppose now that $G$ is arbitrary. Consider $G_0:=G/Z(G^{\der})$ so that $G_0^{\der}$ is adjoint. Thus by what we proved above there is a Shimura variety $(G_2,X_2)\ra (G_0,X_0)$ inducing an isomorphism on the adjoint groups. We define $G'$ to be the connected component of $G\times_{G_0}G_2$, and likewise $X_1$ to be $X\times _{X_0} X_2$. Since $G'$ surjects onto $G_2$ its derived group must be simply connected, completing the proof.
    
\end{proof}

\begin{proposition}\label{prop: realrankatleast2}
    Let $(G,X)$ be a Shimura datum such that $G = G^c$. Then, there exists another Shimura datum $(H,Y)$ that $(G,X)$ embeds in, such that $H = H^{c}$ and such that every $\Q$-simple factor of $H^{\der}$ has real rank at least 2. Further, we may choose $H^{\der}$ to be $\Q$-simple if $G^{\der}$ is.
\end{proposition}
\begin{proof}
    Let $F$ denote a real quadratic field that is linearly disjoint from some finite Galois extension that splits $G$. Then, $H_1 = \Res_{F/\Q}G_F$ is a reductive group such that every $\Q$-simple factor of $H^{\prime, \der}$ has real rank at least 2. The center $Z_{H_1}$ of $H_1$ contains the center $Z_G$ as a subtorus. We have the canonical surjective morphism that we call $\det: H_1 \rightarrow H_1/H_1^{\der}$. This map $\det|_{Z_{H_1}}$ is an isogeny. Let $Z'_G = \det(Z_G)$. Set $H = \det^{-1}(Z'_G)$. This group has the same derived group as $H_1$, and satisfies $H^{ c} = H$, as $Z_G$ and $Z_H$ are isogenous tori. The group $G$ embeds in $H$ by construction, and $(G,X)$ induces a Shimura data on $H$ in which it embeds as a sub-Shimura datum. By construction, $H^{\der}$ is $\Q$-simple if $G^{\der}$ is. The result follows.

\end{proof}

\subsection{Principle bundles}

For an algebraic group $G$ defined over $\Q$, let $Z(G)^o$ denote its central torus. Assuming $Z(G)$ splits over a CM field - which is true for all our Shimura varieties - we may define $Z(G)_s< Z(G)^o$ to be the maximal anisotropic $\Q$-subtorus which splits over $\R$. We define $G^c:=G/Z(G)_s$.

 There is an algebraic variety $P=P_K(G,X)$ over $E(G,X)$ such that 
 $P(\C)\cong G(\Q)\backslash \left(X\times G(\C)\times G(\A_f)\right)/K$
 and maps $$S_K(G,X)\xleftarrow[]{\pi} P\xrightarrow[]{\psi} \check{X}$$ which over $\C$ are naturally identified with the maps $[(x,g,k)]\ra [(x,k)]$ and $[(x,g,k)]\ra g^{-1}x$. Moreover, there is a left action of $G$ on $P$ such that $g'\circ [(x,g,k)] = [(x,gg'^{-1},k])$ and $\psi([x,gg',k])=g'\psi([x,g,k])$.
 
 $P$ has a natural $G^c_{E(G,X)}$-action under which $\psi$ is $G^c_{E(G,X)}$-equivariant, and $P$ is a $G^c_{E(G,X)}$-torsor under $\pi$.

\subsection{Automorphic vector bundles}\label{secthreeautomorphicvb}

A $G^c$-vector bundle $V$ on $\check{X}$ is a vector bundle equipped with an equivariant $G^c$ action over $\check{X}$. 
We may pull back $V$ to obtain a $G^c$-equivariant vector bundle $\psi^*V$ on $P$. This may then in turn be descended to obtain a vector bundle W on $S_K(G,X)$. We call this the \emph{automorphic vector bundle}.
If $V$ is defined over a field $F\supset E(G,X)$ then $W$ has a canonical model over $F$ as well.

\subsection{Rational representations}\label{sec:rationalrepresentations}

Let $\rho$ be a representation of $G^c$ acting on a  vector space $V$ defined over $\Q$. This in turn induces a rational local system $_BV_{\Q}$ on $S_K(G,X)(\C)$ as well as \'etale local systems $_{\et}V_\ell$ for each finite prime $\ell$. There are canonical isomorphisms $_{\et}V_{\ell} \rightarrow _BV_{\Q}\otimes \Q_{\ell}$. The \'etale local systems $_{\et}V_\ell$ descend to the reflex field (see for example \cite[III, Remark 6.1]{milne-models}). 

Fixing a $K$-stable lattice $\V\subset V$ yields a $\Z$-local system on $S_K(G,X)(\C)$, which in turn (using the canonical isomorphism $_{\et}V_{\ell} \rightarrow _BV_{\Q}\otimes \Q_{\ell}$) yields integral models $_{\et}\V_{\ell}\subset \ _{\et}V_{\ell}$. The local systems $_{\et}\V_\ell$ also canonically descend to the reflex field. 

\begin{proposition}\label{prop: milnepullback}
Suppose $f: (G,X) \rightarrow (H,Y)$ is a morphism of Shimura data, and let $f$ also denote the morphism of Shimura varieties thus induced. Suppose $(\rho, V)$ is a rational representation of $H^c$. Let $f^*V$ denote representation\footnote{Which factors through $G^c$.} of $G$ induced by $f:G\rightarrow H$. Then, $_{B}(f^*V)_{\Q}$ and $_{\et}(f^*V)_\ell$ are canonically isomorphic to $f^*(_{B}V_\Q)$ and $f^*(_{\et}V_\ell)$ respectively. 
\end{proposition}
\begin{proof}
The statement about Betti local systems is immediate. The statement for the \'etale local systems follow from the following two facts. Firstly, that the Betti-\'etale isomorphism is canonical, and secondly, the descent of the \'etale local system to the reflex field is also canonical (see \cite[Remark 6.1]{milne-models} and the preceding discussion). 
\end{proof}


 The Riemann-Hilbert correspondence yields a vector bundle with connection $_{\dR}V$ on $S_K(G,X)$, which has a filtration $\Fil^{\bullet}$ - induced pointwise by the action of $\Ss$ - with each filtered piece being an automorphic vector bundle. Further, the data of $_{\dR}V,\Fil^{\bullet}$ descends to the reflex field. 
 Moreover, by the work of Diao-Lan-Liu-Zhu\cite[\S4.1]{LZ2}, for each finite place $v\mid p$ of $E(G,X)$ we obtain algebraic vector bundles with connection $_{p-\dR}V_{E_v}$ equipped with a flat connection and filtration. By \cite[Thm 5.3.1]{LZ2} these are naturally isomorphic in a way which is compatible with morphisms of Shimura data.


\subsection{Canonical metrics at infinity}\label{canhodge}

Let $S_K(G,X)$ be a Shimura variety. Suppose that $\rho:G\ra GL(V)$ is an irreducible rational representation of $G$. We pick a lattice $\V$ preserved by $K$. It follows by \cite[II,Prop 3.3]{milne-moduli}that $\rho\circ f_x$ is a Hodge structure $V_x$ on $V$ for any $x\in X$. By \cite[Prop 3.2]{milne-models} it follows that
$V$ admits a $G$-equivariant bilinear form $\psi:V\times V\ra\Q_\psi$ for a rational character $\psi$ of $G$, which becomes a polarization on $V_x$ for all $x\in X$. Corresponding to such a polarization, we obtain a canonical metric on $_{\dR}V_{\C}$ on $S_K(G,X)$ by taking the Hodge norm (see \cite[6.6']{Schmidt}), and this is compatible with pullbacks under maps of Shimura varieties.

Now, following \cite[II.4.2]{milne-models} for any element $\tau\in\Gal(\Qbar/\Q)$ one may associate to $\tau S_K(G,X)$ the structure of a Shimura variety $S_{^\tau K}(^{\tau}G,^{\tau}X)$ in a manner unique up to a canonical isomorphism. Following \cite[III.6.2]{milne-models} one obtains 
canonical representation $V_{\tau}$ of $^{\tau}G$ in a way compatible with the passage to the de-Rham vector bundles or the $\ell$-adic local systems, such that $_{\dR}V_{\tau}\cong \tau_{\dR}V$. Moreover, conjugating $\psi$ gives us a polarization $\psi_\tau$.  Thus we obtain canonical Hodge metrics on $_{\dR} V_{\C_{\tau}}$ as well.


\subsection{Automorphic line bundles on tori}

We shall require the following lemma, which tells us that for tori, there is essentially only one way given an automorphic line bundle to find it as a sub-bundle of the associated vector bundle corresopnding to a local system. To see the relation recall that for tori, automorphic bundles
are in bijection with complex representations of the torus, whereas local systems are in bijection with rational representations.

\begin{lemma}\label{lem:torcharorb}
The irreducible representations of a Torus defined over $k$ are in bijection with $G_k$-orbits on its character lattice $X^*(k^{sep})$.  
\end{lemma}
\begin{proof}
This is \cite[Theorem 14.22]{milne-alg}
\end{proof}

\subsection{Partial CM types}\label{S-SV-partialCMtypes}

Let $K/F$ be a quadratic CM extension of a totally real field. We define a torus $R_K:=\Res_{K/\Q}\G_m / \Res_{F/\Q}\G_m$. Note that the cocharacter lattice $X_*(R_K)$ is isomorphic
to $\Z\langle e_{\sigma},\ \sigma:K\ra \C\rangle /(e_{\sigma}+e_{\ol\sigma})$ as a Galois module. Similarly, the character lattice $X^*(R_K)\subset X^*(\Res_{K/\Q} \G_m)$ as a Galois module is isomorphic to $\Z \langle e^*_\sigma - e^*_{\overline{\sigma}}\rangle$.

We define a partial CM type \cite[4.2]{Harris} to be a subset $\Phi$ of $\Hom(K,\C)$ such that
$\Phi\cap\ol\Phi=\emptyset$. We associate a homomorphism
$r_\Phi:\Ss\ra R_{K,\R}$ as follows: We first note that
$$R_{K}(\C) \cong \prod_{\sigma\in\Hom(K,\C)} \C_{\sigma}^\times/\sim$$ where $\sim$ identifies $\C_{\sigma}$ and $\C_{\ol\sigma}$ via inversion. Thus, identifying $\Ss(\C)\cong \C^\times\times \C^{\times}$, the map from the first factor to $\C^{\times}_\sigma$
is $$\begin{cases} z\ra z & \sigma\in\Phi\\ z\ra 1 & \sigma\in\ol\Phi\\ z\ra 1& \textrm{ else} \end{cases}.$$
The map from the second factor is determined by the conjugation action.

The data $(R_K,r_{\Phi})$ defines a Shimura datum, which we shall show to be sufficient to discuss CM points in adjoint Shimura varieties.
We finally define the (complex) character $\chi_\Phi$ of $R_{K,\C}$ to be 
$$\chi_\phi:=\sum_{\sigma\in\Phi}e^*_\sigma - e^*_{\overline{\sigma}}.$$

\subsection{CM points in adjoint Shimura varieties}

The purpose of this subsection is to prove the following:

\begin{theorem}\label{pcmreduction}
Let $(G,X)$ be a Shimura datum with $G$ an adjoint group over $\Q$, and let $(T_x,h_x)$ denote a $0$-dimensional Shimura sub-datum such that $T_x$ splits over a CM field $K$. Then there are $B$ maps $a_i:(T_x,h_x)\ra (R_K,r_{\Phi_i})$ to Shimura data corresponding to partial CM types, such that 
$$(\det F^1_x\Lie G\mid T_x)^N = \ds\prod_{i=1}^B(\chi_{\Phi_i}\circ a_i)^{c_i}$$ where $N,B,c_i$ are positive integers bounded in terms of $\dim G$.

\end{theorem}

The rest of this section is devoted to the proof of 
Theorem \ref{pcmreduction}. 
\vspace{0.1in}

\textit{Step 1: Constructing the maps of tori}
\vspace{0.1in}

As in the beginning of the section, we may write
$G_\R\cong\prod_{j\in J}G_j$ where each $G_i$ is absolutely simple, and the Galois action on $J$ factors through a totally real field $F$ \cite[Thm 3.13]{milne-moduli}. There is then a corresponding splitting of hermitian symmetric domains 
$X\cong \prod_{j\in J_{nc}}X_j$ where $J_{nc}$ denotes the elements $j$ where $G_j(\R)$ is non-compact.

 Let $K$ be the splitting field of $T_x$. Even though $T_x$ needn't be a maximal torus of $G$, we will still use the term `root' to refer to a character of $T_x$ that occurs in the adjoint representation of $G$ restricted to $T_x$. For each compact factor $G_j$ the map $\pi_j\circ f_x$ is trivial, whereas for each noncompact factor we denote the corresponding co-root by $\mu_j:=\pi_j\circ f_x(z,1) $. Let $\mu=(\mu_j)_j$ be the corresponding cocharacter of $T_x$. Then $\mu$ is special, in the sense that $\langle \mu,\beta\rangle\in\{-1,0,1\}$ for any root $\beta$ of $T_x$\cite[p.20]{milne-intro}.

The Galois group $\Gal_\Q$ acts on cocharacters of $T_x$ and the action on factors through a Galois CM field $K_x$ which contains $K$, with complex conjugation acting as negation \cite[12.4.e]{milne-intro}. Let $F_x$ be the totally real subfield of $K_x$.  Let $\tau_0$ be a complex place of $K_x$.

Let $\beta$ be a root of $T_x$ occurring in $\mathrm{Lie}\ G$. Since
$\beta$ is fixed by $\Gal(\Qbar/K_x)$ and acted on by complex conjugation as negation, there is a Galois-equivariant map $X^*(R_{K_x}) \ra X^*(T_x)$ sending $e_{\tau_0}^* - e_{\overline{\tau_0}}^*$ to $\beta$. Thus, there is a map
$f_{\beta}:T_x\ra R_{K_x}$
such that $(e_{\tau_0}^*-e_{\overline{\tau_0}}^*)\circ f_\beta=\beta$. The maps $f_\beta$ will be our $a_i$ above.

\vspace{0.1in}
{\it Step 2: Establishing the Map of Shimura varieties}
\vspace{0.1in}

We now check that the maps $f_\beta$ induce maps on Shimura varieties from $(T_x,h_x)$ to $(R_K,r_{\Phi_\beta})$ for an appropriately chosen partial CM type $\Phi_\beta$.

For $g \in \Gal_{\Q}$, we have $$(e^*_{g\tau_0} - e^*_{g\overline{\tau}_0})\circ f_{\beta}=g\beta.$$ Since $\mu_i$ is special for each $i$ we have that $\langle \mu,g\beta\rangle \in\{-1,0,1\}$. Let $\Phi_\beta$ denote the embeddings $g\tau_0$ of $K_x$ for which the inner product is $1$. Then evidently $\Phi_\beta$ is a partial CM-type. 

Moreover, $r_{\Phi_\beta}$ is characterized by the property that $$\forall\sigma\in\Hom(K,\C), (e_\sigma^*-e^*_{\bar\sigma})\circ r_{\Phi_\beta}(z,1)=\begin{cases}z & \sigma\in\Phi_\beta\\ z^{-1} & \sigma\in\ol\Phi_\beta\\ 1& \textrm{ else} \end{cases},$$ and
$$\forall g\in\Gal_{\Q}, (e^*_{g\tau_0} - e^*_{g\overline{\tau}_0})\circ f_{\beta}\circ h_x(z,1)=\left((g\beta)\circ \mu\right)(z) = \begin{cases}z & g\tau_0\in\Phi_\beta\\ z^{-1} & g\tau_0\in\ol\Phi_\beta\\ 1& \textrm{ else} \end{cases}$$

We thus conclude that $ f_{\beta}\circ h_x = r_{\Phi_\beta}$ as desired.

\vspace{0.1in}
{\it Step 2: Establishing the identity.}
\vspace{0.1in}

We now prove the main identity. Note that by definition
$$f_{\beta}^*\chi_{\Phi_\beta}=\ds\sum_{g\in\Gal(K_x/\Q)\mid \langle \mu, g\beta\rangle=1} g\beta.$$ We define $n_\beta$ to be the order of the stabilizer of $\beta$ in $\Gal(K_x/\Q)$.

The filtration on $\Lie_G$ at $x$ is determine by the co-character $\mu$, and thus $F^1_x\Lie_G$ consists of the span of the $\beta$-eigenspaces for all roots $\beta$ satisfying $\langle \mu,\beta\rangle =1$. Let $m_\beta$ be the multiplicity of that eigenspace. Then 

$$\det F^1_{\dR}\Lie G\mid T_x = \sum_{\beta\mid\langle \mu,\beta\rangle=1}m_\beta \beta$$ where $m_\beta$ is the multiplicity with which $\beta$ occurs. Let $B$ denote the number of $\Gal(K_x/\Q)$-orbits of roots occurring in $\Lie G$, counting with multiplicity and $\beta_1,\dots,\beta_B$ a representative from each.

Finally, we let $N:=\ds\prod_{i=1}^B n_{\beta_i},a_i=f_{\beta_i},c_i=N\cdot \frac{m_{\beta_i}}{n_{\beta_i}}$. It is clear that all these numbers are bounded in terms of $\dim G$.

\subsection{Motives associated to special points}

\begin{definition}
Following Milne\cite{milne-cm} we define a rational Hodge structure to be be a finite dimensional $\Q$-vector space $W$, together with a map $h:\Ss\ra \GL(W_{\R})$ such that the corresponding weight map $h^{-1}\mid \G_m$ is defined over $\Q$. 
\end{definition}

Let $(T,h)$ denote a Shimura datum with $T=T^c$ a torus. Let $K = \prod_{p}K_p \subset T(\A_f)$ denote a neat compact open subgroup. Let $V$ denote a $\Q$-representation of $T$, and let $\V\subset V$ denote a lattice stabilized by $K$. Recall from the discussion in Section \ref{sec:rationalrepresentations} the \'etale local systems $_{\et}\V_\ell$ on $S_K(T,h)$ defined over the reflex field of $(T,h)$. Note also that the map $h:\Ss\ra T_{\R}$ makes $V$ a rational hodge structure.

\begin{theorem}\label{specialpointisabelian}
The rational Hodge structure associated to $V$ is in the Tannakian category generated by the rational Hodge structures corresponding to complex CM abelian varieties. 
\end{theorem}
\begin{proof}
This is the content of \cite[Proposition 4.6]{milne-cm}.
\end{proof}

Let $x\in S_K(T,h)$ denote some point, and let $E$ denote the field of definition of $x$. The fiber of the local system $_{\et}\V_p$ at $x$ yields a Galois representation $\rho_p: \Gal_E\rightarrow \GL(\V_p)$. The following result asserts that the representations $\rho_p$ are potentially crystalline for large enough primes $p$.

\begin{proposition}\label{specialiscrystalline}
Let the setup be as above, and let $a$ be the lowest Hodge weight occurring in $V$.  
Then, there exists a finite extension $E'/E$ such that for $p\gg 1$, $\rho_p(-a)|_{\Gal_{E'}}$ is crystalline at all places of $E'$ dividing $p$. 

\end{proposition}

Note that here we use crystalline in the same sense as \S2, so that the weights all lie in $[0,p-2]$.

\begin{proof}

Without loss of generality, we assume that there is no subtorus $T''\subset T$ defined over $\mathbb{Q}$ whose base-change to $\R$ contains the image of $h$ (otherwise, we may just replace $T$ by $T''$). We already have that the rational Hodge structure associated to $V$ is in the Tannakian category generated by some CM complex abelian variety $A$. Said more explicitly, $V$ is a direct summand  of some tensor power $W:=H^1(A(\C),\Q)^{\otimes m}\otimes (H^1(A(\C),\Q)^{\vee})^{\otimes n}$ in the category of rational Hodge structures. Let $\W \subset W$ denote the lattice induced by $H^1(A(\C),\Z) \subset H^1(A(\C),\Q)$.

Let $(T',r')$ be the Shimura variety corresponding to $H^1(A)$. Then $W$ is naturally a representation of $T'$, and the projection map $W\ra V$ naturally induces a map $r'(\Ss)\ra h(\Ss)$ and taking $\Q$-zariski closures a surjection $T'\ra T$. Thus we have a map of Shimura varieties $f:(T',r')\ra (T,h)$ such that $f^*V$ is a direct summand of $W$. By Proposition \ref{prop: milnepullback}, the associated Galois representations don't change, and thus we may reduce to proving the claim for $(T,h)=(T,r')$ and $V=f^*V$. 

We have that $V$ is a direct summand of $W$, and hence $\V\otimes \Z_p$ is a direct summand of $\W\otimes \Z_p$ for $p$ large enough. The discussion in Section \ref{sec:rationalrepresentations} (specifically, the canonical isomorphism between the Betti and \'etale realizations) implies that $_{\et}\V_p$ is a direct summand of $_{\et}\W_p$, and therefore it suffices to prove the claim for $W$. By Lemma \ref{cristensor}, tensor products of crystalline representations are crystalline as long as the weights stay within $[0,p-2]$. We are thus reduced to proving the claim for $W=H^1(A)$. 

Let $E'/E$ denote a number field over which $A$ is defined, and has CM (and therefore has good reduction everywhere). The crystallinity over $E'$ for large primes now follows from the fact that $A$ has good reduction modulo primes above $p$.

\end{proof}


\section{$p$-adic Local Systems on Shimura Varieties are Crystalline}\label{S-crystalline}
In this section, we use work of Esnault-Groechenig to prove results pertaining to the crystallinity of $p$-adic local systems on Shimura varieties. Let $(G,X)$ be a Shimura datum, with reflex field $E(G,X)$.
We also fix a neat compact open subgroup $K\subset G(\mathbb{A}_f)$. For any positive integer $m$, it is well known that there are only finitely many rigid flat vector bundles of rank $\leq m$ on $\ShimK_\C$, and these must all be defined over some number field $E'$ which we may assume contains $E(G,X)$. We fix a log-smooth compactification $\ol{\ShimK}$ of $S_K(G,X)$ defined over $E(G,X)$, which spreads out to such a compactification $\ol{\integralShimK}$ of $\integralShimK$ away from finitely many places $v$. This allows us to discuss the notion of log-crystalline local systems on $\ShimK$ at almost all places. Throughout this section, the term log-crystalline will always be relative to the log smooth compactification $\ol{\integralShimK}$. 



    

    


\begin{theorem}\label{crystallinelocalsystems}
Let $(\rho,V)$ denote a degree $n$ rational representation of $G$ that factors through $G^c$, and let $\V$ be a lattice stable under $K$. Recall that $_{\et}\V_p$ denotes the corresponding $\Z_p$-local system on $S_K(G,X)$. There exists a positive integer $N'$ such that the following holds: Let $p\not\mid N'$ denote a prime, and let $v$ be a place of $E'$ dividing $p$. Then $_{\et}\V_p/\ShimK_{E_v}$ is a log-crystalline local system up to cyclotomic twist.

\end{theorem}

\subsection{Rigidity and crystalline descent}
We remind the reader that local systems called crystalline in the appendix are called log-crystalline in this section. As a first step, we use Esnault-Groechenig's Theorem \ref{thm:betti} in conjunction with Margulis super-rigidity to show that a self-direct sum of $_{\et}\V_p|_{\ShimK_{\C}}$ admits a log-crystalline descent to $\ShimK_{W(\F_q)}$, where $q$ is a power of a large prime $p$. More precisely, let $_{\et}\V_{p^f}$ denote $_{\et}\V_p \otimes W(\F_{p^f})$. 

\begin{corollary}[Esnault-Groechenig, Margulis]\label{EsnaultGroechenig}
    Let $(\rho,V)$ be as in Theorem \ref{crystallinelocalsystems}, and $_{\dR} V_{\rho}/\integralShimK$ denote the vector bundle with connection associated to $\rho$. There exist integers $n$ and $N$ such that the following holds:
 
For every prime $p\nmid N$, and a prime $v$ of $E(G,X)$ above $p$, there exists a positive integer $f$ such that $_{\et}\V^{\oplus n} _{p^f}\mid_{\ShimK_{\C}} $ admits a descent to $\ShimK_{W(\F_{q}[1/p])}$ that is log-crystalline. Here, $q $ is a power of $p$. 
\end{corollary}

\begin{proof}
We can and will replace $G$ by $G^c$. 
We first deduce the corollary under the assumption that $G^{\der}$ is a $\Q$-simple group. By Proposition \ref{prop: realrankatleast2}, there exists an embedding of Shimura data $(G,X)\rightarrow (H,Y)$ such that $H^c = H$, $H^{\der}$ is $\Q$-simple, and has real rank at least 2. Let $W$ be a faithful representation of $H$. By applying Margulis super-rigidity (\cite[Thm.IX.6.15.ii]{margulis}), we deduce that every local system on the Shimura variety associated to $(H,Y)$ is strongly cohomologically rigid, and we may therefore apply \ref{thm:betti} to $S_K(H,Y)$ to deduce that $_{\et}W_{S_K(H,Y)_{\C}}$ admits a log-crystalline descent. We now restrict to $\ShimK$ to deduce that $_{\et}W|_{\ShimK_{\C}}$ admits a log-crystalline descent. By replacing $W$ by some tensor power, we may assume that $V$ is a sub $G$-representation of $W$. As any sub-local system of a log-crystalline local system is log-crystalline, any sub local system of $_{\et}W|_{\ShimK_{\C}}$ that descends to $\ShimK_{W(\F_q)[1/p]}$ also admits a log-crystalline descent. The isotypic component of $V$ in $W$ induces such a local system. This concludes the case when $G^{\der}$ is $\Q$-simple. 


We will now deduce the corollary for arbitrary Shimura varieties. Let $(G',X')\rightarrow (G,X)$ be as in Proposition \ref{prop:shimuracover}. The induced map $S_{K'}(G',X') \rightarrow S(G,X)$ is a finite \'etale map. Replacing $N$ by a larger integer if necessary, we may assume that the map is finite \'etale at the level of integral models. By the remark immediately following \cite[Theorem 2.6]{FaltingsCrystallineCohomology}, it suffices to check that $_{\et}\V_p^{\oplus n}$ pulled back to $S_{K'}(G',X')$ admits a descent that is log-crystalline. This reduces the corollary to the case where $G^{\der}$ is simply connected, and we shall henceforth assume that this is the case.

    We may therefore write $G^{\der} = G_1 \times G_2 \hdots G_k$, where each $G_i$ is $\Q$-simple. Define $G^{(i)} = G/\prod_{i\neq j} G_j$. The Shimura datum $(G,X)$ induces canonical Shimura data $(G^{(i)},X_i)$ for $1\leq i \leq k$, and there is a natural inclusion of Shimura data $(G,X)\rightarrow \prod_i (G^{(i)},X_i)$. Note that the induced map $\ShimK \rightarrow \prod_i S_{K_i}(G^{(i)},X_i)$ is a finite \'etale map, as the morphism of Shimura data is an isomorphism at the level of adjoint Shimura data. We have already dealt with the case of $\Q$-simple groups, and therefore there exist faithful representations $V_i$ of $G^{(i)}$ such that the associated local systems on $S_{K_i}(G^{(i)},X_i)$ admit  crystalline descents. There exists an integer $m$ such that $V$ is a direct summand of $(V_1 \boxtimes V_2 \hdots \boxtimes V_k)^{\otimes {m}}|_G =(V_1^{\otimes m} \boxtimes \hdots \boxtimes V_k^{\otimes m})|_G $. By replacing $V_i$ by $V_i^{\otimes m}$, we assume that $m = 1$. The representations $V_i$ are fixed independent of $p$, and so by replacing $N$ by a larger integer if necessary, we may assume that the difference between the maximum and minimum Hodge weights of $_{\dr}(V_1 \boxtimes V_2 \hdots \boxtimes V_k)/S(\prod G^{(i)},X_i)$ are smaller than $p-1$. So, $\boxtimes_i {_\et\V_{i,p^f}}\mid_{\ShimK_{\C}}$ admits a log-crystalline descent to $\ShimK_{W(\F_q)[1/p]}$. 
    As any direct summand of a log-crystalline local system is log-crystalline, any sub local system of $(\boxtimes_i {_\et}\V_{i,p^f})\mid_{\ShimK_{\C}}$ that descends to $\ShimK_{W(\F_{q})}$ is also log-crystalline. The isotypic component of $V$ in $(V_1 \boxtimes V_2 \hdots \boxtimes V_k)|_G$ induces such a local system. We note in passing that the multiplicity $n$ of $V$ in this isotypic component only depends on the initial data of $G$ and $V$, and not on the prime $p$. The corollary now follows. 
\end{proof}


\subsection{Crystallinity of $p$-adic local systems}

\begin{proof}[Proof of Theorem \ref{crystallinelocalsystems}] The log-crystallinity of $_{\et}\V_p|_{\ShimK}$ would follow from that of $_{\et}\V^{\oplus n}_{p}$ as it is a sub local-system. Therefore, by letting $V$ denote $V^{\oplus n}$ with $n$ as in Corollary \ref{EsnaultGroechenig}, we may assume (by Corollary \ref{EsnaultGroechenig}) that $_{\et}\V_{p^f}|_{\ShimK_{\C}}$ admits a descent to $\ShimK_{W(\F_q)[1/p]}$ which is log-crystalline. We call this local system $\W$.

Consider the $p$-adic local system $\bL / \ShimK$ given by $\pi^*\pi_*\Hom_{W(\F_{p^f})}(\W^{\vee}, _\et \V_{p^f}^{\vee})$, where $\pi: \ShimK \rightarrow \Spec W(\F_q)[1/p]$ is the structure map. By adjointness, we have a map of arithmetic local systems $\bL \rightarrow \Hom_{W(\F_{p^f})}(\W^{\vee}, _\et \V_{p^f}^{\vee})$, and therefore $_\et \V_{p^f} \rightarrow \bL^{\vee}\otimes_{W(\F_{p^f})} \W$. This map is injective, since $_{\et}\V_{p^f}$ and $\W$ are isomorphic on $\ShimK_{\C_p}$. 

Recall that if a local system is crystalline up to cyclotomic twist, so is its dual (by \cite[Theorem 2.6* h]{FaltingsCrystallineCohomology}). We now claim that $\bL$, and therefore $\bL^{\vee}$, are crystalline up to cyclotomic twist. Let $x\in\ShimK(W(\F_{q^m}))$ be a special point (such a point exists as we have assumed that $p$ is a large prime). It suffices to check that $\bL_x$ is crystalline as $\bL$ is pulled back from a point. The Galois representation $\W_x$ is crystalline, and therefore  $\W^{\vee}_x$ is crystalline up to cyclotomic twist. Moreover, $_{\et}\V_{p^f,x}$ is crystalline up to cyclotomic twist (by Lemma \ref{specialiscrystalline}), therefore so is $_{\et}\V^{\vee}_{p^f,x}$. For sufficiently large $p$ the Hodge weights of $\Hom(\W^{\vee}, _\et \V_{p^f}^{\vee})$ are in the Fontaine-Laffaile range up to cyclotomic twist. Therefore, we have that $\bL$ and $\bL^{\vee}$ are crystalline up to cyclotomic twist by Lemma \ref{cristensor}. We may also assume that the Hodge weights of $\bL^{\vee}\otimes \W$ are in the Fontaine-Laffaile range up to twist, and therefore $\bL^{\vee}\otimes \W$ is log-crystalline up to cyclotomic twist by Lemma \ref{lem:logcrystensor}. Finally, as $_{\et}\V_{p^f}$ is a sub-local system of a local system that is log-crystalline up to cyclotomic twist, we have that it must be log-crystalline up to cyclotomic twist. The theorem follows as $_{\et}\V_p$ is a $\Z_p$-local subsystem of $\V_{p^f}$.

\end{proof}

\section{Good Reduction of CM Points}\label{S-GR}

The purpose of this section is to show that all CM points in a Shimura variety are integral with respect to some fixed model. Ideally, we would show this for all primes, even the finitely many `bad ones'. However, we have been unable to show this, settling instead for handling almost all primes.

\vspace{0.1in}
\emph{Question: Given a Shimura variety, is there an integral model of it over some $\cO_F$ with respect to which all CM points are integral?}
\vspace{0.1in}

As such, our main theorem is as follows:

\begin{theorem}\label{CMgoodreduction}

Let $S=S_K(G,X)$ be a Shimura variety defined over a number field $F$. For some positive integer $N$, there exists an integral model $\cS$ over $\cO_F[N^{-1}]$ such that every CM point of $S(\Qbar)$ extends to a point of $\cS(\ol{\Z}[N^{-1}])$. 

\end{theorem}

\subsection{Punctured formal neighborhoods}

\begin{lemma}\label{puncture}
Let $R$ be a regular local ring of mixed characteristic $p$, and consider\newline
$S=R[[x_1,\dots,x_n,y_1,\dots,y_m]]\left[\frac1y_1,\dots\frac1y_n\right]$. Let $R_0, S_0$ denote the maximal prime-to-$p$ Galois \'etale extension of $R,S$ respectively. Then $S_0$ is generated by $R_0$ and the prime-to-$p$ roots of the $y_i$.

\end{lemma}

\begin{proof}
Without loss of generality we may assume $R=R_0$ by base-changing. Let $W$ be a Galois \'etale extension of $S$ of degree prime-to-$p$. Let $T$ denote the normal closure of $R[[x_1,\dots,x_n,y_1,\dots,y_m]]$ in $W$. Now for each $j\in\{1,\hdots,m\}$ we let $R_j$ and $ T_j$ denote the localization of $R[[x_1,\dots,x_n,y_1,\dots,y_m]]$ and $ T$ at the prime ideals $(y_j)$ and $\mathfrak{y}_j$ respectively, where $\mathfrak{y}_j$ is some prime ideal of $T_j$ sitting above $(y_j)$. Now $T_j,R_j$ are discrete valuation rings. Let $e_j$ denote the ramification degree and let $e=\prod_j e_j$. Now let $W'$ denote the compositum of $W$ and the $e$'th roots of all the $y_i$, and let $T', T'_j$ be as before. Then by Abhyankar's lemma \cite[\href{https://stacks.math.columbia.edu/tag/0BRM}{Tag 0BRM}]{stacks-project}, $T'_j$ is unramified over $R[[x_i,\dots,x_n,y_1^{\frac1{e}},\dots,y_m^{\frac1{e}}]]_j$ for all $j$. By the purity of the branch locus \cite[\href{https://stacks.math.columbia.edu/tag/0BMB}{Tag 0BMB}]{stacks-project} it follows that $T'$ is unramified over $R[[x_1,\dots,x_n,y_1^{\frac1e},\dots,y_m^{\frac1e}]$. Finally, \'etale covers of $R$ correspond bijectively to \'etale covers of $R[[x_1,\dots,x_n,y_1^{\frac1e},\dots,y_m^{\frac1e}]]$ and thus $T'=R[[x_1,\dots,x_n,y_1^{\frac1e},\dots,y_m^{\frac1e}]]$. The claim is thus proven.
\end{proof}

\subsection{Specialization homomorphisms and setup}

Let $S=S_K(G,X)$ be a Shimura variety, and let $\ol S$ be a log-smooth compactification of $S$ with $D=\ol S-S$. By blowing up further, we may and do ensure that the irreducible components of $D$ are smooth, i.e. they have no self-intersections. By spreading out, we may form a model
$(\ol\cS,\cS,\cD)$ over $\cO_E[\frac1N]$ for some large even integer $N$ such that $\ol\cS$ is proper smooth and
$\cD$ is a normal crossings divisor of $\ol\cS$ over $\cO_E[\frac1N]$. 

We fix a faithful $G_\Q$-representation $V$ with a lattice $\V$ invariant under $K$. By shrinking\footnote{Note that Theorem \ref{CMgoodreduction} is invariant under finite covers.} $K$ we may assume that the action of $K_2$ on $\V_2 = \V\otimes \Z_2$ is trivial mod $4$. We fix a CM point $x\in S(\Qbar)$ and
by possibly enlarging $N$ assume that $x$ is induced by an integral point of $\cS$, unramified outside of primes dividing $N$.

The following lemma is essentially \cite[Prop 3.1]{EG2}: 

\begin{lemma}\label{localsystemextend}

For all primes $p\not\mid N$, the local system $_{\et}\V_2$ extends to $\cS_{W(\ol{\bF}_p)}$. 

\end{lemma}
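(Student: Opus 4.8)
\textbf{Plan for the proof of Lemma \ref{localsystemextend}.}

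The plan is to use \'etale-local purity for local systems over the integral model, exactly in the spirit of \cite[Prop.\ 3.1]{EG2}. The starting point is that over the generic fiber $S_E$ the local system $_{\et}\V_2$ is defined, and the obstruction to extending it across the special fiber of $\cS_{W(\ol\F_p)}$ is ramification of the corresponding representation of $\pi_1$ along the components of the special fiber $\cS_{\ol\F_p}$. First I would reduce to a local statement: it suffices to check, for each generic point $\eta$ of the special fiber $\cS_{\ol\F_p}$, that the $2$-adic representation is unramified at $\eta$, because then by Zariski--Nagata purity of the branch locus the \'etale cover of $S_E$ trivializing $\V_2/4$ (or rather the cover corresponding to the image of $\pi_1$, which is finite since $K_2,K_3$ act trivially mod $4$ and the residual representation is therefore finite of order prime to $p$ for $p\nmid N$) extends to a finite \'etale cover of $\cS_{W(\ol\F_p)}$, and the local system descends along it.

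The key input for checking unramifiedness at such an $\eta$ is the N\'eron--Ogg--Shafarevich-type criterion coming from Theorem \ref{specialpointisabelian} combined with Proposition \ref{specialiscrystalline}: the local system $_{\et}\V_2$, being cut out of tensor powers of $H^1$ of a family of abelian varieties (after passing to the relevant \'etale neighborhood — here one uses that $\cS$ carries, after shrinking $K$, a polarized abelian scheme or at least a motive generically of abelian type, as in \S\ref{S-SV}), is unramified wherever the abelian scheme has good reduction. So the real content is to control the reduction of this abelian family along $\cD$ and along the special fiber. The punctured-formal-neighborhood Lemma \ref{puncture} is what lets me do this: near a point of $\cD$ the base looks like $R[[x_1,\dots,x_n,y_1,\dots,y_m]][y_1^{-1},\dots,y_m^{-1}]$, and Lemma \ref{puncture} says the prime-to-$p$ part of $\pi_1$ of the punctured neighborhood is generated by $\pi_1$ of $R$ together with the loops around the $y_i$. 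Since $K_2,K_3$ act trivially mod $4$, the mod-$4$ monodromy around each $y_i$ is trivial, so the monodromy of $\V_2/4$ on the punctured neighborhood factors through $\pi_1(\Spec R)$, i.e.\ extends across $\cD$; and by the criterion of good reduction (Grothendieck's semistable reduction + the triviality of the $\ell$-adic inertia action) the abelian family extends. Running this both in the horizontal direction (along $\cD$) and the vertical direction (the special fiber $\cS_{\ol\F_p}$, whose generic points are handled by the same DVR-purity argument since $p\nmid N$ and $N$ was chosen to kill all the relevant bad behavior) gives an extension of $_{\et}\V_2$ to a local system on $\cS_{W(\ol\F_p)}$.

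The main obstacle I expect is the bookkeeping at the boundary: ensuring that the abelian motive cutting out $\V$ really does extend as an abelian scheme (not merely a semiabelian one) across $\cD$ after the shrinking of $K$, and that the ``$N$ large'' hypothesis genuinely absorbs all the finitely many bad primes for this abelian family simultaneously with the bad primes of $\cS$ itself. A secondary subtlety is that $\cS$ is not proper, so one must be careful that purity of the branch locus is being applied to the \emph{proper} model $\ol\cS$ (or to its strict localizations), which is why the construction fixed a log-smooth compactification $(\ol\cS,\cS,\cD)$ at the outset; the argument should be phrased as: extend over $\ol\cS_{W(\ol\F_p)}$ minus $\cD_{W(\ol\F_p)}$ first, then use Lemma \ref{puncture} plus the mod-$4$ triviality to cross $\cD$. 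Modulo these points the proof is a fairly direct transcription of \cite[Prop.\ 3.1]{EG2} to the present integral setting.
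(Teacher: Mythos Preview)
Your proposal has a fundamental gap: you assume that $\cS$ carries ``a polarized abelian scheme or at least a motive generically of abelian type, as in \S\ref{S-SV}''. This is precisely what fails for the Shimura varieties at issue. The main case of the paper is Shimura varieties of \emph{non}-abelian type (exceptional $E_6$, $E_7$, certain orthogonal groups), over which no universal abelian scheme exists. Theorem \ref{specialpointisabelian} and Proposition \ref{specialiscrystalline} concern only $0$-dimensional Shimura data, i.e.\ individual special points; they give you nothing about an abelian family over $S_K(G,X)$. Consequently your N\'eron--Ogg--Shafarevich step has no abelian scheme to apply to, and the argument for unramifiedness at generic points of the special fiber collapses. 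Your invocation of Lemma \ref{puncture} is also misplaced: that lemma is used in the paper for the subsequent Lemma \ref{lem:CMsmoothreduction} (showing CM points avoid the boundary), not for the present statement, which concerns $\cS$ rather than $\ol\cS$.

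The paper's argument avoids any global abelian scheme. Using the Lieblich--Olsson comparison $\pi'_1(\cS_{W(\ol\F_p)}) \cong \pi'_1(S_{\ol\Q_p})$ of prime-to-$p$ quotients, one gets a splitting $\pi'_1(S_{\Q_p^{\ur}}) \cong \pi'_1(S_{\ol\Q_p}) \times \pi'_1(\Q_p^{\ur})$, and a prime-to-$p$ local system on the generic fiber extends to $\cS_{W(\ol\F_p)}$ iff the $\pi'_1(\Q_p^{\ur})$-action is trivial. This can be tested at a \emph{single} $\Q_p^{\ur}$-point extending to $\Z_p^{\ur}$. The fixed CM point $x$ (already arranged to be integral by enlarging $N$) is such a point: since $x$ is CM and $2 \neq p$, inertia acts on $_{\et}\V_{2,\ol x}$ through a finite quotient, hence trivially because $K_2$ is torsion-free by the mod-$4$ hypothesis. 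Thus only the motivic structure at the single point $x$ is needed, not over the whole variety.
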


\begin{proof}
For a profinite group $H$, denote by $H'$ denote the prime-to-$p$ quotient of $H$. By \cite[Thm A.7]{LO} we have that
 $$\pi'_1(\cS_{W(\ol{\bF}_p)})\cong\pi'_1(S_{\ol{\Q}_p}).$$  

 We claim we have a canonical direct product decomposition

\begin{equation}\label{pi1splitting}
 \pi'_1(S_{\Q_p^{\ur}}) \cong \pi'_1(S_{\ol{\Q}_p})\times \pi'_1(\Q_p^{\ur}).
 \end{equation}

 Indeed, we have the natural maps 
 $$\pi'_1(S_{\ol{\Q}_p}) \xrightarrow{f_1} \pi'_1(S_{\Q_p^{\ur}}) \xrightarrow{f_2} \pi'_1(\cS_{W(\ol{\bF}_p)}).$$ Since the composition is an isomorphism, it follows that $f_1$ is injective and $f_2$ determines a canonical left-inverse $\eta$ to $f_1$. Thus the natural right-exact sequence 

 $$  \pi'_1(S_{\ol{\Q}_p}) \ra \pi'_1(S_{\Q_p^{\ur}})\ra \pi'_1(\Q_p^{\ur})\ra 1$$ is in actually exact and admits a natural splitting given by $\eta$, proving the claim.

 
 
 Finally, via this decomposition, we see that a prime-to-$p$ local system on $S_{\Q_p^{\ur}}$ extends to $\cS_{W(\ol{\bF}_p)}$ if and only if the induced action of $\pi'_1(\Q_p^{\ur})$ is trivial, which can be checked on any $\Q_p^{\ur}$ point of $S$ which extends to a $\Z_p^{\ur}$ point of $\cS$. Indeed, this follows from $\Z_p^{\ur}$ having no \'etale extensions. 
 
Now, by assumption, $x$ is a point of $S$ which extends to $\cS$, so it is enough to check that $_{\et}\V_{2,x}$ is unramified at $p$. However, since $x$ is a CM point and $2\neq p$ we see that the image of inertia on  $_{\et}\V_{2,\ol{x}}$ is finite, and therefore is trivial since the monodromy of $_{\et}\V_2$ is contained in $K_2$ which is torsion-free by construction. This completes the proof.
 
\end{proof}

The following lemma completes the proof of Theorem \ref{CMgoodreduction}.

\begin{lemma}\label{lem:CMsmoothreduction}

With the above notation, for every place $v\not\mid N$, every CM point of $S_v$ extends to a point of $\cS_v$.

\end{lemma}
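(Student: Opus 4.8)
The plan is to reduce the statement to a statement about the local Galois representation at $v$ attached to the CM point, and then apply the punctured formal neighborhood analysis (Lemma \ref{puncture}) together with the good-reduction input we already have. Fix a place $v \not\mid N$, say lying over the rational prime $p$, and let $x \in S_v(\Qbar_v)$ be a CM point; after a finite unramified base change we may assume $x$ is defined over $\Q_p^{\ur}$ (or a further tame extension thereof). We want to show $x$ spreads out to a point of $\cS_v$ over the valuation ring. The key idea is to look at the formal completion of $\ol\cS_v$ along a point of the special fiber near which $x$ would specialize: étale-locally at such a point $\ol\cS_v$ looks like $\Spec R[[x_1,\dots,x_n,y_1,\dots,y_m]]$ with $\cD$ cut out by $y_1\cdots y_m = 0$ and $R = W(\ol\bF_p)$, so that a $\Qbar_p$-point of $S$ whose closure meets this chart gives, after completing, a point of the punctured formal neighborhood $R[[x_i,y_j]][\tfrac1{y_1},\dots,\tfrac1{y_m}]$. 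By Lemma \ref{puncture}, the maximal prime-to-$p$ étale extension of this ring is obtained from that of $R$ by adjoining prime-to-$p$ roots of the $y_j$. Consequently the prime-to-$p$ part of the fundamental group of the punctured neighborhood is (topologically) generated by $\pi_1'(\Spec R)$ together with the tame inertia generators around each branch $y_j = 0$.

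\textbf{Main steps.} First I would argue that the CM point $x$ cannot lie in the boundary: its closure in $\ol\cS$ over $\cO_E[\tfrac1N]$ is finite over the base, and if the special fiber of this closure landed inside $\cD$ we would get a CM point (in the sense of a special point of the Shimura variety) limiting into the boundary, which is impossible once we arrange, as we have by enlarging $N$ and using that CM points are rational points with controlled Galois action, that $x$ already extends to an integral point of $\cS$ (this is exactly the hypothesis carried over from the setup preceding Lemma \ref{localsystemextend}). More precisely, the argument of Lemma \ref{localsystemextend} shows that $_{\et}\V_2$ extends across $\cS_{W(\ol\bF_p)}$, and the fiber of an extended local system at a $\Z_p^{\ur}$-point is automatically unramified; conversely, a CM point whose specialization hit $\cD$ would exhibit nontrivial tame inertia acting on $_{\et}\V_2$ near that branch, contradicting the torsion-freeness of the monodromy group $K_2$. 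So the second step is: since $x$ extends to an integral point of $\cS$ (over the valuation ring of a tamely ramified extension), and $\cS \to \Spec\cO_E[\tfrac1N]$ is separated, the integral point is unique if it exists; and by the previous paragraph it does exist, so $x \in \cS_v$ spreads out as claimed. The third step, if one wants the extension to be a point of $\cS_v$ over the valuation ring of $E_v$ itself rather than a tame extension, is to invoke that CM points are defined over abelian (hence, at almost all $p$, unramified-after-a-bounded-extension) extensions of the reflex field, so no genuine ramification is introduced.

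\textbf{Main obstacle.} I expect the hard part to be making rigorous the dichotomy ``either $x$ specializes into $\cD$ or it extends to $\cS$,'' i.e. ruling out the boundary case cleanly and functorially. The danger is that a priori the Zariski closure of $x$ in $\ol\cS_v$ could meet $\cD$ in codimension $\geq 1$, and one must use the specialization map on prime-to-$p$ fundamental groups (as in Lemma \ref{localsystemextend} and \cite[Thm A.7]{LO}), applied now to the \emph{punctured} neighborhood via Lemma \ref{puncture}, to see that the inertia contribution forced by a boundary specialization is tame and nontrivial, hence detected by the action on $_{\et}\V_2$ — which is trivial because $K_2$ is torsion-free and the CM point has finite (hence trivial) image of inertia on $_{\et}\V_2$ away from $p$. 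Once that contradiction is in place, separatedness of $\cS$ finishes the argument immediately; the remaining bookkeeping (tame base changes, enlarging $N$) is routine.
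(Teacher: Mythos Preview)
Your overall architecture matches the paper's: assume the CM point $y$ specializes into $\cD$, analyze the formal punctured neighborhood via Lemma~\ref{puncture}, and reach a contradiction by comparing (i) the trivial prime-to-$p$ inertia at $y$ (CM plus torsion-free $K_2$) with (ii) the nontrivial monodromy forced by crossing the boundary. But step (ii) is exactly where the real content lies, and you have not supplied it. You write that ``a CM point whose specialization hit $\cD$ would exhibit nontrivial tame inertia acting on $_{\et}\V_2$ near that branch,'' yet nothing in Lemma~\ref{localsystemextend} or the specialization theory you invoke tells you that the local system $_{\et}\V_2$ has nontrivial monodromy around the components of $D$. Lemma~\ref{localsystemextend} only extends the local system across $\cS_{W(\ol\bF_p)}$; it says nothing about the boundary $\cD$.

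The paper fills this gap by a genuinely transcendental step that your proposal omits entirely: it chooses an embedding $\cO_M \hookrightarrow \C$, manufactures a holomorphic map $F':\Delta^* \to S(\C)$ whose monodromy image on the prime-to-$p$ quotient coincides (by Lemma~\ref{puncture} and the construction $F(t_i)=t^{v_M(f(t_i))}$) with the one coming from $y$, and then invokes Schmid's theorem that a polarized VHS with maximal domain of definition $S_\C$ must have \emph{infinite} monodromy along any punctured disk approaching the boundary. This Hodge-theoretic input over $\C$ is what makes the contradiction go through; without it, you have no mechanism to rule out that $_{\et}\V_2$ is tamely trivial near $\cD$. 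A secondary issue: you at times conflate the \emph{fixed} CM point $x$ used in the setup to prove Lemma~\ref{localsystemextend} with the \emph{arbitrary} CM point at hand here --- in particular, your sentence ``once we arrange \ldots\ that $x$ already extends to an integral point of $\cS$'' is circular, since that extension is precisely what the lemma is proving.
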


\begin{proof}

Let $y$ be such a CM point with field of definition $E'\supset E$ and consider its extension $y_0$ to $\ol\cS(\ol E_v)=\ol\cS(\ol{\cO_{E_v}})$ corresponding to some place $w\mid v$ of $E'$. Let $M=E_w^{\prime \ur}$.

Suppose for the sake of contradiction that $y_{0,\ol\bF_p}\in D$. Then we may pick a regular system of parameters such that $\widehat{\cO}_{S,y_{0,\ol\bF_p}}\cong \cO_{E,v}[[t_1,\dots,t_n]][[s_1,\dots,s_m]]$ with $D$ cut out by the $t_i$. Since $y_0$ reduces to $D$ we see that the $t_i$ pull back to elements of the maximal ideal of $\cO_M$. 

This induces a map
$$f:\cO_M[[t_1,\dots,t_n,s_1,\dots,s_m]]\left[\frac1{t_i},\dots\frac1{t_n}\right]\ra M.$$ 

Now by lemma \ref{puncture} $f$ induces a map of prime-to-$p$ Galois Groups
$G^{(p)}_M\ra \prod_{\ell\neq p} \Z^n_\ell(1)$. Moreover, by the same lemma the image of this map is completely determined by the (positive) valuations of $f(t_i)$. 

Now, by choosing an embedding $\iota:\cO_M\ra\C$ we obtain an induced map
$$\cO_M[[t_1,\dots,t_n,s_1,\dots,s_m]][\frac1{t_i},\dots\frac1{t_n}] \ra \C[[t_1,\dots,t_n,s_1,\dots,s_m]]\left[\frac1{t_i},\dots\frac1{t_n}\right]$$ inducing an isomorphism of prime-to-$p$ Galois groups. Moreover, we may consider a map $$F:\C[[t_1,\dots,t_n,s_1,\dots,s_m]]\left[\frac1{t_i},\dots\frac1{t_n}\right]\ra \C[[t]]\left[\frac1t\right]$$ defined by $F(s_i)=0,F(t_i)=t^{v_M(f(t_i))}$ which thus has the same fundamental group image as $f$.

Since $_{\et}\V_{2,y_M}$ is trivial, it follows that $F^*_{\et}\V_2$ is also trivial. Now, since the $t_i,s_j$ are regular parameters, the map $F$ is the completion of an holomorphic map $F':\Delta^*\ra S(\C)$ where $t$ is identified with a coordinate of $\Delta$ vanishing at the origin, and since the (profinite completion of the) topological fundamental group of $\Delta^*$ is naturally identified with the \'etale fundamental group of $\Spec \C[[t]]\left[\frac1t\right]$, it follows that $F'^*_{\et}{}_BL_\C$ is also trivial. However, $_{B}V_\C$ underlies a variation of Hodge structures with maximal domain of definition $S_\C$, and thus $F^*_{\et}{}_BV_\C$ must have infinite monodromy around $0$ \cite{Schmid}. This is our desired contradiction.

\end{proof}

\section{Canonical Norm on Local Systems of  Shimura Varieties}\label{S-normShimura}

Let $S=S_K(G,X)$ be a Shimura variety, and let $\ol{S}$ be a log-smooth compactification. We assume as always that $K$ is neat. Let $\rho$ be a rational representation of $G$ on a polarized vector space $(V,q)$ that factors through $G^c$, and let $\V\subset V$ denote a lattice. By Theorem \ref{crystallinenormoncrystallineuptotwist} for large $p$ we have that $_{\et}\V_p$ is log-crystalline up to cyclotomic twist. Let $(\cS,\cV)$ be a smooth integral model of $(S, {}_{\dR}V)$ over $\cO_E[N^{-1}]$ as in Theorem \ref{CMgoodreduction}. We assign an admissible norm to $(\Gr{_{\dR}V,\ol S,S)}$ as follows: 

\begin{itemize}
    \item For each Archimedean place, we simply use the norm on the graded pieces induced by the Hodge norm as described in section \ref{canhodge}.
    \item For each non-Archimedean place $v$ (dividing $p\in \mathbb{Z}$) at which $_{\et}\V_p$ is crystalline and which doesn't divide $N$ from Theorem \ref{CMgoodreduction}, and with $p > \rng_V\cdot \dim V + 2$ : we identify $_{\dR}V_{E_v}$  with $_{p-\dR}V_{E_v}$ via \cite[Thm 5.3.1]{LZ2}, and use the crystalline norm on the graded pieces of $_{p-\dR}V_{E_v,x}\cong D_{HT}(_{\et}V_{E_v,x})$. 
    \item For the (finitely many) other non-Archimedean placed $v$, we identify $_{\dR}V_{E_v}$  with $_{p-\dR}V_{E_v}$ via \cite[Thm 5.3.1]{LZ2}, and use the intrinsic norm on the graded pieces of $_{p-\dR}V_{E_v,x}\cong D_{HT}(_{\et}V_{E_v,x})$.
\end{itemize}

\begin{theorem}\label{shhtadmis}
With the notation above, the normed vector bundle $\Gr _{\dR}V$ is admissible. 
\end{theorem}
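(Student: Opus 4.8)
The plan is to verify the five conditions in Definition \ref{def:admissible} for the data just specified, checking that the pointwise-defined local norms on the graded pieces $\Gr\, {}_{\dR}V$ actually assemble into an admissible collection. The integral model $(\cS,\cV)$ and the integer $N$ are already in place from Theorem \ref{CMgoodreduction}, so conditions (1) and (2) are satisfied by construction; the bundle $\cV$ is the Deligne extension of ${}_{\dR}V$ spread out over $\cO_E[N^{-1}]$, and its graded pieces give an integral model for $\Gr\, {}_{\dR}V$. Condition (3) (a continuous metric at each infinite place) is exactly the Hodge metric from \S\ref{canhodge}, which is continuous because it comes from a polarized variation of Hodge structure. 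So the content is in conditions (4) and (5): that at the finitely many bad finite places the chosen norm extends to an \emph{acceptable} norm on $\ol S$, that at the good places the norm is acceptable on $\cS_v$, and that for \emph{almost all} finite places the chosen norm literally coincides with the $\cV_v$-norm.

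First I would handle acceptability. At a good place $v \nmid N$ where $_{\et}\V_p$ is crystalline (Theorem \ref{crystallinelocalsystems}), we use the crystalline norm on the graded pieces of $_{p-\dR}V_{E_v}$; I would invoke Lemma \ref{lem:intrinsicacceptable} to see the intrinsic norm is acceptable, and then Theorem \ref{crisint} to conclude that the crystalline norm — being comparable to the intrinsic norm up to the fixed factor $p^{a(\dim L-1)/(p-1)}$ — is also acceptable (acceptability is clearly stable under a uniformly bounded rescaling; this is essentially the content of Lemma \ref{lem:admissibleind}, property (2) of Definition \ref{def:acceptable} being insensitive to a global $p^{O(1)}$). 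At the finitely many remaining finite places we use the intrinsic norm directly on ${}_{p-\dR}V_{E_v}$, which is acceptable on $\ol S$ (hence extends to an acceptable norm on $X = \ol S$) again by Lemma \ref{lem:intrinsicacceptable} together with Lemma \ref{acceptabilityafinoidcover}; here I would also need that the local geometric monodromy of ${}_{\et}\V_p$ around the boundary is unipotent, which holds after our neatness/shrinking assumptions, so that the family setup of \S\ref{S-metrized} applies to the Deligne extension.

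The main obstacle is condition (5): showing that for all but finitely many $v$ the crystalline norm agrees \emph{on the nose} with the $\cV_v$-norm coming from the integral model, not merely up to $O(1)$. The plan here is to use the compatibility-with-fibers results from \S\ref{S-hodge}: by Theorem \ref{fibercomp} and Theorem \ref{FLliuzhu}, at a point $x \in \cS(\ol{\Z}[N^{-1}])$ reducing into the good locus the module $S_{\cris}({}_{\et}\V_p)$ specializes to $S_{\cris}$ of the fiber, and by \cite[Thm 5.3.1]{LZ2} the identification of ${}_{\dR}V$ with ${}_{p-\dR}V$ is the one induced by the $p$-adic Riemann--Hilbert correspondence; for $p$ large the Fontaine--Laffaille functor is an equivalence onto its image and the crystalline lattice in $D_{\dR}$ is, up to the comparison factor $p^{a(\dim L-1)/(p-1)}$ which is a \emph{unit} for $p \nmid $ (anything relevant), exactly the lattice $\cV_v \otimes \cO_{\C_p}$ coming from the integral Deligne extension — one has to check that the integral $p$-adic de Rham comparison of Tsuji identifies $S_{\cris}$ with the integral structure $\cV$ on $_{\dR}V$ at good reduction. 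I would argue this by spreading out: the two admissible structures (the one with the $\cV_v$-norms at all good $v$, and ours) are each integral models in the sense of Definition \ref{def:acceptable}, they agree at the generic point, and by Lemma \ref{lem:admissibleind} any two integral models agree at almost all places, which forces our crystalline norm to equal the $\cV_v$-norm for almost all $v$. Combining: conditions (1)–(5) all hold, so $\Gr\, {}_{\dR}V$ is admissible. $\qed$
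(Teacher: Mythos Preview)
Your treatment of conditions (1)--(4) is fine and matches the paper: the Archimedean metric is continuous by construction, and acceptability at every finite place follows from Lemma \ref{lem:intrinsicacceptable} (for the intrinsic norm) together with Theorem \ref{crisint} (to transfer acceptability to the crystalline norm at the good places).

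The gap is in condition (5). You correctly identify that the real content is showing the crystalline lattice $S_{\cris}({}_{\et}\V_{E_v})$ coincides with the fixed integral model $\cV_v$ for almost all $v$. But your argument for this is circular. You propose to view both the $\cV_v$-norms and the crystalline norms as admissible collections and then invoke Lemma \ref{lem:admissibleind} to conclude they agree at almost all places. But Lemma \ref{lem:admissibleind} takes as input two \emph{admissible} collections, and admissibility of the crystalline-norm collection is precisely what you are trying to prove. Nothing in your argument establishes that the crystalline lattices $S_{\cris}$, defined separately at each $v$, globalize to a single integral model $\cV'$ over $\cO_E[N'^{-1}]$; absent that, Lemma \ref{lem:admissibleind} does not apply. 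Your earlier remark that the comparison factor $p^{a(\dim L-1)/(p-1)}$ is a unit is also beside the point: that factor controls crystalline-versus-intrinsic, not crystalline-versus-$\cV_v$.

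The paper supplies exactly the missing idea (Proposition \ref{integralcomp}). The key observation is that $S_{\cris}({}_{\et}\V_{E_v})$ and $\cV_v$ are both Fontaine--Laffaille modules whose underlying $\Q_p$-local systems are isomorphic; by the equivalence of categories (\cite[Thm 77]{Tsuji}) they differ by a global automorphism of ${}_{\dR}V_{E_v}\otimes\Q_p$, hence agree everywhere iff they agree at a \emph{single point}. One then restricts to a CM point, where by Theorem \ref{specialpointisabelian} and \cite[Prop 4.7]{milne-cm} the representation lies in the Tannakian category generated by $H^1$ of a CM abelian variety. For $H^1$ of an abelian variety with good reduction, the agreement of the crystalline lattice with integral de Rham cohomology is Faltings' integral comparison theorem \cite[Thm 5.3]{FaltingsCrystallineCohomology}; this propagates through the Tannakian category integrally at almost all places. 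This reduction-to-a-point-plus-abelian-variety argument is the substantive step you are missing.
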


\begin{proof}

For every finite place $v$, the intrinsic norm on the graded pieces of $D_{HT}(_{\et}V_{E_v,x})$ extends to an acceptable norm on $\ol{S}_v$ .

The fact that the norms at infinity are continuous is immediate, and the acceptability of the norms at each finite place follows from Theorem \ref{crisint} and Lemma \ref{lem:intrinsicacceptable}. 

The Theorem is then reduced to the proof of the following proposition:

\begin{proposition}\label{integralcomp}
For almost all places $v$, the image of the integral model $S_{\cris}(_{\et}\V_{E_v})$ in $_{\dR}V_{E_v}$ under the canonical isomorphism agrees with $\cV_v$. 
\end{proposition}

\begin{proof}

We first reduce to the case where $G^{\der}$ is $\Q$-simple and $G=G^c$. By Proposition \ref{prop:shimuracover}
we may find a cover $f:(G',X')\ra (G,X)$ such that $G'^{\der}$ is simply-connected. Since the map on Shimura varieties is finite \'etale, is is enough to check the proposition on $G'$ after pullback. We therefore assume that $G^{\der}$ is simply connected.

We write $G^{\der} = G_1 \times G_2 \hdots G_k$, where each $G_i$ is $\Q$-simple. Define $G^{(i)} = G/\prod_{i\neq j} G_j$. The Shimura datum $(G,X)$ induces canonical Shimura data $(G^{(i)},X_i)$ for $1\leq i \leq k$, and there is a natural inclusion of Shimura data $(G,X)\rightarrow \prod_i (G^{(i)},X^i)$. Note that the induced map $(G,X)\rightarrow \prod_i (G^{(i)},X_i)$ induces a finite \'etale map of Shimura varieties, as the morphism of Shimura data is an isomorphism at the level of adjoint Shimura data. There exist $\Q$-representations $V_i$ of $G^{(i)}$ such that $V$ is a direct summand of $(V_1 \boxtimes V_2 \hdots \boxtimes V_k)|_G$. Since this is true rationally, it follows for almost all places that the same is true for $\V_v$. Thus the claim for $V$ follows from the claim for each of the $V_i$. We have therefore reduced to the case where $G^{\der}$ is $\Q$-simple. 

Finally, we easily reduce to the case where $G=G^c$, noting that our local systems by definition comes form a rational representation that factors through $G^c$, therefore all our associated data is canonically pulled back along the map $(G,X)\ra (G^c,X^c)$. 

Next, we will now reduce to the case where $G^{\der}$ has real rank at least 2, so that we may apply the results of the appendix. First, we note that the Tannakian argument above reduces to proving the result for some faithful representation of $G$. By Proposition \ref{prop: realrankatleast2}, there exists an embedding of Shimura data $(G,X) \rightarrow (H,Y)$ where $H^{\der}$ is $\Q$-simple and has real rank at least 2, and $H = H^c$. Any faithful representation of $H$ induces a faithful representation of $G$, and so we have reduced to the case when the real rank of the derived group is at least 2.  

Henceforth we assume that $G^{\der}$ is $\Q$-simple, has real rank at least 2, and that $G = G^c$ in what follows. We first note that both $S_{\cris}(_{\et}\V_{E_v})$ and $\cV_v$ are equipped with flat log-connections. We claim that they are \emph{abstractly} isomorphic after an unramified based change base. Indeed, $V$ splits as a direct sum of representations after some finite extension of $\Q$, and hence $\V_v$ does as well for almost all $v$. For $v$ sufficiently large this field extension is unramified at $v$.
Hence, after such a base change, both  $S_{\cris}(_{\et}\V_{E_v})$ and $\cV_v$ are direct sums of integral models of irreducible flat log-bundles on the generic fiber. The claim now follows by Proposition \ref{prop:model}, which applies because every local system on $\ShimK$ is strongly cohomologically rigid. 

It follows that the image of two models in $_{\der}V$ are the same up to a global automorphism of the generic fiber $_{\dR}V_{E_v}\otimes \Q_p$, which corresponds to a global automorphism of the associated $\Q_p$-local system. We may thus check that these two structures agree at a single point. By the crystalline compatibility on passage to fibers in Theorem \ref{fibercomp} we are thus reduced to checking the agreement for a 0-dimensional Shimura datum $(T,r)$ that is a sub of $(G,X)$. We pick $T$ as the smallest $\Q$-subtorus containing the image of $r$.

Since the $\R$-split center of $G=G^c$ is $\Q$-split, it follows that the weight character of $(G,X)$ is rational, and hence the same is true for $(T,r)$. Thus, by \cite[Prop 4.1]{milne-cm} it follows that $(T,r)$ arises from a rational Hodge structure $V$  of CM type. Moreover, by \cite[Prop 4.7]{milne-cm} there is a CM abelian variety $A$ such that $V$ is in the Tannakian category generated by the rational Hodge structure  $H^1(A)$. It follows that $(T,r)$ is a quotient of the Shimura variety $(T',r')$ corresponding to $H^1(A)$. By Proposition \ref{prop: milnepullback}, it is sufficient to prove the proposition for the Shimura variety $(T',r')$.

Now $(T',r')$ admits a faithful representation on $\V_0:=H^1(A)$. The canonical isomorphism between $_{p-\dR} \V_0$ and $_{\dR} \V_0$ constructed in \cite{LZ2} is just the usual $p$-adic comparison isomorphism constructed by Faltings (note that the former object is $D_{\dR}(_{\et}\V_{0,p}[1/p]) = D_{\dR}(H^1_{\et}(A,\Q_p))$ and the latter object is canonically $H_{\dR}^1(A)$). Further, for this particular representation, our statement follows from the integral version of this theorem \cite[Thm 5.3]{FaltingsCrystallineCohomology}. By \cite[Theorem 5.3.1]{LZ2}, the canonical isomorphisms between the $p$-adic and usual Rieman Hilbert correspondences are compatible under Tannakian operations. As $\V\otimes\Q$ is in the Tannakian category generated by $\V_0\otimes\Q$, it is a direct summand of some tensor power of $\V_0\otimes\Q$. This remains true integrally for almost all places. Furthermore, The functor $S_{\cris}$ is also compatible under tannakian operations as long as $p$ is large enough relative to the Hodge-Tate weights of $\V$ (Lemma \ref{cristensor}), and therefore the claim follows.

\end{proof}

\begin{corollary}\label{shhtweil}
For any integer $a$, let $\cL=\det\Gr^a_{F^\cdot}{}_{\dR}V$. Let $h$ denote the height corresponding to $\cL$ with its normed vector bundle structure as above. Let  $h_\cL$ denote a Weil height corresponding to $\cL$ considered as a line bundle on $\ol S$ via the Deligne extension. The height $h$ differs from $h_\cL$ by at most $O(\max(1,\log h_A))$ where $A$ is an ample line bundle on $\ol S$. In particular, if $\cL$ is ample then $h$ is comparable to $h_\cL$. 
\end{corollary}

\begin{proof}

The key is that the Hodge metric has logarithmic singularities by \cite[Thm 6.6']{Schmid}. 

By Theorem \ref{shhtadmis} and corollary \ref{cor:ourheightsareweil} the difference $h-h_{\cL}$ is $O(1)$ plus the difference at the Archimedean places coming from the fact that our normed line bundle is not \emph{strongly} admissible. By Schmid's work\cite[Thm 6.6']{Schmid} this is bounded by the logarithm of a power of the absolute value of logarithm of the distance to the boundary $D=\ol S-S$. Since the absolute value of the logarithm of the distance to the boundary is bounded above by $h_A$, the result follows.

\end{proof}

\end{proof}

\begin{definition}
We keep the setup of this section, and suppose that the weights of $_{\dR}V$ are in $[a,b]$, such that $\Gr^b _{\dR}V$ is 1-dimensional. Call the height constructed using the above norms on $\Gr^b _{\dR}V$ the \emph{canonical height}. 
\end{definition}

\section{Solid Height Functions}\label{S-solidheightfunctions}

\subsection{Notation}

Given two functions $f,g$ we write $f\prec_{\vec{x}} g$ ($g\succ_{\vec{x}}  f$) if there exist functions $A,B>0$ depending only on $\vec{x}$ such that $f\leq Ag^B$. Given a torus $T$, we define $E_T$ to be the splitting field of $T$, and we define $K_T\subset T(\bbA_f)$ to be the maximal compact subgroup.

\subsection{Height function requirements}
Following previous notation, we make the following definition:

\begin{definition}
Let $V$ denote a representation of $\G_m$. Define $\rng_V$ to be the difference between the highest and lowest weight of $\G_m$ on $V$.
\end{definition}

\begin{definition}\label{def:solidcharacter}
    Let $(T,r)$ be a Shimura datum with $T$ a torus, and let $\chi$ be a character of $T^c$. We say that $\chi$ is \textit{solid} if there exists an irreducible $\Q$-representation $V_\chi$ of $T^c$, such that under the filtration $\Fil^\cdot V_{\chi,\C}$ induced by $r$,  we have that the highest weight piece $\Fil^a V_{\chi,\C}$ is 1-dimensional and is isomorphic to $\chi$. By Lemma \ref{lem:torcharorb} the representation $V_\chi$ is unique and we define $\rng_\chi$ to be $\rng_{V_\chi}$.
\end{definition}

Observe that if $\chi_1,\chi_2$ are both solid characters then so is $\chi_1\chi_2$ and $\rng_{\chi_1\chi_2}\leq \rng_{\chi_1}+\rng_{\chi_2}$. 

\begin{lemma}
    Let $f:(T_1,r_1)\ra (T_2,r_2)$ be a map of Shimura vaieties with $T_1,T_2$ Tori, and let $\chi$ be a solid character of $T_2$. Then $\chi\circ f$ is a solid character of $T_1$ of the same weight.
 \end{lemma}

\begin{proof}
    Let $V_\chi$ be the corresponding representation of $T_2$, and consider $V_\chi$ as a representation of $T_1$ via $f$. The filtration induced by $r_1$ is the same as that induced by $r_2$ since $r_2\circ f=r_1$ and thus the highest weight piece $\Fil^a V_{\chi,\C}$ is also the heighest weight piece for $T_1$. The claim follows.
\end{proof}

\begin{definition}\label{def:solidtriple}
    We consider \textit{Toric Triple} to be a triple $(T,r,\chi)$ where 

\begin{enumerate}
    \item $T$ is a Torus
    \item $(T,r)$ is a Shimura variety
    \item $\chi$ is a solid character of $T^c$.
\end{enumerate}
\end{definition}

The example that we will be most concerned with is the setting of CM fields and partial CM types. We have the following lemma: 
\begin{lemma}\label{lem:partialCMtypesolid}
Let $E$ be a CM field with totally real subfield $F\subset E$. Let $\Phi$ be a partial CM type on $E$ with $|\Phi| = g_1$. Then $(R_E,r_\Phi,\chi_\Phi)$ is a toric triple.     
\end{lemma}
\begin{proof}
    We need to produce a representation of $R_E$ whose highest weight (with respect to the cocharacter $\rho_\Phi$) is one-dimensional and the highest weight is $\chi_\Phi$. We have the map $h: \Res_{E/\Q} \G_m \rightarrow \Res_{E/\Q}\G_m$ given by $\alpha\mapsto \frac{\alpha}{\bar{\alpha}}$. This map factors through $R_K$, and therefore we obtain a homomorphism $f: R_K \rightarrow \Res_{E/\Q} \G_m $. We have that $V' =  \Res_{E/\Q}\G_a$ is a representation of $R_K$ via $f$. Base-changing to $\C$, we see that the weight spaces of $r_\Phi$ are $V'_1 = \bigoplus_{\sigma \in \Phi} \C_{\sigma}$, $V'_{-1} = \bigoplus_{\sigma \in \bar{\Phi}}$, and $V'_0 = \bigoplus_{\sigma \notin \Phi \cup \bar{\Phi}}$. The characters of $R_E$ on $V'_1$ are precisely $e^*_{\sigma} - e^*_{\bar{\sigma}}$, for each $\sigma \in \Phi$. Further, each such $e^*_{\sigma} - e^*_{\bar{\sigma}}$ appears with multiplicity one. We may thus define $V_{\chi_{\Phi}}$ to be the irreducible subspace of $\bigwedge^{g_1} V$ containing the line $\bigwedge^{g_1}V'_1$.

\end{proof}

\begin{definition}\label{def:solidheight}

    A \textit{solid height} is a real-valued $h$ function on the set of solid triples, such that:

    \begin{enumerate}
        \item If $\chi_1,\chi_2$ are both solid characters then
        $$h(T,r,\chi_1\chi_2)=h(T,r,\chi_1)+h(T,r,\chi_2)+O_{\rng_{\chi_1}+\rng_{\chi_2}+\dim T}(\log\Disc E_T)$$

        \item Given\footnote{As we explain in the next section, property (2) follows from (1) and (3) formally, but we still decided to include it for clarity as it is used in the proofs.} a solid triple $(T,r,\chi)$ and a positive integer $m$ we have 
        $$h(T,r^m,\chi)=mh(T,r,\chi) + O_{m+\rng_\chi+\dim T}(\log\Disc E_T)$$

        \item If $(T_2,r_2,\chi)$ is solid, and $f:(T_1,r_1)\ra (T_2,r_2)$ is a morphisms of Shimura data with $T_1,T_2$ Tori, then 
        $$h(T_1,r_1,\chi\circ f)=h(T_2,r_2,\chi) + O_{\rng_\chi+\dim T_1+\dim T_2}(\log \Disc E_{T_1}+\log \Disc E_{T_2}).$$

        \item Let $S=S_K(G,X)$ be a Shimura variety, and $V$ an irreducible representation of $G$ with a sublattice $\V$ fixed by $K$. Assume that the highest weight piece $L:=\Fil^a _{\dR}V_\C$ is 1-dimensional, and let $h_L$ be a Weil height on a Toroidal compactification $\bar S$ of $S$ corresponding to the Deligne extension of $L$. Let $h_A$ be a Weil height corresponding to any ample bundle on a Toroidal compactification of $S$. Finally, let $(T,r)\subset (G,X)$ be a 0-dimensional Shimura subdatum, and $\chi:=\Fil_r^a V_\C$ the corresponding Solid character of $T$. Then for all points $x\in S_K(G,X)$ in the image of $S_{K\cap T(\bbA_f)}(T,r)$, we have
        $$|h(T,r,\chi) - h_L(x)| = O_S\Big(\log\Disc E_T + \log\big([K_T:K\cap T(\bbA_f)]\big) + \log^+ h_A(x)\Big)$$
    \end{enumerate}

\end{definition}

We shall prove the following theorem:

\begin{theorem}\label{thm:solidimpliesAO}
    Suppose a solid height function exists. Then the Andr\'e-oort conjecture holds.
\end{theorem}

\subsection{Properties of heights of total CM types}
In this section, we will prove the following proposition. 
\begin{proposition}\label{lem:boundsolidheightoftotalCMtype}
   Let $E$ be a CM field of degree $2g$ with totally real field $F$, and let $\Phi$ be a complete CM type on $E$. Then, $h(R_E,r_\Phi,\chi_\Phi) = O_g(\Disc E^{o_g(1)})$.
\end{proposition}
\begin{proof}
By properties (1) and (3) we may assume $\Phi$ is a primitive CM type.

Consider the norm map $$\Nm_{E/F:}\Res_{E/\Q}\G_m\ra \Res_{F/\Q}\G_m,$$ and define $S_E:=\Nm^{-1}_{E/\Q}(\G_m).$ Now
$\Res_{E/Q}\G_m(\R)\cong \bigoplus_{\sigma\in\Hom(F,\C)}\C_{\sigma}$.  
There is a natural morphism $\Ss\ra (\Res_{E/Q}\G_m)_\R$ which on the $\sigma$ co-ordinate induces the isomorphism corresponding to the element of $\Phi$ above $\sigma$, and this morphism factors through $S_E$.  We denote by $s_\phi$ the corresponding map $\Ss\ra S_{E,\R}$.

There is a natural quotient map $i:(S_E,s_\Phi)\ra (R_E,r_\Phi)$, so by property (3), it suffices to prove that $h(S_E, s_\Phi, i^*(\chi_{\Phi})) = O_g(\Disc E^{o_g(1)})$ 

Let $V':=\Res_{E/\Q}\G_a$ denote the standard representation of $S_E$, and let $\Nm_{E/\Q}$ denote the Norm character, which is solid as its a rational character. The highest weight space (with respect to $s_{\Phi}$) in $\bigwedge^g V'$ is one-dimensional. Let $\chi_{\std}$ denote the the character of this weight space. A direct computation shows that $\chi^2_{\std}=i^*\chi_{\Phi} \cdot \Nm$. By property (1), it suffices to prove that $$\max\big(h(S_E,s_{\Phi},\Nm_{E/\Q}),h(S_E,s_\Phi,\chi_{\std})\big) = O_g((\Disc E)^{o_g(1)}).$$ 

Now $\Nm_{E/\Q}: (S_E,s_\Phi)\ra (\G_m,\Nm_{E/\Q})$ is a Shimura morphism, and if we let $\textbf{1}$ denote the identity ccharacter of $\G_m$ then $\textbf{1}\circ \Nm_{E/\Q} = \Nm_{E/\Q}$, and so by property (3) we have that $h(S_E,s_{\Phi},\Nm_{E/\Q}) = O_g(\log\Disc E).$

The representation $V'$ of $S_E$ is equipped with a canonical (isomorphism class of) symplectic form, and $S_E$ acts on $\std$ by symplectic similitudes. By Zarhin's trick, the representation $\std^{\oplus 8}$ contains a full rank lattice $L$ such that:
\begin{itemize}
    \item $L$ is self-dual for the symplectic form on $\std^{\oplus 8}$.
    \item $L\otimes \hat{\Z}$ is stable by the maximal compact subgroup of $S_E(\A^{f})$ (indeed, by the maximal compact subgroup of $\Res_{E/\Q} \G_m$). 
\end{itemize}
This induces an embedding of the Shimura variety $S_{K_{S_E}}(S_E,s_{\Phi})$ into $\mathcal{A}_{8g}$. By construction, the highest weight on $\bigwedge^{8g} (V')^{\oplus 8}$ respect to $s_{\Phi}$ is $\chi^8_{\std}$. By Properties (1) and (4) it suffices to prove that for any fixed Weil height $h_A$ on $\mathcal{A}_{8g}$ and any $x\in\mathcal{A}_{8g}$ in the image of $(S_E,s_\Phi)$ that 
$h_A(x) = O_g(\Disc E^{o_g(1)}).$

Any Weil height $h_A$ induced by the highest weight piece of $\bigwedge^{8g} \std^{\oplus 8}$ on all of $\mathcal{A}_{8g}$ satisfies  $h_A = h_{\textrm{Fal}} + O_g(\log h_A) $ by \cite[Prop 4.4-4.5]{chaifaltings}. Therefore, it suffices to prove that $h_{\textrm{Fal}}(x) = O_g(\Disc E^{o_g(1)})$. 

By construction of the self-dual lattice in $\std^{\oplus 8}$, the abelian variety underlying $x$ has the form $B = A^4 \times (A^{\vee})^4$, where $A$ is a $g$-dimensional abelian variety with CM by $\cO_E$ and having CM type $\Phi$.

We have that $h_{\textrm{Fal}}(B) = 8 h_{\textrm{Fal}}(A)$. The claim is now exactly \cite[Cor 3.3]{T}, which uses the average form of Colmez's conjecture (proved in \cite{AGHM} and \cite{YZ}) to bound the heights of such CM abelian varieties. 

The completes the proof of this proposition.

\end{proof}

\subsection{Main height bound}

\begin{theorem}\label{thm:mainhtbound}
Assume a solid height function exists. Fix a Shimura variety $S=S_K(G,X)$ with $G$ of adjoint type, and $K$ neat of split-adjoint type. Let $(T,r)\subset (G,X)$ be a 0-dimensional Shimura sub-datum, and let $L$ be an ample bundle on $S$, with a corresponding Weil height function $h_L$. Then for any point $x\in S_K(G,X)(\Qbar)$ in the image of $S_{K\cap T(\bbA_f)}(T,r)$, we have

$$h_L(x) = O_S\left(\bigg(\Disc E_T\cdot [K_T:K\cap T(\bbA_f)]\bigg)^{o_S(1)}\right).$$

\end{theorem}

\begin{proof}
Since all Weil heights associated to ample bundles  are comparable, it is sufficient to prove the theorem for any choice of  Weil height.

\vspace{0.1in}
Let $V$ denote the adjoint representation of $G$ so that $\Fil^{-1} _{\dR}V = _{\dR}V$ and $\Fil^2_{\dR}V = 0$, and fix a lattice $\V$ stabilized by $K$. Then $_{\dR}V$ is naturally equipped with an admissible collection of norms via the canonical norm. Then $L:=\det\Fil^1_{\dR}V$ is an ample bundle on $S$. By property (4) of Definition \ref{def:solidtriple}, it is sufficient to show that 
$$h(T,r,\chi) = O_S\left(\bigg(\Disc E_T\cdot [K_T:K\cap T(\bbA_f)]\bigg)^{o_S(1)}\right)$$

where $\chi$ is the character $\det\Fil_r^1 V_\C\mid T$. 

By Theorem \ref{pcmreduction} there there are $B$ maps $a_i:(T_x,h_x)\ra (R_{E_T},r_{\Phi_i})$ to Shimura data corresponding to partial CM types, such that 
$$\chi^N = \prod_i (\chi_{\Phi_i}\circ a_i)^{c_i}$$ where $N,B,c_i$ are positive integers bounded in terms of $\dim G$.

By properties (2) and (3) of Definition \ref{def:solidtriple} we see that 

$$h(T,r,\chi)^N= \ds\sum_{i=1}^N c_i h(R_K,r_{\Phi_i},\chi_{\Phi_i}) + O_S(\log \Disc E_T).$$ The result therefore follows from the Theorem below.

\end{proof}

\begin{theorem}\label{delignesidea}
Let $E/F$ be a CM field of degree $2d$,  $\Psi$ a partial CM type of $E$. Then
$h(R_E,r_\Psi,\chi_\Psi)=O_d(\Disc E^{o_d(1)}).$
\end{theorem}

The proof of this will heavily use an idea of Deligne\cite[Prop. 2.3.10]{D} for combining partial CM types to get a complete CM type for a larger CM fields, which he used to classify Shimura varieties of abelian type.

\begin{proof}\textrm{ }

\vspace{0.2in}
\noindent\emph{Step 1: The full CM type}
\vspace{0.2in}

This is Proposition \ref{lem:boundsolidheightoftotalCMtype}.

\vspace{0.2in}
\noindent\emph{Step 2: Reduction to $E/\Q$ a Galois extension}
\vspace{0.2in}

We may reduce to the case where $E$ is Galois as follows: Let $E'$ denotes the Galois closure of $E$, set $m=[E':E]$, and define $\Psi'$ to be the pullback of $\Psi$ to a partial CM type of $E'$.  Then the Norm map $\Nm_{E'/E}:R_{E'}\ra R_E$  gives a map of Shimura Data
$(R_{E'},r_{\Psi'})\ra (R_E,r_{\Psi}^m)$, and satisfies $\Nm_{E'/E}\circ \chi_\Psi = \chi_{\Psi'}$. The reduction now follows from properties (2) and (3) of Definition \ref{def:solidtriple}.

\vspace{0.2in}
\noindent\emph{Step 3: $|\Phi|=1$}
\vspace{0.2in}

We next handle the case where $\Phi$ consists of a single place. In that case, note that the isomorphism class of the toric triple $(R_E,r_{\Phi},\chi_{\Phi})$ is independent of $\Phi$, as $\Gal(E/\Q)$ acts transitively on $\Hom(E,\C)$. Therefore we refer to $h(R_E,r_{\Phi},\chi_{\Phi})$ simply by $h_1(E)$.

For $i=1,\dots,d$ let $E_i=F(\sqrt{p_i})$ for distinct primes $p_1,\dots,p_d$. One may pick these primes of size $O_d(1)$ such that $E,E_1,\dots,E_d$ are disjoint extensions of $F$. We set $E=E_0$. 

Next, fix $0\leq j\leq d$.  We choose $\Phi_1,\dots,\Phi_d$ to be partial CM types of $E_0,\dots,\hat{E_j},\dots,E_d$ consisting of a single place, such that all of these places lie above a distinct place of $F$. Let $E^{(j)}$ denote the compositum
$E^{(j)}=E_0\dots \hat{E_j}\dots E_d$, and define $\Phi^{(j)}_i$ to be the pullback of $\Phi_i$ to $E^{(j)}$. Then $\Phi^{(j)}:=\displaystyle\bigcup_{\substack{i=0\\i\neq j}}^d\Phi_i'$ is a full CM type on $E'$.

For $i\neq j$, the norm map $\Nm_{E'/E_i}$ induces a map of Shimura data $(R_{E^{(j)}},\rho_{\Phi^{(j)}})\rightarrow (R_{E_i},\rho_{\Phi_i}^{2^{d-1}})$. Moreover, $\chi_{\Phi^{(j)}}=\displaystyle\prod_{i=0}^d \chi_{\Phi_i}\circ\Nm_{E^{(j)}/E_i}$. It follows from properties (1) and (3) of definition \ref{def:solidtriple}  that

$$ h(R_{E^{(j)}}, r_{\Phi^{(j)}},\chi_{\Phi^{(j)}})=2^{d-1}\displaystyle\sum_{\substack{i=0\\i\neq j}}^dh_1(E_i) + O_{\dim E}(\log\Disc E).$$

and therefore that
$$h_1(E_0) = \frac{1}{d2^{d-1}}\cdot \Bigg(\displaystyle\sum_{j=1}^d h(R_{E^{(j)}}, r_{\Phi^{(j)}},\chi_{\Phi^{(j)}}) - (d-1)h(R_{E^{(0)}}, r_{\Phi^{(0)}},\chi_{\Phi^{(0)}})\Bigg)  + O_{\dim E}(\log\Disc E).$$

The bound now follows Step 1.

\vspace{0.2in}
\noindent\emph{Step 4: The general case}
\vspace{0.2in}

Finally, we handle the general case. Let $E,F,\Phi$ be arbitrary and we set $e=d-|\Phi|$. For $i=1,\dots,e$ we pick $E_i=F(\sqrt{p_i})$ for primes numbers $p_i=O_{[E:\Q]}(1)$ so that $E,E_1,\dots,E_d$ are disjoint over $F$. We set $E=E_0$.

For $i=1,\dots,e$ we pick singleton sets $\Phi_i$ consisting of one place of $E_i$ such that each place of $F$ is either below an element of $\Phi$, or below the element of $\Phi_i$ for $1\leq i\leq e$. Now we set $E_{tot}$ to be the compositum $E_{tot}:=E_0\dots E_e$ and $\Phi_{tot}$ the union of the pullbacks of the $\Phi_i$ and $\Phi$ to $E_{tot}$ under $\Nm_{E_{tot}/E_i}$, which is a complete CM type. Then
as above, we obtain:

\begin{equation}
    h(E_{tot},r_{\Phi_{tot}},\chi_{\Phi_{tot}})=2^eh(R_E,r_\Phi,\chi_\Phi) + 2^{e}\sum_{i=1}^eh_1(E_i) + O_{\dim E}(\log\Disc E).
\end{equation}

The result now follows from steps 1 and 3. 

\end{proof}

\subsection{Using the height bound to prove Andr\'e-Oort}

We finally obtain Andr\'e-Oort conjecture for Shimura varieties:

\begin{theorem}\label{mainao2}
Let $S=S_K(G,X)$ be a Shimura variety. Then the Andr\'e-Oort conjecture holds for $S_K(G,x)$.
\end{theorem}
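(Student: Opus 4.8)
The plan is to check that Theorem~\ref{mainhtbound} furnishes exactly the input that was missing from the Pila--Zannier strategy, and then to conclude by appealing to the work of Binyamini--Schmidt--Yafaev \cite{BSY}, which --- building on Binyamini's point-counting theorem \cite{Binyamini} and on the functional transcendence theorems for Shimura varieties of \cite{PT-axlag, UY, MPT} --- reduces the Andr\'e--Oort conjecture to a height bound of precisely this shape.

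First I would perform the standard reductions. A subvariety $Z$ of $S$ is special if and only if its image in the adjoint Shimura variety is special and $Z$ is a component of the preimage of that image; the analogous statement holds for the finitely many projections to the $\Q$-simple factors of $G^{\mathrm{ad}}$. A fibered induction on $\dim V$ and $\dim S$ in the style of \cite{UYA, Pila, KY} then reduces Theorem~\ref{mainao2} to the setting of Theorem~\ref{mainhtbound}, namely $S = S_K(G,X)$ with $G$ simple of adjoint type. Replacing $V$ by the union of its finitely many conjugates over $E(G,X)$, we may also assume $V$ is defined over a number field.

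Next I would assemble the Galois orbit lower bound. A special point $x \in S(\Qbar)$ is the image of a $0$-dimensional sub-datum $(T,r)\subset (G,X)$; write $M$ for the index of $K\cap T(\A_f)$ in the maximal compact $K_T$ and $E$ for a splitting field of $T$, so that $d(K\cap T(\A_f)) = M\disc E$. Class field theory (as in \cite{UYA, DO}, ultimately relying on the Brauer--Siegel lower bound for class numbers) shows that the Galois orbit of $x$ over a suitable fixed number field has size bounded below by a fixed positive power of $M\disc E$, with Proposition~\ref{smallring} being what allows one to express the estimate purely in terms of $M$ and $\disc E$. On the other hand, Theorem~\ref{mainhtbound} --- in the form giving the canonical height with respect to an ample line bundle, together with Corollary~\ref{shhtweil} comparing it to a Weil height --- bounds the Weil height of $x$, hence the height of an appropriate preimage of $x$ in a fundamental set for the uniformization $\pi$ of $S(\C)$, by $(M\disc E)^{o(1)}$. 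A polynomial lower bound for the number of conjugates of $x$ together with a sub-polynomial upper bound for their heights are exactly the two hypotheses under which Binyamini's counting theorem, applied to the definable set cut out by $\pi^{-1}(V)$ inside a fundamental set, forces all but boundedly many conjugates of $x$ to lie on positive-dimensional weakly special subsets; combined with the Ax--Lindemann (indeed Ax--Schanuel) theorem for $\pi$, this forces $x$ to lie in a positive-dimensional special subvariety contained in $V$.

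The endgame is the usual Noetherian descent: the previous step shows that outside a proper Zariski-closed subset of $V$ every special point of $V$ lies in some positive-dimensional special subvariety $Z\subseteq V$; applying the same analysis to each such $Z$, whose defining torus again has controlled complexity, and inducting on dimension, one concludes that $V$ contains only finitely many maximal special subvarieties. This proves Theorem~\ref{mainao2}, and, via Gao's reduction \cite{Gao-reduction}, the Andr\'e--Oort conjecture for arbitrary (possibly mixed) Shimura varieties. I do not anticipate a genuine obstacle here --- the substance of the whole argument is Theorem~\ref{mainhtbound} --- and the only point demanding real care is the compatibility of the various notions of complexity: one must verify that $M$ and $\disc E$ dominate, up to the $o(1)$ allowed in the exponent, the complexity parameter $d(K\cap T(\A_f))$ appearing both in the Galois lower bound and as the counting parameter, which is precisely what the estimates collected in \S\ref{S-heightsCM} accomplish.
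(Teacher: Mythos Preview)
Your proposal is correct in spirit and follows the same overall route as the paper: reduce to $G$ simple of adjoint type, then feed the sub-polynomial height bound into the Binyamini--Schmidt--Yafaev machine. The paper's proof is considerably more economical than yours: it simply cites \cite[Theorem~1]{BSY} as a black box that already packages the Galois-orbit lower bound, Binyamini's point-counting, the Ax--Lindemann input, and the Noetherian induction, so that the \emph{only} thing left to supply is the height bound. Your second and third paragraphs essentially re-sketch the contents of \cite{BSY}; this is not wrong, but it is redundant once you have decided to cite that paper.

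There is one genuine omission. The paper's proof explicitly splits into two cases: if $G$ is of abelian type, the height bound is taken from \cite{PT-surfaces} (ultimately from the Siegel embedding and \cite{T}); only in the non-abelian-type case is Theorem~\ref{mainhtbound} invoked. You apply Theorem~\ref{mainhtbound} uniformly. This matters because the proof chain behind Theorem~\ref{mainhtbound} passes through the admissibility of the canonical norm (Theorem~\ref{shhtadmis}), which in turn rests on the crystallinity result Theorem~\ref{crystallinelocalsystems}, and \S\ref{S-crystalline} establishes crystallinity via Esnault--Groechenig and Margulis super-rigidity --- an argument that only applies in the non-abelian case (see Theorem~\ref{Margulissuperrigidity} and the opening sentence of \S\ref{S-crystalline}). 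For abelian type the crystallinity (and hence the height bound) comes instead from the comparison with $\cA_g$, which the paper does not re-prove here. So your uniform invocation of Theorem~\ref{mainhtbound} tacitly assumes an input the paper has not provided for abelian type; you should either insert the case-split or note that the abelian-type height bound is already in the literature.
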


\begin{proof}

First, by \cite[Prop 2.1-2.2]{EY} we may reduce to the case where $G$ is of adjoint type and $K$ is neat and split-adjoint. Let $L$ be an ample bundle on $S$  and let $h$ be a corresponding Weil height.

Next, using \cite[Theorem 2]{BSY} it is sufficient to prove that for any Shimura subdatum $(T,r)\subset (G,X)$ and any points in $S_K(G,X)$ in the image of $S_{K\cap T(\A_f)}(T,r)$, that
 \begin{equation}\label{heightneed}
 h(w) = (\Disc(E_T) \cdot  [K_T:K\cap T(\A_f)])^{o_S(1)}
 \end{equation}.
 
But this is precisely Theorem \ref{thm:mainhtbound}

\end{proof}

\section{Construction of a Solid Height Function}\label{S-solidheightconstruction}

\subsection{Norms on 0-dimensional Shimura varieties}

\begin{definition}
    Let $S_K(T,r)$ be a Shimura variety such that $K$ is split, $V$ a $T^c$ representation of weights $[a,b]$.  We let $q:V\times V\ra \Q_{\psi}$ be a polarization of $V$, for a $\Q$-character $\psi$ of $T$, and $\V\subset V$ be a lattice. We define an admissible collection of norms  $_{K,\V,q}|\cdot |$ on $(S_K(T,r),S_K(T,r),\Gr^b _{\dR} V)$ as follows:

\begin{itemize}
    \item For each Archimedean place, we simply use the norm induced by the Hodge norm as described in section \ref{canhodge} using the polarization $q$ above.
    \item For each non-Archimedean place $v\mid p$ such that
    \begin{itemize}
        \item $T$ is unramified at $p$,
        \item $K_p$ is maximal, and 
        \item $p\geq \rng_V\cdot \dim V+2$ \footnote{This condition allows us to compare this norm with the intrinsic norm, by Theorem \ref{crisint}.}.
    \end{itemize}
    we identify $_{\dR}V_{E_v}$  with $_{{p-\dR}}V_{E_v}$ via \cite[Thm 5.3.1]{LZ2}, and use the crystalline norm associated to $\V$.
    \item For the (finitely many) other non-Archimedean places $v$, we identify $_{\dR}V_{E_v}$  with $_{{p-\dR}}V_{E_v}$ via \cite[Thm 5.3.1]{LZ2}, and use the intrinsic norm corresponding to $_{{p-\dR}}\V_{E_v}$ .
\end{itemize}

This collection is admissible by Theorem \ref{shhtadmis}. We write $h_{K,\V,q}$ for the corresponding height function on $S_K(T,X)$. If the level $K=K_T$ is maximal, we suppress it from the notation and simply write $_{\V,q}|\cdot |$ or $h_{\V,q}$.
\end{definition}

\begin{lemma}\label{lem:cnstht}
Assume the setup above. Then $h_{K,\V,q}$ is constant on $S_K(T,X)(\Qbar)$.
\end{lemma}

\begin{proof}
This proof is morally similar to the proof that principally polarized, CM abelian varieties with the same CM type and the same \emph{maximal} endomorphism ring have the same height\footnote{Note that Abelian varieties arising in the same 0-dimensional Shimura variety have locally isomorphic endomorphism rings: i.e, isomorphic over $\Z_p$ for each prime $p$. In particular, maximal and non-maximal endomorphism rings do not occur in the same 0-dimensional Shimura variety. On the level of groups, these correspond to distinct embeddings of the torus in $\GSp_{2g}$}. 

Recall that the complex points of $S_K(T,r)$ can be described as $T(\Q)\backslash T(\bbA_f)/K$. Let $x_1,x_2$ be two distinct points. Then there are many elements $t$ in $T(\bbA_f$) such that $tx_1=x_2$. We define $S(t)$ to be the set of primes $p$ such that $t_p\not\in K_p$. Each such element defines a map
$f_t:S_K(T,r)\rightarrow S_K(T,r)$. Moreover, the pullback of the automorphic local system $_{\et}\V_p$ is naturally isomorphic to the local system $_{\et}(t^{-1}\V)_p$ where
$$t^{-1}\V:=t^{-1}(\V\otimes\hat{\Z})\cap \V_\Q.$$
The intrinsic and crystalline norms are functorial by Theorem \ref{fibercomp} and Proposition \ref{intrinsifunctoriality}. Therefore 
$h_{K,\V,q}(x_2)=h_{K,t^{-1}\V,t^{-1}q}(x_1)$.

Recall the polarization $q$ is a $T$-equivariant map $V\times V\ra \Q_\psi$ for a $\Q$-character $\psi$ of $T$, giving the induced map 
$$ T(\Q)\backslash (V\times V)\times T(\bbA_f)/K \ra T(\Q)\backslash \Q_\psi\times T(\bbA_f)/K \ra \Z, $$ wherethe second map sends $(a,t')\ra a|\psi(t')| $ where we use the adelic norm on $\Q^{\times}\backslash \bbA_f^{\times}/\widehat{\Z}^{\times}$. It therefore follows that $t^{-1}q=|\psi(t^{-1})| \cdot q$, 

Moreover, the lattices corresponding to $t^{-1}\V,\V$ agree at places $v$ above primes $p\not\in S(t)$, and so at for such places  $_{K,\V,q}|\cdot|_v=_{K,t^{-1}\V,t^{-1}q}|\cdot|_v$.

Thus the difference $h_{K,\V,q}(x_2)- 
 h_{K,t^{-1}\V,t^{-1}q}(x_1)$ is a sum of rational multiples of $\log p,p\in S(t)$. By weak approximation we may pick $t$ so as to make $S(t)$ exclude any given prime. Since the logarithms of primes are $\Q$-linearly independent, we see that $h_{K,\V,q}(x_2)=h_{K,t^{-1}\V,t^{-1}q}(x_1)$ and the claim follows.
\end{proof}

By the above lemma, we write $h_{K,\V,q}$ for the constant value taken by the height function.

\subsection{Definition of the function}

Let $(T,r,\chi)$ be a Toric triple, and let $V_\chi$ be the corresponding irreducible $\Q$-representation. We set $[a,b]$ to be the smallest interval containing the weights of $V_\chi$.

We define $\Q_\chi$ to be the the commutative algebra $\End_T(V)$, and $R_\chi\subset\Q_\chi$ to be the maximal order. Note that $\Q_\chi$ is a subfield of $E_T$ and $R_\chi$ is the ring of integers. We set $\V_\chi\subset V_\chi$ to be a free $R_\chi$ sub-module, which we may therefore trivialize. 

All $\Q$-polarizations of $V_\chi$ are of the form $q(a,b):=\tr_{\Q_\chi/\Q}(\alpha a\bar{b})$ where $\alpha\in \Q_\chi$ is an element which is either totally imaginary or totally real (depending on whether the weight is odd of even), and of a pre-specified sign in each complex embedding. The discriminant of this polarization is $\Disc \Q_\chi\cdot  \Nm_{\Q_\chi/\Q}(\alpha)$, so we pick an $\alpha\in R_\chi$ with as small a norm as possible. This can be accomplished with the following elementary lemma:

\begin{lemma}
    Let $E/F$ be a CM field, with $d=[F:\Q]$, and let $\Phi$ be a CM type. For each real embedding $\sigma$ of $F$, fix a sign $s_\sigma\in \{\pm 1\}$. There exists an element $\alpha\in\Oo_F$ with $s_\sigma\sigma(\alpha)>0$ for all $\sigma$ with $\Nm_{E/\Q}(\alpha)=O_d((\Disc E)^d)$.

    Likewise, there exists a totally imaginary $\alpha\in\Oo_E$ with 
    $is_\sigma\sigma(\alpha)>0$ for all $\sigma$ with $\Nm_{E/\Q}(\alpha)=O_d((\Disc E)^d)$.
\end{lemma}

\begin{proof}

Consider the lattice $\Oo_E\subset \C^d$. By \cite[Lecture X]{Siegel}, there exists a Minkowski reduced basis of $\Oo_E$ with elements all of size at most $O_d((\Disc E)^{\frac 12})$. Thus we may pick $\beta\in \Oo_E$ within at most 
$O_d((\Disc E)^{\frac 12})$ of any element in $\C^d$, and so in particular within any hyper-quadrant. The first claim follows by taking $\alpha=\beta+\bar\beta$, and the second by taking $\alpha=\beta-\bar\beta$. 
    
\end{proof}

We now define $h(T,r,\chi)$ to be the constant value taken by the function $h_{\V,q}$ where $q$ corresponds to a polarization of Discriminant $O_d((\Disc E)^d).$




\subsection{Establishing Properties (1)-(3)}

\textbf{Property (1):}
\vspace{0.1in}
Let $\chi_1,\chi_2$ be solid characters of $(T,r)$, and set $\chi_3=\chi_1\chi_2$. Define $\V_{\chi_i}, i=1,2,3$ as above, together with the polarizations $q_1,q_2,q_3$. 

Consider $W=V_{\chi_1}\otimes V_{\chi_2}$ and $\W=\V_{\chi_1}\otimes\V_{\chi_2}$. There is a projection map $f:W\ra V_{\chi_3}$ and $f(\W)$ is stable under the image of $R_{\chi_1}\otimes R_{\chi_2}\ra R_{\chi_3}$. We may therefore take $\V_{\chi_3}:=R_{\chi_3}f(\W)$, and note that $$[\V_{\chi_3}:f(\W)]\prec_{\dim T}\Disc E_T.$$

It follows that we may write $\W$ as a superlattice containing $\V_{\chi_3}\oplus\V'$ with index $\prec_{\dim T}\Disc E_T$.  

Let $q_W:=q_1\otimes q_2$ be the induced polarization on $W$. Now the Crystalline norm is used for all of $_{\W,q_W}|\cdot|,_{\V_1,q_1}|\cdot|,_{\V_2,q_2}|\cdot|, $ for all places above primes $p\gg_{\dim T+\rng_{\chi_1}+\rng_{\chi_2}} 1$ which don't divide $\Disc E_T$, so that by the comparison Theorem \ref{crisint}, $$h(T,r,\chi_1)+h(T,r,\chi_2)=h_{\W,q_W}+O_{\dim T+\rng_{\chi_1}+\rng_{\chi_2}}(\log\Disc E_T).$$

Passing to a sublattice of index $M$ changes all the intrinsic and crystalline norms at a place $v$ by at most $|M|_v$, and so restricting the collection of admissible norms $_{\W,q_W}|\cdot|$ to $\V_{\chi_3}\oplus\V'$ we see that $$h(T,r,\chi_1)+h(T,r,\chi_2)=h_{\V_{\chi_3}\oplus\V',q_W}+O_{\dim T+\rng_{\chi_1}+\rng_{\chi_2})}(\log\Disc E_T).$$ Since the highest filtered piece of $\V_{\chi_3}\oplus\V'$ lies within $\V_{\chi_3}$, we have $$h_{\V_{\chi_3}\oplus\V',q_W}=h_{\V_{\chi_3},q_W} + O_{\dim T+\rng_{\chi_1}+\rng_{\chi_2}}(1).$$

It remains to compare the polarizations $q_W\mid\V_{\chi_3} $ and $q_3$. They are both of discriminant $\prec_d \Disc E_T$, and so  trivializing $\V_{\chi_3}$ we may write them as $(x,y)\ra \tr_{\Q_{\chi_3}/\Q}(x\alpha\bar y)$ for elements $\alpha_W,\alpha_3\in R_{\Q_{\chi_3}}$ of norms $\prec_d \Disc E_T$. It follows that at an archimedean places $w$ the induced norms differ by a multiplicative factor of $w\circ \chi(\alpha_W\alpha_3^{-1})$ on $\Gr^b V_{\chi_3,\C}$, and hence the difference $h_{\V_{\chi_3},q_W} - h_{\V_{\chi_3},q_3}$ is equal to $\log|\Nm_{\Q_{\chi_3}/\Q}(\alpha_W\alpha_3^{-1})|$ which is $O_{\dim T+\rng_{\chi_1}+\rng_{\chi_2}}(\log\Disc E_T)$ as desired.

\vspace{0.1in}
\textbf{Property (3):}
\vspace{0.1in}

Let $(T_2,r_2,\chi_2)$ is solid, and $f:(T_1,r_1)\ra (T_2,r_2)$ is a morphisms of Shimura data with $T_1,T_2$ Tori. Let $\chi_1:=\chi_2\circ f$, and we identify $V_{\chi_1}$ with $V_{\chi_2}$.  We claim that $\Q_{\chi_1}=\Q_{\chi_2}$. Indeed, both may be described as the elements of $\End_\Q(V_{\chi_1})$ which preserve the Hodge decomposition. Thus we may take $\V=\V_{\chi_2}$.

Now as in the proof of property (1), the archimedean contributions to the solid heights of the two polarizations $q_1,q_2$ differ by $O_{\dim T_1+\dim T_2+\rng\chi}(\log\Disc E_{T_1}+\log\Disc E_{T_2})$, and the intrinsic and crystalline norms agree whenever they are both defined. Now the crystalline norm is used for $_{\V_\chi,q_i}|\cdot|_v$ for $i=1,2$ for all places $v$ lying above primes $p$ with $p\gg_{\rng_\chi+\dim T_1+\dim T_2} 1$ which don't divide $\log\Disc E_T$,  so by another application of Theorem \ref{crisint} the result follows.

\vspace{0.1in}
\textbf{Property (2):}
\vspace{0.1in}

We claim that property (2) follows formally from (1) and (3). Indeed, the $m$'th power map gives a morphism $f:(T,r)\ra (T,r^m)$ such that $\chi\circ f=\chi^m$. Therefore (3) says that $$h(T,r^m,\chi)-h(T,r,\chi^m) = O_{\dim T+\rng_\chi+m}(\log\Disc E_T).$$ Finally, by applying property (1) we have that 
$$h(T,r,\chi^m)-mh(T,r,\chi) = O_{\dim T+\rng_\chi+m}(\log\Disc E_T)$$ and the result follows.

\subsection{Establishing Property (4)}

Let $S=S_K(G,X)$ be a Shimura variety, and $V$ an irreducible representation of $G$ with a sublattice $\V$ fixed by $K$, and a polarization $q$. Assume that the highest weight piece $L:=\Fil^a _{\dR}V_\C$ is 1-dimensional, and let $h_L$ be a Weil height on a Toroidal compactification $\bar S$ of $S$ corresponding to the Deligne extension of $L$. The result of (4) is evidently invariant under changing $K$, so we assume that $K$ is split. We let $h_0$ denote the canonical norm. By Corollary \ref{shhtweil} the different $|h_L-h_0|$ is bounded by $O_S(\log^+ h_A(x))$ so it is sufficient for us to prove (4) with $h_0$ replacing $h_L$.

Now for a Shimura subdatum $i:(T,r)\hookrightarrow (G,X)$, let $K':=i^*K$, and let $\chi$ denote the character $\Fil^a_rV_\C\mid T$.  Let $_0|\cdot|$ denote the collection of norms on the line bundle $\Gr^a_{\dR}(V\mid T)$ on $S_{K'}(T,r)$ pulled back from those on $S_K(G,X)$. Then for any $x\in S_K(T,r)(\Qbar)$ the value $h_0(i(x))$ can be computed on $S_K(T,r)$ using the bundle  $\Gr^a_{\dR}(V\mid T)$ with the collection $_0|\cdot |$. Now these norms agree with the norms $_{K',\V,q}|\cdot|$ at the archimedean places, and at all finite places $v$ lying above primes $p\gg_G 1$ not dividing $[K_T:K']\disc E_T$. Thus, by the comparison Theorem \ref{crisint} it is sufficient to prove (4) replacing $h_0(i(x))$ by $h_{K',\V,q}(x)$.

Let $E$ denote the center of the endomorphism algebra $\End_T(V)$, and $\Oo_E$ denote the maximal order. By \cite[Theorem 4.1]{DO}, there is a subring $R\subset \Oo_E$ preserving $\V$ with $[\Oo_E:R]\prec_G [K_T:K']\disc E_T$.  There is therefore a sublattice 
$\V'\subset \V$ of index $\prec_G [K_T:K']\disc E_T$ which is stable under $\Oo_E$.  

Since $V$ admits $V_\chi$ as a summand of multiplicity 1, $\Oo_E$ admits $R_\chi$ as a direct summand, and therefore $\V'$ admits an $R_\chi$ sublattice $U$ as a direct summand. $U$ may not be free, but it is isomorphic to an ideal class, and so has a free submodule of index bounded by $O(\Disc(E_T)^{\frac12}).$ Thus, we may pick $\V'$ to admit $\V_\chi$ as a direct summand. 
Again using Theorem \ref{crisint} and arguing as above, we have that $$\max(|h_{K',\V,q} - h_{K',\V',q}|,|h_{K',\V',q}-h_{K',\V_{\chi},q}|) = O_G\big(\log \disc E_T +\log[K_T:K']\big),$$ so it sufficient to prove (4) replacing $h_{K',\V,q}$ by $h_{K',\V_{\chi},q}$. 

Now the only difference between $h_{K,\V_{\chi},q}(x)$ and $h_{K_T,\V_{\chi},q_\chi}(x)$ occur at the archimedean places corresponding to the distinct polarizations, and at the finite places at which we use the intrinsic norms vs. crystalline norms. The polarizations are both of discriminant $\prec_G [K_T:K']\disc E_T$ and so are handled as in the proof of property (3), and the norms agree for primes not dividing $[K_T:K']$ and so are handled by Theorem \ref{crisint}. 
This completes the proof.

\bibliographystyle{alpha}

\appendix

\section{Frobenius structures and unipotent monodromy at infinity\\by H\'el\`ene Esnault and Michael Groechenig}

We fix an irreducible affine base scheme $S$ which is of finite type over a universally Japanese ring. For the purpose of this appendix, $S$ will either be $\Spec \Cb$, $\Spec \Fb_q$ or $\Spec R$, where $R$ is a finite type algebra. Let us denote by $\bar{X}_{S}$ a smooth and projective $S$-scheme with a relatively very ample line bundle $\Oo_{\bar{X}_S}(1)$. Let $X_{S} \subset \bar{X}_{S}$ be an open subscheme such that $\bar{X}_{S} \setminus X_{S}$ is a strict normal crossings  divisor (snc) $D_{S} = \bigcup_{\mu=1}^cD_{S}^\mu$.
The sheaf of degree $n$ K\"ahler differentials with log-poles along $D$ will be denoted by $\Omega^n_{\bar{X}_{S}/S}\langle D \rangle$. For $\mu=1,\dots,c$ we write $\res_\mu\colon \Omega^1_{\bar{X}_{S}/S}\langle D \rangle \to {\mathcal O}_{D^\mu_{S}/S}$ for the residue map.

\begin{definition}\label{definition}
\begin{itemize}
\item[(a)] A \emph{log-dR local system} on $\bar{X}_{S}$ is a pair $(E_{S},\nabla_{S})$ where $E_{S}$ is a vector bundle of rank $r$ on $\bar{X}_{S}$ and 
$$\nabla\colon E_{S} \to E_{S}\otimes \Omega_{\bar{X}_{S}/S}^1\langle D \rangle$$
is a flat logarithmic connection such that $\res_{\mu}(\nabla)\in H^0(D^\mu_S, \End(E_S|_{D^\mu_S}))$ is \emph{nilpotent} for all $\mu=1,\dots,c$.
\item[(b)] We say that $(E_{S},\nabla_{S})$ is \emph{strongly cohomologically rigid}, if 
\begin{equation}\label{eqn:logdRcomplex}
\mathbb{H}^1\big(\bar{X}_{S},[\End(E_S) \xrightarrow{ \End(\nabla_S)} \End(E_S) \otimes \Omega^1\langle D \rangle \xrightarrow{\End(\nabla_S)} \cdots]\big) = 0.\end{equation}
\item[(c)] A \emph{log-Higgs bundle} on $\bar{X}_S$ is a pair $(V_S,\theta_S)$, where $V_S$ is a vector bundle of rank $r$ on $\bar{X}_S$ and $\theta_S$ is an $\Oo$-linear morphism
$V \to V \otimes \Omega_{\bar{X}}^1\langle D \rangle$ satisfying $\theta_S \wedge \theta_S = 0$.
\item[(d)] A log-Higgs bundle $(V_S,\theta_S)$ is called \emph{strongly cohomologically rigid}, if 
\begin{equation}\label{eqn:logHiggscomplex}
\mathbb{H}^1\big(\bar{X}_{S},[\End(V_S) \xrightarrow{ \End(\theta_S)} \End(V_S) \otimes \Omega^1\langle D \rangle \xrightarrow{\End( \theta_S)} \cdots]\big) = 0.\end{equation}
\end{itemize}
\end{definition}

\begin{rmk}\label{rmk:chern}
\begin{itemize}
\item[(a)] If $S=\Spec \Cb$, the underlying vector bundle $E$ of a log-dR local system has vanishing Chern classes. This follows from the formula for the Atiyah class of $E$ given in \cite[Proposition B.1]{EV}. In addition, the left-hand side of \eqref{eqn:logdRcomplex} computes 
$H^1(X({\mathbb C}), {\End}(E_{\mathbb{C}, {\rm an}})^{\End(\nabla_{\mathbb C})})$. Indeed, as ${\rm res}_\mu(\nabla_{\mathbb C})$ is nilpotent for  $\mu=1,\ldots, c$,  so  is  ${\rm res}_\mu( \End(\nabla_{\mathbb C}))$, thus $\End(E_{\mathbb C})$ is Deligne's extension the cohomology of which computes analytically $Rj_*$ where $j: X_{\mathbb{C}, {\rm an}}\to \bar X_{\mathbb{C}, {\rm an}}$, see \cite[II, Proposition~3.13, Corollaire~3.14]{Del70}. 
\item[(b)] The notion of \emph{strong cohomological rigidity} is more restrictive than the one of \emph{cohomological rigidity} used in \cite{Kat96,EG18}. A cohomological rigid local system, in the traditional sense, does not have any non-trivial infinitesimal deformations which leave the monodromies at infinity invariant. A strongly cohomologically rigid log-dR local system does not have any non-trivial infinitesimal deformations, independently of any constraints at the boundary.
\end{itemize}
\end{rmk}

\begin{definition}[Arithmetic models]
Let $(\bar{X}_{\Cb},D_{\Cb},\Oo_{\bar{X}}(1))$ be a triple consisting of a smooth projective complex variety $\bar{X}_{\Cb}$, an snc divisor $D_{\Cb}$, and a very ample line bundle $\Oo_{\bar{X}_{\Cb}}(1)$. 
\begin{itemize}
\item[(a)] An \emph{arithmetic model} for $(\bar{X}_{\Cb},D_{\Cb},\Oo_{\bar{X}_{\Cb}}(1))$ is given by an affine scheme $S$ where $\Gamma(S,\Oo_S)$ is a finite type subring $R \subset \Cb$, a smooth projective $S$-scheme $\bar{X}_S$ together with an snc divisor $D_S$ such that
$$\bar{X}_{\Cb} = \bar{X}_S \times_S \Spec \Cb\text{ and }D_{\Cb}=D_{S} \times_S \Spec \Cb,$$
and a relatively very ample line bundle $\Oo_{\bar{X}_S}(1)$ pulling back to $\Oo_{\bar{X}_{\Cb}}(1)$. 

\item[(b)] Let $\{(E^i_{\Cb},\nabla^i_{\Cb})\}_{i \in I}$ be a family of log-dR local systems on $X_{\Cb}$. An arithmetic model for $(\bar{X}_{\Cb},D_{\Cb},\Oo_{\bar{X}_{\Cb}},\{E^i_{\Cb},\nabla_{\Cb}^i\}_{i \in I})$ is given by an arithmetic model for $(\bar{X}_{\Cb},D_{\Cb},\Oo_{\bar{X}_{\Cb}}(1))$ as in (a), and log-dR local systems $\{(E^i_{S},\nabla^i_{S})\}_{i \in I}$ on $X/S$ satisfying
$$(E^i_{\Cb},\nabla_{\Cb}^i)=(E^i_S,\nabla^i_S)|_{\bar{X}_{\Cb}} \text{ for all }i \in I.$$
\end{itemize}
\end{definition}

\begin{theorem}\label{thm:main}
Suppose that every stable log-dR local system $(E_{\Cb},\nabla_{\Cb})$ of rank $r$ on $(\bar{X}_{\Cb},D_{\Cb})$ is strongly cohomologically rigid. Then, there exists a finite type subalgebra $R \subset \Cb$ and a model of $(\bar{X}_{\Cb},X_{\Cb},D_{\Cb})$ over $S=\Spec R$ such that every stable log-dR local system of rank $r$ on $(\bar{X}_{\Cb},D_{\Cb})$ has an $S$-model $(E_S,\nabla_S)$ such that for every finite field $k$ and every morphism $R \to W(k)$ the formal flat connection
$$(\widehat{E}_W,\widehat{\nabla}_W)$$
is endowed with the structure of a \emph{torsionfree Fontaine-Laffaille module} on $X_W=\bar{X}_W \setminus D_W$.
\end{theorem}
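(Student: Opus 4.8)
The plan is to prove Theorem \ref{thm:main}, which is the technical heart of the appendix: under the strong cohomological rigidity hypothesis, every stable log-dR local system, after spreading out to a suitable arithmetic model $S = \Spec R$, acquires a torsionfree Fontaine--Lafaille structure on every fiber over $W(k)$. The strategy is modelled on Esnault--Groechenig's proof in the proper case (\cite{EsnaultGroechenig}), with the novelty being the treatment of log structures and the nilpotent-residue condition at the boundary.

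\medskip
\noindent\emph{Step 1: Spreading out and finiteness of the moduli.} First I would invoke the (known) fact that the moduli space of stable log-dR local systems of rank $r$ on $(\bar X_\Cb, D_\Cb)$ with nilpotent residues is of finite type, and that under the rigidity hypothesis \eqref{eqn:logdRcomplex} it consists of finitely many reduced points; this is where Remark \ref{rmk:chern}(a) is used to see the underlying bundle has vanishing Chern classes. Since there are only finitely many such systems, one may choose $R \subset \Cb$ large enough that each $(E_\Cb,\nabla_\Cb)$ descends to $(E_S,\nabla_S)$ over $S$, that $(\bar X_S, D_S)$ is smooth projective with snc divisor, that $R$ is smooth over $\Z$, and that the residues $\res_\mu(\nabla_S)$ remain nilpotent on $S$.

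\medskip
\noindent\emph{Step 2: Frobenius pullback preserves the moduli.} The key mechanism, exactly as in \cite{EsnaultGroechenig}: for a morphism $R \to W(k)$ with $k = \Fb_q$, reduction mod $p$ gives $(\bar X_{\Fb_q}, D_{\Fb_q})$ and the relative Frobenius $F\colon \bar X_{\Fb_q} \to \bar X_{\Fb_q}^{(p)}$ (or the absolute Frobenius after identifying $\Fb_q$-structures). Pullback along Frobenius, via the Cartier operator / inverse Cartier (Ogus--Vologodsky, in its logarithmic form with nilpotent residues), sends stable log-Higgs bundles to stable log-dR local systems and vice versa, and crucially it preserves rank and the nilpotence of residues, hence permutes the finite moduli set. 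Because that set is finite (as a reduced scheme, a finite set of $W(k)$-points once $q$ is large), some power $F^{N!}$ acts trivially, so each $(E_{\Fb_q},\nabla_{\Fb_q})$ is a fixed point of a power of Frobenius pullback --- i.e. it underlies an $F$-isocrystal / has a Frobenius structure compatible with the connection. This is the step I expect to be the main obstacle: one must carefully set up the logarithmic Ogus--Vologodsky correspondence over $W(k)$ (not just over $\Fb_q$), track the nilpotent-residue condition through the Cartier operator, and control the period of the Frobenius permutation uniformly so that a single $R$ works for all $k$ simultaneously. Handling non-properness means the comparison must be done with log-poles and one must check the Griffiths-transversal / nilpotence constraints are stable.

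\medskip
\noindent\emph{Step 3: From a Frobenius structure to a Fontaine--Lafaille module.} Finally, one deforms/lifts the mod-$p$ Frobenius structure to the formal scheme $\widehat{X}_W = \bar X_W \setminus D_W$. Since $(\widehat E_W, \widehat\nabla_W)$ is the formal completion of a flat bundle with nilpotent residues whose special fiber carries a Frobenius, and since the filtration can be taken to be the trivial (or Hodge) one with weights in a range forced by $r$ and the ample bundle (ensuring the weights lie in $[0, p-2]$ for $p$ large, which one arranges by further enlarging $N$), Faltings' construction of logarithmic Fontaine--Lafaille modules (see \cite[Theorem 2.6', p.~43]{FaltingsCrystallineCohomology} and Remark \ref{logfontainelaffaille}) applies: the connection, the filtration, and the divided Frobenius $\Phi$ assemble into a torsionfree Fontaine--Lafaille module on $X_W$. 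The torsionfreeness is immediate because $E_S$ is a vector bundle and everything is obtained by base change and completion from a flat family. One concludes by noting that, by construction, this works uniformly for every $R \to W(k)$, which is precisely the statement.
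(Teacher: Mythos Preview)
Your overall strategy --- spread out, use the finiteness of the moduli, run a Cartier/Higgs--de Rham type self-map, deduce periodicity, and apply Lan--Sheng--Zuo/Faltings --- matches the paper's. But there is a genuine gap in Steps 2--3 concerning the filtration.

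The self-map on the finite moduli set is not Frobenius pullback on log-dR local systems; it is $C^{-1}\circ\mathrm{gr}$, where $\mathrm{gr}$ sends $(E,\nabla,F)$ to the associated graded log-Higgs bundle $(\mathrm{gr}_F E,\mathrm{KS})$ and $C^{-1}$ is the logarithmic inverse Cartier. For $\mathrm{gr}$ to be defined you need a Griffiths-transversal filtration $F$ on $(E,\nabla)$ with locally free graded pieces and stable associated graded. You write that ``the filtration can be taken to be the trivial (or Hodge) one'': the trivial filtration is useless (the resulting Higgs field is not nilpotent and you leave the domain of Ogus--Vologodsky), and you give no reason a Hodge filtration exists. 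In the paper this is supplied by Mochizuki's theorem that a rigid log-dR local system underlies a polarised variation of Hodge structures, hence carries a canonical $F$-filtration whose associated graded is a stable log-Higgs bundle; this is then spread out to the arithmetic model. Without this input your flow never gets off the ground, and in Step 3 there is no filtration available to assemble the Fontaine--Lafaille datum $(M,\Fil,\nabla,\Phi)$.

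A second, smaller point: once periodicity holds over $\bar{k}$, the lift to $W_n$ is not automatic. The paper uses strong cohomological rigidity again to show that the $W_n$-lift of each $(E_s,\nabla_s)$ is unique up to isomorphism, so that applying $C_n^{-1}\circ\overline{\mathrm{gr}}$ to the spread-out model returns (an object isomorphic to) another model in the finite list; this is what makes periodicity persist over $W$. Your Step 3 asserts the lift but does not explain why the periodicity survives it.
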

\begin{rmk}
In \cite{EG20} we prove a stronger result for the case where $D_{\Cb}=\emptyset$. The assumptions of \emph{loc. cit.} are less stringent, as they apply more generally to arbitrary rigid dR local system, i.e. isolated points of the moduli space $\sM_{dR}$. The additional assumptions above allow one to simplify the argument significantly.
\end{rmk}

\subsection{Construction of a suitable arithmetic model}

Moduli spaces of logarithmic flat connections on complex varieties were constructed by Nitsure in \cite{nitsure}. Using Langer's boundedness (see \cite{Lan14}), this construction was extended to more general base schemes (\cite[Theorem 1.1]{Lan14}):

\begin{theorem}[Langer]\label{thm:langer}
For a fixed polynomial $P$ there exists a quasi-projective $S$-scheme $\sM_{dR}(\bar{X}_S,D_S)$ of stable flat logarithmic connections on $\bar{X}_S$ with Hilbert polynomial $P$.
\end{theorem}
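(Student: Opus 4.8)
The strategy is to adapt Nitsure's GIT construction of the moduli space of logarithmic connections over a field \cite{nitsure} to the general base $S$, the one essential new ingredient being Langer's effective boundedness for semistable sheaves in mixed and positive characteristic (\cite{Lan14}), which replaces the boundedness results of Grothendieck and Mumford used in the classical case. The first step is to show that the bundles $E$ underlying stable flat logarithmic connections $(E,\nabla)$ of Hilbert polynomial $P$ (with respect to $\Oo_{\bar X_S}(1)$) form a bounded family over $S$. The point is that a flat logarithmic connection imposes a Bogomolov-type bound: for a saturated subsheaf $F\subset E$, the $\Oo$-linear second fundamental form $F\to (E/F)\otimes\Omega^1_{\bar X_S/S}\langle D_S\rangle$ vanishes exactly when $F$ is $\nabla$-invariant, so comparing slopes along a Harder--Narasimhan filtration of $E$ bounds $\mu_{\max}(E)-\mu_{\min}(E)$ in terms of $r$ and $\mu_{\max}(\Omega^1\langle D\rangle)$ whenever $(E,\nabla)$ is semistable. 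Together with the fixed Hilbert polynomial, Langer's theorem then yields boundedness of the family of such $E$, and since the logarithmic connections on a fixed bundle form a torsor under $\Hom(E,E\otimes\Omega^1\langle D\rangle)$, the pairs $(E,\nabla)$ are bounded as well. Consequently there is an integer $m$, uniform over $S$, with $E(m)$ globally generated and acyclic in positive degrees and $h^0(E(m))=P(m)=:N$.

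Next I would rigidify. Form the Quot scheme $\mathrm{Quot}_{\bar X_S/S}\big(\Oo_{\bar X_S}(-m)^{\oplus N},P\big)$, projective over $S$, and pass to the open subscheme $\cQ$ on which the universal quotient $\cE$ is locally free and the tautological map $\Oo_S^{\oplus N}\to H^0(\cE(m))$ is an isomorphism. Over $\cQ$ the functor of logarithmic connections on $\cE$ is representable by a scheme $\cA\to\cQ$ that \'etale-locally is a torsor under the vector bundle $\Hhom(\cE,\cE\otimes\Omega^1\langle D\rangle)$, hence is affine of finite type over $\cQ$; imposing the closed conditions of integrability $\nabla\wedge\nabla=0$ and of nilpotence of every residue $\res_\mu(\nabla)$ cuts out a closed subscheme $\cR\subset\cA$. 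Thus $\cR$ is affine of finite type over the quasi-projective $S$-scheme $\cQ$, hence quasi-projective over $S$, and $\GL_N$ acts on it by change of frame on $\Oo_S^{\oplus N}$, with the scalars acting trivially and with orbits equal to isomorphism classes of pairs $(E,\nabla)$ of Hilbert polynomial $P$.

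I would then obtain $\sM_{dR}$ as a GIT quotient. For $\ell\gg m$, uniformly over $S$ by the first step, the Grothendieck embedding $\mathrm{Quot}\hookrightarrow\mathrm{Grass}\big(H^0(\Oo_{\bar X_S}(\ell-m))^{\oplus N},P(\ell)\big)$ endows $\cQ$, and by pullback $\cR$, with an $\SL_N$-linearized very ample line bundle $\cL_\ell$. Invoking geometric invariant theory over the base $S$ for the $\SL_N$-action on $\cR$ linearized by $\cL_\ell$, the stable locus $\cR^{s}$ admits a geometric quotient, quasi-projective over $S$; this is the desired $\sM_{dR}(\bar X_S,D_S)$. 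The substance is the Hilbert--Mumford analysis identifying GIT-stability on $\cR$ with stability of $(E,\nabla)$: a one-parameter subgroup $\lambda$ of $\SL_N$ determines a filtration of $\Oo_S^{\oplus N}$, hence of $E$, and --- following Nitsure --- the limit $\lim_{t\to 0}\lambda(t)\cdot(E,\nabla)$ exists in $\cR$ precisely when this filtration is $\nabla$-invariant, in which case the Mumford weight computes the associated slope defect; hence for $\ell$ large the numerical criterion for GIT-(semi)stability matches Gieseker-type (semi)stability of $(E,\nabla)$ tested on $\nabla$-invariant saturated subsheaves, and GIT-equivalence matches isomorphism on the stable locus.

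The main obstacle is arranging the first step uniformly over $S$ in arbitrary characteristic: one must combine the Bogomolov-type estimate coming from the connection with Langer's effective boundedness to produce single integers $m$ and $\ell$ that work for all geometric fibers of $S$ at once, so that the Grassmannian embedding and the GIT data are genuinely defined over $S$. Once that uniformity is secured, the Hilbert--Mumford computation in the third step is --- apart from the bookkeeping needed to track $\nabla$ under one-parameter degenerations, which is exactly Nitsure's contribution --- formally parallel to the case of moduli of semistable sheaves treated by Langer.
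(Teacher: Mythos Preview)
The paper does not prove this theorem; it is stated with attribution to Langer and cited as \cite[Theorem 1.1]{Lan14}, with the only accompanying remark being that flat logarithmic connections are a special case of Simpson's $\Lambda$-modules, so that Langer's general construction of moduli of semistable $\Lambda$-modules applies directly. Your sketch therefore does far more than the paper: you outline the actual GIT argument (boundedness via the second-fundamental-form inequality plus Langer's effective bounds, rigidification through a Quot scheme, the affine parameter space for connections, and the Hilbert--Mumford analysis matching GIT stability with slope stability of $(E,\nabla)$), whereas the paper simply invokes the finished result as a black box. As a summary of the Nitsure--Simpson--Langer construction your outline is accurate.

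One small slip: in building $\cR$ you impose nilpotence of all residues $\res_\mu(\nabla)$. That condition is \emph{not} part of the present theorem --- $\sM_{dR}(\bar X_S,D_S)$ parametrises all stable flat logarithmic connections of Hilbert polynomial $P$, with no constraint on the residues. The nilpotent-residue locus $\sM_{log{\mbox -}dR}(\bar X_S,D_S)$ is carved out afterwards, in Corollary~\ref{cor:langer}, as a closed subscheme of $\sM_{dR}$ via the vanishing of $\chi_{i,\mu}(T)-T^r$. So drop that condition from your $\cR$ here.
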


More generally, Langer constructs moduli spaces for semistable $\Lambda$-modules, where $\Lambda$ is a ring of operators in the sense of \cite{Sim94}. It is explained on p. 87 of \emph{loc. cit.} that flat logarithmic connections are a special case of the general theory of $\Lambda$-modules. 
We are interested in moduli spaces of flat logarithmic connections with \emph{vanishing Chern classes} (see Remark \ref{rmk:chern}). The corresponding Hilbert polynomial satisfies
$$P_0(n)=\int r\cdot{} \mathrm{td}_{\bar{X}_{\Cb}} \mathrm{ch}(\Oo_{\bar{X}_{\Cb}}(n))\text{ for all }n \in \mathbb{N}.$$

\begin{corollary}\label{cor:langer}
There exists a closed subscheme $\sM_{log{\mbox -}dR}(\bar{X}_S,D_S) \subset \sM_{dR}(\bar{X}_S,D_S)$, which is the moduli space of stable log-dR local systems with Hilbert polynomial $P_0$.
\end{corollary}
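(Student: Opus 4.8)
The plan is to obtain $\sM_{log{\mbox -}dR}(\bar X_S,D_S)$ by applying Langer's Theorem~\ref{thm:langer} with Hilbert polynomial $P=P_0$ and then cutting out, inside $\sM_{dR}(\bar X_S,D_S)$, the closed locus on which all residues $\res_\mu(\nabla)$ are nilpotent. The first point to record is that this locus contains every stable log-dR local system: by Remark~\ref{rmk:chern}(a) the underlying bundle of a log-dR local system has vanishing Chern classes, so its Hilbert polynomial with respect to $\Oo_{\bar X_S}(1)$ is exactly $P_0$, and hence it already defines a point of $\sM_{dR}(\bar X_S,D_S)$ for $P=P_0$. So the entire content of the corollary is that ``the residues are nilpotent'' defines a closed subscheme.

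To prove that, I would work not on the coarse space $\sM_{dR}$ itself but on the parameter scheme appearing in Langer's GIT construction (following Simpson's formalism of $\Lambda$-modules, see \cite{Sim94}, \cite{Lan14}): a locally closed subscheme $\mathcal R$ of an appropriate Quot scheme, equipped with a $\PGL_N$-action and a universal family $(\mathcal E,\nabla)$ of stable logarithmic flat connections on $\bar X_S\times_S\mathcal R$, whose good quotient is $\sM_{dR}(\bar X_S,D_S)$. For each $\mu=1,\dots,c$ the residue $N_\mu:=\res_\mu(\nabla)$ is an $\Oo$-linear endomorphism of $\mathcal E|_{D^\mu_S\times_S\mathcal R}$, so $N_\mu^{\,r}$ is a global section of the vector bundle $\End(\mathcal E)|_{D^\mu_S\times_S\mathcal R}$; since $D^\mu_S\to S$ is proper, so is $D^\mu_S\times_S\mathcal R\to\mathcal R$, and one obtains a well-defined closed subscheme $\mathcal R^\mu\subset\mathcal R$ characterised by the vanishing of $N_\mu^{\,r}$ after base change, equivalently by the vanishing of the non-leading coefficients of the characteristic polynomial of $N_\mu$. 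Put $\mathcal R_0:=\bigcap_{\mu}\mathcal R^\mu$. Because an isomorphism of logarithmic flat connections conjugates the residues, and in particular the $\PGL_N$-action on $\mathcal R$ does, $\mathcal R_0$ is a $\PGL_N$-stable, hence GIT-saturated, closed subscheme of $\mathcal R$; its image is therefore a closed subscheme $\sM_{log{\mbox -}dR}(\bar X_S,D_S)\subset\sM_{dR}(\bar X_S,D_S)$, whose closed points are precisely the isomorphism classes of stable log-dR local systems of the given rank, necessarily with Hilbert polynomial $P_0$. This is the asserted moduli space.

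The only genuine subtlety I anticipate is the bookkeeping forced by the coarseness of Langer's moduli space: there is no universal family over $\sM_{dR}$ itself, so the nilpotency condition must be imposed on $\mathcal R$ (or, equivalently, on the moduli stack of stable logarithmic flat connections) and then shown to descend, which is exactly what the $\PGL_N$-invariance and saturation remarks accomplish. A minor additional check is that ``$N_\mu$ nilpotent as a global section of $\End(E|_{D^\mu})$'', as in Definition~\ref{definition}(a), agrees with the fibrewise/characteristic-polynomial formulation used above; this is immediate since each $D^\mu_S$ is smooth, in particular reduced, over a Noetherian base. None of this uses that $S$ is a field, so the argument applies verbatim over the universally Japanese base ring in force.
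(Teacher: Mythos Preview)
Your proof is correct and follows the same core strategy as the paper: express ``all residues nilpotent'' as the vanishing of the non-leading coefficients of the characteristic polynomials of $\res_\mu(\nabla)$, observe that this is invariant under the ambiguity in the universal family, and descend to a closed subscheme of $\sM_{dR}$. The only difference is the descent mechanism. The paper works directly on $\sM_{dR}$ via an \'etale cover $(U_i)_{i\in I}$ carrying universal families well-defined up to twist by a line bundle on $U_i$; since such a twist leaves the characteristic polynomials $\chi_{i,\mu}(T)$ unchanged, the closed loci $\{\chi_{i,\mu}(T)=T^r\}$ glue by faithfully flat descent. You instead go back to the GIT parameter scheme $\mathcal R$ and descend via $\PGL_N$-invariance of the nilpotency locus. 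Both routes are standard and yield the same closed subscheme; the paper's is marginally more direct in that it never leaves the coarse moduli space.
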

\begin{proof}
There is an \'etale covering $\big(U_i \to \sM_{dR}(\bar{X}_S,D_S)\big)_{i \in I}$ 
such that we have a universal family $(\mathcal{E}_{U_i},\nabla_{U_i})$ on $U_i \times_S \bar{X}_S$. By stability, such a universal log-dR $U_i$-family is well-defined up to tensoring by a line bundle on $U_i$. 
By construction, the characteristic polynomial  $\chi_{i,\mu}(T)$ of $\res_\mu(\nabla_{U_i}) 
  $  is a section of a locally free sheaf on $U_i$.
We let $Z_i \hookrightarrow U_i$ be the closed immersion corresponding to the vanishing locus of $(\chi_{i,\mu}(T)-T^r), \mu=1,\ldots, c$. This closed immersion is independent of the choice of a $U_i$-universal family, since tensoring by a line bundle on $U_i$ leaves $\chi_{i,\mu}$ invariant. We may thus apply faithfully flat descent theory to glue those closed immersions to a closed embedding 
$$Z \hookrightarrow \sM_{dR}(\bar{X}_S,D_S).$$
The scheme $Z$ is the sought-for moduli space $\sM_{log{\mbox -}dR}(\bar{X}_S,D_S)$.
\end{proof}

We record the following consequence of non-abelian Hodge theory for later reference.
\begin{theorem} \label{thm:mochi}
For every strongly cohomologically rigid log-dR local system $(E_{\Cb},\nabla_{\Cb})$ on $\bar{X}_{\Cb}$ there exists an $F$-filtration $\cdots \subset F^i\subset F^{i-1} \subset \cdots F^0=E$ satisfying Griffiths transversality $\nabla\colon F^i\to F^{i-1} \otimes_{\Oo_X}  \Omega^1_X\langle D \rangle$ and with the associated graded sheaves $\mathrm{gr}_F^i E = F^i/F^{i+1}$ being locally free. The associated Higgs bundle is denoted by
$$\big(\mathrm{gr}_F E, \mathrm{KS}\big),$$
 where $\mathrm{KS}$ stands for \emph{Kodaira-Spencer} and is defined by the linear maps $$\mathrm{gr}_F \nabla\colon  \mathrm{gr}_F^i E\to \mathrm{gr}_F^{i-1} E \otimes_{\Oo_X} \Omega^{1}_X\langle D \rangle.$$ 
\end{theorem}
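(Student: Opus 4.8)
The plan is to deduce Theorem~\ref{thm:mochi} as the logarithmic, tame incarnation of Simpson's theorem that a rigid semisimple local system underlies a complex variation of Hodge structure; the substantive inputs are Mochizuki's theory of tame harmonic bundles on quasi-projective varieties and the bookkeeping that nilpotency of the residues trivialises all parabolic structures. First I would reduce to the polystable case: a strongly cohomologically rigid log-dR local system is in particular rigid, hence its underlying $\pi_1(X_{\Cb})$-representation is semisimple, and a direct sum of stable variations of Hodge structure is again one, so we may assume $(E_{\Cb},\nabla_{\Cb})$ is polystable. Because $\res_\mu(\nabla_{\Cb})$ is nilpotent, the corresponding representation of $\pi_1(X_{\Cb})$, $X_{\Cb}=\bar X_{\Cb}\setminus D_{\Cb}$, has unipotent local monodromies around $D_{\Cb}$ and $(E_{\Cb},\nabla_{\Cb})$ is its Deligne canonical extension. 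By Mochizuki's tame non-abelian Hodge correspondence this representation carries a tame pluriharmonic metric whose Dolbeault avatar is a polystable parabolic logarithmic Higgs bundle on $(\bar X_{\Cb},D_{\Cb})$ of parabolic degree $0$ with vanishing parabolic Chern classes; unipotence of the monodromy forces all parabolic weights to be $0$, so the parabolic structure is trivial and I obtain an honest polystable log-Higgs bundle $(E^{\mathrm{Dol}}_{\Cb},\theta_{\Cb})$ in the sense of Definition~\ref{definition}(c), with $\res_\mu(\theta_{\Cb})$ nilpotent and vanishing Chern classes.

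Next I would invoke the $\G_m$-action through Simpson's $\lambda$-connection (Rees-module) picture. Let $\sM_{\mathrm{Hod}}$ be the moduli space of semistable logarithmic $\lambda$-connections with nilpotent residues, trivial parabolic structure and Hilbert polynomial $P_0$, constructed as in Corollary~\ref{cor:langer} but over $\A^1_\lambda$; it carries a projection $\lambda\colon\sM_{\mathrm{Hod}}\to\A^1$ with $\sM_{\mathrm{Hod}}|_{\lambda=1}=\sM_{log{\mbox -}dR}(\bar X_{\Cb},D_{\Cb})$ and $\sM_{\mathrm{Hod}}|_{\lambda=0}=\sM_{\mathrm{Dol}}$, together with a $\G_m$-action $t\cdot(E,\lambda,\nabla_\lambda)=(E,t\lambda,t\nabla_\lambda)$ covering multiplication on $\A^1$. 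Strong cohomological rigidity says that $x:=[(E_{\Cb},1,\nabla_{\Cb})]$ is a reduced isolated point of the fibre $\sM_{log{\mbox -}dR}(\bar X_{\Cb},D_{\Cb})$. By Simpson's semistable-reduction theorem for $\lambda$-connections (and its parabolic refinement) the limit $x_0:=\lim_{t\to0}t\cdot x$ exists in $\sM_{\mathrm{Hod}}$ and lies over $\lambda=0$; since $s\cdot x_0=\lim_{t\to0}(st)\cdot x=x_0$ for every $s\in\G_m$, the point $x_0$ is $\G_m$-fixed in $\sM_{\mathrm{Dol}}$. By Simpson's description of the $\G_m$-fixed locus of the Dolbeault moduli space — which holds also in the tame parabolic setting — $x_0$ is represented by a system of Hodge bundles $E^{\mathrm{Dol}}_0=\bigoplus_p E^p$ with $\theta_0(E^p)\subset E^{p-1}\otimes\Omega^1_X\langle D\rangle$; hence the semisimple flat bundle corresponding to $x_0$ underlies a $\Cb$-variation of Hodge structure.

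It remains to identify $(E_{\Cb},\nabla_{\Cb})$ with this variation. The $\Cb$-VHS attached to $x_0$ produces, via Deligne's twisting (Rees) construction, a $\G_m$-equivariant section $\sigma\colon\A^1\to\sM_{\mathrm{Hod}}$ with $\sigma(0)=x_0$, whose value $\sigma(1)$ is the flat bundle underlying the variation; after the usual index shift its Hodge filtration $F^\bullet$ has $F^0=E$, satisfies Griffiths transversality, has locally free graded pieces, and has associated graded Higgs bundle $(\mathrm{gr}_F,\mathrm{KS})=(E^{\mathrm{Dol}}_0,\theta_0)$ — here nilpotency of the residues is exactly what makes the Deligne canonical extension realising $F^\bullet$ the correct one (so that the $\mathrm{gr}^i_F$ are genuine bundles), by Schmid's nilpotent orbit theorem and \cite{Del70}. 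On the other hand the orbit map $t\mapsto t\cdot x$ extends to a $\G_m$-equivariant section $f\colon\A^1\to\sM_{\mathrm{Hod}}$ with $f(0)=x_0$ and $f(1)=x$. The crucial — and hardest — point is that $f=\sigma$, and this is where strong cohomological rigidity is used: a $\G_m$-equivariant section of $\sM_{\mathrm{Hod}}\to\A^1$ through the fixed point $x_0$ is controlled by the attracting set of $x_0$ for the $\G_m$-action, and any discrepancy between $f$ and $\sigma$ would, upon differentiating at $\lambda=1$, produce a non-zero infinitesimal deformation of $(E_{\Cb},\nabla_{\Cb})$ — i.e. a non-zero class in $\mathbb{H}^1\big(\bar X_{\Cb},[\End(E_{\Cb})\to\End(E_{\Cb})\otimes\Omega^1\langle D\rangle\to\cdots]\big)$ — contradicting \eqref{eqn:logdRcomplex}; this is precisely the argument of \cite{Sim94} in the projective case and the corresponding step of \cite{EG20}, transported to the present logarithmic setting. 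Granting $f=\sigma$, we get $x=\sigma(1)$, so $(E_{\Cb},\nabla_{\Cb})$ underlies the variation and its Hodge filtration is the desired $F$-filtration with the stated Kodaira-Spencer maps. I expect the main obstacle to be exactly this last identification, together with the verification that tame non-abelian Hodge theory, the existence and $\G_m$-fixedness of the limit $x_0$, the fixed-point description, and the rigidity-versus-deformation comparison all survive the passage from the compact (or empty-boundary) situation of \cite{Sim94,EG20} to logarithmic poles with nilpotent residues.
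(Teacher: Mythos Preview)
Your approach is correct in spirit and is essentially an unpacking of Simpson's classical argument, but the paper's proof is much shorter: it simply invokes Mochizuki's result \cite[Theorem~10.5]{mochizuki} that every semisimple local system on $X_{\Cb}$ can be \emph{complex-analytically deformed} inside the Betti moduli space to one underlying a polarised variation of Hodge structures, and then observes that strong cohomological rigidity forbids any non-trivial deformation, so $(E_{\Cb},\nabla_{\Cb})$ itself already underlies a VHS. The Riemann--Hilbert correspondence then transports the Hodge filtration to the Deligne extension, with Griffiths transversality and locally free graded pieces following from the standard theory.

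Your more explicit route via $\sM_{\mathrm{Hod}}$ and the $\G_m$-action is the machinery underlying Mochizuki's theorem, so the two approaches are really the same argument at different levels of packaging. However, your final identification step $f=\sigma$ is not quite right as written. Both $f$ and $\sigma$ are $\G_m$-equivariant sections of $\sM_{\mathrm{Hod}}\to\A^1$ through $x_0$, so each is determined by its value at $\lambda=1$; ``differentiating at $\lambda=1$'' does not compare $f(1)$ with $\sigma(1)$, since a priori the attracting cell of $x_0$ could meet the $\lambda=1$ fibre in more than one point. The missing ingredient is precisely the one Mochizuki's theorem supplies: the \emph{analytic} non-abelian Hodge homeomorphism identifies the Dolbeault and de Rham moduli spaces, and under it the $\G_m$-orbit $t\mapsto(E^{\mathrm{Dol}},t\theta)$ becomes a \emph{continuous path inside the $\lambda=1$ fibre} connecting $[(E_{\Cb},\nabla_{\Cb})]$ to $\sigma(1)$. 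Strong cohomological rigidity makes $[(E_{\Cb},\nabla_{\Cb})]$ an isolated point of that fibre, so the path is constant and $f(1)=\sigma(1)$. Once you insert this analytic step, your argument is complete and coincides with the paper's.
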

\begin{proof}
Mochizuki proved in \cite[Theorem 10.5]{mochizuki} that every log-dR local system on $\bar{X}_{\Cb}$ can be complex analytically deformed to a polarised variation of Hodge structures, which implies the existence of the requisite $F$-filtration on the rigid $(E_{\Cb},\nabla_{\Cb})$. In \emph{loc. cit.}, this is stated in terms of Betti local systems on $X_{\Cb} = \bar{X}_{\Cb} \setminus D_{\Cb}$. This is an equivalent perspective, by virtue of the Riemann-Hilbert correspondence which is complex analytic. Due to strong cohomological rigidity, $(E_{\Cb},\nabla_{\Cb})$ cannot be deformed in a non-trivial manner. We conclude that $(E_{\Cb},\nabla_{\Cb})$ underlies a polarised variation of Hodge structures.
\end{proof}

\begin{rmk}
Stability of the log-Higgs bundle $\left(\bigoplus_j\mathrm{gr}_{F}^{ij}E^i_{\mathbb{C}},\mathrm{KS}(\nabla^i_{\mathbb{C}})\right)$ is implied by Mochizuki's parabolic Simpson correspondence \cite{mochizuki}. We remark that the parabolic structure is trivial in the case at hand, since we assume that the monodromies around the divisor at infinity are unipotent and therefore in this case, parabolic stability amounts to stability in the usual sense of log-Higgs bundles. See \cite[p. 722]{Sim90} where the triviality of the parabolic structure is justified for the curve case. The argument given there generalises directly to higher dimensional varieties.
\end{rmk}
{Subsequently, for every strongly cohomologically rigid log-dR local system $(E_{\Cb},\nabla_{\Cb})$ on $\bar{X}_{\Cb}$ 
we fix the $F$-filtration constructed in Theorem~\ref{thm:mochi}.}
\begin{proposition}\label{prop:model}
We keep the assumptions of Theorem \ref{thm:main}. There exists an arithmetic model $(S,\bar{X}_S,D_S,\Oo_{X_S}(1))$ of $(X_{\Cb},D_{\Cb},\Oo_{X_{\Cb}}(1))$ such that
\begin{itemize}
\item[(a)] all rank $r$ log-dR local systems $(E^i_{\Cb},\nabla^i_{\Cb})_{i\in I}$ have a locally free model $(E^i_S,\nabla^i_S)_{i\in I}$ over $S$,
\item[(b)] the models $(E^i_{S},\nabla^i_{S})_{i\in I}$ are also strongly cohomologically rigid,
\item[(c)] the filtrations $(F^{ij}_{\Cb} \subset E^i_{\Cb})$ are defined over $S$ such that the $S$-relative filtrations $F^{ij}_{S}\subset E^i_S$ satisfy the \emph{Griffiths-transversality condition},
\item[(d)] for every $i\in I$ the associated graded 
$$\left(\bigoplus_j\mathrm{gr}_{F}^{ij}E^i_S,\mathrm{KS}(\nabla^i_S)\right)$$ is a \emph{stable} logarithmic Higgs bundle which is also locally free. 
\end{itemize}
Furthermore, if $s\colon \Spec \bar{k} \to S$ is a geometric point of $S$, then
\begin{itemize}
\item[(e)] $(E_{s},\nabla_{s})$ is a log-dR local system on $\bar{X}_{s} = \bar{X}_S \times_S \Spec \bar{k}$, 
then there exists $i\in I$ such that $(E_{s},\nabla_{s})=(E^i_{S},\nabla^i_S)|_{X_{s}}$, 
\item[(f)] $p=\mathrm{char}(\bar{k}) > 2r + 2$, and
\item[(g)] $S \to \Spec \Zb$ is smooth.
\end{itemize}
\end{proposition}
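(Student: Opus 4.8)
The plan is to construct the arithmetic model by starting over a large enough finite type base and progressively shrinking $S$ to enforce each of the conditions (a)--(g). First I would invoke Corollary~\ref{cor:langer} to produce, over some finite type $\Zb$-algebra $R_0 \subset \Cb$, the moduli space $\sM_{log\mbox{-}dR}(\bar X_{S_0}, D_{S_0})$ of stable log-dR local systems of rank $r$ with the prescribed Hilbert polynomial $P_0$; since the finitely many isomorphism classes over $\Cb$ correspond to finitely many $\Cb$-points of a scheme of finite type over $R_0$, after enlarging $R_0$ (inverting finitely many elements, adjoining finitely many algebraic numbers) I may assume these points are all $R_0$-points, spread out to pairwise disjoint sections of $\sM_{log\mbox{-}dR}/S_0$, and, by stability together with an \'etale-locally defined universal family plus a twist by a line bundle, that each admits a locally free model $(E^i_{S_0}, \nabla^i_{S_0})$ on $\bar X_{S_0}$ with nilpotent residues. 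This gives (a). For (b), strong cohomological rigidity is the vanishing of the hypercohomology \eqref{eqn:logdRcomplex}: this hypercohomology over $\Cb$ is zero by hypothesis, and by semicontinuity/cohomology-and-base-change for the (perfect, bounded) log-dR complex $[\End(E^i_S) \to \End(E^i_S)\otimes\Omega^1\langle D\rangle \to \cdots]$, the locus in $S_0$ where it vanishes is open and contains the image of $\Spec\Cb$, so after further localization it holds identically, and hence on every geometric fiber.

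Next I would put in the Hodge-theoretic data. By Theorem~\ref{thm:mochi} each $(E^i_\Cb,\nabla^i_\Cb)$ underlies a polarized $\Z$-VHS and so carries a canonical Griffiths-transverse $F$-filtration $F^{ij}_\Cb$ with locally free graded pieces, and the associated Higgs bundle $(\bigoplus_j \gr^{ij}_F E^i_\Cb, \mathrm{KS})$ is stable by Mochizuki's parabolic Simpson correspondence (the parabolic structure being trivial under our unipotent-monodromy assumption). Each subsheaf $F^{ij}_\Cb \subset E^i_\Cb$ is defined by finitely many equations, so it spreads out to a subbundle $F^{ij}_S \subset E^i_S$ after localizing; Griffiths transversality $\nabla_S(F^{ij}_S)\subset F^{i,j-1}_S \otimes \Omega^1\langle D\rangle$ and local freeness of the graded pieces are closed/open conditions that hold on the generic fiber, hence after shrinking $S$ they hold over all of $S$, giving (c); and stability of the graded Higgs bundle, being again an open condition on fibers of a family of coherent sheaves over a Noetherian base (Langer's openness of stability), propagates from the $\Cb$-fiber to all of $S$ after one more localization, giving (d). For (e), I would note $\sM_{log\mbox{-}dR}(\bar X_S, D_S)$ is of finite type over $S$ and, since all $\Cb$-points are already accounted for by the $(E^i)$, after shrinking $S$ we may assume the sections $s_i$ cover $\sM_{log\mbox{-}dR}$ set-theoretically on every geometric fiber --- this uses that the number of stable objects is upper semicontinuous (or directly: enlarge $S$ so the complement of $\bigcup_i s_i(S)$ in $\sM_{log\mbox{-}dR}$ has empty generic fiber, hence, after removing its image, empty fibers everywhere). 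Finally (f) and (g) are cheap: invert $(2r+2)!$ to kill small residue characteristics, and use generic smoothness for the finite type morphism $S \to \Spec\Zb$ to shrink $S$ so it is smooth over $\Zb$.

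The main obstacle I anticipate is condition (e): ensuring that \emph{every} stable log-dR local system on every geometric fiber $\bar X_s$ is one of the $(E^i_S)|_{X_s}$, rather than merely that the ones we started with remain stable and rigid. A priori, passing to positive characteristic (or to a different characteristic-zero specialization) could create \emph{new} stable log-dR local systems not deforming from $\Cb$. The resolution is that $\sM_{log\mbox{-}dR}(\bar X_S,D_S)$ is a scheme of finite type over $S$, so the function $s \mapsto \#\{\text{closed points of } \sM_{log\mbox{-}dR}(\bar X_s,D_s)\}$ is constructible-bounded; the subscheme $\bigcup_i s_i(S) \subset \sM_{log\mbox{-}dR}(\bar X_S, D_S)$ is closed, its complement $Z$ is a finite type $S$-scheme whose fiber over $\Spec\Cb$ is empty, hence $Z \to S$ has image contained in a proper closed subset $T \subsetneq S$ (as $S$ is irreducible and the generic point of $S$ dominates $\Spec\Cb$ after base change along an embedding $R\hookrightarrow\Cb$); replacing $S$ by $S \setminus T$ removes all such extra objects. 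One must be slightly careful that ``stable log-dR local system of rank $r$ with Hilbert polynomial $P_0$'' is the right moduli problem on every fiber --- i.e. that a stable log-dR local system on $\bar X_s$ automatically has Hilbert polynomial $P_0$ because its underlying bundle has vanishing Chern classes (Remark~\ref{rmk:chern}(a) over $\Cb$, and the analogous Atiyah-class argument or simply flat specialization of Chern classes in positive characteristic) --- but this is exactly parallel to the characteristic-zero statement and causes no real trouble. All the other conditions are routine spreading-out: semicontinuity of cohomology for perfect complexes, openness of stability (Langer), and generic smoothness, each applied finitely many times, intersecting the finitely many resulting dense opens of $S$.
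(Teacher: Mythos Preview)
Your proposal is correct and follows essentially the same spreading-out-then-shrinking strategy as the paper. The one substantive difference lies in how you establish (e). The paper first arranges that $\sM_{log\text{-}dR}(\bar X_S, D_S) \to S$ is \emph{finite and flat}: over $\Cb$ the moduli space is a finite set of reduced points (strong cohomological rigidity forces vanishing tangent spaces), and finite flatness spreads out after a suitable localization of $S$. Then the $|I|$ sections $[(E^i_S,\nabla^i_S)]\colon S \to \sM_{log\text{-}dR}$, being distinct over the generic point, are jointly surjective onto a finite flat $S$-scheme of degree $|I|$. Your alternative --- take the open complement $Z$ of the union of the section images, observe $Z_{\Cb}=\emptyset$ hence $Z_\eta=\emptyset$, and excise the constructible image of $Z$ from $S$ via Chevalley --- is equally valid and in some sense softer, as it does not rely on reducedness of the moduli space or on counting degrees; the paper's route, on the other hand, yields the extra information that $\sM_{log\text{-}dR}$ is finite flat over $S$, which is conceptually pleasant even if not strictly needed downstream.

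Your closing caveat about Hilbert polynomials in positive characteristic is unnecessary: in the paper, assertion (e) is only ever applied to \emph{points of} $\sM_{log\text{-}dR}$, which by construction (Corollary~\ref{cor:langer}) already have Hilbert polynomial $P_0$; there is no claim, and no need for one, that an arbitrary log-dR local system on $\bar X_s$ automatically has vanishing Chern classes.
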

\begin{proof}
The proof is analogous to the one of \cite[Proposition 3.3]{EG20} and will therefore only be sketched. Consider the set  $\mathcal{R}$ of all finite type subrings $R \subset \Cb$. Since $\Cb = \bigcup_{R \in \mathcal{R}} R$ and $(\bar{X},D_{\Cb})$ are defined in terms of finitely many homogenous equations, there exists $\widetilde{R} \in \mathcal{R}$ such that $(\bar{X}_{\Cb},D_{\Cb})$ are obtained by base change from a pair of projective schemes $(\bar{X}_{\widetilde{S}},D_{\widetilde{S}}) \subset \mathbb{P}^N_{\widetilde{S}}$, where we write $\widetilde{S}$ for $\Spec \widetilde{R}$. We may assume that $D_{\widetilde{S}}$ is an snc divisor and that $\bar{X}_{\widetilde{S}}$ is smooth.

We now consider the moduli space $\sM_{log{\mbox -}dR}(\bar{X}_{\widetilde{S}}/{\widetilde{S}})$. Since 
$$\sM_{log{\mbox -}dR}(\bar{X}_{\Cb}/\Cb) \simeq \sM_{log{\mbox -}dR}(\bar{X}_{\widetilde{S}}/{\widetilde{S}}) \times_{\widetilde{S}} \Spec \Cb$$
is finite and flat over $\Spec \Cb$, there exists a finite type algebra $ \tilde{R} \subset 
R$, such that the base change (we denote $\Spec R$ by $S$)
$$\sM_{log{\mbox-}dR}(\bar{X}_{S}/S) = \sM_{log{\mbox -}dR}(\bar{X}_{\widetilde{S}}/{\widetilde{S}}) \times_{\widetilde{S}} S$$
is finite and flat over $S$. 

Since there are only finitely many log-dR local systems $(E^i_{\Cb},\nabla^i_{\Cb})_{i\in I}$ over $\Cb$, we may assume that they have stable and locally free models $(E^i_{S},\nabla^i_{S})_{i\in I}$ over $S$. This amounts to property (a) above. By further enlarging $R$ we obtain strong cohomological rigidity (property (b)), and properties (c,d) about the $F$-filtrations and the associated graded log-Higgs bundles. 

The $S$-models above give rise to sections
\begin{equation}\label{eqn:sections}
[(E^i_{S},\nabla^i_{S})]_{i\in I}\colon S\colon \sM_{log{\mbox -}dR}(\bar{X}_{\widetilde{S}}/{\widetilde{S}}).
\end{equation}
Since the structural morphism $\sM_{log{\mbox -}dR}(\bar{X}_{\widetilde{S}}/{\widetilde{S}}) \to S$ is finite and flat, we infer that the sections of \eqref{eqn:sections} are jointly surjective. This implies (e). By inverting $(2r+2)!$ we can achieve (f). And, property (g) can be arranged by passing to the maximal open subset of $S$ which is smooth over $\Spec \Zb$.
\end{proof}

\subsection{Applications of the Higgs-de Rham flow}

In this subsection, we apply the \emph{logarithmic Higgs-de Rham flow} from \cite{LSYZ19} (the smooth and proper case is due to \cite{LSZ13}).

We fix an arithmetic model as in Proposition \ref{prop:model}. Let $\bar{k}$ be an algebraic closure of a finite field and let $s\colon \Spec \bar{k} \to S$ be a geometric point of $S$.

\begin{definition}[\cite{LSZ13,LSYZ19}]
An $f$-periodic Higgs-de Rham flow on $X_{s}$  is a tuple 
$$(E_0,\nabla_0,F_0,\phi_0,E_1,\nabla_1,F_1,\dots,E_{f-1},\nabla_{f-1},F_{f-1},\phi_{f-1}),$$ 
where for all {$i \in \mathbb{Z}/f\mathbb{Z}$} we have a log-dR local system $(E_i,\nabla_i,F_i)$ with nilpotent $p$-curvature of level $\leq p-1$, a locally split Griffiths-transverse filtration $F_i$, and an isomorphism $\phi_i\colon C_1^{-1}(\mathrm{gr}_F E_i,\mathrm{KS}_i) \simeq (E_{i+1},\nabla_{i+1})$.
\end{definition}
We denote the \emph{set} of isomorphism classes of stable rank $r$ logarithmic Higgs bundles on $X_{s}$ with Hilbert polynomial $P_0$ by   $M_{Dol}(s)$. Likewise, we write $M_{dR}(s)$   for the set of isomorphism classes of stable rank $r$ log-dR local systems on $X_{s}$ with Hilbert polynomial $P_0$. For the purpose of this subsection, it will not matter that those sets are $\bar{k}$-rational points of moduli spaces, which could be constructed with Langer's methods (see Theorem \ref{thm:langer}).

We informally refer to the following diagram as the \emph{Higgs-de Rham flow}:
\[
\xymatrix{
M_{Dol}(s) \ar@/^/@{-->}[rr]^{C^{-1}} & & M_{dR}(s). \ar@/^/@{-->}[ll]^{\mathrm{gr}}
}
\]
The dashed arrows represent merely correspondences, rather than actual maps. The reason is that $\mathrm{gr}(E,\nabla)$ could be not stable, and $C^{-1}$ can only be defined if the $p$-curvature is nilpotent of level $\leq p-1$ and the residues at infinity are nilpotent. 

Using this viewpoint, one calls an element $[(E,\nabla)]$ of $M_{dR}(s)$
 periodic, if there exists $f \in \mathbb{N}$ with
$$[(E,\nabla)]=(C^{-1}\circ\mathrm{gr})^f([(E,\nabla)]).$$

We let $R_{Dol}(s) \subset M_{Dol}(s)$ denote the subset of stable rank $r$ log Higgs bundles with nilpotent Higgs field $\theta$ and nilpotent $\res_\mu \theta$ for all $\mu=1,\dots,c$ of level $\le p-1$. We denote by $R_{dR}(s) \subset M_{dR}(s)$ the subset of stable log-dR local systems with nilpotent residues or level $\le p-1$. Restricting the Higgs-de Rham flow to these subsets has the added advantage of turning the correspondences above into maps of sets:

\begin{equation}\label{eqn:flow}
\xymatrix{
R_{Dol}(s) \ar@/^/[rr]^{C^{-1}} & & R_{dR}(s). \ar@/^/[ll]^{\mathrm{gr}}
}
\end{equation}
It is not immediately obvious that the above maps are well-defined, since one has to justify that strong cohomological rigidity and stability is preserved by $\mathrm{gr}$ and $C^{-1}$.
\begin{lemma}\label{lemma:well-defined}
The maps in \eqref{eqn:flow} are well-defined.
\end{lemma}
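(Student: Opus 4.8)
The plan is to verify that the two correspondences $\mathrm{gr}$ and $C^{-1}$ in \eqref{eqn:flow} actually preserve the defining properties of the sets $R_{Dol}(s)$ and $R_{dR}(s)$: stability, locally freeness of the associated graded, nilpotence of $p$-curvature and of the residues at infinity of the appropriate level, and the Hilbert polynomial $P_0$. Unwinding the two maps: given $(E,\nabla,F) \in R_{dR}(s)$ with $F$ the fixed Griffiths-transversal filtration coming from Theorem \ref{thm:mochi} (spread out to $S$ via Proposition \ref{prop:model}(c,d)), the map $\mathrm{gr}$ sends it to $(\mathrm{gr}_F E, \mathrm{KS})$, and the map $C^{-1}$ is the inverse-Cartier operation of \cite{LSYZ19} applied to a log-Higgs bundle whose Higgs field and residues are nilpotent of level $\le p-1$.

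First I would record the numerical input: all the objects in sight have rank $r$, and by Proposition \ref{prop:model}(f) the residue characteristic $p = \mathrm{char}(\bar k)$ satisfies $p > 2r+2$; in particular $r \le p-1$, so the nilpotence-of-level conditions (which ask for level $\le p-1$) are automatic for any nilpotent operator on a rank-$r$ bundle, and the Higgs-de Rham flow of \cite{LSYZ19} is applicable without extra hypotheses. The Hilbert polynomial is preserved: $\mathrm{gr}_F$ preserves the underlying graded sheaf and hence Chern classes, so $(\mathrm{gr}_F E,\mathrm{KS})$ still has Hilbert polynomial $P_0$; and $C^{-1}$ preserves Chern classes (it is $(F_{\mathrm{abs}})^*$ followed by a Frobenius-descent modification, all of which are compatible with the characteristic-class bookkeeping built into the construction in \cite{LSYZ19}), so the output again has Hilbert polynomial $P_0$. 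For the residues: $\mathrm{gr}_F$ of a connection with nilpotent residues has nilpotent Kodaira--Spencer residues (the residue of $\mathrm{gr}_F\nabla$ is the associated graded of the residue of $\nabla$), and $C^{-1}$ of a log-Higgs bundle with nilpotent residues produces a log-dR local system with nilpotent residues and nilpotent $p$-curvature, by the local description of the inverse Cartier transform in \cite{LSYZ19} (the smooth proper case being \cite{LSZ13}).

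The substantive point, and the one I expect to be the main obstacle, is preservation of \emph{stability} — and with it the locally freeness of $\mathrm{gr}_F E$, which a priori could fail if the filtration steps are not locally free at the special fibre. Here I would invoke exactly the mechanism set up in the preceding subsection: by Proposition \ref{prop:model}(d), for each of the finitely many strongly cohomologically rigid log-dR local systems $(E^i_{\Cb},\nabla^i_{\Cb})$ the associated graded log-Higgs bundle $(\bigoplus_j \mathrm{gr}_F^{ij}E^i_S, \mathrm{KS}(\nabla^i_S))$ is \emph{stable and locally free} over $S$, hence remains so after base change to the geometric point $s$ (stability is an open condition and the locus where it holds, together with the closed locus where the graded pieces are flat, contains $S$ by construction; we may shrink $S$ if needed). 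Dually, Proposition \ref{prop:model}(e) says every log-dR local system on $X_s$ arising in the flow is the reduction of one of the $(E^i_S,\nabla^i_S)$, which are stable and locally free by (a). Thus $\mathrm{gr}$ applied to any element of $R_{dR}(s)$ lands among the stable locally free log-Higgs bundles, i.e. in $R_{Dol}(s)$. For $C^{-1}$: the inverse Cartier transform of a stable log-Higgs bundle in $R_{Dol}(s)$ is again stable — this is the content of the logarithmic Higgs--de Rham flow theory of \cite{LSYZ19}, using that $p > 2r+2$ bounds the relevant torsion and instability phenomena, exactly as in \cite[Proposition 3.3]{EG20} — and strong cohomological rigidity of $(E^i_S,\nabla^i_S)$ (Proposition \ref{prop:model}(b)) guarantees that the flow stays within this finite set of rigid objects rather than wandering into unstable territory. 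Assembling these observations shows both arrows of \eqref{eqn:flow} are well-defined maps of sets, which is the assertion of the lemma.
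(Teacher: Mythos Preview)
Your approach is essentially the same as the paper's: use Proposition~\ref{prop:model}(c,d) to show that $\mathrm{gr}$ lands in stable locally free log-Higgs bundles, and then argue that $C^{-1}$ preserves stability. The extra bookkeeping you include (preservation of Hilbert polynomial, nilpotence of residues, level bounds via $p>2r+2$) is correct and fleshes out points the paper leaves implicit.

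There is one imprecision worth flagging. For the assertion that $C^{-1}$ preserves stability, the paper simply cites \cite[Corollary~5.10]{Lan14}, which gives this directly. Your justification is looser: the reference to \cite[Proposition~3.3]{EG20} is misplaced (that result concerns construction of arithmetic models, not stability under inverse Cartier), and the closing sentence invoking strong cohomological rigidity to ``guarantee the flow stays within this finite set'' is circular as written --- you need stability of $C^{-1}(V,\theta)$ \emph{before} you can identify it with one of the finitely many rigid $(E^i_s,\nabla^i_s)$, not the other way around. The cleanest fix is either to cite Langer directly, or to observe that $C^{-1}$ is an equivalence of categories \cite[Theorem~6.1]{LSYZ19} and equivalences take simple objects to simple objects; either route closes the gap, and your gesture toward \cite{LSYZ19} suggests you had the second in mind.
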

\begin{proof}
Proposition \ref{prop:model}(c) allows us to fix for every $(E_{s},\nabla_s) \in R_{dR}(s)$ an $F$-filtration. It follows from Proposition \ref{prop:model}(d) that $\mathrm{gr}(E_s,\nabla_s)=(\mathrm{gr}_FE_s,\mathrm{KS})$ is stable. This shows that $\mathrm{gr}\colon R_{dR}(s) \to R_{Dol}(s)$ is a well-defined map, which a priori depends on the chosen filtration (but see the end of the proof of Lemma \ref{lemma:bijective}).
Arguing as in \cite[Corollary 5.10]{Lan14} one shows that $C^{-1}$ preserves stability.
\end{proof}
\begin{lemma}
Every element of $R_{Dol}(s)$ is strongly cohomologically rigid.
\end{lemma}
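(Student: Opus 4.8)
The plan is to transfer strong cohomological rigidity from the de Rham side, where it follows (after spreading out) from the hypothesis of Theorem \ref{thm:main}, to the Dolbeault side via the inverse Cartier transform. Fix $(V,\theta)\in R_{Dol}(s)$. By the diagram \eqref{eqn:flow}, whose well-definedness is Lemma \ref{lemma:well-defined}, $C^{-1}(V,\theta)$ is a stable log-dR local system on $\bar X_s$ of rank $r$ with Hilbert polynomial $P_0$; hence by Proposition \ref{prop:model}(e) there is $i\in I$ with $C^{-1}(V,\theta)=(E^i_S,\nabla^i_S)|_{\bar X_s}$. Thus it suffices to show (Step 1) that the fibre $(E^i_s,\nabla^i_s)$ is strongly cohomologically rigid, and (Step 2) that $C^{-1}$ carries strong cohomological rigidity of flat bundles to strong cohomological rigidity of Higgs bundles.

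For Step 1, write $\mathcal C^\bullet_i$ for the relative logarithmic $\End$-de Rham complex of $(E^i_S,\nabla^i_S)$ on $\bar X_S$ and $\pi\colon\bar X_S\to S$. By Proposition \ref{prop:model}(b) we have $\mathbb{H}^1(\bar X_S,\mathcal C^\bullet_i)=0$; since $S$ is affine this says $\mathcal H^1(R\pi_*\mathcal C^\bullet_i)=0$. After possibly enlarging $R$ (there are only finitely many $i\in I$) we may further assume the coherent cohomology sheaves $\mathcal H^{\geq 2}(R\pi_*\mathcal C^\bullet_i)$ are locally free. Then in the base-change spectral sequence $\mathrm{Tor}^{\Oo_S}_{-p}\!\big(\mathcal H^q(R\pi_*\mathcal C^\bullet_i),\kappa(s)\big)\Rightarrow\mathbb{H}^{p+q}(\bar X_s,\mathcal C^\bullet_{i,s})$ the only terms in total degree $1$ are the $(0,1)$-term, which vanishes, and the $(-1,2)$-term, a $\mathrm{Tor}_1$ into a locally free sheaf, which also vanishes. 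Hence $\mathbb{H}^1(\bar X_s,\mathcal C^\bullet_{i,s})=0$, i.e. $(E^i_s,\nabla^i_s)$ is strongly cohomologically rigid in the sense of Definition \ref{definition}(b).

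For Step 2, the logarithmic inverse Cartier transform \cite{LSYZ19} — available because $\bar X_S$ is smooth over $\Spec\Zb$ by Proposition \ref{prop:model}(g) and $p>2r+2$ by (f) — is an equivalence of $\bar k$-linear abelian categories between logarithmic Higgs bundles on $X_s$ with nilpotent Higgs field and residues of level $\le p-1$ and logarithmic flat bundles with nilpotent $p$-curvature and residues of level $\le p-1$, so it induces isomorphisms on $\Ext^1$. Now $\mathbb{H}^1$ of the log-$\End$-Higgs complex \eqref{eqn:logHiggscomplex} of $(V,\theta)$ computes $\Ext^1$ of $(V,\theta)$ with itself in the category of logarithmic Higgs sheaves, and $\mathbb{H}^1$ of \eqref{eqn:logdRcomplex} for $C^{-1}(V,\theta)$ computes the corresponding $\Ext^1$ of flat bundles; since $p>2r+2$, a self-extension of a rank-$r$ object of level $\le p-1$ is again of level $\le p-1$, so these $\Ext^1$ groups coincide with the ones computed in the subcategories above. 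Therefore $\mathbb{H}^1$ of the log-$\End$-Higgs complex of $(V,\theta)$ is isomorphic to $\mathbb{H}^1$ of the log-$\End$-de Rham complex of $(E^i_s,\nabla^i_s)$, which vanishes by Step 1. Hence $(V,\theta)$ is strongly cohomologically rigid. The main obstacle is Step 1: strong cohomological rigidity of an $S$-model does not a priori specialise, because of possible torsion in the second cohomology sheaf, and this is circumvented only by the harmless additional shrinking of $S$; a secondary technical point is checking that the logarithmic transform of \cite{LSYZ19} matches the deformation-theoretic hypercohomology groups compatibly with the nilpotency and boundary constraints.
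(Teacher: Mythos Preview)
Your proof is correct and follows essentially the same route as the paper: transport strong cohomological rigidity from $R_{dR}(s)$ to $R_{Dol}(s)$ via the inverse Cartier equivalence, identifying the relevant $\mathbb{H}^1$ groups with $\Ext^1$'s in the respective categories and using $p>2r+2$ to ensure self-extensions stay in the nilpotent-level-$\leq p-1$ subcategories. Your Step~1 --- the base-change spectral-sequence argument showing that rigidity of the $S$-model specialises to the fibre --- is made explicit where the paper simply invokes Proposition~\ref{prop:model}(e), tacitly absorbing this specialisation (via upper semicontinuity) into the spreading-out construction of the model; your extra shrinking of $S$ is indeed harmless and could equally well be folded into Proposition~\ref{prop:model} itself.
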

\begin{proof}
There is an equivalence of categories (see \cite[Theorem 6.1]{LSYZ19})
$$C^{-1}\colon \mathsf{Higgs}_{p-1}(\bar{X}_{s},D_{s}) \cong \mathsf{MIC}_{p-1}(\bar{X}_{s},D_{s}),$$
where the left-hand side denotes a subcategory of logarithmic Higgs bundles $(V,\theta)$ satisfying several technical assumptions, and similarly, the right-hand side denotes a subcategory of log-dR local systems with nilpotent $p$-Higgs bundles which are required to satisfy various assumptions. We refer the reader to \cite[Section 6]{LSYZ19} for more details. This is an equivalence of categories, and therefore
$${\rm Ext}(C^{-1}(V_s,\theta_s),C^{-1}(V_s,\theta_s))={\rm Ext}((V_s,\theta_s),(V_s,\theta_s))=0.$$
Here, we implicitly use Proposition \ref{prop:model}(e) to guarantee that all self-extensions of $(V_s,\theta_s)$ (respectively $C^{-1}(V_s,\theta_s)$) belong to $\mathsf{Higgs}_{p-1}(\bar{X}_{s},D_{s})$ (respectively $\mathsf{MIC}_{p-1}(\bar{X}_{s},D_{s})$). Indeed, since $p > 2r+2$ by Proposition \ref{prop:model}(f), the Higgs field of such a self-extension is automatically nilpotent of level $\le p-1$.
Thus, $C^{-1}$ preserves strong cohomological rigidity. The same assertion holds for its inverse functor $C$.

We conclude the proof of the lemma by applying assertion (e) of Proposition \ref{prop:model}, according to which every log-dR local system with Hilbert polynomial $P_0$ on $\bar{X}_s$ is strongly cohomologically rigid. Therefore, for every $[(V,\theta)] \in R_{Dol}(s)$ we have that $C^{-1}(V,\theta)$ is strongly cohomologically rigid. This implies that $(V,\theta)={\rm gr} \circ C^{-1}(V,\theta)$ is strongly cohomologically rigid.
\end{proof}

\begin{lemma}\label{lemma:finite}
The set $R_{\dR}({s})$ is finite.
\end{lemma}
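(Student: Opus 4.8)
The plan is to show finiteness of $R_{dR}(s)$ by transporting the problem, via the Higgs--de Rham flow correspondence \eqref{eqn:flow}, to a statement about fixed points of a single self-map of a set that is \emph{known} to be finite. First I would recall that by Langer's construction (Theorem~\ref{thm:langer}) the moduli space $\sM_{dR}(\bar X_s, D_s)$ parametrizing stable flat logarithmic connections with Hilbert polynomial $P_0$ is a quasi-projective $\bar k$-scheme, and by Corollary~\ref{cor:langer} the log-dR locus $\sM_{log\text{-}dR}(\bar X_s,D_s)$ is a closed subscheme of it; hence $M_{dR}(s)$ is a set of $\bar k$-points of a quasi-projective scheme of finite type over $\bar k$. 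The key extra input is Proposition~\ref{prop:model}(e): for our chosen arithmetic model $S$, every log-dR local system on $\bar X_s$ with Hilbert polynomial $P_0$ is (isomorphic to the reduction of) one of the \emph{finitely many} rigid local systems $(E^i_{\Cb},\nabla^i_{\Cb})_{i\in I}$, i.e. it comes from $S$. In other words, $M_{dR}(s)$ is a \emph{finite} set already, and a fortiori so is its subset $R_{dR}(s)$.

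Concretely I would argue as follows. The set $M_{dR}(s)$ of isomorphism classes of stable rank $r$ log-dR local systems with Hilbert polynomial $P_0$ on $\bar X_s$ is, by Proposition~\ref{prop:model}(e), contained in the image of the reduction-at-$s$ maps $i \mapsto [(E^i_S,\nabla^i_S)|_{\bar X_s}]$, of which there are at most $\#I < \infty$. Hence $M_{dR}(s)$ is finite, and $R_{dR}(s) \subset M_{dR}(s)$ is finite as well. (If one prefers to avoid invoking Proposition~\ref{prop:model}(e) directly and argue more intrinsically: strong cohomological rigidity forces $\mathbb{H}^1$ of the log-dR complex of $\End$ to vanish, so the moduli space $\sM_{log\text{-}dR}(\bar X_s,D_s)$ is \'etale over $\Spec \bar k$, hence $0$-dimensional; combined with finite type this gives finiteness of its $\bar k$-points. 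This is essentially the content that was arranged in Corollary~\ref{cor:langer} together with the rigidity hypothesis propagating to characteristic $p$ via Proposition~\ref{prop:model}.)

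The main obstacle — and the reason the statement is not completely trivial — is ensuring that the rigidity that holds over $\Cb$ genuinely propagates to the fiber at the finite-characteristic point $s$, together with finiteness of the number of stable objects. This is exactly what was built into the construction of the arithmetic model: Proposition~\ref{prop:model} was set up so that $\sM_{log\text{-}dR}(\bar X_S/S) \to S$ is finite and flat and the finitely many rigid $\Cb$-objects have $S$-models whose reductions cover every log-dR local system on every geometric fiber. So the proof of this lemma is really just the observation that Proposition~\ref{prop:model}(e) immediately yields $\# M_{dR}(s) \le \# I < \infty$, whence $R_{dR}(s)$ is finite. I would then remark that this finiteness is precisely what makes the subsequent pigeonhole/periodicity argument (every element of $R_{dR}(s)$ eventually becomes periodic under $C^{-1}\circ \mathrm{gr}$, since iterating a self-map of a finite set is eventually periodic) go through in the following lemmas.
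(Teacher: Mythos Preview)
Your proof is correct, but your primary argument takes a different route from the paper's. The paper argues intrinsically: strong cohomological rigidity means the hypercohomology group \eqref{eqn:logdRcomplex} vanishes, and this group is the tangent space of $\sM_{dR}(\bar X,D)$ at $[(E,\nabla)]$; hence each such point is isolated, and a Noetherian scheme has only finitely many isolated points. You instead invoke Proposition~\ref{prop:model}(e) directly to bound $\#M_{dR}(s)\le \#I<\infty$, which is arguably more direct given the setup. Your parenthetical alternative is essentially the paper's argument (though ``\'etale over $\Spec\bar k$'' is a touch stronger than what the tangent-space vanishing literally gives; ``isolated point of a finite-type scheme'' is the cleaner phrasing). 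One minor comment: your opening sentence announces a plan involving the Higgs--de Rham flow \eqref{eqn:flow}, but your actual argument never uses it; you might drop that framing.
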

\begin{proof}
Let $(E,\nabla)$ be a strongly cohomologically rigid log-dR local system which is stable and has Hilbert polynomial $P_0$. The hypercohomology group \eqref{eqn:logdRcomplex} computes the tangent space of $\sM_{dR}(\bar{X},D)$ in $[(E,\nabla)]$. By the vanishing assumption, the point $[(E,\nabla)]$ is isolated. We conclude the proof by recalling that the number of isolated points of a Noetherian scheme is finite.
\end{proof}
\begin{lemma}\label{lemma:bijective}
The maps $\mathrm{gr}$ and $C^{-1}$ are bijections.
\end{lemma}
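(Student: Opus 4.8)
The key point is that the two sets $R_{Dol}(s)$ and $R_{dR}(s)$ are both finite (the latter by Lemma \ref{lemma:finite}; for the former, apply the equivalence $C^{-1}$ of \cite[Theorem 6.1]{LSYZ19} which matches it with $R_{dR}(s)$ setwise, or argue directly that stable strongly cohomologically rigid log-Higgs bundles are isolated points of $\sM_{Dol}$ via the vanishing \eqref{eqn:logHiggscomplex}), and that they have the same cardinality. So it suffices to show one of the two composites is injective, or equivalently that the maps $\mathrm{gr}$ and $C^{-1}$ are each injective — a map between finite sets of equal cardinality that is injective is automatically bijective.

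First I would record that $C^{-1}\colon \mathsf{Higgs}_{p-1}(\bar X_s,D_s) \to \mathsf{MIC}_{p-1}(\bar X_s,D_s)$ is an \emph{equivalence of categories} by \cite[Theorem 6.1]{LSYZ19}; in particular it is fully faithful and essentially surjective, hence induces a bijection on isomorphism classes of objects in those subcategories. Using Proposition \ref{prop:model}(f) ($p>2r+2$) exactly as in the previous lemma, every stable object of $R_{Dol}(s)$, resp. $R_{dR}(s)$, lies in $\mathsf{Higgs}_{p-1}$, resp. $\mathsf{MIC}_{p-1}$: the relevant nilpotency-of-level conditions on the Higgs field / $p$-curvature and on the residues are forced by the rank bound. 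Therefore $C^{-1}$ restricts to an injection (indeed a bijection onto its image) $R_{Dol}(s) \hookrightarrow R_{dR}(s)$.

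Next I would treat $\mathrm{gr}$. For $(E_s,\nabla_s)\in R_{dR}(s)$ the filtration $F_s$ is the one fixed via Proposition \ref{prop:model}(c), and $\mathrm{gr}(E_s,\nabla_s)=(\mathrm{gr}_{F}E_s,\mathrm{KS})$ is stable by Proposition \ref{prop:model}(d), so $\mathrm{gr}$ lands in $R_{Dol}(s)$ (this is Lemma \ref{lemma:well-defined}). To see $\mathrm{gr}$ is a bijection: since both sets are finite of the same cardinality (each being in bijection with the common ambient finite set of strongly cohomologically rigid isolated points, transported through the equivalence $C^{-1}$), it is enough to show $\mathrm{gr}$ is injective, i.e.\ that $(E_s,\nabla_s)$ can be recovered from $(\mathrm{gr}_F E_s,\mathrm{KS})$ together with its $F$-grading. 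But this is precisely the content of the inverse direction of the non-abelian Hodge / Higgs--de Rham correspondence in this setting: applying $C^{-1}$ to $(\mathrm{gr}_F E_s,\mathrm{KS})$ returns a log-dR local system, and by the Ogus--Vologodsky / Lan--Sheng--Zuo theory the pair $(E_s,\nabla_s)$ is determined (up to isomorphism) by its associated graded Higgs bundle once the level bounds hold — equivalently, $\mathrm{gr}$ and $C^{-1}$ are mutually inverse correspondences which, restricted to $R_{dR}(s)$ and $R_{Dol}(s)$, become honest mutually inverse maps. Combining, $\mathrm{gr}$ and $C^{-1}$ are bijections.

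The main obstacle I anticipate is the bookkeeping around the subcategories: one must check carefully that \emph{all} objects in play (the stable rigid local systems, their graded Higgs bundles, and the self-extensions needed to talk about rigidity) genuinely satisfy the technical hypotheses defining $\mathsf{Higgs}_{p-1}$ and $\mathsf{MIC}_{p-1}$ in \cite{LSYZ19} — nilpotent $p$-curvature of level $\le p-1$, nilpotent residues of level $\le p-1$, and the Griffiths-transversality of the chosen filtration. All of these follow from the rank bound $p>2r+2$ of Proposition \ref{prop:model}(f) and the unipotent-monodromy-at-infinity assumption, but this is where the argument needs to be stated with care; once it is in place, bijectivity is the formal consequence that an injection between finite sets of equal size is a bijection.
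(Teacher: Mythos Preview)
Your argument for injectivity of $C^{-1}$ is fine and matches the paper: it is the restriction of an equivalence of categories, so it is injective on isomorphism classes.

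The gap is in your treatment of $\mathrm{gr}$. You assert that ``$\mathrm{gr}$ and $C^{-1}$ are mutually inverse correspondences'' and that $(E_s,\nabla_s)$ is recovered by applying $C^{-1}$ to $(\mathrm{gr}_F E_s,\mathrm{KS})$. This is false: the inverse Cartier transform is the inverse of the Cartier operator $C$ (built from the $p$-curvature), not of the associated-graded operation. In general $C^{-1}\circ\mathrm{gr}$ is a nontrivial permutation of $R_{dR}(s)$ --- this is exactly why one speaks of a Higgs--de Rham \emph{flow}, and why Proposition~\ref{prop:periodic} (periodicity) has content. So nothing in the Ogus--Vologodsky / Lan--Sheng--Zuo formalism tells you that $\mathrm{gr}$ is injective. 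A related circularity: you invoke the equivalence $C^{-1}$ to conclude $|R_{Dol}(s)|=|R_{dR}(s)|$, but at this stage you do not yet know that every element of $R_{dR}(s)$ has nilpotent $p$-curvature (that is only proved \emph{after} this lemma, using bijectivity), so you cannot identify $R_{dR}(s)$ with a subset of $\mathsf{MIC}_{p-1}$.

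The paper's proof of injectivity of $\mathrm{gr}$ is genuinely different and uses strong cohomological rigidity of the Higgs side in an essential way. One builds the Rees family: a $\G_m$-equivariant $\mathbb{A}^1_s$-family of log $t$-connections interpolating between $(E_s,\nabla_s)$ at $t=1$ and $(\mathrm{gr}_F E_s,\mathrm{KS})$ at $t=0$. Strong cohomological rigidity of the associated graded (Proposition~\ref{prop:model}(d) plus the preceding lemma) forces uniqueness of the formal lift over $\bar{k}[[t]]$; hence two log-dR local systems with isomorphic associated graded become isomorphic over $\bar{k}((t))$, and then over $\bar{k}$ by stability. This deformation argument is the missing ingredient; once both maps are injective, the pigeonhole step (using only finiteness of $R_{dR}(s)$ from Lemma~\ref{lemma:finite}) gives equal cardinalities and hence bijectivity.
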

\begin{proof}
It suffices to prove that $\mathrm{gr}$ and $C^{-1}$ are injective. Indeed, it then follows from Lemma \ref{lemma:finite}, they must be of equal cardinality if both maps are injective. The pigeonhole principle is used to conclude that $\mathrm{gr}$ and $C^{-1}$ are bijections.

Since $C^{-1}$ is defined using an equivalence of categories (see \cite[Theorem 6.1]{LSYZ19}), it is clear that $C^{-1}\colon R_{Dol}(s) \to R_{dR}(s)$ is injective.

The associated graded $\mathrm{gr}$ is injective for different reasons. In particular, we will use strong cohomological rigidity to prove this. The Artin-Rees construction applied to the $F$-filtration on $(E,\nabla)$ yields a $\G_m$-equivariant $\mathbb{A}^1_{s}$-family of vector bundles $(\mathcal{V},\nabla_t)$, endowed with a log-$t$-connection $\nabla_t$, where $t\colon \mathbb{A}^1_{s} \to \mathbb{A}^1_{s}$ denotes the identity map. Furthermore, we have
$$(\mathcal{V},\nabla_t)|_{t=0} \simeq (\mathrm{gr}_F E,\mathrm{KS}).$$
Recall from Proposition \ref{prop:model}(d) that the right-hand side is a strongly cohomologically rigid log-Higgs bundle. There is therefore a unique way to lift it to a $t$-connection over $\Spec \bar{k}[t]/(t^2)$, and likewise for $\Spec \bar{k}[t]/(t^n)$. We infer from the Grothendieck existence theorem that there is a unique way to lift it to a $t$-connection on $\Spec \bar{k}[[t]]$. This implies that there cannot be a pair of distinct elements 
$$(E^1_{s},\nabla^1_{s}),(E^2_{s},\nabla^2_{s}) \in R_{dR}(s) \text{ such that }(\mathrm{gr}_FE^1_{s},\mathrm{KS})\simeq(\mathrm{gr}_FE^2_{s},\mathrm{KS}).$$
Otherwise, we would have
$$(E^1_{s},\nabla^1_{s}) \otimes \bar{k}((t)) \simeq (E^2_{s},\nabla^2_{s}) \otimes \bar{k}((t)),$$
which implies the existence of an isomorphism over $\bar{k}$ (by stability). This concludes the proof of injectivity,  and furthermore proves that the map $\mathrm{gr}$ doesn't depend on the chosen $F$-filtration. 
\end{proof}

\begin{proposition}
The $p$-curvature of $[(E,\nabla)] \in R_{dR}(s)$ is nilpotent.
\end{proposition}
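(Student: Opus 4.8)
The plan is to deduce the nilpotence of the $p$-curvature from the bijectivity of the Higgs--de Rham flow maps established in Lemma~\ref{lemma:bijective}, combined with the defining feature of the inverse Cartier transform: it turns nilpotent log-Higgs bundles into log-dR local systems with nilpotent $p$-curvature.

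First I would recall that the equivalence of categories $C^{-1}\colon \mathsf{Higgs}_{p-1}(\bar{X}_{s},D_{s}) \xrightarrow{\ \sim\ } \mathsf{MIC}_{p-1}(\bar{X}_{s},D_{s})$ of \cite[Theorem~6.1]{LSYZ19} (following \cite{LSZ13}, and modelled on the Ogus--Vologodsky correspondence) has as its target, \emph{by definition}, a subcategory of logarithmic flat bundles whose $p$-curvature is nilpotent of level $\le p-1$; thus $C^{-1}(V,\theta)$ has nilpotent $p$-curvature for every $(V,\theta)\in R_{Dol}(s)$.

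Then the conclusion is immediate: by Lemma~\ref{lemma:bijective} the map $C^{-1}\colon R_{Dol}(s)\to R_{dR}(s)$ is a bijection, in particular surjective, so every $[(E,\nabla)]\in R_{dR}(s)$ is isomorphic to $C^{-1}(V,\theta)$ for some $(V,\theta)\in R_{Dol}(s)$, and hence has nilpotent $p$-curvature. Equivalently, since $\mathrm{gr}$ and $C^{-1}$ are mutually inverse bijections of the finite sets $R_{dR}(s)$ and $R_{Dol}(s)$ (Lemmas~\ref{lemma:finite} and~\ref{lemma:bijective}), the self-map $C^{-1}\circ\mathrm{gr}$ of $R_{dR}(s)$ is a permutation of finite order, so every element of $R_{dR}(s)$ is periodic and the final arrow in the corresponding periodic Higgs--de Rham flow is an application of $C^{-1}$. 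The only non-formal input is the nilpotence of the $p$-curvature of $C^{-1}$ of a level $\le p-1$ nilpotent Higgs bundle, which is built into the construction of $C^{-1}$, so I do not anticipate any real obstacle.
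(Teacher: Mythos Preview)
Your proof is correct and follows exactly the same approach as the paper: the image of $C^{-1}$ consists of flat bundles with nilpotent $p$-curvature by construction, and $C^{-1}$ is bijective (in particular surjective) by Lemma~\ref{lemma:bijective}. Your ``equivalently'' paragraph about periodicity is superfluous here (it anticipates the next proposition) but not incorrect.
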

\begin{proof}
By virtue of definition of $C^{-1}$, every log-dR local system in the image of $C^{-1}$ has nilpotent $p$-curvature. According to Lemma \ref{lemma:bijective} the map $C^{-1}$ is bijective. This concludes the proof.
\end{proof}

\begin{proposition}\label{prop:periodic}
Every $[(E,\nabla)] \in R_{dR}(s)$ is \emph{periodic}.
\end{proposition}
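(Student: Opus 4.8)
The plan is to deduce periodicity from a pigeonhole argument, using the finiteness of $R_{dR}(s)$ together with the bijectivity of the two maps constituting the Higgs--de Rham flow. First I would form the self-map $\Psi := C^{-1} \circ \mathrm{gr}\colon R_{dR}(s) \to R_{dR}(s)$. By the diagram \eqref{eqn:flow}, whose legitimacy is Lemma \ref{lemma:well-defined}, the associated graded $\mathrm{gr}$ sends $R_{dR}(s)$ into $R_{Dol}(s)$ and $C^{-1}$ sends $R_{Dol}(s)$ back into $R_{dR}(s)$, so $\Psi$ really is an endomorphism of the \emph{set} $R_{dR}(s)$; moreover $\Psi$ is precisely the operator appearing in the definition of ``periodic'', so the assertion to prove is that for every $[(E,\nabla)] \in R_{dR}(s)$ there exists $f \in \mathbb{N}$ with $\Psi^f([(E,\nabla)]) = [(E,\nabla)]$. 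Before iterating I would record that $\Psi$ may legitimately be applied repeatedly: $C^{-1}$ requires nilpotence of the $p$-curvature, supplied by the preceding proposition, and nilpotence of the residues at infinity of level $\le p-1$, which is built into membership in $R_{dR}(s)$ and guaranteed by the arithmetic model of Proposition \ref{prop:model}.

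Next I would invoke the two facts already established: $R_{dR}(s)$ is finite (Lemma \ref{lemma:finite}), and both $\mathrm{gr}$ and $C^{-1}$ are bijections (Lemma \ref{lemma:bijective}), hence $\Psi$ is a bijection of a finite set, i.e. a permutation. Fixing $[(E,\nabla)]$, its forward $\Psi$-orbit is then a finite subset of $R_{dR}(s)$, so there are integers $0 \le i < j$ with $\Psi^i([(E,\nabla)]) = \Psi^j([(E,\nabla)])$; applying the inverse bijection $\Psi^{-i}$ gives $\Psi^{j-i}([(E,\nabla)]) = [(E,\nabla)]$, so $f = j - i$ works. If one wishes a single period valid on all of $R_{dR}(s)$ simultaneously, one may instead take $f$ to be the order of the permutation $\Psi$ — equivalently the least common multiple of its cycle lengths.

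The step that carries the real content is already behind us: it is the bijectivity of $\mathrm{gr}$ and $C^{-1}$ in Lemma \ref{lemma:bijective}, where strong cohomological rigidity is used essentially (the unique lifting over $\bar{k}[[t]]$ of the associated graded $t$-connection, via the Grothendieck existence theorem, for injectivity of $\mathrm{gr}$; and the equivalence of categories underlying $C^{-1}$ for its injectivity), together with Lemma \ref{lemma:finite}, where rigidity forces the relevant points of $\sM_{dR}(\bar X, D)$ to be isolated. Granting those, the present statement has no remaining obstacle: it is the purely combinatorial observation that an injective self-map of a finite set is bijective, hence every element is periodic under iteration. The only care needed is the bookkeeping of the opening paragraph — that $\Psi$ is a genuine self-map of $R_{dR}(s)$ and is actually iterable — all of which follows from the structural results (Proposition \ref{prop:model} and Lemma \ref{lemma:well-defined}) already in place.
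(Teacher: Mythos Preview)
Your proposal is correct and follows essentially the same route as the paper: define $\sigma = C^{-1}\circ\mathrm{gr}$, observe via Lemmas \ref{lemma:finite} and \ref{lemma:bijective} that it is a permutation of the finite set $R_{dR}(s)$, and conclude that every element is periodic (the paper simply takes $f'$ to be the order of $\sigma$). Your additional remarks on iterability and the role of rigidity are accurate but already absorbed into the cited lemmas.
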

\begin{proof}
Let $\sigma = C^{-1}\circ \mathrm{gr}$. By definition, it is a permutation of the finite set $R_{dR}(s)$. Let $f'$ be the order of $\sigma$. We then have that $\sigma^{f'}([(E,\nabla)])=[(E,\nabla)]$, and thus $[(E,\nabla)]$ is $f$-periodic for  some $f|f'$.
\end{proof}

\subsection{Higgs-de Rham flow over truncated Witt rings}
As before, we denote by $\bar{k}$ the algebraic closure of a finite field of characteristic $p$, and let $s\colon \Spec \bar{k} \to S$ be a $\bar{k}$-point of $S$. Furthermore, we write $W=W(\bar{k})$ for the associated Witt ring, and $K$ for its fraction field. Hensel's lemma and Proposition \ref{prop:model}(g) implies that $s$ can be extended to a morphism 
$$s_W\colon\Spec W \to S.$$
For $n \in \mathbb{N}$ we denote by $W_n$ the ring of $n$-th Witt vectors and by $\bar{X}_n$ the base change $\bar{X}_S \times_S W_n.$

We define $\mathcal{H}(\bar{X}_n/W_n)$ to be the category of tuples $(V,\theta, \bar{E},\bar{\nabla},\bar{F},\phi)$, where $(V,\theta)$ is a graded log-Higgs bundle on $\bar{X}_n$ of level $\leq p-1$, $(\bar{E},\bar{\nabla},\bar{F})$ is a log-dR local system on $\bar{X}_{n-1}$ with a locally split Griffiths-transverse filtration $\bar{F}$ of level $\leq p-2$, and
 $\phi\colon gr_{\bar F}(\bar{E}, \bar \nabla) \simeq (V,\theta) \times_{W_n} W_{n-1}$ is an isomorphism of graded log-Higgs bundles.

Similarly, we denote by $\mathsf{MIC}(\bar{X}_n/W_n)$ the category of quasi-coherent sheaves with $W_n$-linear flat connections on $\bar{X}_n$. There is a functor 
$$C_n^{-1}\colon {\mathcal{H}}(\bar{X}_n/W_n)\to \mathsf{MIC}(\bar{X}_n/W_n)$$ 
which extends the logarithmic inverse Cartier transform. In the  proper non-logarithmic case this is due to \cite[Theorem 4.1]{LSZ13}. {Closely related results were obtained by Xu in \cite{Xu19}.} The logarithmic version is covered in \cite[Section 5]{LSYZ19} immediately before the proof of Proposition 5.2.

Let $(E_{W_n},\nabla_{W_n},F_n)$ be an $W_n$-linear log-dR local system endowed with an $F$-filtration. We denote by $\overline{\mathrm{gr}}(E,\nabla,F)$ the tuple $(\mathrm{gr}_F(E),\mathrm{KS},(E,\nabla,F)_{W_{n-1}},\mathrm{id})$.

\begin{definition}[Lan--Sheng--Zuo \& Lan--Shen--Yang--Zuo]\label{defi:f-periodic}
An $f$-periodic log-dR local system on $\bar{X}_n/W_n$ is a tuple 
$$(E_{W_n}^0,\nabla_{W_n}^0,F_{W_n}^0,\phi_0,E_{W_n}^1,\nabla_{W_n}^1,F_{W_n}^1,\dots,E_{W_n}^{f-1},\nabla_{W_n}^{f-1},F_{W_n}^{f-1},\phi_{f-1}),$$ 
where for all $i$ we have that $(E^i,\nabla^i,F^i)$ is a log-dR local system on $\bar{X}_{W_n}$ ({nilpotent of level $\leq p-2$} on the special fibre) with a locally split Griffiths-transverse filtration $F^i_{W_n}$, {such that for all integers $n$ we have that $\overline{\mathrm{gr}}_F(E^i_{W_n},\nabla^i_{W_n})$ belongs to $\mathcal{H}({\bar{X}_n/W_n})$ and $\phi_i\colon C_1^{-1}(\mathrm{gr}_F E^i_{W_n},\mathrm{KS}_i) \simeq (E^{i+1}_{W_n},\nabla^{i+1}_{W_n})$}.
\end{definition}

By taking the inverse limit with respect to $n$, we obtain a notion of periodicity relative to $W$. Using  
\cite[p.3, Theorem~3.2, Variant~2]{LSZ13}, \cite[Theorem~1.1]{LSYZ19} together with \cite[Theorem 2.6*,p.43 i)]{Fal88} one obtains:

\begin{theorem}[Lan--Sheng--Zuo \& Lan--Sheng--Yang--Zuo]\label{thm:lsz}
A $1$-periodic log-dR local system on $\bar{X}_W/W$ gives rise to a torsion-free Fontaine--Laffaille module  on $X_W=\bar{X}_W \setminus D_W$.
Furthermore, we can associate to an $f$-periodic log-dR local system on $\bar{X}_W/W$ a crystalline \'etale local system of free $W(\mathbb{F}_{p^f})$-modules on $X_K$. This is a fully faithful functor.
\end{theorem}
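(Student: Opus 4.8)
The plan is to unwind Definition~\ref{defi:f-periodic} and to recognize that, after composing the $f$ steps of the logarithmic Higgs--de Rham flow, an $f$-periodic log-dR local system on $\bar X_W/W$ carries exactly the datum of a torsion-free logarithmic Fontaine--Laffaille module with coefficients in $W(\F_{p^f})$ on $X_W = \bar X_W \setminus D_W$, to which one then applies Faltings' equivalence between such modules and crystalline \'etale local systems on $X_K$. Essentially all the real work is bookkeeping: one must check that the level and integrality conditions needed for both the inverse Cartier transform $C_n^{-1}$ and for Faltings' theorem are preserved along the flow. These are guaranteed by the numerical constraints already imposed in Proposition~\ref{prop:model}, in particular $p>2r+2$, so that every $F$-filtration has level $\le p-2$ and every $p$-curvature has level $\le p-1$, together with the fact that we work with the snc compactification $\bar X_W \setminus D_W$ and $\bar X_W/\Z$ smooth (Proposition~\ref{prop:model}(g)).

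I would first treat the case $f=1$. Here the data is $(E_W,\nabla_W,F_W,\phi)$ with $\phi\colon C_1^{-1}(\gr_F E_W,\mathrm{KS}) \xrightarrow{\ \sim\ } (E_W,\nabla_W)$ over each truncation $W_n$. Since the functor $C_n^{-1}\colon \mathcal H(\bar X_n/W_n)\to\mathsf{MIC}(\bar X_n/W_n)$ lifts the logarithmic inverse Cartier transform, transporting $\phi$ through the forgetful functor to modules-with-connection and through $\gr_F$ produces a Frobenius-linear morphism $\Phi\colon F^*(\gr_F E_W)\to E_W$ compatible with the connection, strictly compatible with the filtration, and satisfying Faltings' strong-divisibility condition. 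This is precisely a torsion-free logarithmic Fontaine--Laffaille module in the sense of \cite[Theorem~2.6*, p.~43, i)]{Fal88}; in the smooth proper case the identification is \cite[p.~3, Theorem~3.2, Variant~2]{LSZ13}, and its logarithmic version is supplied by \cite[Section~5]{LSYZ19} (the discussion leading up to Proposition~5.2) together with \cite[Theorem~1.1]{LSYZ19}. Faltings' functor from torsion-free log Fontaine--Laffaille modules on $X_W$ to crystalline \'etale local systems on $X_K$ then produces the desired object, in this case a $W$-local system.

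For general $f$ one iterates: the cyclic chain $\phi_0,\dots,\phi_{f-1}$ exhibits the object as a fixed point of $\sigma^{f}=(C^{-1}\circ\gr)^{f}$, so the $f$-fold composite of the individual Frobenius-linear structures yields a single $\mathrm{Frob}^{f}$-semilinear structure on $\gr_F E_W$. As formulated in \cite[Theorem~3.2, Variant~2]{LSZ13} and \cite[Theorem~1.1]{LSYZ19}, this is exactly the datum that, after extending scalars along $W(\F_{p^f})\subset W(\bar k)=W$, assembles into a torsion-free Fontaine--Laffaille module with $W(\F_{p^f})$-coefficients on $X_W$; applying Faltings' theorem then gives a crystalline \'etale local system of free $W(\F_{p^f})$-modules on $X_K$. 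Full faithfulness of the resulting functor is inherited from the full faithfulness of Faltings' functor in \cite[Theorem~2.6*]{Fal88}, together with the functoriality and equivalence-of-categories properties of $C_n^{-1}$ (\cite[Theorem~6.1]{LSYZ19}, already invoked in Lemmas~\ref{lemma:well-defined} and~\ref{lemma:bijective}) and of the $\gr_F$ construction fixed in Proposition~\ref{prop:model}(c,d), since a morphism of $f$-periodic flows is by definition one respecting all the $\phi_i$, hence matches morphisms of the associated Fontaine--Laffaille modules.

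The main obstacle is exactly the logarithmic and integral bookkeeping in the passage from periodic flow data to a Faltings $\mathcal F$-structure: one must verify, compatibly across all truncations $W_n$, that the $F$-filtration stays of level $\le p-2$, that the $p$-curvature (equivalently the Higgs field after $\gr_F$) stays nilpotent of level $\le p-1$, and that the $\Phi$ built from the $C_n^{-1}$'s satisfies Faltings' strong-divisibility and compatibility axioms on $\bar X_W$ with log-poles along $D_W$. The inequality $p>2r+2$ of Proposition~\ref{prop:model}(f) is precisely what keeps all these levels in the admissible range, and the smoothness of $\bar X_W$ over $\Z$ with $D_W$ an snc divisor is what allows Faltings' Theorem~2.6* to apply in the non-proper setting; packaging the periodic flow into this framework is the technical heart of the argument and is carried out in full in \cite{LSYZ19}.
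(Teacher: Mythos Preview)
Your proposal is correct and follows essentially the same route as the paper: both reduce to the cited works \cite[p.~3, Theorem~3.2, Variant~2]{LSZ13}, \cite[Theorem~1.1]{LSYZ19}, and \cite[Theorem~2.6*, p.~43, i)]{Fal88}, with the $f=1$ case handled directly by Faltings and the general $f$ obtained by the categorical reduction of \cite[Variant~2]{LSZ13}, full faithfulness being inherited from Faltings' functor. The paper's own treatment is in fact terser than yours---it is essentially a citation with a one-paragraph remark---so your expanded sketch is a faithful elaboration of the same argument. One small clarification: the level constraints ($F$-filtration of level $\le p-2$, $p$-curvature nilpotent of level $\le p-1$) are already built into Definition~\ref{defi:f-periodic} and hence are hypotheses of the theorem, not consequences of Proposition~\ref{prop:model}(f); the bound $p>2r+2$ is what ensures, elsewhere in the appendix, that the objects arising from the arithmetic model actually satisfy those hypotheses.
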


 We remark that Faltings only treats the case $f=1$, in which he constructs a fully faithful functor from Fontaine-Laffaille modules to \'etale local systems of $\Zb_p$-modules. The general case can be reduced to this one using a categorical construction, as explained in \cite[Variant~2]{LSZ13}. It is clear that this formal procedure preserves fully faithfulness of the functor.
In combination with the above, the following result concludes the proof of Theorem \ref{thm:main}.

\begin{theorem}
Every element $[(E_s,\nabla_s)] \in R_{dR}(s)$ can be lifted to a periodic Higgs-de Rham flow over $W_n$ on $\bar X_n$.
\end{theorem}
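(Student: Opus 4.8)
**Proof plan for: Every element $[(E_s,\nabla_s)] \in R_{dR}(s)$ lifts to a periodic Higgs-de Rham flow over $W_n$ on $\bar X_n$.**

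The plan is to bootstrap from the established periodicity over the special fibre (Proposition \ref{prop:periodic}) and lift the whole flow inductively up the truncated Witt tower $W_1 = \bar k \hookrightarrow W_2 \hookrightarrow \cdots$, using at each stage that the relevant moduli of data are smooth/unobstructed thanks to strong cohomological rigidity. First I would fix the element $[(E_s,\nabla_s)]$, let $\sigma = C^{-1}\circ\mathrm{gr}$ be the permutation of the finite set $R_{dR}(s)$ from the proof of Proposition \ref{prop:periodic}, and let $f$ be its period, so that $[(E_s,\nabla_s)]$ sits in a cycle $[(E_s^0,\nabla_s^0)], \ldots, [(E_s^{f-1},\nabla_s^{f-1})]$ of length $f$ with $\sigma$ cyclically permuting them. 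The base case $n=1$ of the lift is exactly this data (with the $F$-filtrations supplied by Proposition \ref{prop:model}(c) and the comparison isomorphisms $\phi_i$ coming from the definition of $C^{-1}$). The goal is then, given an $f$-periodic log-dR flow over $W_{n-1}$ reducing to the given cycle mod $p$, to produce one over $W_n$.

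For the inductive step I would argue as in the injectivity half of Lemma \ref{lemma:bijective}, now run in the lifting rather than the uniqueness direction. Given the flow over $W_{n-1}$, the graded log-Higgs bundles $(\mathrm{gr}_F E^i_{W_{n-1}}, \mathrm{KS}_i)$ are strongly cohomologically rigid (Proposition \ref{prop:model}(d), which passes to the Witt-lift models since rigidity is an open condition and the $S$-models were chosen rigid), hence each lifts uniquely to a graded log-Higgs bundle over $W_n$ by the Grothendieck existence / deformation argument already used for $\bar k[[t]]$ in Lemma \ref{lemma:bijective}: the obstruction to lifting along $W_n \twoheadrightarrow W_{n-1}$ lies in the degree-$1$ log-Higgs hypercohomology \eqref{eqn:logHiggscomplex}, which vanishes, and the lift is unique because the degree-$0$ group (automorphisms, killed by stability) also behaves well. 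Then apply the truncated inverse Cartier functor $C_n^{-1}\colon \mathcal{H}(\bar X_n/W_n) \to \mathsf{MIC}(\bar X_n/W_n)$ of Lan--Sheng--Zuo / Lan--Shen--Yang--Zuo to the tuple $(\mathrm{gr}_F E^i_{W_n}, \mathrm{KS}_i, E^i_{W_{n-1}}, \nabla^i_{W_{n-1}}, F^i_{W_{n-1}}, \mathrm{id})$ to get a log-dR local system $(E^{i+1}_{W_n}, \nabla^{i+1}_{W_n})$ over $W_n$ reducing mod $p^{n-1}$ to the old $(E^{i+1}_{W_{n-1}}, \nabla^{i+1}_{W_{n-1}})$; the level conditions $p > 2r+2$ from Proposition \ref{prop:model}(f) guarantee we stay inside the subcategories where $C_n^{-1}$ is defined. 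One must also lift the Griffiths-transversal filtration $F^{i+1}$ to $W_n$ compatibly — this is again a torsor under a log-Higgs hypercohomology group that vanishes by strong cohomological rigidity (the filtration is, up to the $\mathbb{G}_m$-action, the same datum as the graded Higgs bundle, exactly the Rees-bundle picture from Lemma \ref{lemma:bijective}). Finally one checks that after going around the cycle $f$ times the resulting $(E^0_{W_n}, \nabla^0_{W_n})$ is isomorphic to the starting one: since the mod-$p$ reduction is $f$-periodic and all lifts at each stage were unique, the ambiguity in closing up the cycle is measured by an automorphism group that is trivial by stability, so periodicity over $W_n$ follows.

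The main obstacle I expect is the bookkeeping of the three interlocking uniqueness-and-lifting statements — lifting the graded Higgs bundle, applying $C_n^{-1}$, and lifting the $F$-filtration — in a way that stays inside the technical subcategories ($\mathcal{H}(\bar X_n/W_n)$, nilpotence of $p$-curvature and residues of level $\le p-1$ or $\le p-2$) at every rung of the Witt tower, and in checking that the uniqueness of these lifts is genuinely strong enough to force the cycle to close up rather than merely to close up after passing to a further power of $\sigma$. The deformation-theoretic vanishing is already in hand (it is the content of strong cohomological rigidity, Definition \ref{definition}(b),(d), together with the rigidity of the $S$-models in Proposition \ref{prop:model}), and the functor $C_n^{-1}$ and its compatibilities are quoted from \cite{LSZ13, LSYZ19}; what remains is to thread these together, which is essentially a more elaborate version of the pigeonhole-plus-rigidity argument already carried out mod $p$ in Lemmas \ref{lemma:bijective} and Proposition \ref{prop:periodic}.
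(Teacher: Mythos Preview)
Your plan is in the right spirit but both more complicated than necessary and contains a deformation-theoretic slip. The paper's proof does not lift inductively up the Witt tower at all: the arithmetic model $S$ of Proposition~\ref{prop:model} already carries the log-dR local systems \emph{together with their $F$-filtrations} (parts (a) and (c)), so one simply pulls back along the canonical $s_W\colon \Spec W \to S$ (which exists by smoothness of $S$ over $\Spec\Zb$, part (g), and Hensel) to obtain $(E^i_{W_n},\nabla^i_{W_n},F^i_{W_n})$ for every $n$ at once. Strong cohomological rigidity is then used only for \emph{uniqueness} of such a lift, which forces $(C_n^{-1}\circ\overline{\mathrm{gr}})(E^i_{W_n},\nabla^i_{W_n})\simeq(E^{\sigma(i)}_{W_n},\nabla^{\sigma(i)}_{W_n})$ because both sides reduce to the same thing on the special fibre. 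Periodicity over $W_n$ is then immediate from periodicity of $\sigma$.

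The slip in your version is that the obstruction to lifting a Higgs bundle (or a filtered flat bundle) along $W_n\twoheadrightarrow W_{n-1}$ lives in degree-$2$ hypercohomology, not degree~$1$; the vanishing of $\Hh^1$ in \eqref{eqn:logHiggscomplex} gives uniqueness of a lift, not existence. (Likewise, uniqueness is governed by $\Hh^1$, not by $\Hh^0$ as you write.) So your inductive scheme, as stated, does not actually produce the lifts of the graded Higgs bundle or of the filtration. Of course existence is available---it comes precisely from the arithmetic model as in the paper---but once you invoke that, there is no reason to lift the Higgs bundle and the filtration separately and then reassemble: the filtered log-dR local system over $W_n$ is already sitting in Proposition~\ref{prop:model}, and the whole inductive apparatus collapses to the paper's one-line argument.
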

\begin{proof}
Recall from Lemma \ref{lemma:finite} that there is a finite number of non-isomorphic log-dR local systems
$(E^i_s,\nabla^i_s)_{i\in I}$ 
 in $R_{dR}(s)$.
For every $i \in I$ there exists an extension to an $S$-family of log-dR local systems $(E^i_S,\nabla^i_S)$ (see Proposition \ref{prop:model}(a,b)). By pulling back along $s_W\colon \Spec W \to S$ we therefore obtain a lift to a $W$-family $(E^i_W,\nabla^i_W)$, and hence also a $W_n$-lift $(E^i_{W_n},\nabla^i_{W_n})$.

Since $(E^i_s,\nabla_s^i)$ is strongly cohomologically rigid, deformation theory implies that such a $W$-lift is unique up to isomorphism.

We have seen in Proposition \ref{prop:periodic} that every $(E^i_s,\nabla_s^i)$ is periodic over $s$ (this corresponds to the case $n=0$) since the map
$$\sigma=C^{-1}\circ \mathrm{gr}\colon R_{dR}(s) \to R_{dR}(s)$$
is a permutation (Lemma \ref{lemma:bijective}). The $W_n$-relative log-dR local system $(E^i_{W_n},\nabla^i_{W_n})$ is endowed with an $F$-filtration by Proposition \ref{prop:model}(c). We can therefore evaluate $\overline{\mathrm{gr}}(E^i_{W_n},\nabla^i_{W_n})$.

Since the functor $C_n^{-1}$ extends $C^{-1}$ on the special fibre, we see that we have
$$(C^{-1}_n\circ\overline{\mathrm{gr}})(E^i_{W_n},\nabla^i_{W_n})\simeq (E^{\sigma(i)}_{W_n},\nabla^i_{W_n}).$$
In particular, for $(E^i_s,\nabla^i_s)$ being $f$-periodic, we have $\sigma^f(i)=i$, and thus 
$$(C_n^{-1}\circ \overline{\mathrm{gr}})(E^i_{W_n},\nabla^i_{W_n})\simeq (E^i_{W_n},\nabla^i_{W_n}).$$
This equation establishes periodicity of $(E^i_{W_n},\nabla^i_{W_n})$ relative to $W_n$.
\end{proof}

We will now state a Betti version of Theorem~\ref{thm:main}. For this purpose, let us recall that the Betti moduli space $\mathcal{M}_B(X)$ of  {irreducible rank $r$ complex local systems} of is zero-dimensional, since every such {local system}  is assumed to be strongly cohomologically rigid. Furthermore, $\mathcal{M}_B(X)$ is defined over $\mathbb{Q}$, and therefore the irreducible rank $r$ {local systems} $\rho_1,\dots,\rho_{n_r}$ are defined over a number field $F$. By finite generation of $\pi_1^{\rm top}(X)$ we have that the representations $\rho_1,\dots,\rho_{n_r}$ can be defined over $\Oo_F[M^{-1}]$, for a sufficiently big positive integer $M$.

\begin{rmk} As by Remark~\ref{rmk:chern} (a), 
$H^1(X({\mathbb C}), {\End}(E_{\mathbb{C}, {\rm an}})^{\End(\nabla_{\mathbb C})})$ computes the left-hand side of 
\eqref{eqn:logdRcomplex}, so is equal to zero, we can apply  \cite[Theorem~1.1]{EG18}  to conclude  that Simpson's integrality conjecture holds. Therefore, $M$ can be chosen to be $1$.  In fact, under the assumption that 
all log-dR bundles in a given rank  are rigid, the proof of {\it loc. cit.} applies without verifying this vanishing assumption. We do not use this remark in the sequel. 
\end{rmk}

For every prime $p \nmid M$, and every choice of an embedding $\Oo_F \to W(\bar{\mathbb F}_p)$ we can therefore consider the induced $W(\bar{\Fb}_p)$-representations
$$\rho^{W(\bar{\Fb}_p)}_1,\dots,\rho^{W(\bar{\Fb}_p)}_{n_r}\colon \pi_1^{\rm top}(X_{\Cb}) \to \GL_r(W(\bar{\Fb}_p)).$$

The \'etale fundamental group $\pi_1(X_{\Cb})$ is the profinite completion of $\pi_1^{\rm top}(X_{\mathbb C})$.
Thus, we obtain continuous representations
$$\rho^{W(\bar{\Fb}_p)}_1,\dots,\rho^{W(\bar{\Fb}_p)}_{n_r}\colon \pi_1(X_{\Cb}) \to \GL_r(W(\bar{\Fb}_p)).$$

 \begin{theorem} \label{thm:betti}
 Let $(\bar{X}_{\Cb},X_{\Cb},D_{\Cb})$ and $(\bar{X}_{S},X_{S},D_{S})$ be as in Theorem \ref{thm:main}. Suppose that $p$ is a prime, which belongs to the image of $S \to \Spec \Zb$. Let $k$ be a finite field of characteristic $p$ and fix a morphism $\Spec W(k) \to S$. Then, the representations $\{\rho^{W(\bar{\Fb}_p)}_i\}_{i=1,\dots,n_r}$ descend to crystalline representations $\{\rho_{i}^{\rm cris}\}_{i=1,\dots,n_r}: \pi_1(X_K) \to GL_r(W(\bar{\Fb}_p))$, where $K={\rm Frac}(W(k))$.
 \end{theorem}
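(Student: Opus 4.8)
\textbf{Proof proposal for Theorem \ref{thm:betti}.}

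The plan is to bootstrap from Theorem \ref{thm:main} and the functor of Theorem \ref{thm:lsz}. First I would fix a prime $p$ in the image of $S\to\Spec\Zb$, a finite field $k$ of characteristic $p$, and a morphism $\Spec W(k)\to S$; enlarging $k$ if necessary, I may assume $k$ contains all residue fields relevant to the representations $\rho_i$ and that $\bar k$ is the algebraic closure. Pulling back the arithmetic model of Proposition \ref{prop:model} along $\Spec W(\bar k)\to S$, each stable log-dR local system $(E^i_{\Cb},\nabla^i_{\Cb})$ acquires a $W(\bar k)$-model $(E^i_W,\nabla^i_W)$, and by Theorem \ref{thm:main} the associated formal flat connection $(\widehat E^i_W,\widehat\nabla^i_W)$ carries the structure of a torsionfree Fontaine--Lafaille module on $X_W$. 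The argument of the preceding subsection shows moreover that $(E^i_s,\nabla^i_s)$ is $f_i$-periodic for the permutation $\sigma=C^{-1}\circ\mathrm{gr}$ of the finite set $R_{dR}(s)$, and this periodicity lifts compatibly over all $W_n$; taking the inverse limit yields an $f_i$-periodic log-dR local system over $W$.

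Next I would apply Theorem \ref{thm:lsz}: an $f_i$-periodic log-dR local system on $\bar X_W/W$ produces a crystalline étale local system of free $W(\mathbb F_{p^{f_i}})$-modules on $X_K$, where $K=\mathrm{Frac}(W(k))$, and this assignment is fully faithful. Extending scalars $W(\mathbb F_{p^{f_i}})\hookrightarrow W(\bar{\Fb}_p)$ gives a crystalline representation $\rho_i^{\mathrm{cris}}\colon\pi_1(X_K)\to\GL_r(W(\bar{\Fb}_p))$. It then remains to identify $\rho_i^{\mathrm{cris}}$, after further base change along $\pi_1(X_{\Cb})\to\pi_1(X_K)$ (using the specialization / comparison of fundamental groups, as in the argument around equation \eqref{pi1splitting} of Section \ref{S-GR} and \cite[Thm A.7]{LO}), with the representation $\rho_i^{W(\bar{\Fb}_p)}$ coming from the Betti side. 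For this I would invoke non-abelian Hodge theory and the compatibility of the Higgs--de Rham flow with the Riemann--Hilbert correspondence: the Fontaine--Lafaille module underlying the periodic flow has de Rham realization $(E^i_W,\nabla^i_W)$, whose generic fiber is $(E^i_{\Cb},\nabla^i_{\Cb})$, which corresponds under Riemann--Hilbert to the Betti local system $\rho_i$; since the Fontaine--Lafaille-to-étale functor is compatible with the de Rham comparison, the two $W(\bar{\Fb}_p)$-local systems have isomorphic associated de Rham bundles with connection, and by the fully faithfulness statements (both on the crystalline/Fontaine--Lafaille side and by rigidity, Theorem \ref{Margulissuperrigidity}, on the Betti side) this forces $\rho_i^{\mathrm{cris}}$ to be the claimed descent of $\rho_i^{W(\bar{\Fb}_p)}$.

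The main obstacle I anticipate is precisely this last identification: matching the étale local system output of the periodic Higgs--de Rham flow with the a priori given topological representation $\rho_i$ reduced mod the chosen embedding. One must check that the $W$-model produced by Proposition \ref{prop:model}(a,b) is the \emph{same} as the one whose $p$-adic étale realization is $\rho_i^{W(\bar{\Fb}_p)}$ — i.e. that ``spread out the Betti local system, then $p$-adically complete'' agrees with ``run the flow.'' Strong cohomological rigidity is what makes this work, since it pins down the $W$-lift uniquely up to isomorphism (deformation theory), so both constructions must land on the same object; I would make this precise by comparing de Rham realizations over $K$ and using that the period map / Riemann--Hilbert correspondence is an equivalence, together with the fully faithfulness in Theorem \ref{thm:lsz}. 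A secondary technical point is bookkeeping the field of definition $F$ versus the residue fields $\mathbb F_{p^{f_i}}$ and ensuring the various embeddings $\Oo_F\to W(\bar{\Fb}_p)$ are chosen compatibly with $\Spec W(k)\to S$, which is routine once one allows $k$ to be enlarged.
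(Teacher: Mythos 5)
Your proposal correctly reproduces the first part of the paper's argument: use Theorem \ref{thm:main} together with Theorem \ref{thm:lsz} to produce crystalline representations $\pi_i\colon\pi_1(X_K)\to\GL_r(W(\bar{\Fb}_p))$ from the periodic Higgs--de Rham flows associated to the $S$-models. However, for the crucial final identification you propose a route that does not go through, and you omit the step that actually closes the gap in the paper.

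The difficulty you rightly flag --- matching the \'etale output of the flow with the spread-out Betti representation --- cannot be resolved by ``comparing de Rham realizations'' as you suggest. Two problems: (i) the local system $\rho_i^{W(\bar{\Fb}_p)}$ is a priori only a representation of $\pi_1(X_{\bar K})\cong\widehat{\pi_1^{\rm top}(X_\Cb)}$, not of the arithmetic $\pi_1(X_K)$, so it has no $D_{\dR}$ in the sense required, and asserting that it has ``the same de Rham bundle as $\rho_i^{\rm cris}$'' presupposes the arithmetic descent you are trying to prove; (ii) even over $\bar K$, a $p$-adic \'etale local system is not recovered from its associated filtered bundle with connection --- $D_{\dR}$ is far from fully faithful --- so having isomorphic de Rham sides would not force the $\pi_1(X_{\bar K})$-representations to agree. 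Fully faithfulness of the Fontaine--Lafaille functor (Theorem \ref{thm:lsz}) compares two crystalline objects with one another; it does not let you compare a crystalline \'etale local system against an abstract Betti local system via their de Rham shadows. And invoking strong cohomological rigidity to say ``both constructions must land on the same object'' conflates uniqueness of the $W$-lift of the flat connection (which is true) with an identification of \'etale realizations (which does not follow).

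What the paper actually does instead is a counting argument: it restricts the $\pi_i$ to the geometric fundamental group $\pi_1(X_{\bar K})$ and proves two claims using \cite[Theorem 5.15]{SYZ}: that each $\pi_i^{\rm geom}$ is \emph{irreducible} (a reducible residual representation would, after passing to a finite extension $K'/K$, produce a periodic sub-log-dR local system of slope $0$, contradicting stability), and that the $\pi_i^{\rm geom}$ are \emph{pairwise non-isomorphic} (by fully faithfulness of the functor from periodic flows over $K'$ to residual $\bar{\Fb}_p$-representations). Since there are exactly $n_r$ irreducible rank $r$ Betti local systems, the pigeonhole principle then identifies $\{\pi_i^{\rm geom}\}$ with $\{\rho_i^{W(\bar{\Fb}_p)}\}$ as unordered sets of $\pi_1(X_{\bar K})$-representations, and each $\rho_i^{W(\bar{\Fb}_p)}$ therefore extends to the corresponding arithmetic $\pi_j$. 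Your proposal contains neither the geometric irreducibility nor the pairwise non-isomorphism claim, and neither is automatic; they are the substantive content of the theorem and require the Sun--Yang--Zuo machinery over ramified extensions, which you do not mention.
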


 \begin{proof}
 By combining Theorem~\ref{thm:lsz} and Theorem \ref{thm:main}, we obtain crystalline representations
 $$\{\pi_{i}\}_{i=1,\dots, n_r}: \pi_1(X_{K})\to GL_r(W (\bar{\mathbb F}_p))$$ associated to the corresponding log-dR systems $(E_i,\nabla_i)_{i=1,\dots,r}$ on $\bar X_K$ 
 Restricting these representations further to the geometric fundamental group $\pi_1(X_{\bar{K}})$, we obtain 
 $$(\pi^{\rm geom}_i)_{i=1,\dots, n_r}\colon \pi_1(X_{\bar{K}}) \to GL_r(W(\bar{\mathbb{F}}_p)).$$
 \begin{claim}
 The geometric representations $(\pi_i^{\rm geom})_{i=1,\dots,n_r}$ are irreducible.
 \end{claim}
 \begin{proof}[Proof of the claim]
Assume by contradiction that there exists $\pi_i^{\rm geom}$ which is reducible. Then, the residual representation $ \pi_i^{\rm geom}\otimes \bar{ \mathbb F}_p:  \pi_1(X_{\bar K})\to GL_r(\bar{ \mathbb F}_p)$ is reducible as well. 
The continuous representation  $\pi_i\otimes \bar{ \mathbb F}_p: \pi_1(X_K)\to GL_r(\bar{\mathbb F}_p)$
factors through the finite group $GL_r(\mathbb{F}_q)$ for a $p$-power $q$. By Proposition~\ref{prop:model} (g)
we may assume that $X_{K}$ has a rational point, which yields a section $\Gal(\bar{K}/K) \to \pi_1(X_K).$ The kernel of the restriction $\pi_i |_{{\rm Gal}(\bar{K}/K)}$ yields a finite extension $K'/K$ such that $\pi_i \otimes \bar{\mathbb F}_p|_{\pi_1(X_{K'})}$ is reducible. 

Let $\alpha$ be a subrepresentation of  $\pi_i \otimes \bar{\mathbb F}_p|_{\pi_1(X_{K'})}$. We will now use work by Sun--Yang--Zuo. It develops a version of the Higgs-de Rham flow over ramified extensions of $W$. Theorem 5.15 in \cite{SYZ} implies that the subrepresentation $\alpha$ gives rise to a sub-log-dR local system of $(E_i,\nabla)$, which is furthermore periodic and thus of slope $0$. This contradicts stability. Note that \emph{loc. cit.} deals with the more general setting of twisted Higgs-de Rham flows and projective representations. When applying their result we may therefore assume that the twisting line bundle $\mathcal{L}$ is trivial, since our representations are not projective.
 \end{proof}
  \begin{claim}
 The geometric representations $\pi_i^{\rm geom}$ are pairwise non-isomorphic.
 \end{claim}
 \begin{proof}[Proof of the claim]
As before, it suffices to show that the residual $\bar{\mathbb{F}}_p$-representations are pairwise non-isomorphic. We will use the same strategy as before. An isomorphism between 
 $\pi_i^{\rm geom} \otimes \bar{\mathbb F}_p$  and $\pi_j^{\rm geom} \otimes \bar{\mathbb F}_p$
 therefore implies the existence of a finite extension $K'/K$ such that there exists an isomorphism 
$$\pi_i^{\rm geom} \otimes \bar{\mathbb F}_p \simeq \pi_j^{\rm geom}  \otimes \bar{\mathbb F}_p \colon \pi_1(X_{K'}) \to GL_r(\bar{\mathbb{F}}_p).$$
According to \cite[Theorem 5.15]{SYZ} (which relies on \cite[Theorem 5.8(ii)]{SYZ}), the functor from periodic Higgs-de Rham flows to $\bar{\mathbb{F}}_p$-linear representations of $\pi_1(X_{K'})$ is fully faithful. Thus, we obtain an isomorphism of the associated Higgs-de Rham flows, and in particular we have that $(E_i,\nabla_i)_{K'} \simeq (E_j,\nabla_j)_{K'}$. This implies $i=j$ since the associated complex log-dR local systems $(E_i,\nabla_i)_{\Cb}$ and $(E_j,\nabla_j)_{\Cb}$ are non-isomorphic, and hence concludes the proof.
 \end{proof}
Applying these two claims we see that 
 the geometric representations
 $$ \pi_i|_{\pi_1(X_{\bar K})} : \pi_1(X_{\bar K})\to GL_r(W (\bar{\mathbb F}_p))$$
 for $i=1,\ldots, n_r$ 
  remain irreducible since the set
  $\{ \pi_i|_{\pi_1(X_{\bar K})}\}_{i=1,\ldots, n_r}$ defines $n_r$ pairwise non-isomorphic $W(\bar{\mathbb F})_p$-local systems 
on $X_{\bar K}$, which by the pigeonhole principle has to be the set of  $W (\bar{\mathbb F}_p)$-local systems defined by $\{\rho^{W(\bar{\Fb}_p)}_i\}_{i=1,\dots,n_r}$, and thus each single one of them descends to a crystalline representation. 
 \end{proof}

\end{document}